\newtheorem{theorem}{Theorem}[section]
\newtheorem{definition}[theorem]{Definition}
\newtheorem{lemma}[theorem]{Lemma}
\newtheorem{remark}[theorem]{Remark}
\newtheorem{conjecture}[theorem]{Conjecture}
\newtheorem{proposition}[theorem]{Proposition}
\newtheorem{corollary}[theorem]{Corollary}
\newtheorem{example}[theorem]{Example}
\def\T{\mathcal{T}}
\def\L{\mathcal{L}}
\def\G{\mathcal{G}}
\def\P{\mathcal{P}}
\def\X{\mathcal{X}}
\def\B{\mathcal{B}}
\def\W{\mathcal{W}}
\def\mathbb{\mathbbm}
\def\Y{\mathcal{Y}}
\def\V{\mathcal{V}}
\def\U{\mathcal{U}}
\def\F{\mathcal{F}}
\def\cite#1{#1}
\def\uS{\mathcal{S}}
\def\p{\mathfrak{p}}
\def\K{\mathcal{K}}
\def\ref#1{#1}
\def\Tor{\rm{Tor}}
\def\Inv.lt.{\rm{Inv.\, lt.}}
\def\dir.lt.{\rm{dir.\, lt.}}
\def\top{\rm{top}}
\def\cont{\rm{cont}}
\def\Hom{\rm{Hom}}
\def\Image{\rm{Image}}
\def\Tor{\mathcal{T}or}
\def\Id{\rm{Id}}
\def\aff{\rm{aff}}
\def\SL{\rm{SL}}
\def\D{\mathfrak{D}}
\def\IG{\rm{IG}}
\def\loc{\rm{loc}}
	\def\xunderline#1{\ensuremath{\textit{#1}}}
\begin{document}

\title{Conjectural Positivity for  Pontryagin Product in Equivariant K-theory of Loop Groups}

\author{Shrawan Kumar}

\date{}

\maketitle{}

\begin{center} {\it To my friend Daniel Nakano on the occasion of his sixtieth birthday}
\end{center}
\vskip2ex

{\it Abstract:} {\tiny Let $G$ be a connected simply-connected simple algebraic group over $\mathbb{C}$ and let $T$ be  a maximal torus, $B\supset T$ a Borel subgroup   and $K$  a maximal compact subgroup. Then, the product in the (algebraic) based loop group $\Omega(K)$  gives rise to a comultiplication in the topological $T$-equivariant $K$-ring $K_T^{\top}(\Omega(K))$. Recall that   $\Omega(K)$ is identified with the affine Grassmannian $\X$ (of $G$) and hence we get a comultiplication in 
$ K_T^{\top}(\X)$. Dualizing, one gets the Pontryagin product in the $T$-equivariant $K$-homology $K^T_0(\X)$, which in-turn gets identified with the convolution product (due to S. Kato).  
Now, $ K_T^{\top}(\X)$ has a basis $\{\xi^w\}$ over the representation ring $R(T)$  
given by the ideal sheaves corresponding to the finite codimension Schubert varieties $X^w$ in $\X$. We make a positivity conjecture  on the comultiplication structure constants in the above basis. Using some results of  Kato, this conjecture gives rise to an equivalent conjecture on the positivity of the multiplicative structure constants in $T$-equivariant quantum $K$-theory $QK_T(G/B)$ in the Schubert basis.}
\setcounter{section}{0}

\section{Introduction}

Let $G$ be a connected simply-connected simple algebraic group over $\mathbb{C}$. We fix a Borel subgroup $B$ and  a maximal torus ${T}\subset {B}$. We also fix a maximal compact subgroup $K$ of $G$ such that $T_o:=T\cap K$ is a (compact) maximal torus of $K$. 
Let $\X = G(\mathbb{C}((t))/G(\mathbb{C}[[t]])$  be the  affine Grassmannian. Then, $\X$ is an ind-projective variety with filtration 
$$\X_0\subset \X_1\subset \ldots \subset\X_n \subset\ldots \,\,\text{ given by Schubert varieties.}
$$ Let $
K_T^{\top}\left(\X\right)= \underset{n\rightarrow \infty}{\Inv.lt.}\,\,K_T^{\top}\left(\X_n\right)$ be the topological $T$-equivariant  $K$-group of $\X$  under the analytic topology on $\X_n$. Let $\W:=W\propto Q^\vee$ be the affine Weyl group, where $Q^\vee$ is the coroot lattice of $G$ and  $W$ is the (finite) Weyl group of $G$ and let  $\W'$ be the set of minimal  coset representatives in $\W/W$. Let $\underline{o}$ be the base point of ${\X}$ and $\U^-:= G[t^{-1}]$. 
For any $ w\in \W'$, the sheaf $\xi^w:=\mathcal{O}_{ X^w} \left( -\partial  X^w \right)$ over ${\X}$ gives rise to an element denoted $[\bar\xi^w]\in 
K_T^{\top}\left(\X\right)$ by using Lemma \ref{2.2}, where $X^w := \overline{\U^- w \underline{o}}\subset \X$ and
$\partial  X^w :=  X^w \backslash \U^- w \underline{o}$.  By Lemma \ref{2.7}, 
$$
K_T^{\top} \left(\X\right) = \Pi_{w\in \W'} \,R(T) \left[\bar{\xi}^{w}\right].
$$
We also define the  $T$-equivariant $K$-homology $K^T_0 \left( {\X}\right)$ by 
$
K^T_0 \left( {\X}\right) = \underset{n\rightarrow \infty}{\dir.lt.}\,\, K^T_0 \left( {\X}_n\right), 
$ where $K^T_0 \left( {\X}_n\right)$ is the Grothendieck group corresponding to the $T$-equivariant coherent sheaves on the projective variety $\X_n$. Then, as in Definition \ref{2.6},
$$
K_0^T\left(\X\right) = \bigoplus_{ w\in \W'} R(T)\cdot \left[\mathcal{O}_{X_w}\right],
$$
where $X_w$ is the Schubert variety $\overline{\B w \underline{o}}$, $\B$ being the standard Iwahori subgroup defined as  the inverse image of $B$ in $G[[t]]$ under the evaluation map at $t=0$.

Let $\Omega (K)$ be the based {\it algebraic}  loop group of $K$ endowed with the analytic topology (see the details above Lemma \ref{2.5}). 
Then, $K$ (in particular, $T_0$) acts on $\Omega (K)$ via conjugation. We recall the following well-known lemma (cf. Lemma \ref{2.5}):
\begin{lemma} 
The inclusion map 
$$ 
\beta: \Omega (K) \rightarrow \X,~ \gamma \mapsto {\gamma}\cdot \underline{o},\quad \mbox{for } \gamma\in \Omega (K)
$$
is a $K$-equivariant homeomorphism  under the analytic topology on $\X$.  
\end{lemma}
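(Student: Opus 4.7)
The plan is to establish three things in sequence: $K$-equivariance of $\beta$, bijectivity, and bicontinuity.

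For $K$-equivariance, note that $K$ embeds into $G(\mathbb{C}[[t]])$ as the subgroup of constant loops, so each $k \in K$ fixes $\underline{o}$. The $K$-action on $\Omega(K)$ is conjugation, and the $K$-action on $\X$ is inherited from the $G(\mathbb{C})$-action. Hence for $\gamma \in \Omega(K)$,
$$\beta(k\gamma k^{-1}) = k\gamma k^{-1}\cdot\underline{o} = k\gamma\cdot\underline{o} = k\cdot\beta(\gamma).$$

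Bijectivity follows from the loop-group Iwasawa-type decomposition: every element of $G(\mathbb{C}((t)))$ admits a unique factorization $g = \gamma h$ with $\gamma \in \Omega(K)$ and $h \in G(\mathbb{C}[[t]])$. Surjectivity of $\beta$ is then immediate, since every coset $gG(\mathbb{C}[[t]])$ is represented by the loop factor $\gamma$. Injectivity reduces to showing $\Omega(K) \cap G(\mathbb{C}[[t]]) = \{e\}$: an algebraic loop in $K$ extending holomorphically over the closed unit disk must be constant (apply the maximum modulus principle to matrix coefficients in a faithful representation), and a constant based loop equals $e$.

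Continuity of $\beta$ is clear, being the composite of the inclusion $\Omega(K) \hookrightarrow G(\mathbb{C}((t)))$ with the quotient map to $\X$, both continuous for the analytic topology. For continuity of the inverse, the plan is to exploit compatible compact filtrations on both sides: $\X = \bigcup_n \X_n$ by Schubert varieties, and a matching filtration $\Omega(K) = \bigcup_n \Omega(K)_n$ by loops of bounded polynomial degree, related via the Cartan decomposition $G(\mathbb{C}((t))) = \coprod_{\lambda} G(\mathbb{C}[[t]])\, t^\lambda\, G(\mathbb{C}[[t]])$ with $\lambda$ ranging over dominant coweights in $Q^\vee$. Once $\beta$ is shown to intertwine these filtrations, each level restriction is a continuous bijection between compact Hausdorff spaces and is therefore a homeomorphism; passing to the direct limit completes the proof.

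The main obstacle I anticipate is verifying the compatibility of the two filtrations, which requires identifying the Schubert stratification of $\X$ indexed by $\W'$ with the polynomial-degree stratification of $\Omega(K)$ read off from the Cartan decomposition. The Iwasawa decomposition itself, while classical in the smooth setting, must be invoked in its algebraic form; since the lemma is stated as well-known, I would cite a standard reference (for instance, Pressley--Segal, or Kumar's book on Kac--Moody groups) rather than reproving the decomposition.
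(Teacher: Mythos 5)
The paper does not actually prove this lemma: it is stated as well known, with a pointer to Pressley--Segal [PS, \S 3.5 and Theorem 8.6.3], which is precisely the statement that the algebraic based loop group maps homeomorphically onto the polynomial loop Grassmannian. Your outline is therefore not so much an alternative route as a reconstruction of the argument behind the citation: equivariance (correct as you state it, since constant loops fix $\underline{o}$), bijectivity via the algebraic Iwasawa factorization, and bicontinuity via the compact filtrations $\Omega(K)=\bigcup_n\Omega(K)_n$ and $\X=\bigcup_n\X_n$. Two caveats on the bijectivity step. First, the factorization you invoke is, in the algebraic category, a statement about the polynomial loop group $G[t^{\pm 1}]=\Omega(K)\cdot G[t]$; to phrase it for $G(\mathbb{C}((t)))$ as you do, you also need the standard fact that $G(\mathbb{C}((t)))=G[t^{\pm 1}]\cdot G(\mathbb{C}[[t]])$ (equivalently, the identification $\X\simeq G[t^{\pm1}]/G[t]$ recorded in Section 2 of the paper). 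Second, your justification of $\Omega(K)\cap G(\mathbb{C}[[t]])=\{e\}$ via the maximum modulus principle is incomplete as stated: boundedness of the matrix coefficients on the disk does not give constancy. The standard fix is a reflection argument: if $\gamma\in\Omega(K)\cap G[t]$ and $\rho$ maps $K$ into the unitary group, then on $|t|=1$ one has $\rho(\gamma(t))^{-1}=\rho(\gamma(t))^{*}$, whose entries are polynomials in $t^{-1}$ with conjugated coefficients; since $\rho(\gamma(t))^{-1}=\rho(\gamma(t)^{-1})$ also has polynomial entries in $t$, these entries are simultaneously polynomial in $t$ and in $t^{-1}$, hence constant, and the base-point condition forces $\gamma=e$.

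On bicontinuity, your reduction is sound: each $\Omega(K)_n$ is compact (closedness plus boundedness of the Fourier coefficients, since $\rho(K)$ is compact), each $\X_n$ is a projective variety hence compact Hausdorff, and once one knows $\beta(\Omega(K)_n)\subset\X_{m(n)}$ and $\beta^{-1}(\X_n)\subset\Omega(K)_{m'(n)}$, the restriction of $\beta$ to each filtration piece is a continuous bijection onto a compact Hausdorff image, and continuity of $\beta^{-1}$ follows level by level from the limit topologies. But as you yourself note, the intertwining of the Schubert filtration with the degree filtration (via the Cartan decomposition indexed by dominant coweights) is exactly where the content lies, and it is left as a plan rather than carried out; this is also the content packaged in [PS, Theorem 8.6.3]. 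Since the paper itself settles the lemma by that citation, resolving this step by citing Pressley--Segal, as you propose, is consistent with the paper; just be aware that without either that citation or an actual verification of the filtration compatibility (and with the maximum-modulus step repaired as above), the proposal is an outline rather than a complete proof.
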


Consider the $K$-equivariant multiplication map (which is continuous) 
$$
\hat{m}:~\Omega (K) \times \Omega (K) \rightarrow \Omega (K), \quad \left( \gamma_1,  \gamma_2\right) \mapsto \gamma_1\cdot \gamma_2.
$$
By virtue of the above $K$-equivariant homeomorphism $\beta$, we get a $K$-equivariant {\it continuous} (but {\it not} regular) map 
$
m:~\X\times .\X \rightarrow \X.
$
Thus, we get a pull-back map to the completed tensor product (cf. Definition \ref{2.6}):
$$
m^*:~K^{\top}_{T_0}\left(\X\right) = K^{\top}_T \left(\X\right) \rightarrow K^{\top}_T \left(\X\times \X\right) = K_T^{\top} \left(\X\right) {\hat{\otimes}}_{R(T)} K_T^{\top} \left(\X\right).
$$
The induced map $m^*$
can be written as follows (for any $w\in\W'$):
$$
m^*\left(\left[\bar{\xi}^w\right]\right) = \sum_{u,v\in \W'} a^w _{u,v}\left[\bar{\xi}^u\right] \otimes\left [\bar{\xi}^v\right],\mbox{ for unique } a^w_{u,v}\in R(T).
$$
The following is our main conjecture (cf. Conjecture \ref{2.9}). 

\begin{conjecture}\label{1.2}
We conjecture that for any $u,v,w\in\W'$, 
$$
(-1)^{\ell(u)+\ell(v)-\ell(w)} a^w_{u,v}\in \mathbb{Z}_+\left[\left(e^{\alpha_1}-1\right), \ldots , \left(e^{\alpha_l}-1\right)\right],
$$
 i.e., $(-1)^{\ell(u)+\ell(v)-\ell(w)} a^w_{u,v}$ is a polynomial in the variables $e^{\alpha_1}-1,\ldots, e^{\alpha_l}-1$ with non-negative integral coefficients, where $\left\{\alpha_1,\ldots,\alpha_l\right\}$ are the simple roots of $G$. 
\end{conjecture}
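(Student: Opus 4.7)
The plan is to turn Conjecture \ref{1.2}, which as stated lives in the topological (non-algebraic) category because the loop multiplication $m$ is only continuous, into an equivalent purely algebraic positivity statement, and then to attack that statement by Schubert-calculus techniques. The first step is to dualize. The basis $\{[\bar\xi^w]\}$ of $K_T^\top(\X)$ is set up to be dual to the structure-sheaf basis $\{[\mathcal{O}_{X_w}]\}$ of $K_0^T(\X)$, so transposing $m^*$ identifies the $a^w_{u,v}$ with the structure constants of a Pontryagin product on $K$-homology, $[\mathcal{O}_{X_u}]\star[\mathcal{O}_{X_v}]=\sum_w a^w_{u,v}\,[\mathcal{O}_{X_w}]$. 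By results of S.\ Kato, this Pontryagin product agrees with an algebraic convolution product on $K_0^T(\X)$, which in turn is identified with multiplication in equivariant quantum $K$-theory $QK_T(G/B)$ (the translation parts of $w\in\W'$ becoming Novikov variables). Through this chain, Conjecture \ref{1.2} becomes an affine/quantum analogue of the Anderson--Griffeth--Miller positivity for $K_T(G/B)$, with the stated sign $(-1)^{\ell(u)+\ell(v)-\ell(w)}$ and with variables $e^{\alpha_i}-1$ in place of $1-e^{-\alpha_i}$ (the sign conversion reflecting that the $\bar\xi^w$ side is naturally indexed by finite codimension rather than finite dimension).

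With the conjecture translated into a statement about $QK_T(G/B)$, the natural plan is to imitate the classical AGM strategy in the affine/quantum setting. In $K_T(G/B)$ one resolves intersections of Schubert varieties by Bott--Samuelson varieties, pushes forward, and removes boundary divisors via a Koszul resolution whose alternating sum produces the codimension sign together with a polynomial in $1-e^{-\alpha_i}$ with non-negative integer coefficients. Here one would replace Bott--Samuelson varieties by their affine analogues for the affine flag variety $G/\B$, build a convolution diagram realizing $[\mathcal{O}_{X_u}]\star[\mathcal{O}_{X_v}]$ as a push-forward from a twisted product of such resolutions, stratify its image by affine Schubert cells, and run the AGM argument stratum by stratum. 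The quasi-map compactifications used by Kato (building on Braverman--Finkelberg and Givental--Lee) furnish the algebraic arena in which this could in principle be carried out, provided one tracks the interaction with the Novikov degree carefully.

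The principal obstacle is sign control across infinitely many Novikov/translation degrees. In the classical case a single Koszul resolution delivers the codimension sign in one shot, but in the affine/quantum setting each $a^w_{u,v}$ aggregates contributions from strata indexed by elements of the coroot lattice, and one must prove that no cancellation between strata destroys the uniform sign $(-1)^{\ell(u)+\ell(v)-\ell(w)}$. This requires both a uniform Cohen--Macaulay/rational-singularities statement for the relevant quasi-map moduli and a combinatorial identification of the cumulative sign, plausibly via Kashiwara--Shimozono-type affine combinatorics and the affine Grothendieck polynomials of Lenart--Maeno. Both ingredients appear substantially more delicate than their classical counterparts --- even the purely cohomological Graham/Mihalcea positivity in $QH^*_T(G/B)$ took significant geometric work --- and controlling the full polynomial in $e^{\alpha_i}-1$ (rather than a single integer) strictly strengthens the difficulty, which is precisely why the statement remains conjectural.
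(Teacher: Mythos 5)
The statement you are asked to prove is Conjecture \ref{1.2}, and the paper itself contains no proof of it: the paper only \emph{formulates} the conjecture and establishes a chain of equivalent reformulations --- the identification $a^w_{u,v}=b^w_{u,v}$ with the Pontryagin structure constants (Lemma \ref{3.2}, via the nondegenerate pairing of Theorem \ref{2.4}), Kato's identification of the Pontryagin product with the modified convolution product $\odot$ (Theorem \ref{4.9}), and the translation to the quantum $K$-theory structure constants $d^{z,\eta}_{x,y}$ of $QK_T(G/B)$ (Corollary \ref{7.3} and Proposition \ref{7.7}). Your proposal's first half essentially retraces exactly this chain of reductions, and that part is sound and consistent with the paper. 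But a reduction to an equivalent open statement is not a proof, and your second half --- the proposed affine/quantum analogue of the Anderson--Griffeth--Miller argument via affine Bott--Samelson resolutions and quasi-map compactifications --- is only a strategy sketch. You yourself identify the two missing ingredients (a uniform Cohen--Macaulay/rational-singularities statement for the relevant moduli and control of the sign across all coroot-lattice/Novikov strata, with no cancellation between strata), and neither is supplied. So there is a genuine gap: the entire positivity argument is absent, which is unavoidable here since the statement is an open conjecture; the honest conclusion of your proposal should be that it establishes equivalences already in the paper (Conjectures \ref{1.3}, \ref{1.6}, \ref{1.10}) and outlines a possible line of attack, not that it proves Conjecture \ref{1.2}.
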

The pairing 
$$
\langle ~,~\rangle: ~ K^0_T\left(\bar{\X}\right) \otimes_{R(T)}  K_0^T\left({\X}\right) \rightarrow R(T)
$$
as in Definition \ref{2.3} is non-singular by Theorem \ref{2.4}, where $K^0_T\left(\bar{\X}\right)$ is defined above Definition \ref{2.3}. Further, 
$ K^0_T\left(\bar{\X}\right)$ is canonically isomorphic with $ 
 K^{\top}_T \left(\X\right)$ (cf. Lemma \ref{2.7}).
Thus, the pairing  induces the identification: 
$$
\psi :  K^{\top}_T\left({\X}\right) \simeq \Hom_{R(T)} \left(K_0^T\left({\X}\right) ,R(T)\right),
$$
and a similar identification $\tilde{\psi}$ for $\X\times \X$. 
Using these identifications $\psi$ and $\tilde{\psi}$, we can rewrite the map ${m}^*$ as 
$$
\hat{m}^*:\Hom_{R(T)} \left(K_0^T\left({\X}\right) ,R(T)\right) \rightarrow \Hom_{R(T)} \left(K_0^{T}\left(\X\times \X\right) ,R(T)\right)
$$
giving rise to the product 
$$
{\p}:~K_0^{T}\left(\X\times \X\right) \simeq K^T_0\left({\X}\right) \otimes_{R(T)} K_0^T\left({\X}\right) \rightarrow K_0^T\left({\X}\right).
$$
Thus, $\p$ makes $K_0^T\left({\X}\right)$ into an $R(T)$-algebra (cf. Definition \ref{3.1} for more details). Its product is called the {\it Pontryagin product}. Let us write,  under the Pontryagin product, 
for $u,v \in \W'$,
\begin{equation*}
\left[\mathcal{O}_{X_u}\right]* \left[\mathcal{O}_{X_v}\right]= \sum_{ w\in \W'}b_{u,v}^w\left [\mathcal{O}_{X_w}\right]. 
\end{equation*}
By  Lemma \ref{3.2}, using
Theorem \ref{2.4}, we get,  for any $u,v,w\in\W'$, 
$$
a_{u,v}^w = b_{u,v}^w.
$$
Thus, the above Conjecture \ref{1.2}  translates to the following equivalent conjecture on the Pontryagin product in $K_0^T\left({\X}\right)$ (cf. Conjecture \ref{3.3}). 
\begin{conjecture} \label{1.3} Under the Pontryagin product as above, its structure constants $b_{u,v}^w$ satisfy 
$$
(-1)^{ \ell(u)+\ell(v)-\ell(w)}
\,\,b_{u,v}^w \in \mathbbm{Z}_+ \left[\left(e^{\alpha_1}-1\right), \ldots , \left(e^{\alpha_l}-1\right)\right].
$$
\end{conjecture}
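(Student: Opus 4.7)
The plan is to reduce Conjecture \ref{1.3} to a positivity statement where geometric arguments of Brion and Anderson--Griffeth--Miller type become applicable. Since $a^w_{u,v} = b^w_{u,v}$ by Lemma \ref{3.2}, positivity of the Pontryagin structure constants is equivalent to the comultiplicative version, so one is free to work on whichever side is more tractable. The first move would be to exploit S.~Kato's identification of the Pontryagin product on $K_0^T(\X)$ with a convolution product, which replaces the merely continuous multiplication $m$ by honest algebraic operations on the semi-infinite flag variety. Via Kato's further isomorphism to $T$-equivariant quantum $K$-theory $QK_T(G/B)$, the coefficients $b^w_{u,v}$ become sums of $K$-theoretic equivariant Gromov--Witten-type invariants on $G/B$, graded by $d\in Q^\vee$, and the conjecture becomes the equivalent positivity statement for $QK_T(G/B)$ referred to in the abstract.

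Next I would try to realize each $b^w_{u,v}$ as a $T$-equivariant sheaf Euler characteristic
\[
b^w_{u,v} = \chi_T\bigl(Z_{u,v,w},\, \mathcal{E}_{u,v,w}\bigr)
\]
on an appropriate Schubert-type intersection $Z_{u,v,w}$ on the semi-infinite flag variety (or on a moduli space of stable maps), where $\mathcal{E}_{u,v,w}$ is assembled from the ideal sheaves $\xi^{\bullet}$ in such a way that the Serre-duality shift by $\ell(u)+\ell(v)-\ell(w)$ explains the alternating sign. The target polynomial ring $\mathbb{Z}_+[e^{\alpha_1}-1,\ldots,e^{\alpha_l}-1]$ is precisely the one dual to the Anderson--Griffeth--Miller positivity pattern for $K_T(G/B)$, so the proof strategy would mirror theirs: construct a Bott--Samelson-type resolution of $Z_{u,v,w}$, apply Kawamata--Viehweg or Grauert--Riemenschneider vanishing to kill higher direct images, and evaluate the equivariant Euler characteristic via Demazure-type operators, arranging each $T$-fixed-point contribution to be manifestly a monomial in $\pm(e^{\alpha_i}-1)$ with the correct sign. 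A final alternation over the resolution would then produce the asserted polynomial positivity.

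The chief obstacle I anticipate lies in the semi-infinite geometry. The convolution intersection $Z_{u,v,w}$ is naturally an ind-subscheme of the Kato--Naito--Sagaki semi-infinite flag variety, where the analogues of rational singularities, Frobenius splitting, and cohomology-vanishing for Schubert ideal sheaves are available but still fragile in precisely the ranges needed here. A second, genuinely new difficulty is that the equivalent conjecture in $QK_T(G/B)$ involves honest quantum corrections at nonzero $d\in Q^\vee$, and no positivity result for $T$-equivariant $K$-theoretic quantum Gromov--Witten invariants at this level of generality is currently known; hence the argument cannot invoke an established $QK_T$-positivity theorem but must produce one. The most plausible route is therefore to argue directly on the semi-infinite side, proving a positive convolution formula for the ideal sheaves $\xi^w$ via a geometric model of the convolution diagram together with a suitable Frobenius splitting, and then transporting the positivity back to the Pontryagin product through Kato's isomorphism.
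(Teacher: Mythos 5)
There is a genuine gap: you have not proved the statement, and indeed nothing in the paper does either --- Conjecture \ref{1.3} (= Conjecture \ref{3.3}) is stated as an open conjecture. The reductions that occupy the first part of your proposal are exactly the equivalences the paper itself establishes: $a^w_{u,v}=b^w_{u,v}$ is Lemma \ref{3.2}, the identification of the Pontryagin product with Kato's (modified) convolution product is Theorem \ref{4.9}, and the translation into positivity of the quantum product structure constants of $QK_T(G/B)$ is Corollary \ref{7.3} together with Proposition \ref{7.7}. These moves only shuttle the conjecture between equivalent formulations (Conjectures \ref{2.9}, \ref{3.3}, \ref{4.10}, \ref{7.6}); in particular, ``transporting the positivity back through Kato's isomorphism'' is circular, because the $QK_T$ statement is equivalent to, not weaker than, the one you are trying to prove --- and you yourself concede that no positivity theorem for $T$-equivariant quantum $K$-theory at this generality is known.

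The remaining, and only substantive, part of your plan is the assertion that each $b^w_{u,v}$ can be realized as $\chi_T(Z_{u,v,w},\mathcal{E}_{u,v,w})$ for some Schubert-type intersection with a sheaf built from the $\xi^\bullet$, so that an Anderson--Griffeth--Miller/Brion-style vanishing argument applies. No such $Z_{u,v,w}$ or $\mathcal{E}_{u,v,w}$ is constructed, and there are concrete obstructions to constructing them along the lines you suggest: the multiplication map $m:\X\times\X\rightarrow\X$ is \emph{not} an algebraic morphism (Remark \ref{2.8}), so one cannot set up the usual correspondence/intersection diagram on $\X\times\X\times\X$ that underlies those geometric positivity proofs; the known vanishing-theorem arguments (and their Kac--Moody extensions used for the cup product in [Ku-2], [BK]) concern the ordinary product on a single flag variety, whereas here the structure constants are governed by the Demazure product and genuine quantum corrections (see Theorem \ref{5.9} and Corollary \ref{7.3}, where nonzero $q^\beta$-degrees appear), for which no analogue of Kawamata--Viehweg or Frobenius-splitting positivity is currently available. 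So the proposal is a research program whose central step is precisely the open problem, not a proof of Conjecture \ref{1.3}.
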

Consider the diagram
$$
\begin{array}{lll}
\tilde{\X}:=& \G \times^{\B} \X & \xrightarrow{\mu} \X,\\
&\quad \downarrow \pi&\\
& \,\,\, \Y
\end{array}
$$
where $\Y:=\G/\B$, $\mu([g,x]):=g\cdot x$ and $\pi ([g,x]):= g\B$ for $g\in \G$ and $x\in \X$. 
Take $\B$-equivariant coherent sheaves $\mathcal{S}_1$ on $\Y$ and $\mathcal{S}_2$ on $\X$ supported in $p^{-1}(\X_n)$ and $\X_n$  respectively (for some $n>0$), where $p:\Y \rightarrow \X$ is the projection. Their {\it convolution product} is defined by 
$$
\mathcal{S}_1 \odot' \mathcal{S}_2 := \mu_! \left( \left( \pi^* \mathcal{S}_1\right) \otimes^L \left( \epsilon \boxtimes^{\B}\mathcal{S}_2\right)\right) \in K_0^\B \left(\X\right),
$$
where $\epsilon \boxtimes^{\B}\mathcal{S}_2$ denotes the sheaf on $\G\times^{\B} \X$ the pull-back of which to $\G \times \X$ is the product sheaf $\epsilon \boxtimes\mathcal{S}_2$ 
($\epsilon$ being the rank-1 trivial bundle over $\G$),
 $\otimes^L$ is the derived tensor product $\sum(-1)^i \Tor_i^{\mathcal{O}_{\tilde{\X}}}$ and $\mu_! :=\sum_i (-1)^i R^i \mu_*$.
Since $\mu_!$ and $\otimes^L$ both descend to corresponding $K$-groups, we get a well defined map
$$
\odot ': ~ K_0^\B(\Y) \otimes _\mathbbm{Z} K_0^\B \left(\X\right) \rightarrow \K_0^\B \left(\X\right). 
$$
Observe that $\odot '$ is $R(\B)$-linear in the first variable but, in general, {\it not} $R(\B)$-linear in the second variable but it is $R(\P)$-linear (cf. Corollary \ref{4.5}). 
Then, we have the following result (cf. Proposition \ref{4.3}).

\begin{proposition} \label{1.4}
For $u\in \W$ and $v\in \W'$,
$$
\left[\mathcal{O}_{X^\B_u}\right] \odot'\left [\mathcal{O}_{X_v}\right] =\left [\mathcal{O}_{X_{\overline{u*v}}}\right] \in K_0^B\left(\X\right) ,
$$
where ${X^\B_u}:= \overline{\B u\B/\B} \subset \Y$ and $*$ is the Demazure product in $\W$ (cf. Definition \ref{4.2}).

Observe that $u*v$ may not lie in $\W'$. We take its unique representative $\overline{u*v}$ in $\W'$. 
\end{proposition}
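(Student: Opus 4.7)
The plan is to proceed by induction on $\ell(u)$, reducing the general case to a single simple reflection via a Bott--Samelson resolution of $X^\B_u$. At the base step the convolution can be computed directly on either a birational model of $X_{sv}$ or a $\mathbb{P}^1$-bundle over $X_v$, depending on whether multiplication by the simple reflection increases or decreases the length.

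For the base case $u=s=s_i$, one has $X^\B_s=\P_s/\B\cong\mathbb{P}^1$ and $\pi^{-1}(X^\B_s)=\P_s\times^\B\X\subset\G\times^\B\X$. The supports $\P_s\times^\B\X$ and $\G\times^\B X_v$ of the two sheaves being tensored meet cleanly in $\G\times^\B\X$, since in the local model on $\G\times\X$ they are products along complementary directions; hence no higher $\Tor$'s appear and $\pi^*\mathcal{O}_{X^\B_s}\otimes^L(\epsilon\boxtimes^\B\mathcal{O}_{X_v})=\mathcal{O}_{\P_s\times^\B X_v}$. Thus the convolution reduces to $\mu_!\mathcal{O}_{\P_s\times^\B X_v}$, and the map $\mu:\P_s\times^\B X_v\to\X$ splits into two subcases. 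If $\ell(sv)>\ell(v)$, then $\mu$ is proper birational onto $X_{sv}$ (on the dense open $\B s\B\times^\B\B v\underline{o}$ it is an isomorphism onto $\B sv\underline{o}$), and rationality of singularities of the affine Schubert variety $X_{sv}$ gives $\mu_!\mathcal{O}=\mathcal{O}_{X_{sv}}$. If $\ell(sv)<\ell(v)$, then $X_v$ is $\P_s$-stable and $\mu$ is a Zariski-locally trivial $\mathbb{P}^1$-bundle onto $X_v$, so $\mu_!\mathcal{O}=\mathcal{O}_{X_v}$ by $H^{\geq 1}(\mathbb{P}^1,\mathcal{O})=0$. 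In either subcase the answer is $[\mathcal{O}_{X_{\overline{s*v}}}]$, confirming the proposition for $u=s$.

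For the inductive step with $\ell(u)=k\geq 2$, I fix a reduced expression $u=s_{i_1}\cdots s_{i_k}$ and let $Z_u:=\P_{i_1}\times^\B\cdots\times^\B\P_{i_k}/\B$ be the Bott--Samelson variety with its proper birational $\B$-equivariant map $\phi:Z_u\to X^\B_u$, which satisfies $\phi_*\mathcal{O}_{Z_u}=\mathcal{O}_{X^\B_u}$ and $R^{>0}\phi_*\mathcal{O}_{Z_u}=0$. Introduce the thickened space $\tilde Z_u:=\P_{i_1}\times^\B\cdots\times^\B\P_{i_k}\times^\B\X$ together with $\rho:\tilde Z_u\to\pi^{-1}(X^\B_u)\subset\G\times^\B\X$ (multiplying the first $k$ entries and leaving $\X$ alone) and $\nu:=\mu\circ\rho:\tilde Z_u\to\X$. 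Since $\rho$ is proper and birational with the same rationality as $\phi$, the projection formula yields
$$
[\mathcal{O}_{X^\B_u}]\odot'[\mathcal{O}_{X_v}]=\nu_!\bigl(\mathcal{O}_{\P_{i_1}\times^\B\cdots\times^\B\P_{i_k}\times^\B X_v}\bigr).
$$
The right-hand side is now computed using the tower of $\mathbb{P}^1$-fibrations $\tilde Z_u\to\P_{i_2}\times^\B\cdots\times^\B\P_{i_k}\times^\B\X\to\cdots\to\X$, each stage being precisely an instance of the base case. Iterating the base case and using associativity of the Demazure product together with $s_{i_1}*\cdots*s_{i_k}=u$ for a reduced expression, the output equals $[\mathcal{O}_{X_{\overline{u*v}}}]$, as desired.

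The main technical obstacle lies in controlling the derived tensor and derived pushforward: the tensor products must be shown to be underived (clean intersection in $\G\times^\B\X$, reducing to the local product structure) and the higher direct images under $\mu$ must vanish. The latter is the more delicate point; it is handled by invoking rationality of singularities of affine Schubert varieties (Mathieu, Kumar), which ensures that any birational rational resolution pushes the structure sheaf down to that of its image with vanishing higher direct images, while the Zariski-locally trivial $\mathbb{P}^1$-bundle case is immediate from $H^{\geq 1}(\mathbb{P}^1,\mathcal{O})=0$.
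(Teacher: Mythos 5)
Your overall route is close to the paper's: both start from the Tor-vanishing observation to replace the derived tensor product by $\mathcal{O}_{\tilde{p}^{-1}(X_u^\B)\times^\B X_v}$, and both then compute $\mu_!$ via Bott--Samelson--type spaces. The paper contracts the full BSDH resolution of both factors at once, using [Ku-1, Theorem 8.1.13], normality of $X_{\overline{u*v}}$, the identity $\Image(\beta_{\underline{s}})=X_{\overline{s_{j_1}*\cdots*s_{j_m}}}$ and a Grothendieck spectral sequence, so no case analysis is needed; you instead induct one $\mathbb{P}^1$ at a time, which is viable in principle, but your base case contains a genuine error.

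The flaw is in the dichotomy of the base case $u=s=s_i$: you split according to $\ell(sv)$ versus $\ell(v)$ computed in $\W$, but the correct comparison in the affine Grassmannian is modulo $W$, i.e.\ whether $\overline{s*v}=v$ or $\overline{s*v}>v$ in $\W'$. If $sv>v$ but $sv\in vW$ (already for $s=s_i$ a finite simple reflection and $v=e$, and more generally at any stage of your tower where the new reflection is absorbed into the coset), then $\B s\B\, v\P=\B sv\P=\B v\P$, so the image of $\mu:\P_i\times^\B X_v\to\X$ is $X_v$, the map is a $\mathbb{P}^1$-fibration and \emph{not} birational; your claim that the open cell maps isomorphically onto ``$\B sv\underline{o}$'' fails (that cell equals $\B v\underline{o}$ and the fibers are one-dimensional), and ``$X_{sv}$'' is not a Schubert variety of $\X$. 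The conclusion $[\mathcal{O}_{X_{\overline{s*v}}}]=[\mathcal{O}_{X_v}]$ is still true in this case, but only via the fibration argument, so the fix is to re-organize the cases as: either $X_v$ is $\P_i$-stable (covering both $sv<v$ and $sv=vt>v$ with $t\in W$), or $sv>v$ with $sv\in\W'$. Two smaller points: in the birational case, rational singularities of the target alone does not give $R\mu_*\mathcal{O}=\mathcal{O}$ since $\P_i\times^\B X_v$ is not smooth --- you need rational singularities of the source as well (or compose with a BSDH resolution of $X_v$, as the paper does); and the intermediate spaces of your tower should be $\P_{i_1}\times^\B\cdots\times^\B\P_{i_j}\times^\B\X$ (contracting the last factor into $\X$ at each step, or equivalently the factorization through $\P_{i_1}\times^\B\X$), since there is no natural map from $\tilde Z_u$ to $\P_{i_2}\times^\B\cdots\times^\B\P_{i_k}\times^\B\X$ compatible with $\nu$.
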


Let $\left\{\omega_i\right\}_{1\leq i\leq l}$ be the fundamental weights of $G$. 
Recall that there is an isomorphism  $R(B)\xrightarrow{\sim} K^0_G(X)= K_0^G (X)$ explicitly given by
$e^\lambda \mapsto \left[\L\left(-\lambda\right)\right]$,  for a character $e^\lambda$ of $T$ ,
 where $\L\left(-\lambda\right)$ is the homogeneous line bundle over $X=G/B$ associated to the principal $B$-bundle $G\rightarrow X$ via the character $e^\lambda$ (cf. Definition \ref{4.6}). 
 As proved by Steinberg, $R(T)$ is a free $R(G)= R(T)^W$-module (under multiplication) with a basis
$
\left\{ e^{\delta_x} := x^{-1} \Pi_{\alpha_i: x^{-1} \alpha_i <0} \,\,e^{\omega_i}\right\}_{x\in W}.
$
Thus, $\left\{\L\left(-\delta_x\right)\right\}_{x\in W}$ is a basis of $K_0^G (X)$
as a $K_0^G (*)\simeq R(T) ^W$-module. 
Consider the pairing 
$$
\langle~,~\rangle :~ K_G^0 \left(X\right) \otimes_{K_G^0 \left(*\right) } K_G^0 \left(X\right)  \rightarrow K_G^0 \left(*\right) \simeq R(T)^W, 
\langle V_1,~V_2\rangle= \chi_G\left(V_1\otimes V_2\right),
$$
where $\chi_G$ denotes the $G$-equivariant Euler-Poincar\'e characteristic. 
Then, it is non-singular (cf. Derfinition \ref{4.6}).
Let $\left\{\L_x:=\L\left(-\delta_x\right)\right\}_{x\in W}$ be the Steinberg basis of $K_0^\P\left(\P/\B\right)\simeq K_\P^0\left(\P/\B\right)$ (since $\P/\B \simeq X$ is smooth) over $K_0^\P (*)$ and 
let $\left\{\L^x\right\}_{x\in W}$ be the dual basis of $K_0^\P\left(\P/\B\right)$ under the above pairing.

Let $\overline{\mu}:\P\times^\B \X\rightarrow \X$ be the product map $[p,x]\mapsto p\cdot x$, for $p\in\P$ and $x\in \X$. 
As mentioned earlier, $\odot'$ is \textit{not} $R(\B)$-linear in the second variable. To remedy this,  following S. Kato,
define the \textit{modified convolution product}: 
$
\odot :K_0^\B \left( \Y\right) \bigotimes _{K_0^\B(*)} K_0^\B \left( \X\right) \rightarrow K_0^\B \left ( \X\right)
$
by 
$$
a\odot b := \sum_{x\in W} \left(\epsilon \left(\L^x\right) \cdot a\right) \, \odot' \overline{\mu}_! \left(\L_x \overset{\B}{\boxtimes} b\right), \mbox{ for  } a\in K_0^\B\left(\Y\right) \mbox{ and } b \in K_0^\B\left(\X\right),
$$
where $\epsilon:K_0^\P \left(\P/\B\right)\overset{\sim}{\rightarrow}K_0^\B (*)$ is the isomorphism.
The following result is due to S. Kato (cf. Theorem \ref{4.9} and Corollary \ref{4.11}). 
\begin{theorem}\label{1.5}
 The two products $*$ and $\odot$ in $K_0^T(\X)$ coincide.  Moreover, the product $\odot$ in $K_0^T(\X)$ is associative and commutative. 
For any $u,v\in \W'$, write 
$$
\left[\mathcal{O}_{X_u}\right] \odot \left[\mathcal{O}_{X_v}\right] = \sum_{w\in \W'} p_{u,v}^w \left[ \mathcal{O}_{X_w}\right].
$$
Thus,
$$
p_{u,v}^w = b^w_{u,v}, \mbox{ for any } u,v,w\in \W',
$$
where $b_{u,v}^w$ are the structure constants for the Pontryagin product in $K_0^T(\X)$ as above. 
\end{theorem}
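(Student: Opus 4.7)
The theorem has three parts: the equality $\odot = \p$, the associativity and commutativity of $\odot$, and the equality of structure constants $p_{u,v}^w = b_{u,v}^w$. Once $\odot$ is identified with the Pontryagin product $\p$, the remaining statements reduce cleanly: the equality of structure constants is immediate from comparing Schubert-basis expansions, while the associativity and commutativity of $\p$ are inherited from the multiplication $\hat m$ on $\Omega(K)$. As a preliminary I would verify that $\odot$ is genuinely $R(T)$-bilinear, so that it descends to $K_0^\B(\Y) \otimes_{K_0^\B(*)} K_0^\B(\X)$. The failure of $R(\B)$-linearity of $\odot'$ in the second variable is controlled by $R(\P)$ (Corollary \ref{4.5}); the Steinberg decomposition of $R(\B)$ as a free $R(\P)$-module, combined with the duality $\langle \L_x, \L^y \rangle = \delta_{x,y}$, allows a direct check that the correction sum in the definition of $\odot$ exactly cancels this defect.

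The heart of the argument is the identification $\odot = \p$. Since $\p$ is by construction the transpose of the comultiplication $m^*$ under the non-singular pairing of Theorem \ref{2.4}, it suffices to show that the transpose of $\odot$ coincides with $m^*$. Using the isomorphism $K_T^{\top}(\X) \simeq K_T^0(\bar{\X})$ of Lemma \ref{2.7}, the transpose of $\odot$ admits a purely algebraic description in terms of the convolution map $\overline{\mu}: \P \times^\B \X \to \X$ and the projection $\pi: \tilde \X \to \Y$, with the Steinberg correction serving to intertwine the $R(\B)$- and $R(\P)$-module structures. The central computation is to verify that this algebraic expression agrees with the $K$-theoretic pull-back along the continuous multiplication $m: \X \times \X \to \X$. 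Granted this, associativity of $\odot$ follows from the strict associativity of $\hat m$, and commutativity follows by an Eckmann--Hilton argument applied to the two compatible $H$-space structures on $\Omega(K)$, namely loop concatenation and the pointwise multiplication inherited from the group structure of $K$.

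The main obstacle is precisely the matching of the transpose of $\odot$ with $m^*$: the map $m$ is only continuous, not algebraic, so $m^*$ cannot be computed by a direct algebraic push-forward along a morphism of ind-schemes. The remedy, following Kato, is to replace the first factor of $\X \times \X$ by the algebraic approximation $\tilde \X = \G \times^\B \X$, on which a genuine algebraic convolution $\mu$ is defined, and to correct for the discrepancy $\pi^* \neq \Id$ using the Steinberg basis decomposition. Carrying this comparison out rigorously in the ind-scheme setting requires a careful reconciliation of the analytic and algebraic topologies on each $\X_n$ (via Lemma \ref{2.7}), together with a delicate $T$-equivariant $K$-theory computation that tracks the $W$-action through the Steinberg basis.
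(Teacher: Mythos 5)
Your overall architecture matches the paper's: reduce everything to the identification of $\odot$ with the Pontryagin product $\p$, read off $p_{u,v}^w=b_{u,v}^w$ by comparing Schubert-basis expansions, check $K_0^\B(*)$-bilinearity of $\odot$ via the Steinberg basis and the duality $\langle \L_x,\L^y\rangle=\delta_{x,y}$ (this is exactly the computation carried out in Definition \ref{4.8}), and deduce associativity of $\odot$ from associativity of the loop-group multiplication. However, there is a genuine gap at the heart of the matter: the identification $\odot=\p$ is never actually established. You correctly isolate it as ``the central computation'' --- comparing the transpose of the algebraic convolution (through $\tilde\X=\G\times^\B\X$, with the Steinberg correction) against $m^*$ for the merely continuous, non-algebraic map $m$ --- but then proceed with ``Granted this\dots'' and close by saying the comparison ``requires a delicate $T$-equivariant $K$-theory computation,'' which is precisely the content of the theorem and is not supplied. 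Note that the paper itself does not prove this step either: Theorem \ref{4.9} is explicitly attributed to Kato, with the proof indicated in [Ka-2, \S8] and [Ka-1, \S2.2]. A blind proof must either carry out that comparison or reduce it to a precisely quoted external result; as written, the core claim is asserted, not proved.

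A secondary issue concerns commutativity. You invoke Eckmann--Hilton for the two $H$-space structures on $\Omega(K)$, ``loop concatenation and pointwise multiplication,'' but concatenation does not preserve algebraic loops, so the interchange argument cannot be run on $\Omega(K)$ itself. The paper's route is to use the $T_0$-equivariant homotopy equivalence $\Omega(K)\hookrightarrow \Omega^{\cont}(K)$ of [PS, Proposition 8.6.6] and then the co-commutativity of the coproduct on $K_T^{\top}\bigl(\Omega^{\cont}(K)\bigr)$ (where the Eckmann--Hilton-type homotopy commutativity is available); your argument becomes correct once you insert this passage to the continuous loop space, so this part is a fixable technical slip rather than a wrong idea.
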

Thus, the above conjecture  can equivalently be reformulated in terms of the structure constants for the modified convolution product $\odot$ in $K_0^T(\X)$ (cf. Conjecture \ref{4.10}).  
\begin{conjecture}\label{1.6}
For any $u,v,w\in \W'$ ,
$$
(-1)^{\ell(u)+\ell(v)-\ell(w)}p_{u,v}^w \in \mathbb{Z}_+ \left[\left (e^{\alpha_{1}}-1\right),\ldots , \left (e^{\alpha_{l}}-1\right)\right].
$$
\end{conjecture}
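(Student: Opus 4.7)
The plan is to work with the modified convolution product $\odot$ from Theorem~\ref{1.5} rather than with the comultiplication directly, since $\odot$ has a purely geometric definition on the ind-variety $\tilde{\X}=\G\times^{\B}\X$ and, via Proposition~\ref{1.4}, its companion $\odot'$ reduces to the Demazure product on Schubert classes. The first step is to unwind
$$
[\mathcal{O}_{X_u}]\odot[\mathcal{O}_{X_v}] = \sum_{x\in W} \bigl(\epsilon(\L^x)\cdot [\mathcal{O}_{X_u}]\bigr)\,\odot'\,\overline{\mu}_!\bigl(\L_x\overset{\B}{\boxtimes}[\mathcal{O}_{X_v}]\bigr)
$$
via the projection formula and Proposition~\ref{1.4}, so as to express $p_{u,v}^w$ as a finite alternating sum, indexed by $W$, of $\B$-equivariant Euler characteristics of explicit coherent sheaves supported on a single Schubert subvariety $\X_n$.

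The second step is to compute each such Euler characteristic on $\X_n$ by a generic-translate argument in the spirit of Anderson--Griffeth--Miller and Brion. Using the non-singular pairing of Theorem~\ref{2.4}, one can recover $p_{u,v}^w$ as the pairing of $[\mathcal{O}_{X_u}]\odot[\mathcal{O}_{X_v}]$ against the boundary ideal sheaf class $[\bar{\xi}^w]$; this pairing should be realizable as an equivariant Euler characteristic on a generic $\U^-$-translate of the relevant cycles, to which the known AGM-type positivity for finite flag varieties would apply at the level of each summand.

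The principal obstacle I expect is the sign-alternating sum over $W$ intrinsic to the definition of $\odot$: neither the individual summands nor the intermediate classes $\overline{\mu}_!(\L_x\overset{\B}{\boxtimes}[\mathcal{O}_{X_v}])$ will themselves be Schubert-positive with the predicted sign, so the positivity of $p_{u,v}^w$ must emerge through systematic cancellation. One way to organize the cancellation is to replace the Steinberg basis $\{\L_x\}$ by a basis of $K_0^{\P}(\P/\B)$ better adapted to the $\B$-orbit filtration of $\tilde{\X}$, so that the sum collapses to a single geometric term whose positivity is manifest. A possibly more tractable alternative route is to translate the problem, via Kato's isomorphism $K_0^T(\X)\simeq QK_T(G/B)$, into equivariant quantum $K$-theory and argue inductively in $\ell(w)$ by means of quantum $K$-theoretic Chevalley and Pieri-type formulas, reducing the positivity to the already established positivity in the non-quantum equivariant $K$-theory of $G/B$.
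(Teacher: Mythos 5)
You have set out to prove a statement that the paper itself does not prove: Conjecture \ref{1.6} is one of the equivalent forms of the paper's main open conjecture (it is restated as Conjecture \ref{4.10}, and shown equivalent to Conjectures \ref{1.2}, \ref{1.3} and \ref{1.10} via Lemma \ref{3.2}, Theorem \ref{4.9} and Proposition \ref{7.7}). So there is no proof in the paper to compare against, and what you have written is a research plan rather than a proof. Its central difficulty is one you name yourself but do not resolve: after unwinding $[\mathcal{O}_{X_u}]\odot[\mathcal{O}_{X_v}]$ through the Steinberg basis $\{\L_x\}$ and Proposition \ref{1.4}, the coefficient $p_{u,v}^w$ appears as an alternating sum over $W$, and the classes $\overline{\mu}_!\left(\L_x \overset{\B}{\boxtimes}[\mathcal{O}_{X_v}]\right)$ are not Schubert classes (compare the expansion in Proposition \ref{5.2}, whose coefficients $f(v,w;\lambda)$ already carry both signs). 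A generic-translate (Anderson--Griffeth--Miller/Brion) argument controls the sign of a single geometric intersection number; it cannot be applied ``at the level of each summand'' here, because the summands genuinely fail the predicted sign pattern and the required cancellation is exactly the content of the conjecture. Saying that one should choose a basis ``so that the sum collapses to a single geometric term whose positivity is manifest'' is a restatement of the problem, not a step toward its solution; note also that $m:\X\times\X\to\X$ is not a morphism of ind-varieties (Remark \ref{2.8}), so there is no ambient regular map to which a Kleiman--Brion transversality argument could be applied directly.

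Your alternative route is circular. By Corollary \ref{7.3} and Proposition \ref{7.7}, positivity of the quantum structure constants $d^{z,\eta}_{x,y}$ of $QK_T(G/B)$ (Conjecture \ref{1.10}) is \emph{equivalent} to the statement you are trying to prove, not a weaker or more accessible target; transporting the problem through Kato's localization theorem therefore gains nothing by itself. Moreover, the known quantum $K$-theoretic Chevalley formulas (Lenart--Naito--Sagaki, Buch--Chaput--Mihalcea--Perrin) govern only multiplication by divisor-type or special classes, and no general Pieri rule or induction on $\ell(w)$ is available that would reduce arbitrary quantum products to classical equivariant $K$-theory positivity on $G/B$; the cases where positivity is actually known (minuscule and cominuscule varieties, two-step flags in type $A$, $\IG(2,2n)$, etc., as surveyed in Remark \ref{7.9}) all require substantially different and harder arguments. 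If you want to make genuine progress in the framework of this paper, the concrete tools it offers are the explicit formulas of Theorems \ref{1.7} and \ref{1.8} and the $\SL_2$ computation of Proposition \ref{6.3}, which let you test and perhaps establish the conjecture in low rank or for special classes of $u,v$, but they do not yet yield the general case.
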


For any $x\in W$, similar to the sheaf $\xi^w\left(w\in \W'\right)$, define the sheaf 
$$
\zeta^x =\mathcal{O} _{\mathring{X}^x}\left(-\partial \mathring{X} ^x\right),
$$
where $\mathring{X}^x := \overline{B^-xB/B} \subset X:= G/B$ ,
$
\partial \mathring{X}^x = \mathring{X}^x \backslash \left( B^-xB/B\right)$ and $B^-\supset T$ is the opposite Borel subgroup of $G$.
Consider its class $\left[ \zeta^x\right]\in K_0^T (X) = K^0_T(X)$.

Recall the $K_T^0(*)$-algebra isomorphism
$$
\varphi: ~ R(T) \underset{R(G)}{\otimes} R(T) \underset{\sim}{\rightarrow} K^0_T (X),~e^\lambda\otimes e^\mu \mapsto e^\lambda \cdot \L_X\left(-\mu\right). 
$$
The domain of $\varphi$ acquires the $K_T^0(*)=R(T)$-module structure via its multiplication on the first factor. 
The isomorphism $\varphi$ allows us to view $\zeta^x$ as an element $\bar{\zeta}^x\in R(T) \underset{R(G)}{\otimes} R(T) $. 
For any element $\alpha=\sum_j a_j \otimes b_j \in R(T) \underset{R(G)}{\otimes} R(T)$, 
we define 
$
\vert \alpha \vert = \sum_j a_j b_j \in R(T).
$
For any $0\leq i \leq l$, define a certain {\it left Demazure operator: }
$$
D_i' : R(T) \otimes_{R(G)} R(T) \rightarrow R(T)\otimes_{R(G)} R(T),
\,\,\,
D_i'\left( a\otimes b\right)=\left(D_i a\right)\otimes b, \,\text{for $a,b\in R(T)$}, 
$$
where, for any $0\leq i\leq l$, 
$
D_i:R(T) \to R(T) \,\,\text{takes $e^\lambda$ to  $\frac{e^\lambda-e^{s_i\lambda}}{1-e^{\alpha_i}}$}.
$ (Here $s_0:=s_\theta$ and $\alpha_0=-\theta$; $\theta$ being the highest root of $G$.)

The following is one of our main results (cf. Theorem \ref{5.9}). 
\begin{theorem}\label{1.7}
Take $u\in \W$, $v\in \W'$ and take a reduced decomposition $u=s_{i_1}\ldots s_{i_n}$\,  $\left(0\leq i_j\leq l\right)$. Then, under the modified convolution product
$$
\left[ \mathcal{O}_{X^\B_u}\right] \odot \left[ \mathcal{O}_{X_v}\right] = \sum_{x\in W} \sum_{1\leq j_1< \cdots <j_p\leq n} \left\vert D_{i_1}' \cdots \hat{\hat{D}}'_{i_{j_1}}\cdots \hat{\hat{D}}'_{i_{j_p}} \cdots D_{i_n}'\left(\bar{\zeta}^x \right)\right\vert [ \mathcal{O}_{X_{\overline{s_{i_{j_1}}*\cdots *s_{i_{j_p}}*x*v}}}],
$$
where $\hat{\hat{D}}'_j$ means to replace the operator $D_j'$ by the Weyl
group action on $R(T) \otimes_{R(G)} R(T)$ acting only on the first factor, $*$ is the Demazure product in $\W$  and for $w\in \W$, $\bar{w}$ denotes the corresponding minimal representative in $wW$. 
\end{theorem}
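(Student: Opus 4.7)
The plan is to prove the formula by induction on $n = \ell(u)$, generating the $2^n$ terms indexed by subsets $\{j_1<\cdots<j_p\}$ through iterated application of a single-reflection, two-branch recursion. Concretely, I would establish a ``$\mathbb{P}^1$-branching'' identity of the form
$$
[\mathcal{O}_{X^\B_{s_i u'}}] \odot [\mathcal{O}_{X_v}] = \Phi_i^{D'}\bigl([\mathcal{O}_{X^\B_{u'}}] \odot [\mathcal{O}_{X_v}]\bigr) + \Phi_i^{\hat{\hat{D}}'}\bigl([\mathcal{O}_{X^\B_{u'}}] \odot [\mathcal{O}_{X_v}]\bigr),
$$
valid whenever $\ell(s_i u') = \ell(u')+1$, where on each summand $|F(\bar{\zeta}^x)|\,[\mathcal{O}_{X_{\bar{w}}}]$ produced by the induction, the two branches act as $\Phi_i^{D'}: |F(\bar{\zeta}^x)|\,[\mathcal{O}_{X_{\bar{w}}}] \mapsto |D'_i F(\bar{\zeta}^x)|\,[\mathcal{O}_{X_{\bar{w}}}]$ and $\Phi_i^{\hat{\hat{D}}'}: |F(\bar{\zeta}^x)|\,[\mathcal{O}_{X_{\bar{w}}}] \mapsto |\hat{\hat{D}}'_i F(\bar{\zeta}^x)|\,[\mathcal{O}_{X_{\overline{s_i*w}}}]$. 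Iterating this recursion from $u=e$ produces exactly the sum in the theorem, the subset $\{j_1<\cdots<j_p\}$ recording the positions at which the ``$\hat{\hat{D}}'$-branch'' was taken; the compound Schubert index $\overline{s_{i_{j_1}}*\cdots*s_{i_{j_p}}*x*v}$ then follows from associativity of the Demazure product $*$.

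For the base case $u = e$: only the empty subset $p=0$ contributes on the right, yielding $\sum_{x\in W} |\bar{\zeta}^x|\,[\mathcal{O}_{X_{\overline{x*v}}}]$. Since $[\mathcal{O}_{X^\B_e}]$ is the class of a $\B$-fixed point, $\epsilon(\L^x)\cdot [\mathcal{O}_{X^\B_e}]$ is merely a scalar in $R(T)$, and Proposition \ref{1.4} collapses $\odot'$ to a Demazure product. I would then verify the identity $\sum_x \epsilon(\L^x)\,\overline{\mu}_!(\L_x \boxtimes^\B [\mathcal{O}_{X_v}]) = \sum_x |\bar{\zeta}^x|\,[\mathcal{O}_{X_{\overline{x*v}}}]$ via the Steinberg duality pairing: under the isomorphism $\varphi$, the dual pair $(\L_x,\L^x)$ matches, through evaluation at the $\B$-fixed point, with the data $(\zeta^x, \bar{\zeta}^x)$ coming from the opposite Schubert basis.

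For the inductive step, write $u = s_{i_1}u'$ with $u' = s_{i_2}\cdots s_{i_n}$ reduced, and realize $[\mathcal{O}_{X^\B_u}]$ via the Bott-Samelson tower as an iterated $\mathbb{P}^1$-bundle pushforward over $X^\B_{u'}$. Combining this with the projection formula along $\mu$ and with associativity of $\odot$ (Theorem \ref{1.5}) allows me to isolate the effect of the outermost $s_{i_1}$ as a single $\mathbb{P}^1$-pushforward. Along this $\mathbb{P}^1$, the pushforward of an $\mathcal{O}$-class contributes exactly two terms — geometrically corresponding to the $\B$-fixed point and its open complementary affine cell — and a direct computation identifies these with the $\Phi_{i_1}^{D'}$ and $\Phi_{i_1}^{\hat{\hat{D}}'}$ branches respectively, closing the induction.

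The main obstacle is precisely the single-reflection recursion. Its two-term shape must simultaneously reflect the Steinberg-basis decomposition used in the definition of $\odot$ (indexed by $x\in W$) and the $\mathbb{P}^1$-branching on the Bott-Samelson side. The delicate point is that the Steinberg basis is \emph{not} an eigenbasis for the Demazure operators, so one cannot simply distribute $D_{i_1}$ over the $x$-sum. Instead, the matching must take place after $\overline{\mu}_!$ has been applied, and amounts to showing that the $\mathbb{P}^1$-pushforward on the Bott-Samelson side corresponds, under the Steinberg duality, to the action of $D_{i_1}$ on the first $R(T)$-factor of $R(T)\otimes_{R(G)} R(T)$. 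This reconciliation, resting on Chevalley-Monk type identities in $K_T^0(G/B)$ together with the defining relations of the $D'_i$, forms the technical heart of the proof.
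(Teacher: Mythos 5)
Your outline captures the right geometric mechanism --- the two--term $\mathbb{P}^1$-branching obtained by pushing a character twist along a minimal parabolic (this is exactly Proposition 4.4 of the paper), and the Steinberg/ideal-sheaf duality bookkeeping that converts the sum over the dual basis $\L^y$ into the classes $\bar{\zeta}^x$ --- but the inductive step as you have set it up has a genuine gap. Your branch maps $\Phi_i^{D'}$ and $\Phi_i^{\hat{\hat{D}}'}$ act on the \emph{presentation} $\vert F(\bar{\zeta}^x)\vert\,[\mathcal{O}_{X_{\bar w}}]$, not on classes, while your induction hypothesis is only the identity of classes in $K_0^T(\X)$ for $u'$. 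That identity has already destroyed exactly the data the next step needs: since $\odot'$ is not $R(T)$-linear in the second variable, peeling off $s_{i_1}$ amounts to convolving $[\mathcal{O}_{X^\B_{s_{i_1}}}]$ against the inner result, and this sends $c\,[\mathcal{O}_{X_w}]$ to $(s_{i_1}c)\,[\mathcal{O}_{X_{\overline{s_{i_1}\ast w}}}]+D_{i_1}(c)\,[\mathcal{O}_{X_w}]$, so one must know, for each Steinberg index $y$ separately, the $R(T)$-coefficients of $[\mathcal{O}_{X^\B_{u'}}]\odot'\overline{\mu}_!\bigl(\L_y\overset{\B}{\boxtimes}\mathcal{O}_{X_v}\bigr)$ \emph{before} they are weighted by $\epsilon(\L^y)$, collapsed by $\vert\cdot\vert$ and summed. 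The class $[\mathcal{O}_{X^\B_{u'}}]\odot[\mathcal{O}_{X_v}]$ alone does not determine these, so the recursion cannot be run on the stated identity; the induction must be carried with the character data kept symbolic, i.e.\ on the stronger statement that $[\mathcal{O}_{X^\B_{u}}]\odot'\bigl(e^\lambda[\mathcal{O}_{X_w}]\bigr)$, equivalently $\mu^\B_!\bigl(\mathcal{O}_{X^\B_u}\overset{\B}{\boxtimes}e^\lambda\bigr)$, expands as a subword sum with mixed Demazure/Weyl coefficients. That strengthened statement is precisely what the paper proves as Proposition 5.2 (a geometric induction identical in spirit to your $\mathbb{P}^1$-peeling) together with Lemma 5.5 (the purely algebraic subword identity in $Q_\W$); the proof of Theorem 5.9 is then a short duality computation inserting $\bar{\zeta}^x$ and summing over the Steinberg basis, which is the part you describe correctly.

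Two of the tools you invoke for the ``technical heart'' are also not the right ones. The associativity of $\odot$ from Theorem 1.5 lives on $K_0^T(\X)$ and does not let you split $s_{i_1}$ off the first argument of $\odot$; what is needed is the elementary associativity of the underlying convolution $\odot'$ for length-additive products (a two-stage pushforward, as in the proof of Proposition 5.2), carried out before the Steinberg sum is introduced. And no Chevalley--Monk identity is required: the left Demazure operator on the first factor of $R(T)\otimes_{R(G)}R(T)$ arises from the relation $z_i\cdot(e^\lambda\delta_e)=e^{s_i\lambda}z_i+D_i(e^\lambda)\delta_e$ together with Proposition 4.4; the operators never touch the Steinberg factor, so your worry that the Steinberg basis is not an eigenbasis for the $D_i$ does not actually arise. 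Finally, in your base case you silently replace $[\mathcal{O}_{X^\B_e}]\odot'\bigl(c\,[\mathcal{O}_{X_w}]\bigr)$ by $c\,[\mathcal{O}_{X_w}]$; Proposition 4.3 covers only untwisted structure sheaves, so the unit property against twisted second arguments needs its own (short) justification.
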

Define an involution 
$$
t:~R(T)\otimes_{R(G)}R(T) \rightarrow R(T)\otimes_{R(G)}R(T) ,~a\otimes b \mapsto b\otimes a,\,\mbox{for } a,b\in R(T).
$$
Via the isomorphism $\varphi$ identify any element of $K_T^0(X)$ by an element of $R(T)\otimes_{R(G)}R(T)$. 
Thus, for any class $\eta\in K_T^0(X)$, we have the transposed class $\eta^t := t(\eta) \in K_T^0 (X)$. The same definition as that of $\varphi$ realizes $\eta^t\in  K_T^0\left(\X^{\B}\right)$ compatible with its restriction to $X\hookrightarrow \X^\B$. Viewed $\eta^t$ as an element of $K_T^0\left(\X^{\B}\right)$, we write it as $\eta^t_{\aff}$. 
For any $u\in \W, v\in \W'$ and $x\in W$, consider 
$ X_{(u,x,v)} := X_u' \times^{\B}\mathring{X}_x' \times{^\B} X_v $
 together with the standard product map $\mu_x:X_{(u,x,v)}\rightarrow\X$ and the standard projection $\pi_x: X_{(u,x,v)}\rightarrow X_{u}^\B$, where $X'_u$ is the inverse image of $X_u^\B$ in $\G$ under $\G\rightarrow \Y$ and 
$\mathring{X}_x \subset X\hookrightarrow \Y.$ Here, $\mathring{X}_x$ is the Schubert variety $\overline{BxB/B} \subset X$ and $\mathring{X}_x'$ is to be thought of as its inverse image in $\G$. 
We have the standard pull-back map
$
\mu_x^*:K_T^0\left(\X\right) \rightarrow K_T^0\left( X_{(u,x,v)}\right).
$
We give another expression for the modified convolution product $\odot$ in the following (cf. Theorem \ref{5.15}):
\begin{theorem}\label{1.8}
For $u\in\W$ and $v\in\W'$, 
$$
\left[\mathcal{O}_{X_u^\B}\right]\odot\left[\mathcal{O}_{X_v}\right] =\sum_{w\in \W'}\,\,\sum_{x\in W} \left\langle \left( \left[ \zeta^x\right]^t_{\aff} \right)_{|X_u^\B},\pi_{x!}\mu_x^*\xi^w\right\rangle\left[\mathcal{O}_{X_w}\right].
$$
\end{theorem}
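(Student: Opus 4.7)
The plan is to test the identity against the $R(T)$-basis $\{[\bar{\xi}^w]\}_{w\in\W'}$. By Theorem \ref{2.4} this basis is dual to $\{[\mathcal{O}_{X_w}]\}_{w\in\W'}$ under the non-singular pairing $\langle\cdot,\cdot\rangle$, so it suffices to verify, for every $w\in\W'$,
$$
\langle [\bar{\xi}^w],\,[\mathcal{O}_{X_u^\B}]\odot[\mathcal{O}_{X_v}]\rangle = \sum_{x\in W}\langle ([\zeta^x]^t_{\aff})_{|X_u^\B},\,\pi_{x!}\mu_x^*\xi^w\rangle.
$$

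Starting from the definition
$$
[\mathcal{O}_{X_u^\B}]\odot[\mathcal{O}_{X_v}]=\sum_{x\in W}\bigl(\epsilon(\L^x)\cdot[\mathcal{O}_{X_u^\B}]\bigr)\,\odot'\,\overline{\mu}_!\bigl(\L_x\boxtimes^\B[\mathcal{O}_{X_v}]\bigr),
$$
I would unfold each $\odot'$ as $\mu_!((\pi^*\cdot)\otimes^L(\epsilon\boxtimes^\B\cdot))$ and use base change for the Cartesian square relating the two-step convolution $\G\times^\B(\P\times^\B\X)$ to the triple convolution space $X_{(u,x,v)}=X'_u\times^\B\mathring{X}'_x\times^\B X_v$, together with the projection formula along $\P/\B\simeq X$. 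This realizes each summand on the right-hand side as a single push-forward along $\mu_x:X_{(u,x,v)}\to\X$, with the twist by $\epsilon(\L^x)$ becoming a class on $X_u^\B\subset\Y$ and the twist by $\L_x$ becoming a class supported on the middle factor $\mathring{X}_x\hookrightarrow X\hookrightarrow\Y$.

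Next I would pair with $[\bar{\xi}^w]$ and invoke the adjunction $\mu_x^{*}\dashv\mu_{x!}$ to convert the push-forward along $\mu_x$ into the pull-back $\mu_x^*\xi^w$, which is then pushed down along $\pi_x$ onto $X_u^\B$. The main obstacle, and the key identification to be made, is that the combined twists by $\L^x$ on $X_u^\B$ and $\L_x$ on $\mathring{X}_x$ assemble, summed over $x\in W$, into the single class $([\zeta^x]^t_{\aff})_{|X_u^\B}$; this will rest on the Steinberg-duality fact that under $\varphi:R(T)\otimes_{R(G)}R(T)\xrightarrow{\sim}K_T^0(X)$ the sum $\sum_x\L^x\otimes\L_x$ represents the class of the diagonal in $X\times X$, whose image under the swap involution $t$ is $\sum_x\bar{\zeta}^x$. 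Lifting this identity to $K_T^0(\X^\B)$ via the very construction of $\eta^t_{\aff}$ from $\eta^t$ will then complete the identification. The hardest step will be carrying out the base-change cleanly so that the $\L^x$-twist lands on the $X_u^\B$-factor and the $\L_x$-twist on the middle factor in the triple product, after which Steinberg duality supplies the rest.
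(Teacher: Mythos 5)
Your overall scaffolding (pairing against the dual basis $\{\xi^w\}$ via Theorem \ref{2.4}, passing to the triple space $X_{(u,x,v)}$, and using the adjunction $\langle f_!\uS,\F\rangle=\langle\uS,f^*\F\rangle$ to produce $\pi_{x!}\mu_x^*\xi^w$) matches the outer layers of the paper's argument, but the step you yourself single out as the key identification is where the proposal breaks. First, the claimed Steinberg fact is false: the diagonal class is symmetric under the swap $t$, and by Lemma \ref{5.18} one has $\sum_{x\in W}\bar{\zeta}^x=1\otimes 1$, which is the class of $\mathcal{O}_X$, not (the swap of) the diagonal; so ``diagonal $\mapsto\sum_x\bar\zeta^x$ under $t$'' cannot be the mechanism. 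Second, in the definition \eqref{4.8.3} of $\odot$ the index $x$ labels the Steinberg basis $\L_x,\L^x$, and $\L_x$ is a line bundle on all of $X=\P/\B$; it is \emph{not} supported on $\mathring{X}_x$, so after your base change the middle factor of the convolution space is the whole flag variety carrying $\L_x$, and nothing in your argument makes the Schubert varieties $\mathring{X}_x$, or the classes $[\zeta^x]^t$, appear. You have conflated the Steinberg index with the Weyl-group index of the middle Schubert factor in $X_{(u,x,v)}$.

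The missing idea is a change of basis, not a diagonal identity: one must expand each line bundle in the Schubert structure-sheaf basis, $[\L_y]=\sum_{x\in W}\langle\L_y,\zeta^x\rangle[\mathcal{O}_{\mathring{X}_x}]$ (the $X$-analogue of Theorem \ref{2.4}, i.e.\ $\langle\zeta^x,\mathcal{O}_{\mathring{X}_y}\rangle=\delta_{x,y}$ from [CK, Proposition 3.8]), so that $\overline{\mu}_!(\L_y\boxtimes^{\B}\mathcal{O}_{X_v})=\sum_x\langle\L_y,\zeta^x\rangle[\mathcal{O}_{X_{\overline{x*v}}}]$ by Proposition \ref{4.3}; this is what puts $\mathring{X}_x$ in the middle of $X_{(u,x,v)}$. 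Then, writing $[\zeta^x]=\sum_i a_i\otimes b_i$, one uses the $R(\P)$-linearity of $\odot'$ in the second slot (Corollary \ref{4.5}) together with the dual-basis identity $\sum_y\L^y\langle\L_y,\alpha(b_i)\rangle=\alpha(b_i)$ to collapse the $y$-sum, and finally trades the remaining $R(T)$-character twist $a_i$ in the second slot of $\odot'$ for the corresponding affine line-bundle twist on $X_u^\B$ in the first slot; it is precisely this slot-exchange that produces the transpose $t$ and the affinization, i.e.\ the class $\left([\zeta^x]^t_{\aff}\right)_{|X_u^\B}$. Once you have $\left[\mathcal{O}_{X_u^\B}\right]\odot\left[\mathcal{O}_{X_v}\right]=\sum_x\left(\left([\zeta^x]^t_{\aff}\right)_{|X_u^\B}\right)\odot'\left[\mathcal{O}_{X_{\overline{x*v}}}\right]$, your concluding steps (expansion via Theorem \ref{2.4} and the projection-formula adjunction, Lemma \ref{5.16}) do finish the proof as in the paper; without the change of basis and the slot-exchange, however, the central identification is unproved and, as stated, rests on a false identity.
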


Using Theorem \ref{1.7}, we give an explicit expression in Section 6 for the convolution product $\odot$ in the affine Grassmannian associated to $G=\SL_2(\mathbb{C})$ (cf. Proposition \ref{6.3}). It was  obtained earlier in [LLMS] and also [Ka-1] by different methods. 

Let $Q_+^\vee:= \underset{i=1}{\overset{l}{\oplus}} \mathbbm{Z}_{\geq0}\alpha_i^\vee$, where $\left\{\alpha_1^\vee,\ldots,\alpha_l^\vee\right\}$ are the simple coroots of $G$. Consider the formal power series ring $\mathbbm{Z} [[Q_+^\vee]]$ in the variables $q_i=q^{\alpha_i^\vee}$. For any $\beta=\sum_{i=1}^ln_i\alpha_i^\vee$, $n_i\geq 0$ , we denote $q^\beta=
\prod q_i^{n_i}$. 
Additively, \textit{T-equivariant quantum K-theory} of $X=G/B$ is defined as 
$$
Q K_T \left(X\right) = K_T^0 \left(X\right) [[ q_1, \ldots , q_l]].
$$

Thus, $QK_T(X)$ has a $K_T^0(*) [[q_1,\ldots,q_l]] $-basis given by the structure sheaves
$\{ \left[ \mathcal{O}^x\right] = [ \mathcal{O}_{\mathring{X}_{xw_o}}]\}_{x\in W}$.  
It acquires a ring structure given by Givental  and Lee. We denote the product structure by $*$ called the \textit{quantum product}.
Then, we get the following result (cf. Corollary \ref{7.3}) which is obtained 
as a consequence of Kato's Localization Theorem \ref{7.2}. 
 \begin{corollary} \label{1.9}For $x,y \in W$ and $\beta_1, \beta_2 \in Q_{<0}^\vee $ , in the quantum product
$$
\left[ \mathcal{O}^x\right] * \left[ \mathcal{O}^y\right] = \sum_{\beta\leq 0, \, z\in W'_\beta} p_{x\tau_{\beta_1},y \tau_{\beta_2}}^{z\tau_\beta}  q^{\beta- \left( \beta_1+\beta_2\right)} \left[\mathcal{O}^z\right] \in QK_T(X),
$$
where $Q_{<0}^\vee := \left\{ q\in Q^\vee:~ \alpha_i (q) <0, \mbox{ for all the simple roots $\alpha_i$ of $G$}\right\},$
$p_{x\tau_{\beta_1},y \tau_{\beta_2}}^{z\tau_\beta}$ are the structure constants as above for the modified convolution product $\odot$ in $K_0^T \left(\X\right)$, $W_\beta $ is the stabilizer of $\beta $ in $W$ and $W_\beta'$ is the set of minimal coset representatives in $W/W_\beta$.  
\end{corollary}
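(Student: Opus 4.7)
The plan is to derive this corollary essentially mechanically from Kato's Localization Theorem \ref{7.2}, which realizes $QK_T(X)$ as an appropriate completion/localization of the modified convolution algebra $\bigl(K_0^T(\X),\odot\bigr)$. Under that theorem's identification, quantum Schubert classes in $QK_T(X)$ correspond, up to a $q$-shift, to affine Schubert classes of the form $[\mathcal{O}_{X_{z\tau_\beta}}]$ with $\tau_\beta \in \W$ the translation by $\beta \in Q^\vee$. Because $z\tau_\beta$ lies in $\W'$ only when $\beta$ is sufficiently antidominant (equivalently $\beta \leq 0$ and $z \in W'_\beta$), one has both the freedom and the necessity of choosing auxiliary translations $\beta_1,\beta_2 \in Q_{<0}^\vee$ to represent the two factors.

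First, I would fix $\beta_1,\beta_2 \in Q_{<0}^\vee$ sufficiently antidominant so that $x\tau_{\beta_1}$ and $y\tau_{\beta_2}$ both lie in $\W'$, and record the identifications provided by Theorem \ref{7.2} in the form
\[
[\mathcal{O}^x] \;\longleftrightarrow\; q^{-\beta_1}\,[\mathcal{O}_{X_{x\tau_{\beta_1}}}], \qquad [\mathcal{O}^y] \;\longleftrightarrow\; q^{-\beta_2}\,[\mathcal{O}_{X_{y\tau_{\beta_2}}}],
\]
and more generally $[\mathcal{O}^z] \longleftrightarrow q^{-\beta}\,[\mathcal{O}_{X_{z\tau_\beta}}]$ for $z \in W'_\beta,\ \beta \leq 0$. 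The $q$-shift is arranged precisely so that the identification is independent of the choice of antidominant translation, the dependence cancelling in the localization.

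Next, I would apply Theorem \ref{1.5} to expand
\[
[\mathcal{O}_{X_{x\tau_{\beta_1}}}] \,\odot\, [\mathcal{O}_{X_{y\tau_{\beta_2}}}] \;=\; \sum_{w \in \W'} p^{w}_{x\tau_{\beta_1},\,y\tau_{\beta_2}}\, [\mathcal{O}_{X_w}],
\]
parametrize each contributing $w \in \W'$ in the unique form $w = z\tau_\beta$ with $\beta \leq 0$ and $z \in W'_\beta$, and then transport back through Kato's identification. Rewriting each $[\mathcal{O}_{X_{z\tau_\beta}}]$ as $q^{\beta}[\mathcal{O}^z]$ and multiplying overall by $q^{-\beta_1-\beta_2}$ produces exactly the $q$-exponent $\beta-(\beta_1+\beta_2)$ claimed in the formula.

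The main obstacle is therefore not the combinatorial manipulation above but extracting from Theorem \ref{7.2} the precise content the corollary needs: (i) that Kato's identification is a ring map, converting $\odot$ into the quantum product $*$; (ii) that the $q$-shift on affine Schubert classes is exactly $[\mathcal{O}_{X_{z\tau_\beta}}] \mapsto q^\beta[\mathcal{O}^z]$, together with the stated parametrization of $\W'$; and (iii) that the resulting sum over $\beta$ and $z$ converges in $QK_T(X) = K_T^0(X)[[q_1,\ldots,q_l]]$, i.e.\ only finitely many triples $(z,\beta,w)$ contribute to each monomial in the $q_i$. Point (iii) should follow from standard length bounds on the structure constants $p^w_{u,v}$ together with the fact that $\ell(z\tau_\beta)$ grows linearly as $\beta$ moves deeper into the antidominant chamber, so that for fixed $\beta_1,\beta_2$ only finitely many $\beta$ with a prescribed value of $\beta-(\beta_1+\beta_2)$ can occur.
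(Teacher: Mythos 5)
Your proposal takes essentially the same route as the paper: expand $[\mathcal{O}_{X_{x\tau_{\beta_1}}}]\odot[\mathcal{O}_{X_{y\tau_{\beta_2}}}]$ in the Schubert basis, parametrize the contributing terms as $z\tau_\beta$ with $\beta\le 0$ and $z\in W'_\beta$ (Lemma \ref{5.17}(c)), and transport everything through Kato's embedding $\psi$ of Theorem \ref{7.2} with the $q$-shifts you describe. The one place where you defer the real work is your point (ii): Theorem \ref{7.2} gives $\psi\left([\mathcal{O}_{X_{z\tau_\beta}}]\odot[\mathcal{O}_{X_{\tau_\gamma}}]^{-1}\right)=q^{\beta-\gamma}[\mathcal{O}^z]$ only for strictly antidominant $\beta,\gamma\in Q_{<0}^\vee$, whereas the $\beta$ occurring in the $\odot$-expansion need only satisfy $\beta\le 0$, so your blanket identification ``$[\mathcal{O}^z]\leftrightarrow q^{-\beta}[\mathcal{O}_{X_{z\tau_\beta}}]$ for $\beta\le 0$'' does not follow verbatim from the theorem. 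The paper closes exactly this point by inserting $[\mathcal{O}_{X_{\tau_\delta}}]\odot[\mathcal{O}_{X_{\tau_\delta}}]^{-1}$ for a fixed $\delta<0$ and using Proposition \ref{5.19} (which gives $[\mathcal{O}_{X_{z\tau_\beta}}]\odot[\mathcal{O}_{X_{\tau_\delta}}]=[\mathcal{O}_{X_{z\tau_{\beta+\delta}}}]$ with $\beta+\delta<0$, and likewise $[\mathcal{O}_{X_{\tau_{\beta_1}}}]\odot[\mathcal{O}_{X_{\tau_{\beta_2}}}]=[\mathcal{O}_{X_{\tau_{\beta_1+\beta_2}}}]$) together with associativity and commutativity of $\odot$ (Corollary \ref{4.11}); this is also what makes precise your assertion that the identification is independent of the chosen antidominant shift (compare Corollary \ref{5.20} and Remark \ref{7.4}). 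Finally, your worry (iii) is a non-issue: by \eqref{eq2.6.2}, $K_0^T(\X)$ is a direct sum over $\W'$, so the $\odot$-expansion is a finite sum and no convergence argument is needed; the last step is just the inclusion $QK_T(X)\hookrightarrow QK_T(X)_{\loc}$, as in the paper.
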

For $x,y\in W$, write the quantum product in $QK_T(X)$: 
$$
 \left[ \mathcal{O}^x\right] *\left[ \mathcal{O}^y\right] = \sum_{z\in W, \, \eta \in Q_+^\vee} d_{x,y}^{z,\eta} q^\eta \left[\mathcal{O}^{z}\right].
$$
The above  Conjecture \ref{1.6} is equivalent to the following conjecture on the quantum product structure constants in $QK_T(X)$ (cf. Proposition \ref{7.7}). 
\begin{conjecture}\label{1.10}
For any $x,y, z \in W$ and $\eta \in Q_+^\vee$, 
$$
(-1)^{\ell(x)+\ell(y)-\ell(z)} d^{z,\eta}_{x,y} \in \mathbbm{Z}_+\left[ \left(e^{\alpha_1}-1\right),\ldots , \left(e^{\alpha_l}-1\right)\right].
$$
\end{conjecture}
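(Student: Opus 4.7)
The plan is to derive Conjecture \ref{1.10} as an equivalent reformulation of the already-stated Conjecture \ref{1.6} (this being the content of Proposition \ref{7.7}), using Corollary \ref{1.9} together with Theorem \ref{1.5} as the bridge between the two families of structure constants.

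First, I would fix $\beta_1,\beta_2\in Q^\vee_{<0}$ deep in the antidominant chamber, so that $x\tau_{\beta_1},y\tau_{\beta_2},z\tau_\beta\in\W'$ for all $x,y,z\in W$ and all $\beta:=\beta_1+\beta_2+\eta$ with $\eta\in Q^\vee_+$. Corollary \ref{1.9} together with Theorem \ref{1.5} then gives the identification
\[
d^{z,\eta}_{x,y}\;=\;b^{z\tau_\beta}_{x\tau_{\beta_1},\,y\tau_{\beta_2}}
\]
for every $x,y\in W$, $z\in W'_\beta$, $\eta\in Q^\vee_+$, with $d^{z,\eta}_{x,y}=0$ otherwise.

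Next, I would perform a length-parity check. Using $\ell(w\tau_\gamma)=\ell(w)+\ell(\tau_\gamma)$ for $\gamma$ deep antidominant (so that the product is reduced), together with $\ell(\tau_\gamma)=-2\rho(\gamma)$, one obtains
\[
\ell(x\tau_{\beta_1})+\ell(y\tau_{\beta_2})-\ell(z\tau_\beta)=\ell(x)+\ell(y)-\ell(z)+2\rho(\eta),
\]
which has the same parity as $\ell(x)+\ell(y)-\ell(z)$, since $2\rho(\eta)$ is an even non-negative integer (as $\rho$ is integrally valued on $Q^\vee$, via $\rho=\sum_i\omega_i$). Hence the implication Conjecture \ref{1.6} $\Rightarrow$ Conjecture \ref{1.10} follows immediately from the identification above.

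For the converse direction, I would exploit the translation invariance of the Pontryagin coefficients that is implicit in Corollary \ref{1.9}: the independence of its left-hand side from the choice of $\beta_1,\beta_2\in Q^\vee_{<0}$ shows that the Pontryagin coefficient $b^{z\tau_\beta}_{x\tau_{\beta_1},y\tau_{\beta_2}}$ depends only on $x,y,z\in W$ and $\eta=\beta-\beta_1-\beta_2\in Q^\vee_+$. Combined with the $\Omega(K)$-group structure (via left, right, or conjugation actions of $\tau_\zeta\in\Omega(K)^T$, compatible with the $T$-equivariant Schubert basis), this lets one shift an arbitrary triple $(u,v,w)\in(\W')^3$ into the antidominant regime covered above while preserving the value of $b^w_{u,v}$, at which point the parity calculation transfers positivity from $d^{z,\eta}_{x,y}$ to $b^w_{u,v}$.

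The main obstacle is this last shifting step. Given $(u,v,w)=(u_0\tau_{\gamma_u},v_0\tau_{\gamma_v},w_0\tau_{\gamma_w})$ in $(\W')^3$ with $\gamma_u,\gamma_v,\gamma_w$ not necessarily antidominant, one must produce a coherent simultaneous shift $(\zeta_u,\zeta_v,\zeta_u+\zeta_v)$ that moves all three indices into the regime of Corollary \ref{1.9}, remains inside $\W'$, and preserves $b^w_{u,v}$. Pinning this down rigorously in $T$-equivariant $K$-theory (so as to rule out spurious character twists under translation) is the technical heart of the equivalence in Proposition \ref{7.7}.
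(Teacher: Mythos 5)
You are right that Conjecture \ref{1.10} cannot be proved outright: in the paper it remains a conjecture, and what is actually established is only its equivalence with Conjecture \ref{1.6} (Proposition \ref{7.7}, via Corollary \ref{7.3}), which is the route you propose. Your forward implication (Conjecture \ref{1.6} $\Rightarrow$ Conjecture \ref{1.10}) is essentially the paper's argument: choose $\beta_1,\beta_2\in Q^\vee_{<0}$ deep enough that $\beta=\eta+\beta_1+\beta_2\in Q^\vee_{<0}$, identify $d^{z,\eta}_{x,y}=p^{z\tau_\beta}_{x\tau_{\beta_1},y\tau_{\beta_2}}$ by Corollary \ref{7.3}, and compare parities. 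One slip there: for antidominant $\gamma$ the correct formula is $\ell(w\tau_\gamma)=\ell(\tau_\gamma)-\ell(w)$ (Lemma \ref{5.17}(b)); the product is not reduced, so the exponent is $-\bigl(\ell(x)+\ell(y)-\ell(z)\bigr)+2\rho(\eta)$ rather than your expression, but the parity conclusion is unaffected.

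The genuine gap is in the converse direction, and it is exactly the step you flag as ``the technical heart'' and leave open. The paper does not get the shift from translation invariance ``implicit in Corollary \ref{7.3}'': that corollary only constrains constants whose lower indices are already deep translations $x\tau_{\beta_1},y\tau_{\beta_2}$ with $\beta_1,\beta_2\in Q^\vee_{<0}$, so it cannot by itself move an arbitrary triple $u,v,w\in\W'$ into that regime. What the paper uses instead is Corollary \ref{5.20}, namely $p^w_{u,v}=p^{w\tau_{q_1+q_2}}_{u\tau_{q_1},v\tau_{q_2}}$ for all $q_1,q_2\le 0$, which is proved by multiplying by $[\mathcal{O}_{X_{\tau_q}}]$ and invoking the associativity and commutativity of $\odot$ (Corollary \ref{4.11}) together with Proposition \ref{5.19}, i.e. $[\mathcal{O}_{X_u}]\odot[\mathcal{O}_{X_{\tau_q}}]=[\mathcal{O}_{X_{u\tau_q}}]$ for $q\le 0$ --- precisely the statement that translation acts on the $T$-equivariant Schubert basis with no extra terms or character twists. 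That statement is not formal: the paper derives it from the explicit convolution formula of Theorem \ref{5.9} together with $\sum_{x\in W}\bar\zeta^x=1\otimes 1$ (Lemma \ref{5.18}); your appeal to the $\Omega(K)$-group structure is the right heuristic but does not substitute for this computation. The converse also needs the small observation, again extracted from Corollary \ref{7.3}, that $p^{z\tau_{\gamma+\beta_1+\beta_2}}_{x\tau_{\gamma_1+\beta_1},y\tau_{\gamma_2+\beta_2}}\ne 0$ forces $\gamma-(\gamma_1+\gamma_2)\in Q^\vee_+$, so that the shifted constant really occurs as some $d^{z,\eta}_{x,y}$; without it the positivity of $b^w_{u,v}$ cannot be read off from Conjecture \ref{1.10}.
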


We mention some of the known positivity results or conjectures related to $QK(X)$ and $QK_T(X)$ by  Lenart-Maeno [LM], Buch-Mihalcea [BM-1], Lam-Schilling-Shimozono [LSS], Li-Mihalcea [LiM], Buch-Chaput-Mihalcea-Perrin [BCMP-1] and [BCMP-2], 
Lenart-Naito-Sagaki [LNS], Xu [Xu] and  Benedetti-Perrin-Xu [BPX]. For more details, see Remark \ref{7.9}.

\vskip1ex

\noindent{\bf Acknowledgements:} I am very grateful to Syu Kato for numerous correspondences and conversations who patiently explained to me his works, especially [Ka-1]. Part of this work was done while the author was visiting the Institute for Advanced Study, Princeton during the fall semester of 2022 and the Institut des Hautes \'Etudes Scientifiques, Bures-sur-Yvette during the fall semester of 2023. I gratefully acknowledge their support. I also thank L. Mihalcea for some of the references.

\section{ Formulation of the main conjecture}

\noindent 

Let $G$ be a connected simply-connected simple algebraic group over $\mathbb{C}$. We fix a Borel subgroup $B$ and a maximal torus ${T}\subset {B}$. We also fix a maximal compact subgroup $K$ of $G$ such that $T_o:=T\cap K$ is a (compact) maximal torus of $K$. 
Let $\mathbb{C}((t))$ be the field of Laurent power series and let $\G:=G((t))$ be the loop group consisting of $\mathbb{C}((t))$ rational point of $G$. Let $\P:=G[[ t]]$ be the standard maximal parahoric subgroup, which is the set of $\mathbb{C}[[t]]$ rational points of $G$. 
Consider the {\it affine Grassmannian}  $\X=\G/\P$. Then, $\X$ is an ind-projective variety with filtration $\X_0\subset \X_1\subset \ldots \subset\X_n \subset\ldots $ given by Schubert varieties: 
$$
\X_n=\bigcup_{\{w\in\W': \ell(w)\leq n\}} \,\,\B w\P/ \P,
$$
where $\B$ is the standard Iwahori subgroup defined as  the inverse image of $B$ in $G[[t]]$ under the evaluation map at $t=0$, $\W:=W\propto Q^\vee$ is the affine Weyl group, $Q^\vee$ is the coroot lattice of $G$, $W$ is the (finite) Weyl group of $G$ and $\W'$ is the set of minimal length coset representatives in $\W/W$. In particular, $\X$ has inductive limit analytic topology. The torus $T$ acts on $\X$ via the left multiplication keeping each $\X_n$ stable. Define 
$$
K_T^{\top}\left(\X\right)= \underset{n\rightarrow \infty}{\Inv.lt.}\,\,
K_T^{\top}\left(\X_n\right).
$$

Observe that $\X\simeq G\left[t^{\pm 1}\right]/G[t]$, where we abbreviate $G\left(\mathbb{C}[t^{\pm}]\right)$ by $G\left[t^{\pm1}\right]$ etc.

Let $\bar{\X}:= G((t^{-1}))/G[t]$ be the 
{ \it thick} loop group, where $\mathbb{C} ((t^{-1})):=\mathbb{C} [[t^{-1}]][t]$ is viewed as the set of Laurent series in $t^{-1}:\left\{ \sum\limits_{n\leq k} a_n t^n, ~ a_n\in \mathbb{C}\right\}$. 

\begin{definition} \label{2.1}\rm{
For a quasi-compact scheme $\Y$, an $\mathcal{O}_{\Y}$-module $\uS$ is called {\it coherent} if it is finitely presented as an $\mathcal{O}_\Y$-module and any $\mathcal{O}_\Y$-submodule of finite type admits a finite presentation. 

A subset $S\subset\W'$ is called an ideal if $x\in S$ and $y\leq x$ in $\W'$ imply $y\in S$. An $\mathcal{O}_{\bar{\X}}$-module $\T$ is called coherent if $\T_{|{\V}^S}$ is a coherent $\V^S$-module for any finite ideal $S\subset \W'$, where $\V^S$ is the quasi-compact open subset of $\bar{\X}$ defined by $\V^S:= \bigcup_{w \in S}\,w \bar\U^- \underline{o}$, where $\underline{o}$ is the base point of $\bar{\X}$ and $\bar\U^-:= G[[t^{-1}]]$. Then,
$\V^S = \bigcup _{w \in S}\,\bar\U^- w \underline{o}$; in particular, $\V^S$ is $\bar\U^-$-stable. }
\end{definition}
We recall the following result due to Kashiwara-Shimozono [\cite{KS}, Lemma 8.1].

\begin{lemma}\label{2.2}
For any $T$-equivariant coherent sheaf $\uS$ over $\bar{\X}$ and any finite ideal $S\subset \W$, the sheaf $\uS_{| \V^S}$ admits a finite resolution by locally free sheaves $\F_i$ over $\V^S$: 
$$
0\rightarrow \F_k \rightarrow \cdots \rightarrow \F_2 \rightarrow \F_1 \rightarrow \F_0 \rightarrow \uS_{|\V^S} \rightarrow 0. 
$$
Moreover, for any $ w\in \W'$, the sheaf $\xi^w:=\mathcal{O}_{ X^w} \left( -\partial  X^w \right)$ over $\bar{\X}$ is a coherent sheaf, where 
\begin{equation*}
 C^{w} := \bar\U^- w \underline{o},\,\,\,
 X^w := \overline{C^w} \subset \bar{\X} ~\mbox{and}\,\,
\partial  X^w :=  X^w \backslash  C^w.\quad \square 
\end{equation*} 
\end{lemma}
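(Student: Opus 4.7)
The plan is to prove both assertions by a local-to-global argument exploiting the affine-space structure of the $\bar\U^-$-orbits in $\bar{\X}$. The first observation is that since $S$ is a finite ideal, $\V^S$ is quasi-compact and is covered by the finitely many open charts $w\,\bar\U^-\underline{o}$ for $w$ ranging over the maximal elements of $S$. Each such chart is isomorphic to $\bar\U^- = G[[t^{-1}]]$, and hence to a pro-finite-dimensional affine space $\varprojlim_N \mathbb{A}^N$ with coordinate ring $R = \varinjlim_N \mathbb{C}[x_1,\ldots,x_N]$.

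On such a chart, a $T$-equivariant coherent sheaf $\uS$ corresponds to a $T$-equivariant finitely presented $R$-module $M$, which, being finitely presented, is pulled back from a $T$-equivariant finitely presented module $M_N$ over $\mathbb{C}[x_1,\ldots,x_N]$ at some finite level $N$. Hilbert's syzygy theorem on $\mathbb{A}^N$ then provides a finite free resolution of $M_N$ of length at most $N$, which can be taken $T$-equivariantly by the linear reductivity of $T$ (choose generators and syzygies as combinations of $T$-weight vectors). Pulling this resolution back to the pro-limit yields a finite $T$-equivariant locally free resolution on the chart, and patching over the finitely many charts gives the desired global resolution on $\V^S$.

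For the coherence of $\xi^w$, I would verify Definition \ref{2.1} directly. For any finite ideal $S \subset \W'$, both $X^w \cap \V^S$ and $\partial X^w \cap \V^S$ are finite unions of finite-codimension Schubert subvarieties, locally cut out on each affine chart by finitely many equations; hence $\mathcal{O}_{X^w}\vert_{\V^S}$ and $\mathcal{O}_{\partial X^w}\vert_{\V^S}$ are coherent. Consequently,
$$
\xi^w \vert_{\V^S} \;=\; \ker\bigl(\mathcal{O}_{X^w}\vert_{\V^S} \twoheadrightarrow \mathcal{O}_{\partial X^w}\vert_{\V^S}\bigr)
$$
is finitely presented, as is any submodule of finite type (which again descends to a finite level by the same argument).

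The main obstacle is the infinite Krull dimension of $\V^S$: a priori one might worry that the length of the resolution must be infinite, or that ``coherent'' is poorly behaved on a pro-scheme. The rescuing observation is that any finitely presented module depends on only finitely many of the pro-coordinates, reducing both the resolution and the coherence check to classical commutative algebra on an honest finite-dimensional $\mathbb{A}^N$. Implementing this descent while preserving $T$-equivariance and consistently patching across the affine charts of $\V^S$ is the technical heart of the argument, and is precisely what Kashiwara--Shimozono carry out in their Lemma~8.1.
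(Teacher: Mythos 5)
The paper does not actually prove this lemma: it is quoted verbatim from Kashiwara--Shimozono [KS, Lemma 8.1], so there is no internal argument to compare yours with, and your sketch has to stand on its own. As it stands, the first assertion has a genuine gap at the final step. Chart-by-chart finite free resolutions cannot simply be ``patched'' into a resolution by locally free sheaves on $\V^S$: a resolution is not local data equipped with descent information, the resolutions you build on the various charts are non-canonical and there is no mechanism identifying them on overlaps. What a correct argument needs are two genuinely global inputs: (i) every $T$-equivariant coherent sheaf on $\V^S$ admits a surjection from a $T$-equivariant locally free sheaf of \emph{finite rank defined on all of} $\V^S$ -- this is where the real work lies and it does not follow from the pro-affine description of a single chart; and (ii) a uniform bound on local projective dimension over the finitely many charts, so that after iterating (i) that many times the kernel is locally free and the resolution terminates. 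Your finite-level descent plus Hilbert syzygy argument is in substance the proof of (ii), but (i) is nowhere addressed -- ``patching'' is precisely the step that fails. (Two smaller inaccuracies: the chart $w\bar\U^-\underline{o}$ is not isomorphic to $\bar\U^-=G[[t^{-1}]]$ but to its quotient by the stabilizer of $w\underline{o}$, a pro-unipotent group and hence a pro-affine space; and $\V^S$ is the union over \emph{all} $w\in S$, since the inclusion of the charts attached to non-maximal elements into those of maximal ones is not justified.)

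The second half, on coherence of $\xi^w$, is essentially sound once you justify the one assertion you leave implicit: that $X^w\cap\V^S$ and $\partial X^w\cap\V^S$ are, on each chart, pulled back from a finite-dimensional level, i.e.\ their ideal sheaves are locally finitely generated (this is a structural fact about Schubert varieties in the thick Grassmannian and is the actual content of the claim, not a formality). Granting it, your kernel argument does work: in the exact sequence $0\to\xi^w|_{\V^S}\to\mathcal{O}_{X^w}|_{\V^S}\to\mathcal{O}_{\partial X^w}|_{\V^S}\to 0$ the kernel of a surjection of coherent sheaves is again coherent in the sense of Definition \ref{2.1}, because the relevant coordinate rings (polynomial rings in countably many variables) are coherent rings. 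Finally, note that your closing sentence delegates ``the technical heart'' to Kashiwara--Shimozono; that is legitimate as attribution (it is exactly what the paper does), but it means that, as a self-contained proof, the first assertion of the lemma is not established by your argument.
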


Let $K^0_T \left( \bar{\X}\right)$ denote the Grothendieck group of $T$-equivariant coherent $\mathcal{O}_{\bar{\X}}$-modules. Thus, $K^0_T\left(\bar{\X}\right)$ can be thought of as the inverse limit of $K^0_T\left(\V^S\right)$, as $S$ varies over the finite ideals of $\W'$. 
For any $w \in \W'$, the $K$-theory class of the coherent $\mathcal{O}_{\bar{\X}}$-module $\xi^w$ is denoted by 
$$
\left[\xi^w\right] \in K^0_T \left( \bar{\X}\right). 
$$
In particular, we can also think of $\left[\xi^w\right]$ as an element $\left[\bar{\xi}^w\right]$ of $K^{\top}_T\left( {\X}\right)$ by using Lemma \ref{2.2}. 

We also define the {\it $T$-equivariant $K$-homology} $K^T_0 \left( {\X}\right)$ by 
$$
K^T_0 \left( {\X}\right) = \underset{n\rightarrow \infty}{\dir.lt.}\,\, K^T_0 \left( {\X}_n\right), 
$$
where $K^T_0 \left( {\X}_n\right)$ is the Grothendieck group corresponding to the $T$-equivariant coherent sheaves on the projective variety $\X_n$.

\begin{definition}\label{2.3}\rm{
Consider the $R(T)$-bilinear pairing 
$$\langle ~,~ \rangle: K^T_0 \left( \bar{\X}\right) \otimes_{R(T)} K^T_0 \left( {\X}\right) \rightarrow R(T)$$
 defined by 
$$
\left\langle \left[\mathcal{S}\right],~\left[\mathcal{F}\right] \right\rangle = \sum_{i} (-1)^i  \chi_T \left( \X_n,~ \Tor_i^{\mathcal{O}_{\bar{\X}}} \left(\mathcal{S},\mathcal{F}\right)\right), 
$$ 
for $\mathcal{S}$ a $T$-equivariant coherent sheaf on $\bar{\X}$ and $\mathcal{F}$  a $T$-equivariant coherent sheaf on $\X$ supported in $\X_n$ (for some $n$), where $\chi_T$ denotes the $T$-equivariant Euler-Poincar\'e characteristic and $R(T)$ is the representation ring of $T$ over $\mathbb{Z}$.  }
\end{definition}

We recall the following theorem due to Compton-Kumar [\cite{CK}, Proposition 3.8].

 \begin{theorem}\label{2.4} 
 Under the above pairing, for any $v,  w\in \W'$, 
$$
\left\langle \left[ \xi^v\right], ~\left[ \mathcal{O}_{X_w}\right]\right\rangle = \delta _{v,w},
$$
where the finite dimensional Schubert variety 
$$
X_w := \overline{\B w \underline{o}} \subset \X.
$$
$\square$
\end{theorem}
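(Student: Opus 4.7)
The plan is to reduce the computation to a quasi-compact open subset $\V^S \subset \bar{\X}$, where $S \subset \W'$ is a finite ideal containing $v$ and every $u \leq w$. By Lemma \ref{2.2}, $\xi^v|_{\V^S}$ admits a finite locally free resolution, so the higher Tor sheaves are coherent and supported inside $X_w \cap X^v$, and the $T$-equivariant Euler characteristic is well defined. The geometric input is standard Schubert transversality in the Kac--Moody setting: $X^v$ is $\bar{\U}^-$-stable and $X_w$ is $\B$-stable, and the opposite Bruhat decomposition forces $X^v \cap X_w$ to be empty unless $v \leq w$, in which case it is a Richardson variety $R_{v,w}$ of dimension $\ell(w) - \ell(v)$.

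When $v \not\leq w$, the supports of $\xi^v$ and $\mathcal{O}_{X_w}$ are disjoint and the pairing vanishes. When $v = w$, the intersection is the single $T$-fixed point $\{w\underline{o}\}$, at which $X^w$ and $X_w$ meet transversally with complementary tangent spaces inside $T_{w\underline{o}}\bar{\X}$. A $T$-equivariant Koszul resolution of $\mathcal{O}_{w\underline{o}}$ on a $T$-stable affine smooth neighborhood reduces the calculation to $\chi_T(\mathcal{O}_{X_w}\otimes^L \xi^w) = \chi_T(\mathcal{O}_{w\underline{o}}) = 1$, with all contributions from nontrivial weights of the Koszul complex cancelling in pairs.

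The main obstacle is the strict case $v < w$, where $R_{v,w}$ is positive-dimensional and one must prove $\chi_T(\mathcal{O}_{X_w}\otimes^L \xi^v) = 0$. The plan is induction on $\ell(w)-\ell(v)$: pick a (possibly affine) simple reflection $s_i$ with $s_i w < w$ and $s_i v > v$, which is guaranteed to exist whenever $v < w$ by the usual Bruhat-order lifting property. The projection $p : X_w \to X_w/P_i$ is a $\mathbb{P}^1$-fibration, and $\xi^v$ is compatibly $P_i$-structured with $\xi^{s_i v}$. The crucial role of the $-\partial X^v$ twist appears here: along a generic fiber of $p$, $\mathcal{O}_{X^v}(-\partial X^v)$ restricts to $\mathcal{O}_{\mathbb{P}^1}(-1)$, which has vanishing Euler characteristic, so $Rp_*$ annihilates the generic part; on the special fibers the pushforward is identified with the corresponding Tor for $\xi^{s_i v}$ on $X_{s_i w}$, where $\ell(s_i w) - \ell(s_i v) < \ell(w) - \ell(v)$, so induction closes the loop. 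The subtle technical point is to execute this entirely within the ind-scheme setting: the $\mathbb{P}^1$-fibration, the locally free resolution of $\xi^v$, and the base change along $p$ must all be coherently realized on a single $\V^S$, which is precisely what Lemma \ref{2.2} and the choice of ideal $S$ supply.
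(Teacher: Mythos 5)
The paper does not prove Theorem \ref{2.4} at all: it is quoted from Compton--Kumar [CK, Proposition 3.8], and the proof there (as in [Ku-2] and [BK] for the analogous statements) runs through the geometry of the Richardson varieties $X^v\cap X_w$ --- their irreducibility, normality and Cohen--Macaulayness established via Frobenius splitting --- together with the vanishing of the higher Tor sheaves of $\mathcal{O}_{X^v}$ against $\mathcal{O}_{X_w}$ and the vanishing of \emph{all} cohomology of the boundary-twisted structure sheaf of $X^v\cap X_w$ when $v<w$. Your outline agrees with that strategy in the easy cases: $v\not\leq w$ by disjointness of supports, and $v=w$ by transversality at the $T$-fixed point $w\underline{o}$ (where, since $w\underline{o}\notin\partial X^w$, the twist is locally trivial). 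But the case $v<w$, which you correctly identify as the main obstacle, is exactly where your argument has a genuine gap.

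Your induction rests on the claim that whenever $v<w$ in $\W'$ there is a simple reflection $s_i$ with $s_iw<w$ and $s_iv>v$, ``guaranteed by the usual lifting property.'' This is not what the lifting property gives, and the claim is false: already for $G=\SL_2(\mathbb{C})$, take $v=s_0$ and $w=s_0s_1s_0$, both in $\W'$; the unique left descent of $w$ is $s_0$, and $s_0v<v$, so no such $s_i$ exists and the induction cannot even begin. Moreover, even when such an $s_i$ exists, the key assertions --- that $\xi^v$ restricts to $\mathcal{O}_{\mathbb{P}^1}(-1)$ on a generic $\P_i$-fiber, that the derived pushforward therefore ``annihilates the generic part,'' and that the special fibers contribute exactly the Tor's of $\xi^{s_iv}$ against $\mathcal{O}_{X_{s_iw}}$ --- are left unjustified, and they conceal precisely the hard content of the theorem: one must control the entire Richardson variety $X^v\cap X_w$ and its boundary, not just a generic fiber, which is what the Frobenius-splitting inputs of [CK] (vanishing of higher Tor's, identification of the derived tensor product with the structure sheaf of the Richardson variety, and vanishing of $H^j$ of the boundary twist in all degrees for $v<w$) are there to supply. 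As written, the Euler-characteristic vanishing for $v<w$ does not follow, so the proposal does not constitute a proof.
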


Let $$\Omega (K) :=\left\{\gamma: S^1 \rightarrow K: \gamma (1) =1\,\text{and $\gamma$  extends to an algebraic morphism $\tilde{\gamma}: \mathbb{C}^* \rightarrow G$}\right\}$$
be the based {\it algebraic}  loop group of $K$. 
Then, $K$ (in particular, $T_0$) acts on $\Omega (K)$ via conjugation: 
$$\left(k\cdot \gamma\right) (z)= k\gamma (z)k^{-1},\,\,\text{ for $k\in K,  \gamma\in \Omega (K)$ and $z\in S^1$}.$$

Choose an embedding $\rho$:
$$
K\subset G \overset{\rho} {\hookrightarrow}
GL_N(\mathbbm{C}) \subset M_N(\mathbbm{C}).
$$
We endow $\Omega (K)$ with the 
   inductive limit topology induced from the filtration: 
$$
\Omega (K)_1 \subset \Omega(K)_2 \subset \cdots \subset \Omega (K)_n \subset \cdots ,
$$
where
\begin{eqnarray*}
\Omega (K)_n &:=& \left\{\vphantom{\sum_{k=-n}^n a_k^{i,j}} \gamma:S^1 \rightarrow K:\, \rho(\gamma)\mbox{ has its $(i,j)$-th matrix entry of the form } \right.\\
&&\left.\sum_{k=-n}^n a_k^{i,j}
 z^k\mbox{ with } a_k^{i,j}  \in \mathbb{C} \mbox{ for } z \in S^1\right\}
\end{eqnarray*}
is realized as a closed subset of $\mathbbm{C}^{(2n+1)N^2}$ (under the analytic topology) coming from the coefficients $\left\{ a_k^{i,j}\right\}$. Then, this topology on $\Omega (K) $ does not depend upon the choice of the embedding $\rho$.

The following lemma is well-known (cf. [\cite{PS}, \S3.5 and Theorem 8.6.3]). 

\begin{lemma} \label{2.5}
The inclusion map 
$$ 
\beta: \Omega (K) \rightarrow \X,~ \gamma \mapsto \tilde{\gamma}\cdot \underline{o},\quad \mbox{for } \gamma\in \Omega (K)
$$
is a $K$-equivariant homeomorphism  under the above topology on $\Omega (K)$ and the analytic topology on $\X$.  

\end{lemma}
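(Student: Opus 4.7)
The plan is to deduce the lemma from the standard loop-group Iwasawa factorization, together with a compatibility between the filtrations defining the two topologies. First, $K$-equivariance is immediate from the observation that $K \subset G \subset G[[t]] = \P$, so $K$ stabilizes the base point $\underline{o}$; hence for $k\in K$ and $\gamma\in\Omega(K)$,
$$
\beta(k\gamma k^{-1}) = \bigl(k\tilde{\gamma}k^{-1}\bigr)\cdot\underline{o} = k\tilde{\gamma}\cdot\underline{o} = k\cdot\beta(\gamma).
$$

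For bijectivity, I would invoke the factorization $\G = \Omega(K)\cdot\P$ with $\Omega(K)\cap\P = \{1\}$, which is a consequence of a loopwise Gram--Schmidt/Iwasawa construction in the sense of Pressley--Segal, \S 8.1 and Theorem 8.6.3. Equivalently, every $g\in\G$ admits a unique decomposition $g = \gamma\cdot p$ with $\gamma\in\Omega(K)$ (extended algebraically to $\mathbb{C}^*$) and $p\in\P$; the assignment $g\P\mapsto\gamma$ then furnishes a set-theoretic inverse to $\beta$.

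Continuity of $\beta$ itself is straightforward: a loop whose matrix coefficients (after embedding via $\rho$) have Laurent degree at most $n$ yields, upon multiplication by $\underline{o}$, a point lying in some finite-dimensional Schubert piece $\X_{h(n)}$; restricted to $\Omega(K)_n \subset \mathbb{C}^{(2n+1)N^2}$ the map $\beta$ is given by polynomial formulas in the coordinates $\{a_k^{i,j}\}$, hence is continuous in the analytic topology on $\X_{h(n)}$. Passing to the direct limits extends continuity globally.

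The main obstacle is continuity of $\beta^{-1}$: one must show that the Iwasawa factor $\gamma$ depends continuously on the coset $g\P\in\X$. The strategy is to argue piecewise: first show $\beta^{-1}(\X_n) \subset \Omega(K)_{h'(n)}$ for some integer $h'(n)$, and then exhibit $\beta^{-1}|_{\X_n}$ via an explicit Cholesky/$QR$-type algorithm on the matrix representation coming from $\rho$, which depends real-analytically on the matrix entries and hence is continuous into $\Omega(K)_{h'(n)}$. Passing to direct limits of these continuous local inverses yields continuity of $\beta^{-1}$ globally. The technical details of the piecewise factorization and its continuity are carried out in Pressley--Segal, \S\S 3.5 and 8.6, which I would appeal to rather than reproduce.
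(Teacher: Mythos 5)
The paper gives no argument for this lemma at all: it is quoted as well-known with the citation [PS, \S 3.5 and Theorem 8.6.3], which is precisely the source your sketch ultimately defers to, so your equivariance computation plus the Iwasawa-type factorization $\G=\Omega(K)\cdot\P$, $\Omega(K)\cap\P=\{1\}$ is essentially the paper's (i.e.\ Pressley--Segal's) route. The only step I would replace is the ``real-analytic Cholesky/QR'' argument for continuity of $\beta^{-1}$: each $\Omega(K)_n$ is compact (it is closed, and the coefficients $a^{i,j}_k$ are Fourier coefficients of functions with values in the bounded set $\rho(K)$, hence bounded), so the continuous injection $\beta|_{\Omega(K)_n}$ into the Hausdorff space $\X$ is automatically a homeomorphism onto its image, and once one knows $\beta^{-1}(\X_m)\subset\Omega(K)_{n(m)}$ for some $n(m)$, continuity of $\beta^{-1}$ on each $\X_m$, and hence on the inductive limit, follows with no explicit factorization algorithm.
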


\begin{definition} \label{2.6}\rm{
Consider the multiplication map 
$$
\hat{m}:~\Omega (K) \times \Omega (K) \rightarrow \Omega (K), \quad \left( \gamma_1,  \gamma_2\right) \mapsto \gamma_1\cdot \gamma_2.
$$

From the above description of the topology on $\Omega(K)$, it is easy to see that $\hat{m}$ is continuous. Moreover, $\hat{m}$ is $K$-equivariant (in particular, $T_0$-equivariant) under the conjugation action of $K$ on $\Omega(K)$ viewing the elements of $K$ as constant loops and acting diagonally on the domain of $\hat{m}$. 

By virtue of the $K$-equivariant homeomorphism $\beta$ (cf. Lemma \ref{2.5}), we get a $K$-equivariant {\it continuous} map 
$$
m:~\X\times \X \rightarrow \X.
$$

Thus, we get a pull-back map
$$
m^*:~K^{\top}_{T_0}\left(\X\right) = K^{\top}_T \left(\X\right) \rightarrow K^{\top}_T \left(\X\times \X\right) = K_T^{\top} \left(\X\right) {\hat{\otimes}}_{R(T)} K_T^{\top} \left(\X\right),
$$
where 
$$
K_T^{\top}\left(\X\right)  {\hat{\otimes}}_{R(T)}\, K_T^{\top} \left(\X\right) := \underset{n\rightarrow \infty}{\Inv.lt.} \,\,\left(K_T^{\top}\left(\X_n\right)\otimes_{R(T)}K_T^{\top} \left(\X_n\right)\right). 
$$

Observe that since $T/T_0$ is contractible, $K_{T_0}^{\top}\left(\X\right)=K_T^{\top}\left(\X\right)$. Moreover,  since $K_T^{\top}\left(\X_n\right)$ is a free $R(T)$-module (cf. [\cite{KK}, Proof of Lemma 3.15]), by the Kunneth theorem [\cite{Mc}, Theorem 4.1], 
$$
K_T^{\top}\left(\X_n\times \X_n\right) \approx K_T^{\top}\left(\X_n\right) \otimes_{R(T)} K_T^{\top}\left(\X_n\right).
$$ 

Recall that (cf. [\cite{CK}, Proposition 3.5])
\begin{equation}\label{eq2.6.1}
K_T^{0}\left(\bar{\X}\right)= \Pi_{w\in \W'} \,\,R(T)\left[\xi^w\right].
\end{equation}
By virtue of the above result,  we call $\left\{ \left[\xi^w\right]\right\}_w$ an {\it infinite} basis.

 Also, by [\cite{CK}, Lemma 3.2],
\begin{equation}\label{eq2.6.2}
K_0^T\left(\X\right) = \bigoplus_{ w\in \W'} R(T)\cdot \left[\mathcal{O}_{X_w}\right].
\end{equation}}
\end{definition}

\begin{lemma}\label{2.7}
The canonical map 
$$
i_{\X}:~ K^0_T\left(\bar\X\right) \rightarrow K_T^{\top} \left(\X\right)
$$
is an $R(T)$-algebra isomorphism. Thus, 
$$
K_T^{\top} \left(\X\right) = \Pi_{w\in \W'} \,R(T) \left[\bar{\xi}^{w}\right].
$$
\end{lemma}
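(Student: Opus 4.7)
The plan is to establish $i_\X$ as an $R(T)$-algebra isomorphism by reducing everything to a duality statement against $K_0^T(\X)$.

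First, I would define $i_\X$ using Lemma \ref{2.2}. For a $T$-equivariant coherent sheaf $\uS$ on $\bar{\X}$ and any $n$, the restriction $\uS|_{\V^{S_n}}$, where $S_n := \{w \in \W' : \ell(w) \leq n\}$, admits a finite $T$-equivariant locally free resolution $\F_\bullet$ by Lemma \ref{2.2}. Under the natural comparison of $\V^{S_n} \subset \bar{\X}$ with the ind-scheme neighborhood of $\X_n$ in $\X$ (induced by the common Bruhat cell decomposition indexed by $\W'$ and the analytic identification via the homeomorphism $\beta$ of Lemma \ref{2.5}), the alternating sum $\sum_i (-1)^i [\F_i|_{\X_n}] \in K^{\top}_T(\X_n)$ should be independent of the chosen resolution and compatible under the transition maps over $n$; this defines $i_\X$, and by construction $i_\X([\xi^w]) = [\bar{\xi}^w]$.

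Next I would verify that $\{[\bar{\xi}^w]|_{\X_n}\}_{w \in S_n}$ is an $R(T)$-basis of $K^{\top}_T(\X_n)$. The module $K^{\top}_T(\X_n)$ is free of finite rank over $R(T)$ by [\cite{KK}]; by (\ref{eq2.6.2}), $K^T_0(\X_n) = \bigoplus_{w \in S_n} R(T)[\mathcal{O}_{X_w}]$ is free of rank $|S_n|$; and the pairing of Definition \ref{2.3} restricts to a perfect $R(T)$-bilinear pairing between these two finite free modules. Theorem \ref{2.4} tells me that the matrix $\bigl(\langle[\bar{\xi}^v], [\mathcal{O}_{X_w}]\rangle\bigr)_{v,w \in S_n}$ is the identity, so $\{[\bar{\xi}^w]\}_{w \in S_n}$ is precisely the dual basis to $\{[\mathcal{O}_{X_w}]\}_{w \in S_n}$ and hence an $R(T)$-basis of $K^{\top}_T(\X_n)$ of rank $|S_n|$.

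I would then pass to the inverse limit $K^{\top}_T(\X) = \underset{n\rightarrow\infty}{\Inv.lt.}\, K^{\top}_T(\X_n)$. Each element $\alpha$ is uniquely determined by the family of pairings $c_w(\alpha) := \langle \alpha, [\mathcal{O}_{X_w}]\rangle \in R(T)$ for $w \in \W'$, because $c_w(\alpha)$ stabilizes as soon as $n \geq \ell(w)$. Collecting these coefficients yields the identification $K^{\top}_T(\X) = \prod_{w \in \W'} R(T)[\bar{\xi}^w]$. Comparing with (\ref{eq2.6.1}), which gives the same product description $\prod_{w \in \W'} R(T)[\xi^w]$ for the source $K^0_T(\bar{\X})$, and using that $i_\X$ sends $[\xi^w] \mapsto [\bar{\xi}^w]$ coefficient-by-coefficient, I conclude that $i_\X$ is an $R(T)$-module isomorphism.

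Finally, for the ring structure: the multiplication on both sides is induced by the derived tensor product of $T$-equivariant coherent sheaves (computed via the locally free resolutions from Lemma \ref{2.2}), which commutes with the pullback and restriction operations used in the construction of $i_\X$. The main obstacle, I expect, is the very first step: precisely setting up the comparison between the thick open $\V^{S_n} \subset \bar{\X}$ and the thin Schubert filtration piece $\X_n \subset \X$, and verifying that the alternating-sum class in $K^{\top}_T(\X_n)$ does not depend on the chosen resolution. Once this is in place, the remainder is essentially linear algebra driven by Theorem \ref{2.4}.
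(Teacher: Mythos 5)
There is a genuine gap at the central step of your argument. Everything is made to rest on the claim that the pairing of Definition \ref{2.3} ``restricts to a perfect $R(T)$-bilinear pairing'' between $K_T^{\top}(\X_n)$ and $K_0^T(\X_n)$, but neither the existence nor the perfection of such a pairing on the topological side is available at this point. The pairing of Definition \ref{2.3} is defined only on algebraic classes: its first argument is a $T$-equivariant coherent sheaf on $\bar\X$, and the Tor/Euler-characteristic formula does not a priori apply to an arbitrary element of $K_T^{\top}(\X_n)$. To pair a general topological class against $[\mathcal{O}_{X_w}]$ you would first need to know that it is an $R(T)$-combination of the algebraic classes $[\bar{\xi}^w]$ --- which is exactly the surjectivity of $i_\X$ you are trying to prove, so the argument is circular. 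Even if one grants a well-defined pairing, its perfection is a nontrivial duality assertion for the singular projective varieties $\X_n$ and does not follow from Theorem \ref{2.4}: that theorem only shows that the classes $[\bar{\xi}^w]$, $w\in S_n$, are $R(T)$-linearly independent, and over $R(T)$ a family of $|S_n|$ independent elements of a free module of rank $|S_n|$ need not generate it (compare $2\mathbbm{Z}\subset\mathbbm{Z}$). Since your inverse-limit identification (``$\alpha$ is determined by the coefficients $c_w(\alpha)$'') and your injectivity conclusion both rest on this perfection, the heart of the lemma --- that the algebraic classes span all of $K_T^{\top}(\X)$ --- remains unproven. (Your construction of $i_\X$ via the resolutions of Lemma \ref{2.2} and the reduction of the ring-compatibility to that construction are fine, but they are the routine part.)

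For comparison, the paper supplies precisely this missing input from [KK]. Injectivity is proved by passing to $\Y=\G/\B$: the map $i_\Y\colon K_T^0(\bar{\Y})\to K_T^{\top}(\Y)$ is an isomorphism by [KK, Proposition 3.39] together with [Ku-2, Proposition 3.6], and $R^i\pi_*(\mathcal{O}_{\bar\Y})=0$ for $i>0$ plus the projection formula show that $\pi^*\colon K_T^0(\bar\X)\to K_T^0(\bar\Y)$ is injective, whence $i_\X$ is injective by the commutative square. Surjectivity is proved by quoting [KK, Corollary 3.20 and Lemma 2.27], which exhibit the explicit $R(T)$-basis $\{\L(\hat{\rho})\cdot[\xi^w_\B]\}_{w\in\W'}$ of $K_T^{\top}(\X)$, and then using the localization map ([KK, Proposition 2.22]) to show that the transition matrix expressing $[\xi^v]$ in this basis is upper triangular with invertible diagonal entries $e^{v^{-1}\hat{\rho}}$, hence invertible; combined with \eqref{eq2.6.1} this gives $K_T^{\top}(\X)=\Pi_{w\in\W'}R(T)[\xi^w]$. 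If you wish to salvage your duality strategy, you would need to establish independently that $K_T^{\top}(\X_n)\to \Hom_{R(T)}(K_0^T(\X_n),R(T))$ is an isomorphism, and in this setting that statement is obtained from exactly the [KK]-type results the paper invokes, so no shortcut is gained.
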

\begin{proof}
Let $\Y:= \G/\B$. Then, by [\cite{KK}, Proposition 3.39] together with [\cite{Ku-2}, Proposition 3.6] (since the Schubert varieties in $\Y$ have rational singularity [\cite{Ku-1}, Theorem 8.2.2 (c)]), we obtain that the canonical map 
$$
i_\Y: K^0_T \left(\bar{\Y}\right) \rightarrow K_T^{\top}\left(\Y\right)\mbox{ is an isomorphism}, 
$$
where $\bar{\Y} := G((t^{-1}))/G[t]\cap \B$.

Let $\pi: \bar{\Y} \rightarrow \bar{\X}$ be the standard projection. 
Then, 
$$R^i\pi_*\left(\mathcal{O}_{\bar{\Y}}\right)=0, \,\,\,\text{for all $i>0,$}$$
 since $H^i\left(G/B,\mathcal{O}_{G/B}\right)=0$, for all $i>0$. Thus, using the projection formula [\cite{Ha}, Chap. III, Exercise 8.3] we get that the induced map
$$\pi^*: K^0_T\left(\bar{\X}\right)\rightarrow K^0_T \left(\bar{\Y}\right)$$ is injective.

Since $i_\Y$ is injective, we get that so  is $i_\X$ from the following commutative diagram:
\begin{equation*}
\begin{array}{lll}
\displaystyle K_T^0\left(\bar{\X}\right) &\displaystyle \xrightarrow{i_{\X}}&\displaystyle  K_T^{\top}\left(\X\right)\\
\,\,\,\,\,\,\displaystyle \lhookdownarrow \pi^* &&\,\,\,\,\,\,\displaystyle \downarrow \pi^*\\
\displaystyle K_T^0\left(\bar{\Y}\right)&\displaystyle \underset{i_{\Y}}{\xrightarrow{\sim}} & \displaystyle K_T^{\top} \left(\Y\right).
\end{array}
\end{equation*}
We next prove that $i_\X$ is surjective:

By [\cite{KK}, Corollary 3.20 and Lemma 2.27], we get that $\left\{ \L\left(\hat{\rho}\right)\cdot \left[\xi^w_\B\right]\right\}_{w\in \W'}$ is an infinite $R(T)$-basis of $K_T^{\top} \left(\X\right)$, where $\L\left(\hat{\rho}\right)$ is the line bundle over $\Y\approx \hat{\G}/\hat{\B}$ corresponding to the character $e^{-\hat{\rho}}$ of $\hat\B$. Here $\hat\G$ is the universal central extension of $\G$ (cf. [\cite{Ku-3}, Definition 1.4.5 corresponding to $\lambda_c=0_1$]), $\hat\B$ is the inverse image of $\B$ in $\hat\G$,  $\hat\rho$ is the weight taking value $1$ on each of the affine simple coroots $\left\{\alpha ^\vee_i\right\}_{0\leq i \leq l}$ and $\xi^w_\B:= \mathcal{O}_{X_\B^w}\left(-\partial X_\B^w\right)$, $X_\B^w = \overline{\bar\U^- w {\underline{o}}_\B}, \partial  X^w_\B:=  X^w_\B \setminus \bar\U^- w {\underline{o}}_{ \B}$, $\underline{o}_\B$ being the base point of $\Y$. (We have used here [\cite{KK}, Proposition 3.9] and [\cite{CK}, Proposition 3.8] to transform the basis in [\cite{KK}] to our basis $\left[\xi^w\right]$.) 

By [\cite{KK}, Proposition 2.22] considering the localization map 
$$
r: K_T^{\top}\left(\X\right) \rightarrow K_T^{\top} \left(\X^T\right),
$$
we get for any $v\in \W'$, 
$$
\left[\xi^v\right] = \sum_{ w\in \W'} a^v_w \left( \L (\hat{\rho})\cdot\left [\xi_\B^w\right]\right) \in K_T^{\top}\left(\X\right), 
$$
where
$$a^v_w=
\left\{ \begin{array}{l}
0,\,\,\,\mbox {for} \,\,\ell(w)\leq \ell(v)\mbox{ and }w\neq v 
\\
e^{v^{-1}\hat{\rho}}, \,\,\,\mbox {for} \,\, w = v.
\end{array}\right.
$$
Thus, the matrix $A=\left(a^v_w\right)_{v, w\in \W'}$ with entries in $R(T)$ is an upper triangular matrix with invertible diagonal entries. In particular, $A$ is an invertible matrix. 
This shows that, for any $v\in\W'$, 
$$
\L\left(\hat{\rho}\right)\cdot \left[\xi^v_B\right] \in \Pi_{w\in \W'}\,\,R(T) \,\,\left[\xi^w\right]. 
$$

Hence,
$$K_T^{\top}\left(\X\right)=\Pi_{w\in\W'}\,\,R(T)\left[\xi^w\right].$$
 This proves the surjectivity of $i_{\X}$ in view of \eqref{eq2.6.1} and hence the lemma is proved. 
\end{proof}

\begin{remark}\label{2.8}\rm{
The map $m:\X\times \X \rightarrow \X$ is {\it not} an algebraic morphism with respect to the ind-variety structure on $\X$. In fact, it fails to be an algebraic morphism already for $G=\SL_2(\mathbb{C})$. }
\end{remark}

The induced map 
$$
m^*:~ K_T^{\top}\left(\X\right) \rightarrow K_T^{\top} \left(\X\right)  {\hat{\otimes}}_{R(T)}\, K_T^{\top} \left(\X\right)
$$
as in Definition \ref{2.6} can be written as follows by using Lemma \ref{2.7} (for any $w\in\W'$):
$$
m^*\left(\left[\bar{\xi}^w\right]\right) = \sum_{u,v\in \W'} a^w _{u,v}\left[\bar{\xi}^u\right] \otimes\left [\bar{\xi}^v\right],\mbox{ for unique } a^w_{u,v}\in R(T).
$$

The following is our main conjecture. 

\begin{conjecture}\label{2.9}
We conjecture that for any $u,v,w\in\W'$, 
$$
(-1)^{\ell(u)+\ell(v)-\ell(w)} a^w_{u,v}\in \mathbb{Z}_+\left[\left(e^{\alpha_1}-1\right), \ldots , \left(e^{\alpha_l}-1\right)\right],
$$
 i.e., $(-1)^{\ell(u)+\ell(v)-\ell(w)} a^w_{u,v}$ is a polynomial in the variables $x_1=e^{\alpha_1}-1,\ldots,x_l=e^{\alpha_l}-1$ with non-negative integral coefficients, where $\left\{\alpha_1,\ldots,\alpha_l\right\}$ are the simple roots of $G$. 
   $\square$
\end{conjecture}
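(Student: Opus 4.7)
The plan is to attack the equivalent Conjecture \ref{1.6} on the modified convolution product $\odot$ in $K_0^T(\X)$, which by Kato's Theorem \ref{1.5} transports positivity to the main conjecture. This reformulation has two advantages: the modified product is associative and commutative, and we have the completely explicit formula of Theorem \ref{1.7} expressing $[\mathcal{O}_{X_u^\B}] \odot [\mathcal{O}_{X_v}]$ in terms of left Demazure operators $D_i'$ acting on the affine classes $\bar{\zeta}^x \in R(T) \otimes_{R(G)} R(T)$.

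First I would restrict to $u \in \W'$ and fix a reduced decomposition $u = s_{i_1}\cdots s_{i_n}$, so that the structure constant $p^w_{u,v}$ becomes a finite sum indexed by subsets $\{j_1 < \cdots < j_p\} \subset \{1,\ldots,n\}$ and elements $x \in W$, with each contribution of the form $|D_{i_1}' \cdots \hat{\hat{D}}'_{i_{j_1}} \cdots \hat{\hat{D}}'_{i_{j_p}} \cdots D_{i_n}'(\bar{\zeta}^x)|$, constrained by $\overline{s_{i_{j_1}} * \cdots * s_{i_{j_p}} * x * v} = w$. A bookkeeping step checks that the target sign $(-1)^{\ell(u)+\ell(v)-\ell(w)} = (-1)^{n + \ell(v) - \ell(w)}$ is compatible with the parities introduced by each Demazure operator (each $D_i$ applied to a polynomial in the $(e^{\alpha_j}-1)$ drops total degree by one when it lands on a root direction, while each Weyl-group replacement $\hat{\hat{D}}'$ preserves degree).

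To produce the required positivity in $\mathbb{Z}_+[(e^{\alpha_1}-1), \ldots, (e^{\alpha_l}-1)]$, the main input would be a Graham-type positive expansion of the finite-dimensional classes $\bar{\zeta}^x$ in these variables (analogous to the AJS/Billey and Kostant-Kumar formulas for Schubert classes), together with the identity $D_i(e^{\alpha_j}-1)^N = ((e^{\alpha_j}-1)^N - (e^{s_i\alpha_j}-1)^N)/(1 - e^{\alpha_i})$ which, when $j \neq i$, rewrites as a positive polynomial in the $(e^{\alpha_k}-1)$ times $-(e^{\alpha_i}-1)^{-1}$, absorbing one factor of $(e^{\alpha_i}-1)$ per application. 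Tracking these factors across the sum in Theorem \ref{1.7} and invoking Theorem \ref{1.8} as an alternative pairing-style expression would give a cross-check.

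The main obstacle I anticipate is that Theorem \ref{1.7} produces an alternating sum over subwords and over $x \in W$ whose individual summands are \emph{not} positive in the required sense: the claimed sign pattern emerges only after substantial cancellation. A direct monomial-level verification is almost certainly infeasible, so the more realistic route is geometric: realize $[\mathcal{O}_{X_u^\B}] \odot [\mathcal{O}_{X_v}]$ as a derived pushforward of an honest equivariant coherent sheaf from a Bott-Samelson-type resolution of $X_u^\B \times^\B X_v$ (or from one of Kato's semi-infinite Schubert varieties), and then invoke a Brion/Anderson-Griffeth-Miller-style equivariant K-theoretic positivity theorem. The critical technical points that would need to be established are the appropriate Cohen-Macaulay/rational-singularity properties of the relevant resolution, the vanishing of higher direct images, and a compatibility with the Steinberg basis used in the definition of $\odot$ so that the positivity passes from the geometric construction to the Schubert-basis expansion.
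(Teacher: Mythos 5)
There is a fundamental problem: the statement you are asked to prove is the paper's \emph{main open conjecture}. The paper gives no proof of Conjecture \ref{2.9}; it only establishes a chain of equivalent reformulations (Conjecture \ref{3.3} via Lemma \ref{3.2}, Conjecture \ref{4.10} via Kato's Theorem \ref{4.9}, and Conjecture \ref{7.6} via Corollary \ref{7.3} and Proposition \ref{7.7}), together with explicit formulas (Theorems \ref{5.9} and \ref{5.15}) and low-rank evidence (Proposition \ref{6.3}). Your proposal retraces exactly this reduction chain, which is fine as far as it goes, but what you offer beyond it is a research plan, not a proof, and the two inputs it hinges on are themselves unestablished and essentially as hard as the conjecture.

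Concretely: (i) the ``Graham-type positive expansion'' of the classes $\bar{\zeta}^x$ in the variables $e^{\alpha_i}-1$ is not available and is not obviously even true in the form you need --- already for $\SL_2$, Lemma \ref{6.1} gives $\bar{\zeta}^e=e^{-\rho}\otimes e^{\rho}$ and $D_0'(\bar{\zeta}^e)=-e^{2\rho}\bar{\zeta}^e$, so signs and non-polynomial weights enter immediately, and as you yourself note the summands of Theorem \ref{5.9} are individually non-positive with the conjectured sign emerging only after cancellation; no mechanism for controlling that cancellation is supplied. (ii) The proposed geometric route via a Brion/Anderson--Griffeth--Miller-type theorem does not apply as stated: those positivity results concern products of Schubert classes under an algebraic (diagonal pull-back) product, whereas here the relevant product is the Pontryagin/convolution product, and the multiplication map $m:\X\times\X\to\X$ is not an algebraic morphism (Remark \ref{2.8}). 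The honest pushforward you can construct from a Bott--Samelson-type resolution computes $\odot'$, whose Schubert structure constants are the trivially positive ones of Proposition \ref{4.3}; passing to the modified product $\odot$ involves the Steinberg dual basis $\L^x$ in \eqref{4.8.3}, and it is precisely this twist that destroys manifest positivity and would have to be controlled by a new idea. So the proposal identifies the correct equivalent formulations but contains no argument that closes the gap; the statement remains conjectural.
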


\begin{remark}\label{2.10}\rm{
Considering the localization of $\bar{\xi}^w$, it is easy to see that, in the above sum, }
$$ 
a^w_{u,v} =0 \quad \mbox{unless} \quad \ell(u)+\ell(v) \geq \ell(w).
$$ 
\end{remark}
\section{An equivalent formulation of the main conjecture in terms of Pontryagin product}

\begin{definition}[Pontryagin Product]\label{3.1}\rm{
 Recall from Definition \ref{2.6} the multiplication map 
$m: \X\times \X \rightarrow \X$ via the identification of $\X$ with $\Omega(K)$ under $\beta$. This gives rise to the pull-back map
$$
m^*:~ K_T^{\top}\left(\X\right) \rightarrow K_T^{\top} \left(\X\times \X\right).
$$

By Lemma \ref{2.7}, we have a canonical isomorphism 
$$
i_{\X}: ~K^0_T\left(\bar{\X}\right) \xrightarrow{\sim} K_T^{\top}\left(\X\right),
$$
and a similar isomorphism (by the same proof)
$$
i_{\X\times \X} : K^0_T\left(\bar{\X}\times\bar{\X}\right)\xrightarrow{\sim} K_T^{\top}\left(\X\times \X\right).
$$

Thus, the map $m^*$ gives rise to the map
$$
\tilde{m}^*:~ K^0_T\left(\bar{\X}\right) \rightarrow K^0_T\left(\bar{\X}\times\bar{\X}\right)
$$
under the identifications $i_{\X}$ and $i_{\X\times \X}$. 

Now, the pairing (over $R(T)$)
$$
\langle ~,~\rangle: ~ K^0_T\left(\bar{\X}\right) \otimes_{R(T)}  K_0^T\left({\X}\right) \rightarrow R(T)
$$
as in Definition \ref{2.3} is non-singular by Theorem \ref{2.4}. This induces the identification: 
$$
\psi :  K^0_T\left(\bar{\X}\right) \simeq \Hom_{R(T)} \left(K_0^T\left({\X}\right) ,R(T)\right)
$$
and a similar identification
$$
\tilde{\psi}:~K^0_T\left(\bar{\X}\times\bar{\X}\right)\simeq \Hom_{R(T)} \left(K_0^{T}\left(\X\times \X\right) ,R(T)\right).
$$
Using these identifications $\psi$ and $\tilde{\psi}$, we can rewrite the map $\tilde{m}^*$ as 
$$
\hat{m}^*:\Hom_{R(T)} \left(K_0^T\left({\X}\right) ,R(T)\right) \rightarrow \Hom_{R(T)} \left(K_0^{T}\left(\X\times \X\right) ,R(T)\right)
$$
giving rise to the product 
$$
{\p}:~K_0^{T}\left(\X\times \X\right) \simeq K^T_0\left({\X}\right) \otimes_{R(T)} K_0^T\left({\X}\right) \rightarrow K_0^T\left({\X}\right),
$$
where the first identification follows from the identity \eqref{eq2.6.2} for $\X$ and a similar identity for $\X$ $\times$ $\X$. Moreover, the image of the map $\p$ lands inside 
$K_0^T\left({\X}\right) \subset \left(K^0_T\left({\X}\right)^*\right)^*$ due to Remarks \ref{2.10}, where, for an $R(T)$-module $M$, 
$$
M^* := \Hom_{R(T)} (M,R(T)).
$$
Thus, $\p$ makes $K_0^T\left({\X}\right)$ into an $R(T)$-algebra. Its product is called the {\it Pontryagin product}. Let us write,  under the Pontryagin product, 
for $u,v \in \W'$,
\begin{equation}\label{3.1.1}
\left[\mathcal{O}_{X_u}\right]* \left[\mathcal{O}_{X_v}\right]= \sum_{ w\in \W'}b_{u,v}^w\left [\mathcal{O}_{X_w}\right]. 
\end{equation}

Then, by Remark \ref{2.10},}
 $$
 b_{u,v}^w =0 \quad \mbox{if } \ell(w) > \ell(u) +\ell(v).
 $$
 \end{definition}
 
Moreover, by Theorem \ref{2.4}, we get the following. Also, see [LSS, $\S$5.1], where they define their $K_T(\X)$ as the continuous dual of $K_T^{\top}(\X)$, which is equivalent to our definition of $K^T_0(\X)$ in view of  Theorem \ref{2.4}. 
 \begin{lemma} \label{3.2}
For any $u,v,w\in\W'$, 
$$
a_{u,v}^w = b_{u,v}^w. \qquad \qquad\qquad
\square
$$
  \end{lemma}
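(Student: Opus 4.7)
The plan is to obtain the equality by pairing $[\bar{\xi}^w]$ against $[\mathcal{O}_{X_u}]*[\mathcal{O}_{X_v}]$ and computing the result in two ways using Theorem \ref{2.4}. By the very construction of the Pontryagin product in Definition \ref{3.1}, the map $\p$ on $K_0^T$ is the transpose of the map $m^*$ on $K_T^{\top}$ under the nondegenerate pairing $\langle\,,\,\rangle$; equivalently, writing the adjunction formula out, for every $\xi \in K^0_T(\bar{\X})$ and every $\eta \in K_0^T(\X\times\X)$ we have
$$
\langle \tilde m^*(\xi),\,\eta \rangle_{\bar{\X}\times\bar{\X}} \;=\; \langle \xi,\,\p(\eta)\rangle_{\bar{\X}},
$$
where on the left we use the product pairing on $K^0_T(\bar{\X}\times\bar{\X})\otimes K_0^T(\X\times\X)$.

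First I would take $\xi = [\bar{\xi}^w]$ and $\eta = [\mathcal{O}_{X_u}]\otimes[\mathcal{O}_{X_v}]$. The right-hand side of the displayed identity then becomes $\langle [\bar{\xi}^w],\,[\mathcal{O}_{X_u}]*[\mathcal{O}_{X_v}]\rangle$; expanding in the basis \eqref{eq2.6.2} via \eqref{3.1.1} and applying Theorem \ref{2.4} collapses the sum to $b_{u,v}^w$. For the left-hand side, I would substitute the expansion
$$
m^*\bigl([\bar{\xi}^w]\bigr) \;=\; \sum_{u',v' \in \W'} a^w_{u',v'}\,[\bar{\xi}^{u'}]\otimes[\bar{\xi}^{v'}]
$$
and invoke the Künneth-type compatibility of the pairing on $\X\times\X$ with the factor pairings, which gives $\langle [\bar{\xi}^{u'}]\otimes[\bar{\xi}^{v'}],\,[\mathcal{O}_{X_u}]\otimes[\mathcal{O}_{X_v}]\rangle = \delta_{u',u}\delta_{v',v}$ by a second application of Theorem \ref{2.4}. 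The double sum then collapses to $a^w_{u,v}$, and comparing the two evaluations yields $a^w_{u,v} = b_{u,v}^w$.

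The main thing to check carefully is that all of this makes sense despite the fact that $K_T^{\top}(\X)$ is an infinite product (so $m^*[\bar{\xi}^w]$ is in a completed tensor product). Concretely, one must verify that pairing with the compactly supported class $[\mathcal{O}_{X_u}]\otimes[\mathcal{O}_{X_v}]$ sees only finitely many terms of $\sum_{u',v'} a^w_{u',v'}[\bar{\xi}^{u'}]\otimes[\bar{\xi}^{v'}]$; this follows from the fact that $[\mathcal{O}_{X_u}]$, $[\mathcal{O}_{X_v}]$ are supported in some $\X_n$ together with the inverse-limit description of $K_T^{\top}(\X\times\X)$ and the tensor-product formula above it in Definition \ref{2.6}. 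Beyond this finiteness check and the (standard) naturality of $\langle\,,\,\rangle$ with respect to external products, the argument is a formal unwinding of the definitions in Definition \ref{3.1}.
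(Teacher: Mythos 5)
Your argument is correct and is exactly the unwinding that the paper leaves implicit: Lemma \ref{3.2} is stated there with no written proof beyond the citation of Theorem \ref{2.4}, since the Pontryagin product $\p$ is by construction (Definition \ref{3.1}) the transpose of $\tilde m^*$ under the pairing in which $\{[\xi^w]\}$ and $\{[\mathcal{O}_{X_w}]\}$ are dual bases. Your adjunction identity, the two evaluations of $\langle [\bar{\xi}^w], [\mathcal{O}_{X_u}]*[\mathcal{O}_{X_v}]\rangle$, and the finiteness/Künneth remarks are precisely the intended justification, so there is nothing to add.
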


Thus, Conjecture \ref{2.9} translates to the following equivalent conjecture on the Pontryagin product in $K_0^T\left({\X}\right)$. 

\begin{conjecture}\label{3.3} Under the Pontryagin product as above, its structure constants $b_{u,v}^w$ satisfy 
$$
(-1)^{ \ell(u)+\ell(v)-\ell(w)}
\,\,b_{u,v}^w \in \mathbbm{Z}_+ \left[\left(e^{\alpha_1}-1\right), \ldots , \left(e^{\alpha_l}-1\right)\right] ,
$$
where $\mathbbm{Z}_+ \left[\left(e^{\alpha_1}-1\right), \ldots , \left(e^{\alpha_l}-1\right)\right]$ denotes polynomials in $\left(e^{\alpha_1}-1\right), \ldots , \left(e^{\alpha_l}-1\right)$ with non-negative integral coefficients.  $\square$
\end{conjecture}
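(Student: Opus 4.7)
The plan is essentially immediate once one invokes Lemma 3.2: that lemma asserts $a^w_{u,v} = b^w_{u,v}$ for every triple $u,v,w \in \W'$, so the positivity claim for $b^w_{u,v}$ in Conjecture 3.3 is literally the positivity claim for $a^w_{u,v}$ in Conjecture 2.9. Thus the deduction of Conjecture 3.3 from Conjecture 2.9 is purely formal, and the substantive content sits in Lemma 3.2.

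To verify Lemma 3.2 itself I would unwind the chain of dualities in Definition 3.1. By Theorem 2.4, $\{[\bar{\xi}^w]\}_{w\in\W'}$ and $\{[\mathcal{O}_{X_w}]\}_{w\in\W'}$ are dual $R(T)$-bases under $\langle\cdot,\cdot\rangle$, and hence under $\psi$; a K\"unneth-type argument yields the analogous duality $\tilde\psi$ on $\X\times\X$. By construction, $\p$ is the $R(T)$-linear transpose of $\tilde m^*$ under these identifications. Pairing
$$\tilde m^*([\bar{\xi}^w]) = \sum_{u,v} a^w_{u,v}\,[\bar{\xi}^u]\otimes[\bar{\xi}^v]$$
against $[\mathcal{O}_{X_u}]\otimes[\mathcal{O}_{X_v}]$ extracts $a^w_{u,v}$, while pairing $[\bar{\xi}^w]$ against $[\mathcal{O}_{X_u}]*[\mathcal{O}_{X_v}] = \sum_z b^z_{u,v}[\mathcal{O}_{X_z}]$ extracts $b^w_{u,v}$. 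These two readings must agree by the defining transpose relation between $\tilde m^*$ and $\p$, yielding $a^w_{u,v} = b^w_{u,v}$.

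As for the underlying Conjecture 2.9 (and hence 3.3) itself, the natural route is via the explicit formula in Theorem 1.7 combined with Theorem 1.5, which identifies the modified convolution product $\odot$ with the Pontryagin product so that $b^w_{u,v} = p^w_{u,v}$. This reduces the positivity claim to a statement about the coefficients $\bigl|D'_{i_1}\cdots \hat{\hat{D}}'_{i_{j_1}}\cdots \hat{\hat{D}}'_{i_{j_p}}\cdots D'_{i_n}(\bar{\zeta}^x)\bigr|$, where $\bar{\zeta}^x$ is the Graham--Kumar-positive class of $\mathcal{O}_{\mathring X^x}(-\partial \mathring X^x) \in K_T^0(X)$. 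The main obstacle is that the Demazure operators $D_i$ do \emph{not} individually preserve any obvious cone of polynomials in $e^{\alpha_j}-1$ with the predicted signs, and the subsequent projection $w \mapsto \bar w$ from $\W$ to $\W'$ causes many distinct subsets $\{j_1,\dots,j_p\}$ to contribute to the same Schubert class, forcing one to control intricate cancellations before any residual non-negativity can be read off. This is essentially the full open content of the conjecture.
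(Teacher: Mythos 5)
This statement is a conjecture, so the paper offers no proof of it; its only content here is the translation from Conjecture 2.9 via Lemma 3.2 (i.e.\ $a^w_{u,v}=b^w_{u,v}$, which rests on the duality of Theorem 2.4), and your proposal reproduces exactly that reduction, including the transpose argument the paper leaves implicit. Your further remarks correctly identify that the residual positivity itself is the open content, so your treatment matches the paper's approach.
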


\section{Pontryagin product in terms of convolution product}

Consider the diagram
$$
\begin{array}{lll}
\tilde{\X}:=& \G \times^{\B} \X & \xrightarrow{\mu} \X,\\
&\quad \downarrow \pi&\\
& \,\,\, \Y
\end{array}
$$
where $\Y:=\G/\B$, $\mu([g,x]):=g\cdot x$ and $\pi ([g,x]):= g\B$ for $g\in \G$ and $x\in \X$. 

Observe that both $\mu$ and $\pi$ are $\G$-equivariant morphisms under the left action of $\G$ on the spaces involved.
\begin{definition}\label{4.1} \rm{
Take $\B$-equivariant coherent sheaves $\mathcal{S}_1$ on $\Y$ and $\mathcal{S}_2$ on $\X$ supported in $p^{-1}(\X_n)$ and $\X_n$  respectively (for some $n>0$), where $p:\Y \rightarrow \X$ is the projection. Their {\it convolution product} is defined by 
$$
\mathcal{S}_1 \odot' \mathcal{S}_2 := \mu_! \left( \left( \pi^* \mathcal{S}_1\right) \otimes^L \left( \epsilon \boxtimes^{\B}\mathcal{S}_2\right)\right) \in K_0^\B \left(\X\right),
$$
where $\epsilon \boxtimes^{\B}\mathcal{S}_2$ denotes the sheaf on $\G\times^{\B} \X$ the pull-back of which to $\G \times \X$ is the product sheaf $\epsilon \boxtimes\mathcal{S}_2$ 
($\epsilon$ being the rank-1 trivial bundle over $\G$) (cf. [\cite{SGA1},  Chap. VIII, \S1]),
 $\otimes^L$ is the derived tensor product $\sum(-1)^i \Tor_i^{\mathcal{O}_{\tilde{\X}}}$ and $\mu_! :=\sum_i (-1)^i R^i \mu_*$.

Observe that $\left(\pi^* \mathcal{S}_1\right) \otimes^L \left(\epsilon \boxtimes^{\B}\mathcal{S}_2\right) $ is well defined; in fact, 
 \begin{equation} \label{4.1.1}\Tor_i^{\mathcal{O}_{\tilde{\X}}} 
 \left(\pi^* \mathcal{S}_1,\epsilon \boxtimes^{\B}\mathcal{S}_2\right)=0, \,\,\,\text{for all $i>0$},
 \end{equation}
  as can be easily seen by pulling the two sheaves to $\G$ $\times$ $\X$. 
Further, the sheaf $\left(\pi^* \mathcal{S}_1\right) \otimes_{\mathcal{O}_{\tilde{\X}}}
 \left(\epsilon \boxtimes^{\B}\mathcal{S}_2\right) $ has support in a projective variety (of finite dimension), and hence $\mu_!$ is well defined. 

Since $\mu_!$ and $\otimes^L$ both descend to corresponding $K$-groups, we get a well defined map
$$
\odot ': ~ K_0^\B(\Y) \otimes _\mathbbm{Z} K_0^\B \left(\X\right) \rightarrow \K_0^\B \left(\X\right). 
$$

Observe that $\odot '$ is $R(\B)$-linear in the first variable but, in general, {\it not} $R(\B)$-linear in the second variable but it is $R(\P)$-linear (cf. Corollary \ref{4.5}). }
\end{definition}

For generalities on convolution product, we refer to [\cite{CG}, \S5.2]. 
\begin{definition}\label{4.2}
\rm{
In any Coxeter group $\W$,  define the {\it Demazure product} $*$ for any $u\in \W$ and simple reflection $s_i$,
$$
u* s_i=
\left\{ \begin{array}{l}
u,\,\,\,  \mbox{if}\quad us_i <u\\
us_i,\,\,\, \mbox{if}\quad us_i >u.
\end{array}\right.
$$
This extends to an associative product by defining 
$$u*v= (\cdots (( u*s_{i_1})*s_{i_2})\cdots *s_{i_n})$$
 for a reduced decomposition $v=s_{i_1}\ldots s_{i_n}$. (It does not depend upon the choice of the reduced decomposition of $v$.)}
\end{definition}

\begin{proposition} \label{4.3}
For $u\in \W$ and $v\in \W'$,
$$
\left[\mathcal{O}_{X^\B_u}\right] \odot'\left [\mathcal{O}_{X_v}\right] =\left [\mathcal{O}_{X_{\overline{u*v}}}\right] \in K_0^B\left(\X\right) ,
$$
where ${X^\B_u}:= \overline{\B u\B/\B} \subset \Y$.

Observe that $u*v$ may not lie in $\W'$. We take its unique representative $\overline{u*v}$ in $\W'$. 
\end{proposition}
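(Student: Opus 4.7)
I plan to prove this by induction on $\ell(u)$, reducing to the case of a simple reflection. The base case $u = e$ is immediate: $X'_e = \B$, so $X'_e \times^\B X_v \cong X_v$ (the derived tensor product collapsing by the transversality observed in (4.1.1)), $\mu$ restricts to the embedding $X_v \hookrightarrow \X$, and $e * v = v$ gives the identity.

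The key base case $u = s_i$ amounts to analysing the proper morphism $\mu_i : P_i \times^\B X_w \to \X$, $[p,x] \mapsto p\cdot x$, for $w \in \W'$. If $s_i w < w$ in $\W$, then $X_w$ is $P_i$-stable and $\mu_i$ becomes a locally trivial $\mathbb{P}^1$-bundle over $X_w$; the vanishing $H^{>0}(\mathbb{P}^1, \mathcal{O}) = 0$ gives $R\mu_{i*}\mathcal{O} = \mathcal{O}_{X_w} = \mathcal{O}_{X_{\overline{s_i * w}}}$. If $s_i w > w$, the image of $\mu_i$ is $X_{\overline{s_i w}}$; here $P_i \times^\B X_w$ has rational singularities (as a $\mathbb{P}^1$-bundle over the Cohen--Macaulay Schubert variety $X_w$), and one verifies $R\mu_{i*}\mathcal{O} = \mathcal{O}_{X_{\overline{s_i w}}}$ via a direct cohomology-vanishing argument on the generic fibre, combined with the rational singularities of the target [\cite{Ku-1}, Thm.~8.2.2(c)].

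For the inductive step with $\ell(u) \geq 2$, I fix a reduced decomposition $u = s_i u''$ and form the relative Bott--Samelson space $W := P_i \times^\B X'_{u''} \times^\B X_v$, together with the one-step resolution $\theta : W \to X'_u \times^\B X_v$, $[p,g,x] \mapsto [pg, x]$, and the product map $\tilde{\mu} : W \to \X$, $[p,g,x] \mapsto pgx$, satisfying $\tilde{\mu} = \mu \circ \theta$. Rational singularities of Schubert varieties in $\Y$ (applied relatively to the bundle $X'_u \times^\B X_v \to X^\B_u$) give $R\theta_* \mathcal{O}_W = \mathcal{O}_{X'_u \times^\B X_v}$, so $[\mathcal{O}_{X^\B_u}] \odot' [\mathcal{O}_{X_v}] = \tilde{\mu}_! [\mathcal{O}_W]$. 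Factoring instead through the tail,
\[
W = P_i \times^\B (X'_{u''} \times^\B X_v) \xrightarrow{\mathrm{id} \times^\B m'} P_i \times^\B \X \xrightarrow{\mu_i} \X
\]
(where $m'$ is the product map on the tail), the inductive hypothesis gives $m'_! [\mathcal{O}_{X'_{u''} \times^\B X_v}] = [\mathcal{O}_{X_{\overline{u''*v}}}]$, and then the single-reflection case applied to $w = \overline{u''*v}$ yields $\tilde{\mu}_! [\mathcal{O}_W] = [\mathcal{O}_{X_{\overline{s_i * \overline{u''*v}}}}]$. The identity $\overline{s_i * \overline{u''*v}} = \overline{u*v}$ follows from the associativity of the Demazure product together with the fact that if $w = \overline{w} \cdot y$ with $y \in W$, then $s_i * w = (s_i * \overline{w})*y$ lies in the coset $(s_i * \overline{w})W$, so $\overline{s_i * w} = \overline{s_i * \overline{w}}$.

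I expect the main obstacle to be the subcase $s_i w > w$ with $s_i w \notin \W'$ inside the base case: here $\mu_i$ is no longer birational, the generic fibre has positive dimension $\ell(s_i w) - \ell(\overline{s_i w})$ arising from the difference between the orbit $\B s_i w \underline{o}$ in $\X$ and the longer element $s_i w \in \W$, and establishing $R\mu_{i*}\mathcal{O} = \mathcal{O}_{X_{\overline{s_i w}}}$ rather than a more complicated derived object requires a careful Kempf--Ramanathan-type vanishing on the Schubert-variety fibres of the projection $\Y \to \X$, together with the rational singularity input for both source and target.
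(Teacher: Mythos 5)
Your overall strategy (induction on $\ell(u)$, peeling off one simple reflection, with rational singularities of Schubert varieties as the cohomological input) is viable, and your combinatorial reduction $\overline{s_i*\overline{u''*v}}=\overline{u*v}$ is correct. But the one place you yourself flag as unresolved — the subcase $s_iw>w$ with $s_iw\notin\W'$ in the base case — is a genuine gap as written, and moreover you misdiagnose it: no Kempf--Ramanathan-type vanishing on positive-dimensional Schubert fibres of $\Y\to\X$ is needed. If $w\in\W'$, $s_iw>w$ and $s_iw\notin\W'$, then $s_iw=ws_j$ for a finite simple reflection $s_j\in W$, so $s_iw\,\underline{o}=w\,\underline{o}$ and $\B s_i\B w\P=\B s_iw\P=\B w\P$; hence $X_w$ is $\P_i$-stable and the untwisting $[p,x]\mapsto(p\B,px)$ identifies $\mu_i$ with the projection $\P_i/\B\times X_w\to X_w$, exactly as in your case $s_iw<w$ — the fibres are $\mathbb{P}^1$'s and $R\mu_{i*}\mathcal{O}=\mathcal{O}_{X_w}=\mathcal{O}_{X_{\overline{s_i*w}}}$ is immediate. (In the remaining case $s_iw>w$, $s_iw\in\W'$, your birational argument is fine: compose with a resolution of $\P_i\times^\B X_w$ and use rational singularities of the target $X_{s_iw}$; note only that $\P_i\times^\B X_w$ is an $X_w$-bundle over $\P_i/\B$, not a $\mathbb{P}^1$-bundle over $X_w$, though the rational-singularity conclusion is unaffected.) In the inductive step you should also make explicit the sheaf-level facts you are using, namely $R\theta_*\mathcal{O}_W=\mathcal{O}_{X_u'\times^\B X_v}$ (the one-step relative BSDH statement) and the compatibility of proper pushforward with the $\P_i\times^{\B}(-)$ twist, so that $\tilde\mu_!=\mu_{i!}\circ(\mathrm{id}\times^\B m')_!$ can be fed the inductive hypothesis; these are standard but are the actual content.

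For comparison, the paper avoids the induction and the case analysis entirely: after the same identification $\left[\mathcal{O}_{X_u^\B}\right]\odot'\left[\mathcal{O}_{X_v}\right]=\mu_!\left(\mathcal{O}_{\tilde p^{-1}(X_u^\B)\times^\B X_v}\right)$, it resolves the source in one shot by the Bott--Samelson variety $\mathcal{Z}_u'\times^\B\mathcal{Z}_v$, uses that the image of such a BSDH map is $X_{\overline{u*v}}$ and the vanishing theorem [Ku-1, Theorem 8.1.13] together with normality of Schubert varieties [Ku-1, Theorem 8.3.2(b)] for both the composite map and the resolution map, and concludes with the Grothendieck spectral sequence. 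Your route replaces that single citation by repeated one-step arguments and the $\P_i$-stability dichotomy; it buys a more self-contained, hands-on proof of the base case, at the cost of having to carry the relative one-step vanishing through the induction — once the subcase above is fixed as indicated, it goes through.
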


\begin{proof}
As observed in identity \eqref{4.1.1}, following its notation, 
$$
\Tor^{\mathcal{O}_{\tilde\X}}_i \left(\pi^* \mathcal{O}_{X_u^\B}, \epsilon\boxtimes^{\B}\mathcal{O}_{X_v}\right)=0,  \quad \mbox{for}\quad i>0.
$$

Further, 
\begin{eqnarray}\label{eq4.3.1}
\left(\pi^* \mathcal{O}_{X_u^\B}\right) \otimes _{\mathcal{O}_{\tilde\X}}\left(\epsilon\boxtimes^{\B}\mathcal{O}_{X_v}\right) &=&  \left(\mathcal{O}_{\tilde{p}^{-1}\left(X_u^\B\right)}\boxtimes^{\B}\mathcal{O}_{\X}\right) \otimes
\left(\mathcal{O}_\G\boxtimes^{\B}\mathcal{O}_{X_v}\right)\nonumber\\
&=&\mathcal{O}_{\tilde{p}^{-1}\left(X_u^\B\right)}\boxtimes^{\B}\mathcal{O}_{X_v},
\end{eqnarray}
where $\tilde{p}:\G\rightarrow \Y$ is the standard projection. 

Thus,
\begin{equation}\label{eq4.3.2}
\left[\mathcal{O}_{X_u^\B}\right] \odot' \left[\mathcal{O}_{X_v}\right] = \mu_!\left(\mathcal{O}_{\tilde{p}^{-1}\left(X_u^\B\right)}\times^{\B}\mathcal{O}_{X_v}\right)=
\mu_! \left(\mathcal{O}_{(\tilde{p}^{-1}\left(X_u^\B\right)\boxtimes^{\B} X_v)}\right) .
\end{equation}

Take a reduced decomposition
$u=s_{i_1}\ldots s_{i_n}$, where $\{s_i\}_{0\leq i\leq l}$ are the simple reflections of $\W$. 
Let
$$\mathcal{Z}_u' := \P_{i_1}\times^\B \P_{i_2}\times ^\B \ldots \times ^\B \P_{i_n}$$
 be the BSDH (Bott-Samelson-Demazure-Hansen) variety, where $\P_i\supset \B$ is the minimal parabolic subgroup of $\G$ containing $s_i$ (cf. [\cite{Ku-1}, \S 7.1.3]). Then, we have a morphism
$$
\beta_u':\mathcal{Z}'_u\rightarrow \tilde{p}^{-1}\left(X_u^\B\right),\,\,\,[p_1,\ldots ,p_n] \mapsto p_1p_2 \ldots p_n, \mbox { for } p_j\in \P_{i_j}. 
 $$
 Similarly, let $\beta_v:\mathcal{Z}_{v}\rightarrow X_v$ be a BSDH desingularization (cf. [Ku-1, $\S$7.1.3]). Then, we have the commutative diagram induced from the morphisms $\beta'_u$ and $\beta_v$: 
$$
\begin{array}{ll}
\mathcal{Z}_{u,v}= \mathcal{Z}'_u & \times^\B \mathcal{Z}_v \xrightarrow{\beta_u' \times \beta_v} \tilde{p}^{-1}\left(X^\B_u\right) \times^\B X_v\\
&\rotatebox[origin=c]{140}{$\xleftarrow{{\beta_{u,v}}}$} 
\phantom{X^\B  \underrightarrow{\beta_u' \times \beta_v} }\downarrow^\mu\\
&\phantom{= X^\B \mathcal{Z}_v \underrightarrow{\beta_u' \times \beta_v} } X_{\overline{u*v}} .
\end{array}
$$
Observe that, for any sequence of simple reflections  $\underline{s} = (s_{j_1},\ldots,s_{j_m})$ in $\W$, 
\begin{equation}\label{eq4.3.3}
\Image \left(\beta_{\underline{s}}\right)= X_{\overline{s_{j_1}*s_{j_2}* \ldots * s_{j_m}}}\mbox{ (cf. [\cite{Ku-1}, Theorem 5.1.3 and Definition 7.1.13])}.
\end{equation}
By [\cite{Ku-1}, Theorem 8.1.13] for $M=\mathbbm{C}$,
\begin{equation}\label{eq4.3.4}
R^i \beta_* \left({\mathcal{O}_{\mathcal{Z}}}_{u,v}\right)=0, \quad\mbox{for } i>0 
\end{equation}
and 
\begin{equation}\label{eq4.3.5}
\beta_* \left({\mathcal{O}_{\mathcal{Z}}}_{u,v}\right)= \mathcal{O}_{X_{\overline{u*v}}},\quad \mbox{where } \beta:= \beta_{u,v},
\end{equation}
since $X_{\overline{u*v}}$ is normal by [\cite{Ku-1}, Theorem 8.3.2(b)]. A similar property as \eqref{eq4.3.4}  and \eqref{eq4.3.5} is true for the morphism $\beta_u'\times \beta_v$. Thus, by the Grothendieck spectral sequence for the composition of two functors (cf. [\cite{Ja}, Part I, Proposition 4.1]), we get 
$$
\left(R^i \mu_*\right)\left(\mathcal{O}_{(\tilde{p}^{-1}(X_u^\B)\times^\B X_v})\right) =
\left\{ \begin{array}{l}
0, \qquad for ~i>0\\
\mathcal{O}_{X_{\overline{u*v}}}, \qquad for ~i=0.
\end{array}\right.
$$
This proves the proposition by using \eqref{eq4.3.2}. $\square$
\end{proof}

As before, let $\B\subset \P_i ~(0\leq i \leq l)$ denote the minimal parabolic subgroup of $\G$ containing the simple reflection $s_i$. 
\begin{proposition} \label{prop4.4}
Let $\mu_i:\P_i\times^\B * \rightarrow\P_i/\B$ be the map $[p,*]\mapsto p\B$,  for $p\in \P_i$. Then, for any character $e^\lambda$ of $\B$,
$$
(\mu_i)_! \left(\mathcal{O}_{X_i}\boxtimes^\B e^\lambda\right)= e^{s_i\lambda}\left[\mathcal{O}_{X_i}\right]+ \left(\frac{e^\lambda- e^{s_i\lambda}}{1-e^{\alpha_i}}\right)\left[\mathcal{O}_{e}\right]\in K_0^\B(X_i),
$$
where $X_i:=\P_i/\B \simeq \mathbb{P}^1$.

Here $s_0$ is thought of as $s_\theta$ (reflection corresponding to the highest root $\theta$ of $G$) and $\alpha_0:=-\theta$. Observe that $\mu_{i!} \left(\mathcal{O}_{X_i}\boxtimes^\B e^\lambda\right)= \L_{X_i}(-\lambda)$. 
\end{proposition}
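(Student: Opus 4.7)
The plan is to first reduce the claim to an identity in $K_0^T(X_i)$ for $X_i\cong \mathbb{P}^1$, and then verify that identity by restriction to the two $T$-fixed points $\{e, s_i\B\}$.

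For the reduction, the map $[p,\ast]\mapsto p\B$ is a canonical isomorphism $\P_i\times^\B\ast\xrightarrow{\sim}X_i$ under which $\mu_i$ becomes the identity and $\mathcal{O}_{X_i}\boxtimes^\B e^\lambda$ is the homogeneous line bundle $\mathcal{L}_{X_i}(-\lambda)$ (as the parenthetical remark after the proposition already records). So one is left with verifying
\[
[\mathcal{L}_{X_i}(-\lambda)] \;=\; e^{s_i\lambda}[\mathcal{O}_{X_i}] + \frac{e^\lambda - e^{s_i\lambda}}{1-e^{\alpha_i}}[\mathcal{O}_e]
\]
in $K_0^\B(X_i)=K_0^T(X_i)$; the right-hand side is well-defined in $R(T)$ because $\lambda - s_i\lambda\in\mathbb{Z}\alpha_i$ forces $(1-e^{\alpha_i})\mid(e^\lambda-e^{s_i\lambda})$.

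For the verification, $\{[\mathcal{O}_{X_i}],[\mathcal{O}_e]\}$ is a free $R(T)$-basis of $K_0^T(X_i)$ (the rank-two analogue of \eqref{eq2.6.2}); on this basis, the restriction map to $X_i^T$ has matrix $\bigl(\begin{smallmatrix} 1 & 1-e^{\alpha_i}\\ 1 & 0\end{smallmatrix}\bigr)$, invertible over $\mathrm{Frac}\,R(T)$, so any class is determined by its fixed-point restrictions. The paper's convention $\mathcal{L}(-\lambda)=\G\times^\B\mathbb{C}_{e^\lambda}$ gives $[\mathcal{L}_{X_i}(-\lambda)]|_e = e^\lambda$ and $[\mathcal{L}_{X_i}(-\lambda)]|_{s_i\B}=e^{s_i\lambda}$; matching at $s_i\B$ then forces the $[\mathcal{O}_{X_i}]$-coefficient to be $e^{s_i\lambda}$, and matching at $e$ forces the $[\mathcal{O}_e]$-coefficient to be $(e^\lambda-e^{s_i\lambda})/(1-e^{\alpha_i})$.

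The main subtlety is purely the bookkeeping of equivariant weights: identifying the tangent weight of $X_i$ at $e$ as $-\alpha_i$ so that the Koszul resolution yields $[\mathcal{O}_e]|_e=1-e^{\alpha_i}$ (and $[\mathcal{O}_e]|_{s_i\B}=0$), and aligning the sign convention for $\mathcal{L}(-\lambda)$ so that its fiber weights at $e$ and $s_i\B$ are $\lambda$ and $s_i\lambda$ (rather than their negatives). With those pinned down, the proposition reduces to a two-line fixed-point check.
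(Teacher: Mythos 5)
Your proof is correct, and it reaches the formula by a genuinely different route than the paper. You argue by localization at the two $T$-fixed points: since $\{[\mathcal{O}_{X_i}],[\mathcal{O}_e]\}$ is a free $R(T)$-basis of $K_0^\B(X_i)=K_0^T(X_i)$ and its restriction matrix to $\{e,s_i\B\}$ has determinant $\pm\left(1-e^{\alpha_i}\right)\neq 0$ in the integral domain $R(T)$, a class is determined by its fixed-point restrictions; the fiber weights $e^{\lambda}$ at $e$ and $e^{s_i\lambda}$ at $s_i\B$ for $\L_{X_i}(-\lambda)$ agree with the paper's own convention (in the proof of Lemma \ref{5.13} the localization of $\L(\lambda)$ at $x$ is $e^{-x\lambda}$), and $[\mathcal{O}_e]$ restricts to $1-e^{\alpha_i}$ at $e$ and to $0$ at $s_i\B$ (the tangent weight at $e$ is $-\alpha_i$, which for $i=0$ is $\theta$, so the convention $\alpha_0=-\theta$ makes this uniform), which pins down the two coefficients exactly as stated. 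The paper instead writes $[\L_{X_i}(\lambda)]=a_\lambda[\mathcal{O}_{X_i}]+b_\lambda[\mathcal{O}_e]$, multiplies by $\L_{X_i}(\mu)$ for characters $e^\mu$ with $\mu(\alpha_i^\vee)>0$ and $(\lambda+\mu)(\alpha_i^\vee)\geq 0$, and compares $T$-equivariant Euler characteristics computed by Borel--Weil for $\SL_2$, checking that $a_\lambda=e^{-s_i\lambda}$ and $b_\lambda=\left(e^{-\lambda}-e^{-s_i\lambda}\right)/\left(1-e^{\alpha_i}\right)$ satisfy the resulting identity for all such $\mu$. Your route is shorter and makes the weight conventions explicit, at the price of needing the fixed-point restrictions of $[\mathcal{O}_e]$ and the injectivity of restriction to $X_i^T$, which you supply directly via the invertible matrix rather than citing a localization theorem; the paper's route uses only global Euler characteristics of line bundles on $\mathbb{P}^1$ and $\chi_T[\mathcal{O}_e]=e^0$, though it leaves implicit the (easy) uniqueness of the pair $(a_\lambda,b_\lambda)$ solving its equation (4.4.5) as $\mu$ varies.
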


\begin{proof}
Write in $K_0^\B(X_i)$ 
$$
\left[\L_{X_i}(\lambda)\right] = a_\lambda\left[\mathcal{O}_{X_i} \right] + b_\lambda \left[\mathcal{O}_e\right]
,\mbox{for } a_\lambda,b_\lambda \in K_0^\B (*).
$$
Take a character $e^\mu$ of $\B$ such that $m:= \mu(\alpha_i^\vee)>0$ and $n+m\geq 0$, where $n:=\lambda \left(\alpha_i^\vee\right)$ and $\alpha_0^\vee:=-\theta^\vee$. Then, 
\begin{equation}\label{4.4.1}
 \left[\L_{X_i}(\lambda+\mu)\right] = a_\lambda\left[\L_{X_i}(\mu) \right] + b_\lambda e^{-\mu} \in K_0^\B(X_i).
\end{equation}
By the Borel-Weil theorem for $\SL_2$,
\begin{equation}\label{4.4.2}
\chi_T\left(\mathcal{L}_{X_i}(\lambda+\mu)\right) =e^{-(\lambda +\mu)}+e^{-(\lambda+\mu)+\alpha_i} +\cdots +e^{-(\lambda+\mu)+(m+n)\alpha_i}.
\end{equation}
Similarly, 
\begin{equation}\label{4.4.3}
\chi_T(\L _{X_i}(\mu))=e^{-\mu} +e^{-\mu+\alpha_i}+\cdots + e^{-\mu+m\alpha_i},
\end{equation}
and 
\begin{equation}\label{4.4.4}
\chi_T[\mathcal{O}_e]=e^o.
\end{equation}
By equations \eqref{4.4.1}- \eqref{4.4.3},
\begin{equation}\label{4.4.5}
e^{-\left(\lambda+\mu\right)}\left[1+e^{\alpha_i}+\cdots + e^{\left(m+n\right)\alpha_i}\right]= a_\lambda e^{-\mu} \left[1+ e^{\alpha_i}+\cdots + e^{m\alpha_i}\right]+ b_\lambda e^{-\mu}.
\end{equation}
Take $a_\lambda=e^{-s_i\lambda}=e^{-\lambda+n\alpha_i}$ and 
$$
b_\lambda = \frac{e^{-\lambda}-e^{-s_i\lambda}}{1-e^{\alpha_i}} = e^{-\lambda}\left(\frac{1-e^{n\alpha_i}}{1-e^{\alpha_i}}\right).
$$
Then, considering the two cases $n>0$ and $n\leq0$ separately, it is easy to see that with the above choices of $a_\lambda$ and $b_\lambda$, the equation \eqref{4.4.5} is satisfied for all $\mu$ chosen as above. This proves the proposition. 
\end{proof}

The following corollary follows immediately from Proposition \ref{prop4.4}.

\begin{corollary} \label{4.5}
For  $b_0 \in K_0^\P (*) = K_0^G (*)$,
$$
\left(\mu_i\right)_! \left( \mathcal{O}_{X_i} \boxtimes^\B b_0\right) =b_0 \left[\mathcal{O}_{X_i}\right] \in K_0^\B\left(X_i\right).
$$

Thus, following the proof of Proposition \ref{4.3}, we get that for any $a\in K_0^\B\left(\Y \right)$ and $b\in K_0^\B\left(\X\right)$,
$$
a\odot' \left(b_0\cdot b\right) = b_0a\odot' b. \qquad\qquad\qquad \square
$$
\end{corollary}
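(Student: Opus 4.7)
The plan is to treat the two assertions in sequence. The first is a direct $\mathbb{Z}$-linear extension of Proposition~\ref{prop4.4}; the second is a BSDH-based iteration of the first, running parallel to the proof of Proposition~\ref{4.3}.

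For the first identity, I would write $b_0=\sum_\lambda c_\lambda e^\lambda$ with $c_\lambda\in\mathbb{Z}$, and sum Proposition~\ref{prop4.4} term-by-term to obtain
$$(\mu_i)_!\bigl(\mathcal{O}_{X_i}\boxtimes^\B b_0\bigr) \;=\; (s_i b_0)\,[\mathcal{O}_{X_i}] \;+\; \frac{b_0-s_i b_0}{1-e^{\alpha_i}}\,[\mathcal{O}_e].$$
Since $b_0\in K_0^\P(*)=K_0^G(*)=R(T)^W$ is $W$-invariant, and each simple reflection $s_i$ (for $0\le i\le l$, with $s_0=s_\theta$ the reflection in the highest root) lies in $W$, one has $s_i b_0=b_0$ for every $i$. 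The second term therefore vanishes and the first collapses to $b_0\,[\mathcal{O}_{X_i}]$, establishing the claim.

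For the second identity, I would first use $\mathbb{Z}$-linearity in the first slot to reduce to the case $a=[\mathcal{O}_{X_u^\B}]$ for some $u\in\W$, and then follow the BSDH strategy of the proof of Proposition~\ref{4.3}. Picking a reduced expression $u=s_{i_1}\cdots s_{i_n}$ and forming $\mathcal{Z}_u'=\P_{i_1}\times^\B\cdots\times^\B\P_{i_n}$ together with its resolution $\beta_u'$, the same manipulation that gave identity~\eqref{eq4.3.2} realizes $[\mathcal{O}_{X_u^\B}]\odot' c$, for any $c\in K_0^\B(\X)$, as the pushforward of $\epsilon\boxtimes^\B\cdots\boxtimes^\B\epsilon\boxtimes^\B c$ along the tower of $\mathbb{P}^1$-fibrations
$$\mathcal{Z}_u'\times^\B\X \longrightarrow \P_{i_1}\times^\B\cdots\times^\B\P_{i_{n-1}}\times^\B\X \longrightarrow \cdots \longrightarrow \P_{i_1}\times^\B\X \longrightarrow \X.$$
Setting $c=b_0\cdot b$ and applying the relative form of the first identity at each of the $n$ stages --- valid because $b_0$, as a $\P$-module, is in particular a $\P_{i_j}$-module and thus contributes a trivial $\P_{i_j}$-equivariant bundle pulling back consistently through the tower, so that the projection formula extracts $b_0$ from each pushforward --- yields, after iteration, $b_0\cdot([\mathcal{O}_{X_u^\B}]\odot' b)$, as required.

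The main point of care is the justification of this ``relative first identity'' at each $\P_{i_j}/\B$-stage: one must check that the projection formula genuinely applies, namely that $b_0$ is identified with the pullback of a trivial $\P_{i_j}$-equivariant bundle along the structure map of the corresponding fibration. This is automatic from the $\P$-equivariance of $b_0$, and so the argument closes cleanly.
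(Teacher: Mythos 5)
Your treatment of the first identity is correct and is exactly the paper's intended deduction from Proposition \ref{prop4.4}: expand $b_0$ into $T$-characters, and use that $b_0\in K_0^{\P}(*)=R(G)\simeq R(T)^W$ is fixed by each $s_i$ (with $s_0$ acting through $s_\theta\in W$), so the Demazure correction term vanishes. Your plan for the second identity --- reduce to $a=[\mathcal{O}_{X_u^{\B}}]$ by linearity in the first variable (note this uses $R(\B)$-linearity, not just $\mathbb{Z}$-linearity, since the Schubert classes span $K_0^{\B}(\Y)$ only over $R(T)$), realize $[\mathcal{O}_{X_u^{\B}}]\odot' c$ as a pushforward along the BSDH tower of $\P_{i_j}/\B\simeq\mathbb{P}^1$-fibrations as in the proof of Proposition \ref{4.3}, and extract $b_0$ stage by stage --- is also the route the paper has in mind with the phrase ``following the proof of Proposition \ref{4.3}''.

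However, the justification you single out as the main point of care contains a genuine error: you claim that $b_0$, ``as a $\P$-module, is in particular a $\P_{i_j}$-module.'' This fails exactly at the affine node, i.e.\ whenever $i_j=0$: the minimal parabolic $\P_0$ is \emph{not} contained in $\P=G[[t]]$ (it contains the root subgroups $x_{\theta}(ct^{-1})$), so a $\P$-module does not restrict to a $\P_0$-module, and a $G$-representation need not extend to $\P_0$. Since a reduced word of a general $u\in\W$ involves $s_0$ --- the case of actual interest on the affine Grassmannian --- your argument as written does not cover those stages of the tower. The input you actually need at each stage is weaker and is available: only that the virtual $T$-character of $b_0$ is $s_{i_j}$-invariant for every $0\le i_j\le l$, which holds because $b_0\in R(T)^W$ and $s_0$ acts on $R(T)$ via $s_\theta\in W$. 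Concretely, either apply your first displayed identity (valid for all $0\le i\le l$, including $i=0$) fiberwise over the base of each $\P_{i_j}/\B$-fibration, so that the correction term of Proposition \ref{prop4.4} vanishes and $b_0$ passes through each pushforward by the projection formula; or observe that $b_0|_{R(T)}\in R(T)^{s_{i_j}}$ lies in the image of $R(\P_{i_j})\to R(T)$ (via $R(\P_{i_j})\simeq R(T)^{s_{i_j}}$ for the rank-one Levi), which is the correct replacement for the false restriction-from-$\P$ argument. With that substitution your induction down the tower closes and coincides with the paper's argument.
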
 

We write the product in $R\left(\B\right)=R(T)$ additively by writing the character $\lambda$ of $\B$ as $e^\lambda$. 
\begin{definition}\label{4.6}
\rm{
 Let $\left\{\omega_i\right\}_{1\leq i\leq l}$ be the fundamental weights of $G$. 
Since $X:=G/B$ is smooth, we have 
$$
K_G^0 \left(X\right)\simeq K_0^G \left(X\right). 
$$
By [\cite{CG}, \S5.2.16],
\begin{equation}\label{4.6.1}
K_0^G\left(X\right) \simeq K_0^G \left( G\times^B *\right) \simeq K_0^B(*) \simeq R(B) \simeq R(T).
\end{equation}
The isomorphism $R(B)\xrightarrow{\sim} K_0^G (X)$ can explicitly be given as 
\begin{equation}\label{4.6.2}
e^\lambda \mapsto \left[\L\left(-\lambda\right)\right],\mbox{ for a character $e^\lambda$ of $T$} ,
\end{equation}
 where $\L\left(-\lambda\right)$ is the homogeneous line bundle over $X$ associated to the principal $B$-bundle $G\rightarrow X$ via the character $e^\lambda$. 
 
By Steinberg [\cite{St}, Theorem 2.2], $R(T)$ is a free $R(G)= R(T)^W$-module (under multiplication) with a basis
$$
\left\{ e^{\delta_x} := x^{-1} \Pi_{\alpha_i: x^{-1} \alpha_i <0} \,\,e^{\omega_i}\right\}_{x\in W}.
$$
Thus, $\left\{\L\left(-\delta_x\right)\right\}_{x\in W}$ is a basis of $K_0^G (X)$
as a $K_0^G (*)\simeq R(T) ^W$-module. 

The above identification \eqref{4.6.1}  easily translates to the identification:
$$
R(T)\simeq R\left(\B\right) \simeq K_0^\P \left(\P/\B\right) , \quad e^\lambda \mapsto \L\left(-\lambda\right)
$$
thought of as a $\P$-equivariant line bundle over $\P/\B$
corresponding to the character $e^{\lambda}$ of $\B$ (equivalently a character of $T$). 

By an analogue of Theorem \ref{2.4} for $X$, we get that the pairing 
$$
\langle~,~\rangle :~ K_G^0 \left(X\right) \otimes_{K_G^0 \left(*\right) } K_G^0 \left(X\right)  \rightarrow K_G^0 \left(*\right) \simeq R(T)^W
$$
induced by 
$$
\langle V_1,~V_2\rangle= \chi_G\left(V_1\otimes V_2\right),
$$
for $G$-equivariant vector bundles $V_1$ and $V_2$ (over $X$)  is non-singular, where $\chi_G$ denotes the $G$-equivariant Euler-Poincar\'e characteristic. 

Let $\left\{\L_x:=\L\left(-\delta_x\right)\right\}_{x\in W}$ be the Steinberg basis of $K_0^\P\left(\P/\B\right)\simeq K_\P^0\left(\P/\B\right)$ (since $\P/\B \simeq X$ is smooth) over $K_0^\P (*)$ and 
let $\left\{\L^x\right\}_{x\in W}$ be the dual basis of $K_0^\P\left(\P/\B\right)$ under the above pairing.}
\end{definition}

Let $\Delta \in K_0^\P \left(\P/\B \times \P/\B\right)$ be the diagonal class, i.e., $\Delta$ is the class of the coherent sheaf $\mathcal{O}_D$, where $D\subset \P/\B \times \P/\B $ is the diagonal variety.

\begin{lemma} \label{4.7}
With the notation as above
$$
\Delta =\sum_{x\in W}\L_x \boxtimes \L^x \in K_0^\P \left(\P/\B \times \P/\B\right). 
$$
\end{lemma}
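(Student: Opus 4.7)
The plan is to pair both sides of the claimed identity against an arbitrary test element of the form $\alpha\boxtimes \beta$ and invoke non-degeneracy of the K\"unneth pairing on $K_0^\P(\P/\B\times \P/\B)$.

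First I would set up the product pairing. Since $\P/\B\simeq X$ is projective smooth and $K_0^\P(\P/\B)$ is a free module over $K_0^\P(*)\simeq R(T)^W$ on the Steinberg basis $\{\L_x\}_{x\in W}$, the K\"unneth theorem gives
$$
K_0^\P(\P/\B\times \P/\B)\simeq K_0^\P(\P/\B)\otimes_{K_0^\P(*)}K_0^\P(\P/\B),
$$
with basis $\{\L_x\boxtimes \L_y\}_{x,y\in W}$. The Euler pairing $\langle V_1,V_2\rangle=\chi_\P(V_1\otimes V_2)$ on $(\P/\B)^2$ factors as $\langle \alpha_1\boxtimes\beta_1,\alpha_2\boxtimes\beta_2\rangle=\langle\alpha_1,\alpha_2\rangle\langle\beta_1,\beta_2\rangle$ (by the external projection formula), and hence is again non-singular. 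It therefore suffices to verify that $\Delta$ and $\sum_x\L_x\boxtimes\L^x$ pair identically against every class of the form $\alpha\boxtimes\beta$.

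Next I would compute the diagonal side. Writing $i:\P/\B\hookrightarrow(\P/\B)^2$ for the diagonal embedding, we have $\Delta=[i_*\mathcal{O}_{\P/\B}]$ and $i^*(\alpha\boxtimes\beta)=\alpha\otimes\beta$, so the projection formula yields
$$
\chi_\P\bigl(\Delta\cdot(\alpha\boxtimes\beta)\bigr)=\chi_\P\bigl(i^*(\alpha\boxtimes\beta)\bigr)=\chi_\P(\alpha\otimes\beta)=\langle\alpha,\beta\rangle.
$$
For the right-hand side,
$$
\chi_\P\Bigl(\bigl(\textstyle\sum_x\L_x\boxtimes\L^x\bigr)\cdot(\alpha\boxtimes\beta)\Bigr)=\sum_x\langle\L_x,\alpha\rangle\,\langle\L^x,\beta\rangle.
$$
Expanding $\beta=\sum_y c_y\L_y$ with $c_y=\langle\L^y,\beta\rangle$ (using the duality $\langle\L_y,\L^z\rangle=\delta_{y,z}$) and applying bilinearity together with the symmetry $\langle\alpha,\beta\rangle=\langle\beta,\alpha\rangle$ of the Euler pairing, this telescopes to
$$
\bigl\langle \textstyle\sum_x c_x\L_x,\alpha\bigr\rangle=\langle\beta,\alpha\rangle=\langle\alpha,\beta\rangle.
$$

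The two expressions agree for all $\alpha,\beta$, so by non-degeneracy of the K\"unneth pairing the identity $\Delta=\sum_x\L_x\boxtimes\L^x$ follows. The only nontrivial ingredient is the diagonal identity $\chi_\P(\Delta\cdot(\alpha\boxtimes\beta))=\langle\alpha,\beta\rangle$; everything else is a routine dual-basis manipulation. K\"unneth and the freeness of $K_0^\P(\P/\B)$ over $R(T)^W$ are standard for the smooth projective flag variety $\P/\B\simeq X$ and are already used implicitly in Definition~\ref{4.6}.
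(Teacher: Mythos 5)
Your proof is correct and follows essentially the same route as the paper's: the paper likewise tests both classes against external products (there, line bundle classes $\L(\lambda)\boxtimes\L(\mu)$, which span), computes the diagonal pairing as $\chi_\P(\L(\lambda+\mu))$ and collapses $\sum_x\langle\L_x,\cdot\rangle\langle\L^x,\cdot\rangle$ by the dual-basis property, then concludes by spanning/non-degeneracy. Your use of general classes $\alpha\boxtimes\beta$ and the explicit projection-formula step for the diagonal are only cosmetic variations.
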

\begin{proof}
Take any $\P$-homogeneous line bundles $\L\left(\lambda\right)$ and $\L\left(\mu\right)$ over $X=\P/\B$. Then, 
\begin{equation}\label{4.7.1}
\chi_{\P}\left(\mathcal{O}_D\otimes \left(\L\left(\lambda\right)\boxtimes \L\left(\mu\right)\right)\right) = \chi_\P \left(\L\left(\lambda+\mu\right)\right).
\end{equation}

Further, 
\begin{eqnarray}\label{4.7.2}
&&\chi_{\P}\left(\sum_{x\in W}\left(\L_x \boxtimes \L^x \right) \otimes \left(\L\left(\lambda\right)\boxtimes \L\left(\mu\right)\right)\right)\nonumber\\
&&\qquad\qquad\qquad=\sum_{x\in W}\chi_{\P}\left(\left(\L_x \otimes\L(\lambda)\right)\boxtimes \left(\L^x\otimes\L(\mu)\right)\right)\nonumber\\
&&\qquad\qquad\qquad=\sum_{x\in W}\chi_{\P}\left(\L_x \otimes\L(\lambda)\right) \cdot \chi_{\P} \left(\L^x\otimes\L(\mu)\right)\nonumber\\
&&\qquad\qquad\qquad=\sum_{x\in W}\left\langle \L_x ,\L(\lambda)\right\rangle\left\langle \L^x,\L(\mu)\right\rangle \nonumber\\
&&\qquad\qquad\qquad=\left\langle  \sum_{x\in W} \left\langle \L_x ,\L(\lambda)\right\rangle \L^x,\L(\mu)\right\rangle \nonumber\\
&&\qquad\qquad\qquad=\left\langle \L\left(\lambda\right),\L\left(\mu\right)\right\rangle\nonumber\\
&&\qquad\qquad\qquad=\chi_{\P}\left(\L(\lambda +\mu)\right).
\end{eqnarray}
Comparing the equations \eqref{4.7.1} and \eqref{4.7.2}, we get 
$$
\Delta =\left[\mathcal{O} _D\right] = \sum_{w\in W}\L_x \boxtimes \L^x,
$$
since $\left\{ \L\left(\lambda\right)\right\}_{\lambda \in R\left(\B\right)}$ spans $K_0^{\P}\left(\P/\B\right)$.
\end{proof}

\begin{definition}\label{4.8} \rm{
Consider the commutative diagram:
$$
\arraycolsep=2pt
\begin{array}{lcl}
\displaystyle K_0^\B\left(\X\right)&\displaystyle \overset{\overset{i}{\sim}}{\longrightarrow} K_0^\P \left(\P \times^\B \X\right) &\displaystyle \underset{\sim}{\overset{\eta}{\longrightarrow}} K_0^{\P} \left(\P/\B \times \X\right)\\
\rotatebox[origin=c]{140}{$\underset{\sim}{\overset{\underline{\phi}}{\longleftarrow}}$} 
&& \rotatebox[origin=c]{240}{$\underset{\sim}{\overset{{\phi}}{\longrightarrow}}$}  
\\
&\displaystyle  K_0^\P \left(\P/\B\right)  \underset{K_0^\P(*)}{\boxtimes} K_0^\P\left(\X\right).&
\end{array}
$$
In this diagram $i$ is the Induction Isomorphism [CG, \S5.2.15], the isomorphism $\eta$ is induced from the $\P$-equivariant isomorphism of the ind-varieties:
$$
\P \times^{\B} \X \underset{\sim}{\rightarrow} \P/\B \times  \X, \quad \left[ p,x\right] \mapsto\left(p\B,px\right),
\qquad \mbox{for } p\in \P \mbox{ and } x\in \X.
$$

The isomorphism $\phi$ is the Kunneth isomorphism (cf. [CG, Theorem 5.6.1]). To satisfy the hypotheses of loc cit., we have used Lemma \ref{4.7} and the result that 
$$ 
K_0^\P \left(\Y\right) =K_0^G \left(\Y\right),\quad\mbox{ for any } \mbox{ $\P$-ind-variety }\Y. 
$$

By definition, $\overline{\phi}=\phi\circ \eta \circ i$ and hence it is an isomorphism. Analyzing the proof of [CG, Theorem 5.6.1], specifically on page 275 of loc cit., we get that 
\begin{equation}\label{4.8.1}
\overline{\phi} (b)=\sum_{x\in W} \L^x \boxtimes \overline{\mu}_! \left( \L_x \overset{\B}{\boxtimes} b\right), \mbox{ for any } b\in K_0^\B \left(\X\right),
\end{equation}
where $\overline{\mu}:\P\times^\B \X\rightarrow \X$ is the product map $[p,x]\mapsto p\cdot x$, for $p\in\P$ and $x\in \X$. Here, we have  abbreviated $\left(\hat{p}^* \L_x \right) \overset{\B}{\boxtimes} b$
by  $\L_x\overset{\B}{\boxtimes} b$, where $\hat{p}:\P\rightarrow \P/\B$ is the projection.
In particular,  for $b=\mathcal{O}_{X_u}$ (for $u\in \W'$) ,
\begin{equation}\label{4.8.2}
\overline{\phi} \left(\left[ \mathcal{O}_{X_u}\right]\right) = \sum_{x\in W} \L^x \boxtimes \overline{\mu}_! \left( \L_x \overset{\B}{\boxtimes}  \mathcal{O}_{X_u}\right). 
\end{equation}
As mentioned earlier, $\odot'$ is \textit{not} $R(\B)$-linear in the second variable. To remedy this, we modify its definition following [Ka-2, \S8].
Define the \textit{modified convolution product}: 
$$
\odot :K_0^\B \left( \Y\right) \bigotimes _{K_0^\B(*)} K_0^\B \left( \X\right) \rightarrow K_0^\B \left ( \X\right)
$$
by 
\begin{equation}\label{4.8.3}
a\odot b := \sum_{x\in W} \left(\epsilon \left(\L^x\right) \cdot a\right) \, \odot' \overline{\mu}_! \left(\L_x \overset{\B}{\boxtimes} b\right), \mbox{ for  } a\in K_0^\B\left(\Y\right) \mbox{ and } b \in K_0^\B\left(\X\right),
\end{equation}
where $\epsilon:K_0^\P \left(\P/\B\right)\overset{\sim}{\rightarrow}K_0^\B (*)$ is the isomorphism $i^{-1}$ as earlier  for $\X$ replaced by $*$. It is easy to see that $\odot$ does not depend on the choice of the basis $\L_w$. From the definition of $\odot$, it follows that $\odot$ is $K_0^\B(*)$-bilinear. It is clearly $K_0^\B(*)$-linear in the first variable. To prove its linearity in the second variable, take a character $e^\lambda$ of $\B$. Then, 
\begin{eqnarray*}
a\odot e^\lambda \cdot b &=& \sum_{x\in W}\epsilon \left(\L^x\right)\cdot a \odot' \overline{\mu}_!\left(\L_x \overset{\B}{\boxtimes} e^\lambda \cdot b\right)\\
&=& \sum_{x\in W}\epsilon \left(\L^x\right)\cdot a \odot' \overline{\mu}_!\left(\L(-\lambda)\cdot \L_x \overset{\B}{\boxtimes} b\right)\\
&=& \sum_{x,y\in W}\epsilon \left(\L^x\right) \left\langle \L(-\lambda)\cdot \L_x,\L^y\right\rangle\cdot a \odot' \overline{\mu}_! \left( \L_y \overset{\B}{\boxtimes} b\right)\\
&&\text{since $\odot'$ is $R(\P)$-linear in the second variable by Corollary \ref{4.5}}\\
&=&\sum_{y\in W}\left( \sum_{x\in W} \epsilon \left(\L^x\right) \left\langle \L_x, \L\left(-\lambda\right)\L^y \right\rangle\right) \cdot a \odot' \overline{\mu}_! \left( \L_y \overset{\B}{\boxtimes} b\right)\\
&=&\sum_{y\in W} \epsilon\left(\L\left(-\lambda\right)\L^y\right)\cdot a \odot' \overline{\mu}_! \left( \L_y \overset{\B}{\boxtimes} b\right) ,\\
&&\quad\mbox{since }\sum_{x\in W}\L^x \left\langle \L_x, \L\left(-\lambda\right)\L^y \right\rangle = \L\left(-\lambda\right)\L^y\\
&=&\sum_{y\in W}e^\lambda \cdot \epsilon \left(\L^y\right) \cdot a \odot'  \overline{\mu}_! \left( \L_y \overset{\B}{\boxtimes} b\right)\\
&=&e^\lambda a \odot b.
\end{eqnarray*}
This proves that $\odot$ is $K_0^\B(*)$-linear in the second variable. 

We now prove that for $b\in K_0^\P(\X)$, 
\begin{equation}\label{4.8.4}
a\odot b= a\odot' b, \quad \mbox{for any } a\in K_0^\B \left(\Y\right).
\end{equation}
Since $b\in K_0^\P(\X)$, it is easy to see that, for any $x\in W$, 
$$
 \overline{\mu}_! \left( \L_x\overset{\B}{\boxtimes} b\right)= \chi_\P\left(\L_x\right)\cdot b,\,\,\,\text{by the projection formula,}
 $$ since $\L_x\overset{\B}{\boxtimes} b=\tilde{\pi}^* \left(\L_x\right)\otimes \left(\epsilon \overset{\B}{\boxtimes} b\right)$, where $\tilde{\pi}: \P\times^\B \X \rightarrow \P/\B$ is the projection. 
 
 Thus, 
\begin{eqnarray*}
a\odot b &=&  \sum_{x\in W} \epsilon \left(\L^x\right) \cdot a \odot' \chi_\P\left(\L_x\right)\cdot b\\
&=& \sum_{x\in W} \epsilon \left(\L^x\right) \chi_\P\left(\L_x\right)\cdot a \odot' b,\\
&& \mbox{since $\odot'$ is $K_0^\P(*)=R(\P)$-linear in the second variable }\\
&=&\epsilon \left(\mathcal{O}_{\P/\B}\right)\cdot a \odot' b,\mbox{ as above since } \,\chi_\P(\L_x) :=\langle \L_x, \mathcal{O}_{\P/\B}\rangle\\
&=&a \odot' b.
\end{eqnarray*}
This proves \eqref{4.8.4}.

Let $*$ be the Pontryagin product in $K_0^T(\X)$ as in Definition 3.1 and $\odot$ the modified convolution product $K_0^T(\Y)\otimes K_0^T(\X)\rightarrow K_0^T(\X)$ as above. Since $p:\Y \rightarrow \X$ is a $\G$-equivariant (in particular, $\B$-equivariant) fibration; in particular, it is a flat morphism. Thus, there is the pull-back map $p^*:K_0^T(\X) \rightarrow K_0^T(\Y)$. This takes, for $w\in \W'$, $[\mathcal{O}_{X_w}]\mapsto[\mathcal{O}_{X^\B_{xw_o}}]$, where $w_o$ is the longest element of $W$. Via this $p^*$, we get a (modified) convolution product $\odot$ on $K_0^T(\X)$. }
\end{definition}

The following result is due to Kato with  a proof indicated  in [Ka-2, \S8] and [Ka-1, $\S$2.2]. 
\begin{theorem}\label{4.9}
 The two products $*$ and $\odot$ in $K_0^T(\X)$ coincide.  

For any $u,v\in \W'$, write 
\begin{equation}\label{4.9.1}
\left[\mathcal{O}_{X_u}\right] \odot \left[\mathcal{O}_{X_v}\right] = \sum_{w\in \W'} p_{u,v}^w \left[ \mathcal{O}_{X_w}\right].
\end{equation}

Thus,
$$
p_{u,v}^w = b^w_{u,v}, \mbox{ for any } u,v,w\in \W',
$$
where $b_{u,v}^w$ are the structure constants for the Pontryagin product in $K_0^T(\X)$ (cf. identity \eqref{3.1.1}). $\square$
\end{theorem}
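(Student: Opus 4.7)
The plan is to reduce the theorem to verifying the equality of structure constants $p^w_{u,v}=b^w_{u,v}$ on the Schubert basis. Since both products are $R(T)$-bilinear (bilinearity of $\odot$ is established within Definition~\ref{4.8}), this is sufficient. Using the non-singular pairing of Theorem~\ref{2.4}, the equality is equivalent to
$$
\bigl\langle [\bar{\xi}^w],\, [\mathcal{O}_{X_u}] \odot [\mathcal{O}_{X_v}]\bigr\rangle \;=\; \bigl\langle m^{\ast}([\bar{\xi}^w]),\, [\mathcal{O}_{X_u}] \otimes [\mathcal{O}_{X_v}]\bigr\rangle,
$$
whose right-hand side equals $b^w_{u,v}$ by the definition of the Pontryagin product in Definition~\ref{3.1}, combined with Lemma~\ref{3.2}.

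I would evaluate the left-hand side through the defining formula \eqref{4.8.3} for $\odot$. Proposition~\ref{4.3} expresses $\odot'$ on Schubert classes as the Demazure product, and the Kato modification assembled from the Steinberg dual bases $\{\L_x\},\{\L^x\}$ together with the diagonal decomposition of Lemma~\ref{4.7} realizes precisely the Künneth isomorphism $\overline{\phi}$ of Definition~\ref{4.8}. Plugging these ingredients into \eqref{4.8.3} and applying the projection formula to the convolution diagram $\G \times^{\B} \X \xrightarrow{\mu} \X$, the coefficient $p^w_{u,v}$ becomes a pairing of $[\bar\xi^w]$ with an algebraic pushforward $\bar\mu_!$ on $\G\times^{\B}\X$, computed entirely in terms of algebraic operations.

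The main obstacle is that $m:\X\times\X\to\X$ is not a morphism of ind-varieties (Remark~\ref{2.8}), so $m^{\ast}$ is only defined on topological equivariant $K$-theory, whereas $\odot$ is built from algebraic $K$-homology. To bridge these, I would invoke Lemma~\ref{2.7}, which identifies $K_T^{\top}(\X) \cong K_T^0(\bar{\X})$ and allows the topological pairings on the right-hand side to be re-expressed algebraically. One must then verify that on each finite Schubert piece $\X_n$, Bott-Samelson desingularizations $\mathcal{Z}_u \times^{\B} \mathcal{Z}_v$ compute both the topological $m^{\ast}$ and the algebraic convolution compatibly: both map surjectively onto $X_{\overline{u\ast v}}$ by Proposition~\ref{4.3}, and the projection formula together with Lemma~\ref{4.7} reconciles the $\P/\B$-fiber contributions that separate the thick convolution $\G \times^{\B} \X$ from the diagonal Pontryagin model $\X \times \X$. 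With this identification in place, the equality of pairings, and hence of structure constants, follows.
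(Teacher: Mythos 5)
There is a genuine gap, and it sits exactly at the point your sketch labels ``the main obstacle.'' The entire content of Theorem \ref{4.9} is the comparison between a purely topological object --- the dual of $m^*$, where $m$ is the continuous but non-algebraic loop multiplication (Remark \ref{2.8}) --- and a purely algebraic one, the modified convolution $\odot$ built from $\mu:\G\times^{\B}\X\to\X$, the Steinberg bases and Lemma \ref{4.7}. Every tool you invoke to close this gap (Proposition \ref{4.3}, the projection formula on the convolution diagram, Lemma \ref{4.7}, the K\"unneth decomposition $\overline{\phi}$) lives entirely on the algebraic side and can only reorganize $\odot$; none of them says anything about $m^*$. Your key sentence --- ``one must then verify that on each finite Schubert piece the Bott--Samelson desingularizations compute both the topological $m^*$ and the algebraic convolution compatibly'' --- is an assertion of the theorem, not an argument for it: there is no morphism from $\mathcal{Z}_u\times^{\B}\mathcal{Z}_v$ (or any ind-variety) realizing $m$, since $m$ is not a morphism, and the image of $X_u\times X_v$ under loop multiplication is not a Schubert variety such as $X_{\overline{u*v}}$, so the identification of structure constants cannot be extracted from Proposition \ref{4.3} plus bookkeeping. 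What is genuinely needed is a new ingredient relating the group multiplication on $\Omega(K)$ to the convolution diagram (for instance via the identification $\G\times^{\P}\X\simeq\X\times\X$ together with an equivariant-homotopy or localization argument, and a separate argument that Kato's $R(\B)$-bilinear modification $\odot$, rather than $\odot'$, is the product matching the topological coproduct). Your proposal does not supply this.

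For calibration: the paper itself gives no proof of this statement; it is quoted from Kato, with the argument indicated in [Ka-2, \S 8] and [Ka-1, \S 2.2]. So there is no in-paper proof to compare against, and your sketch, as written, reduces to restating the theorem at its hardest point. The surrounding reductions you make (bilinearity of both products, duality via Theorem \ref{2.4} and Lemma \ref{3.2}, re-expressing the topological pairing algebraically through Lemma \ref{2.7}) are fine, but they are the easy perimeter of the problem, not its core.
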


Thus, Conjecture 3.3 can equivalently be reformulated in terms of the structure constants for the modified convolution product $\odot$ in $K_0^T(\X)$. 
\begin{conjecture}\label{4.10}
With the above notation, for any $u,v,w\in \W'$ ,
$$
(-1)^{\ell(u)+\ell(v)-\ell(w)}p_{u,v}^w \in \mathbb{Z}_+ \left[\left (e^{\alpha_{1}}-1\right),\ldots , \left (e^{\alpha_{l}}-1\right)\right].
$$
\end{conjecture}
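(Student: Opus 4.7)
The plan is to reduce Conjecture \ref{4.10} to Conjecture \ref{3.3} via Theorem \ref{4.9}, which identifies $p_{u,v}^w = b_{u,v}^w$ for all $u,v,w \in \W'$. Under this identity the sign-normalized coefficients $(-1)^{\ell(u)+\ell(v)-\ell(w)} p_{u,v}^w$ and $(-1)^{\ell(u)+\ell(v)-\ell(w)} b_{u,v}^w$ are literally the same element of $R(T)$, so membership of one in $\mathbbm{Z}_+[(e^{\alpha_1}-1),\ldots,(e^{\alpha_l}-1)]$ is membership of the other. This is the entire content of the reformulation, and no further argument is needed to derive Conjecture \ref{4.10} from Conjecture \ref{3.3}.

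For an honest attack on the common positivity, the natural tool is the explicit formula of Theorem \ref{1.7}. Fixing $u \in \W'$ with a reduced word $u = s_{i_1}\cdots s_{i_n}$, one writes $[\mathcal{O}_{X_u^\B}] \odot [\mathcal{O}_{X_v}]$ as a double sum over $x \in W$ and subsets $\{j_1<\cdots<j_p\}\subset\{1,\ldots,n\}$, with coefficient $\bigl|D_{i_1}'\cdots \hat{\hat{D}}'_{i_{j_1}}\cdots \hat{\hat{D}}'_{i_{j_p}}\cdots D_{i_n}'(\bar{\zeta}^x)\bigr|$ attached to the basis element indexed by $\overline{s_{i_{j_1}}*\cdots*s_{i_{j_p}}*x*v}$. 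One would then group terms contributing to a fixed $w$ and argue that the regrouped sum has the required sign $(-1)^{\ell(u)+\ell(v)-\ell(w)}$ and lies in $\mathbbm{Z}_+[(e^{\alpha_j}-1)]$. The inputs would be a Graham-type positivity statement for the finite-dimensional classes $\bar{\zeta}^x \in R(T)\otimes_{R(G)}R(T)$ together with the fact that each Demazure operator $D_i$ preserves $\mathbbm{Z}_+[(e^{\alpha_j}-1)]$, while each replacement $\hat{\hat{D}}'_{i_k}$ by the Weyl action contributes the expected sign twist corresponding to one extra unit of length on the Demazure-product side.

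The principal obstacle is sign tracking. Individual summands in Theorem \ref{1.7} do not carry the target sign $(-1)^{\ell(u)+\ell(v)-\ell(w)}$ in any visible way: distinct choices of $(x, j_1,\ldots,j_p)$ can produce the same $w$ with different parities, so the predicted sign must emerge from non-trivial cancellations across the double sum. Turning these cancellations into a manifestly positive expression appears to require either a geometric model of the modified convolution realizing the structure constants as Euler characteristics of explicitly positive bundles on a suitable resolution (in the spirit of the Anderson--Griffeth--Miller proof of Graham positivity in $K_T(G/B)$), or a combinatorial model via quantum Lakshmibai--Seshadri paths compatible with Kato's identification in Theorem \ref{1.5} and with the quantum $K$-theory reformulation in Corollary \ref{1.9}. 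Neither route is available from the material in the excerpt alone, which is consistent with the statement being presented as a conjecture rather than a theorem.
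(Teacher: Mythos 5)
Your first paragraph is exactly the paper's treatment of this statement: Conjecture \ref{4.10} is not proved but is presented as an equivalent reformulation of Conjecture \ref{3.3}, the equivalence being the identity $p_{u,v}^w = b_{u,v}^w$ supplied by Theorem \ref{4.9}, so the sign-normalized coefficients coincide verbatim. Your further remarks correctly recognize that the positivity itself remains conjectural and is not established anywhere in the paper, so there is nothing more to check.
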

As a corollary of Theorem \ref{4.9} we get the following.

\begin{corollary}\label{4.11}
The product $\odot$ in $K_0^T(\X)$ is associative and commutative. 
\end{corollary}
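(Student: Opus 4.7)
My plan is to reduce both assertions to Theorem \ref{4.9}, which identifies $\odot$ with the Pontryagin product $*$ on $K_0^T(\X)$. It therefore suffices to show $*$ is associative and commutative, and both will be deduced from the corresponding properties of the continuous multiplication $m:\X\times\X\to\X$ of Definition \ref{2.6} via the $K$-equivariant homeomorphism $\beta$ of Lemma \ref{2.5}, combined with the non-degenerate pairing of Theorem \ref{2.4}.

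Associativity is immediate. The group multiplication $\hat m$ on $\Omega(K)$ is strictly associative, so $m\circ(m\times\mathrm{id}) = m\circ(\mathrm{id}\times m)$ as continuous $T$-equivariant maps $\X\times\X\times\X\to\X$. Hence the pull-back $m^*:K_T^{\top}(\X)\to K_T^{\top}(\X)\hat\otimes_{R(T)}K_T^{\top}(\X)$ is coassociative; dualizing as in Definition \ref{3.1} yields associativity of $*$.

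For commutativity, let $\tau$ denote the coordinate swap on $\X\times\X$. Since topological $T$-equivariant $K$-theory is a homotopy functor, it suffices to exhibit a $T$-equivariant continuous homotopy between $m$ and $m\circ\tau$: this forces $(m\circ\tau)^*=m^*$, i.e.\ cocommutativity of $m^*$, and dually commutativity of $*$. Transferred to the based loop group, this is the classical Eckmann--Hilton fact that the pointwise group product of loops in a connected topological group is homotopy commutative. Indeed, loop concatenation $\hat m'$ gives a second continuous, $T$-equivariant (conjugation-invariant) product on based loops in $K$, sharing the constant loop at $e$ as identity and satisfying the interchange law with $\hat m$ strictly, because pointwise multiplication commutes with the reparametrization intrinsic to concatenation. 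Eckmann--Hilton then produces continuous $T$-equivariant homotopies $\hat m \simeq \hat m' \simeq \hat m\circ\tau$.

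The main technical obstacle is that concatenation of algebraic loops need not be algebraic, so the Eckmann--Hilton homotopy has to be constructed in the space $\Omega^{\mathrm{cts}}(K)$ of continuous based loops rather than in $\Omega(K)$ itself. To transport the conclusion back to $\X$ one invokes the classical fact that the inclusion $\Omega(K)\hookrightarrow \Omega^{\mathrm{cts}}(K)$ is a $K$-equivariant (weak) homotopy equivalence (cf.\ [PS, Chap.~3 and Theorem~8.6.2]), so topological $T$-equivariant $K$-theory does not distinguish the two spaces. Once this bookkeeping is in place, $m^*$ is cocommutative, whence $*$ and therefore $\odot$ are commutative.
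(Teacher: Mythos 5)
Your proposal is correct and follows essentially the same route as the paper: reduce to the Pontryagin product via Theorem \ref{4.9}, get associativity from strict associativity of $m$, and get commutativity by passing through the $T_0$-equivariant homotopy equivalence $\Omega(K)\hookrightarrow\Omega^{\cont}(K)$ (the paper cites [PS, Proposition 8.6.6]) and the homotopy commutativity of the loop-space product. Your Eckmann--Hilton discussion merely spells out the standard cocommutativity fact that the paper quotes for the continuous based loop space of a compact Lie group.
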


\begin{proof}
The corollary follows from the corresponding properties of the Pontryagin product $*$ in $K_0^T(\X)$. The associativity of $*$ of course follows since the product $m:\X \times \X \rightarrow \X$ (cf. Definition 3.1) is associative. 

For the commutativity of $*$, recall that the inclusion $\Omega(K)\rightarrow \Omega ^{\cont}(K)$ is $T_0$-equivariantly homotopic equivalence, where $\Omega ^{\cont}(K)$ is the space of all the based continuous maps from $S^1$ to $K$ under the compact-open topology (cf. [PS, Proposition 8.6.6]). Further, $ \Omega ^{\cont}(K)$ being the loop group of a compact Lie group, the coproduct in $K_T^{\top}\left( \Omega ^{\cont}(K)\right)$ is co-commutative and hence the (dual) Pontryagin product $*$ in $K_0^T(\X)$ is commutative. 
\end{proof}

\section{An expression for the modified convolution product structure constants}

We will use Theorem \ref{4.9} to get the structure constants for the Pontryagin product $*$ in $K^T_0(\X)$ from that of the structure constants for the modified convolution product $\odot$ in $K_0^T(\X)$. 

\begin{definition} \label{def5.1} \rm{
Following [KK, \S2.1] consider the ring $Q_\W$ , which is the smash product of the $\W$-field $Q(T)$ ($Q(T)$ being the quotient field of the representation ring $R(T)$) with the group algebra $\mathbbm{Z}[\W]$. Specifically, $Q_\W$ is a free left $Q(T)$-module with basis $\{\delta_w\}_{w\in \W}$ and the product is given by 
$$
\left(q_1\delta_{w_1}\right) \cdot \left( q_2\delta_{w_2}\right) = q_1 \left(w_1\cdot q_2\right) \delta_{w_1w_2}, \quad \mbox{ for } q_1,q_2 \in Q(T) \,\,\text{and  $w_1, w_2 \in \W$},
$$
where $s_0$ acts on $R(T)$ via $s_\theta$. }
\end{definition}

For any simple reflection $\{s_i\}_{0\leq i\leq l}$, define the element $z_i \in Q_\W$ by 
$$
z_i := \frac{1}{1-e^{\alpha_i}} \left( \delta_e-\delta_{s_i}\right),\mbox{ where we take } \alpha_0 = -\theta.
$$
Then, $z_i= e^{-\hat{\rho}}\hat{y}_i\cdot e^{\hat{\rho}}$, where $\hat{y}_i$ is the same as ${y}_i$ in [KK, \S2.1] except that we replace each simple root $\alpha_i$ by $-\alpha_i$ and  $\hat{\rho}(\alpha_i^\vee)=1$ for all simple coroots $\alpha_i^\vee,0\leq i\leq l$ , where 
$\alpha^\vee_0 :=-\theta^\vee$. For any $w\in \W$, define
 $$z_w:=z_{i_1}\cdots z_{i_n}\in Q_\W\,\,\,\text{ for a reduced decomposition $w=s_{i_1}\cdots s_{i_n}\in \W$}.$$
 Then, it does not depend upon the choice of a reduced decomposition of $w$ (i.e.,  $z_{i}$'s satisfy the braid property, cf. [KK, Proposition 2.4]). Moreover,
$$
z_i^2 =z_i \quad \mbox{ for all } 0\leq i \leq l.
$$
Further, we can write (cf. [KK, Theorem 2.9]), for any $w\in \W$,
$$z_w\cdot \left(e^\lambda \delta_e\right) = \sum_{v\leq w} f\left( v,w; \lambda\right) z_v,$$
 for some unique 
$f\left( v,w; \lambda\right) \in R(T)$.
As in [KK, I$_3$], $Q_\W$ acts on $Q(T)$ via
$$
\left(q~\delta_w\right)\boxdot q' = q\cdot \left(wq'\right), \mbox{ for } q,q'\in Q (T) \mbox{ and }w\in \W.
$$
In particular,
\begin{equation} \label{5.1.1} 
z_i\cdot (e^\lambda\delta_e) = e^{s_i\lambda} z_i+ z_i \boxdot e^\lambda, \,\,\,\text{for any $0\leq i\leq l$ and $e^\lambda \in R(T).$}
\end{equation}

As a consequence of Proposition 4.4, we get the following. 
\begin{proposition}\label{5.2}
For any $w\in \W$ and any character $e^\lambda$ of $\B$, 
$$
\mu_!^\B \left(\mathcal{O}_{X_w^\B} \overset{\B}{\boxtimes} e^\lambda\right) = \sum_{v\leq w\in \W} f\left(v,w; \lambda\right)\left[\mathcal{O}_{X_v^\B}\right] \in K_0^\B \left(\Y\right),
$$
where $\mu^\B: \G\times^\B *\rightarrow \Y:=\G/\B$ takes $[g,*]\mapsto g\B$, for $g\in\G$ and, as earlier, $X_w^\B := \overline{\B w\B/\B}\subset \Y$.
\end{proposition}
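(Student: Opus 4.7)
The plan is induction on $\ell(w)$, with Proposition~\ref{prop4.4} providing the single-reflection step and the algebraic identity~\eqref{5.1.1} bridging the geometric and algebraic recursions. The base case $\ell(w)=0$ is immediate: $X_e^\B=\{\underline{o}_\B\}$, so both sides equal $e^\lambda[\mathcal{O}_{X_e^\B}]$, matching $f(e,e;\lambda)=e^\lambda$ obtained from $z_e(e^\lambda\delta_e)=e^\lambda z_e$.

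For the inductive step, I would write $w=s_iw'$ with $\ell(w')=\ell(w)-1$ and use the multiplication map $\tilde q\colon P_i\times^\B X_{w'}^\B\to X_w^\B$, which is birational with $R\tilde q_*\mathcal{O}=\mathcal{O}_{X_w^\B}$ by rational singularity of Schubert varieties (cf.\ [Ku-1, Theorem~8.1.13]). The projection formula gives $\mu^\B_!(\mathcal{O}_{X_w^\B}\boxtimes^\B e^\lambda)=\tilde q_!(\tilde q^*\L_\Y(-\lambda))$. Factoring $\tilde q=\tilde m\circ\iota$ through the multiplication $\tilde m\colon P_i\times^\B\Y\to\Y$ (which, under the isomorphism $P_i\times^\B\Y\cong X_i\times\Y$, $[p,y]\mapsto(p\B,py)$, becomes projection on $\Y$), I would push forward along the $\mathbb{P}^1=P_i/\B$-factor fiber-wise. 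Applied to a term $[\mathcal{O}_{X_{v'}^\B}]$ with character twist $e^\mu$, Proposition~\ref{prop4.4} produces the two-term expansion $e^{s_i\mu}[\mathcal{O}_{X_{s_i*v'}^\B}]+(z_i\boxdot e^\mu)[\mathcal{O}_{X_{v'}^\B}]$, where $*$ is the Demazure product. This is precisely the effect of applying $z_i$ on the left in $Q_\W$ via~\eqref{5.1.1} together with the identity $z_iz_{v'}=z_{s_i*v'}$. Summing over $v'\leq w'$ weighted by the inductive coefficients $f(v',w';\lambda)$ reproduces $z_w(e^\lambda\delta_e)=z_iz_{w'}(e^\lambda\delta_e)=\sum_{v\leq w}f(v,w;\lambda)z_v$, yielding the desired formula under the correspondence $z_v\leftrightarrow[\mathcal{O}_{X_v^\B}]$.

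The main technical obstacle is the rigorous unwinding of the twisted family $P_i\times^\B X_{w'}^\B\subset P_i\times^\B\Y\cong X_i\times\Y$: its fibers over $X_i$ are $P_i$-translates of $X_{w'}^\B$, \emph{not} a direct product, and the pullback $\tilde q^*\L_\Y(-\lambda)$ must be identified with the line bundle on $P_i\times^\B X_{w'}^\B$ associated to the character $e^\lambda$ of $\B$ via the right-end $\B$-quotient. Once this bookkeeping shows that the $\mathbb{P}^1$-push-forward produces exactly the $s_i$-twist on characters and the Demazure residue $z_i\boxdot e^\mu$ dictated by Proposition~\ref{prop4.4}, braid invariance of $z_w$ guarantees independence of the chosen reduced expression and the induction closes.
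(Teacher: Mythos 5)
Your proposal is correct and follows essentially the same route as the paper: induction on $\ell(w)$ via left multiplication by a simple reflection, using the vanishing of higher direct images for the map $\P_i\times^\B X_{w'}^\B\to X_w^\B$ (from [Ku-1]) to identify $\mu^\B_!\bigl(\mathcal{O}_{X_w^\B}\overset{\B}{\boxtimes}e^\lambda\bigr)$ with the convolution $\mathcal{O}_{X_i}\overset{\B}{\boxtimes}\L_{w'}(-\lambda)$, then Proposition \ref{prop4.4} for the fiberwise $\mathbb{P}^1$-pushforward and the identities \eqref{5.1.1}, \eqref{5.2.1} to match the recursion defining $f(v,w;\lambda)$. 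The technical point you flag (identifying the pullback of the equivariant line bundle with the twisted product along the right-hand $\B$-quotient) is exactly the step the paper handles implicitly in the same way.
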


\begin{proof}
Observe that $\mu_!^\B\left(\mathcal{O}_{X_w^\B}\overset{\B}{\boxtimes}e^\lambda\right)=\L^\B_w(-\lambda)$, 
where $ \L^\B_w(-\lambda)$ is the restriction to $X_w^\B$ of the $\G$-equivariant 
 line bundle over $\Y$ corresponding to the character $e^\lambda$ of $\B$. 
 By Proposition \ref{prop4.4} and the identity \eqref{5.1.1}, the proposition is true for $w=s_i$, for any $0\leq i \leq l$. We assume the validity of the proposition  for $w$ by induction on $\ell(w)$ and take $s_i w\in\W$ with $\ell(s_i w)>\ell(w)$. 
Consider the map 
$$
\mu_i^w: \P_i\times^\B X_w^\B \to X_{s_i w}^\B, \,\,\,\left[p,x\right] \mapsto px, \mbox{ for } p\in \P_i \mbox{ and } x\in X_w^\B.
$$
Then, By [Ku-1, Theorem 8.2.2(c)], $\mu_i^w$ is a trivial morphism, i.e., 
\begin{eqnarray*}
\left(R^j \mu_{i_*}^w\right) \left( \mathcal{O}_{\P_i\times^\B X^\B_w}\right) &=&0, \quad \mbox{for } j>0, \\
&=& \mathcal{O}_{X^\B_{s_iw}}, \quad \mbox{for } j=0.
\end{eqnarray*}
Thus, 
\begin{eqnarray*}
\L^\B_{s_i w}\left(-\lambda\right)&=&\mathcal{O}_{X_i} \overset{\B}{\boxtimes} \L_w\left(-\lambda\right), \mbox{ where } X_i:=\P_i/\B\\
&=& \mathcal{O}_{X_i} \overset{\B}{\boxtimes} \left(\sum_{v\leq w} f\left(v, w; \lambda\right) \left[ \mathcal{O}_{X_v^\B}\right]\right),\mbox{ by induction}\\
&=& \sum_{v\leq w, ~s_iv>v}s_i \left(f\left(v,w;\lambda\right)\right) \left[ \mathcal{O}_{X_{s_iv}^\B}\right]
+  \sum_{v\leq w, ~s_iv<v}s_i \left(f\left(v,w;\lambda\right)\right) \left[ \mathcal{O}_{X_{v}^\B}\right]\\
&&+  \sum_{v\leq w} \left( z_i \boxdot f\left(v,w;\lambda\right)\right) \cdot \left[ \mathcal{O}_{X_{v}^\B}\right], \mbox{ by Proposition \ref{prop4.4}}\\
&=&\sum_{v\leq s_i w}f\left(v,s_iw;\lambda\right)\left[ \mathcal{O}_{X_{v}^\B}\right].
\end{eqnarray*}
The last equality follows since 
\begin{equation}\label{5.2.1}
z_i\cdot \left(aq\right) = \left(z_i\boxdot a\right) q+ \left(s_ia\right)\left(z_i\cdot q\right), \mbox{ for } a\in Q(T) \mbox{ and } q\in Q_W. 
\end{equation}
This completes the induction and hence the proposition is proved. 
\end{proof}

\begin{corollary}\label{5.3}
For any $w\in \W'$, and any character $e^\lambda$ of $\B$, 
$$
\mu_!^\B \left(p^*(\mathcal{O}_{X_w}) \overset{\B}{\boxtimes} e^\lambda\right) = \sum_{v\leq ww_o, \,v\in \W}f\left(v,ww_o; \lambda\right)\left[ \mathcal{O}_{X_{v}^\B}\right],
\,\,\mbox{where $p:\Y\to \X$ is the projection}.
$$
\end{corollary}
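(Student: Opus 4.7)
The plan is to reduce Corollary 5.3 to Proposition 5.2 applied to the Weyl group element $w w_o \in \W$, where $w_o$ denotes the longest element of $W$. The heart of the argument will be the identification
$$
p^{*}[\mathcal{O}_{X_w}] = [\mathcal{O}_{X_{ww_o}^\B}] \in K_0^\B(\Y)
$$
for $w \in \W'$; once this is in hand, the formula of the corollary drops out of Proposition 5.2 applied to $ww_o$ with no further work.

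First I would verify that the scheme-theoretic preimage $p^{-1}(X_w)$ equals $X_{ww_o}^\B$. Since $w \in \W'$, we have $\ell(wu) = \ell(w) + \ell(u)$ for every $u \in W$, and combined with the Bruhat decomposition $\P = \bigsqcup_{u \in W} \B u \B$ this yields $\B w \P = \bigsqcup_{u \in W} \B wu \B$. Hence the open Schubert cell $\B w \underline{o} \subset \X$ pulls back to $\bigsqcup_{u \in W} \B wu \B/\B \subset \Y$, whose closure is exactly $X_{ww_o}^\B$. Passing to closures (and using the classical fact that $v' \leq ww_o$ in $\W$ iff the minimal coset representative of $v' W$ lies below $w$ in $\W'$, which can be pulled from the Kac-Moody Bruhat order material in [Ku-1]) gives $p^{-1}(X_w) = X_{ww_o}^\B$ as a reduced closed subvariety.

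The map $p: \Y \to \X$ is a locally trivial fibration with smooth projective fiber $\P/\B$, hence flat. Flatness upgrades the set-theoretic identification to the $K$-theoretic identity $p^{*}[\mathcal{O}_{X_w}] = [\mathcal{O}_{X_{ww_o}^\B}]$, because $p^{*}\mathcal{O}_{X_w} = \mathcal{O}_{p^{-1}(X_w)}$ with vanishing higher Tors. Consequently
$$
p^{*}(\mathcal{O}_{X_w}) \overset{\B}{\boxtimes} e^\lambda = \mathcal{O}_{X_{ww_o}^\B} \overset{\B}{\boxtimes} e^\lambda
$$
in $K_0^\B(\G \times^\B *)$.

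Finally, applying Proposition 5.2 with $w$ replaced by $ww_o \in \W$ gives
$$
\mu_!^\B\bigl(\mathcal{O}_{X_{ww_o}^\B} \overset{\B}{\boxtimes} e^\lambda\bigr) = \sum_{v \leq ww_o,\, v \in \W} f(v, ww_o;\lambda)\,[\mathcal{O}_{X_v^\B}],
$$
and combining with the previous step yields the statement of the corollary. The only mildly delicate ingredient is the Bruhat-order comparison $v' \leq ww_o \Leftrightarrow \bar{v'} \leq w$, but this is a standard feature of minimal coset representatives in Coxeter groups (and is valid in the affine/Kac-Moody setting), so I do not anticipate a genuine obstacle here.
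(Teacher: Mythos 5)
Your proposal is correct and follows the same route as the paper: the paper's proof likewise reduces to Proposition \ref{5.2} applied to $ww_o$, citing the identification $p^{-1}(X_w)=X^\B_{ww_o}$ and the flatness of the fibration $p$ (recorded just before Theorem \ref{4.9}) to get $p^*[\mathcal{O}_{X_w}]=[\mathcal{O}_{X^\B_{ww_o}}]$. You merely spell out the Bruhat-decomposition and closure details that the paper leaves implicit, and these are fine.
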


\begin{proof}
It follows immediately from Proposition \ref{5.2} and the fact that, under the projection $p: \Y \to \X$, $p^{-1}\left(X_w\right) = X^\B_{ww_o}$, and the discussion before Theorem \ref{4.9}. 
\end{proof}

\begin{definition} \label{5.4} \rm{
For any $0\leq i\leq l$, define a variant of \textit{Demazure operator }
$D_i: R(T) \rightarrow R(T)$ by 
$$
D_i(e^\lambda)=z_i\boxdot e^\lambda = \frac{e^\lambda-e^{s_i\lambda}}{1-e^{\alpha_i}}, \quad \mbox{for any character } e^\lambda \mbox{ of } T .
$$

Recall that $s_0=s_\theta$ and $\alpha_0=-\theta$. Observe that 
\begin{equation}\label{5.4.1}
D_i(ab) =(D_ia)\cdot b+(s_ia) \cdot D_i (b) ,\mbox{ for } a,b \in R(T).
\end{equation}}

\end{definition}

\begin{lemma}\label{5.5}
For any $i_j\in \left\{0,1,\ldots , l\right\}$, as elements of $Q_\W$ (cf. Definition \ref{def5.1}), 
$$
\left(z_{i_1}\cdots z_{i_n}\right) \cdot \left( e^\lambda \delta_e\right) = \sum_{1\leq j_1 < j_2 <\cdots< j_p\leq n} \left( D_{i_1} \cdots \hat{\hat{D}}_{i_{j_1}}\cdots \hat{\hat{D}}_{i_{j_p}} \cdots{{D}}_{i_{n}}\right) \left(e^\lambda\right)
z_{i_{j_1}}\cdots z_{i_{j_p}},
$$
where $\hat{\hat{D}}_j$ means to replace the operator $D_j$ by the Weyl group action of $s_j$. 
\end{lemma}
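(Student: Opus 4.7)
\medskip

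\noindent\textbf{Proof plan for Lemma \ref{5.5}.} The plan is to induct on $n$, with the key input being the commutation rule \eqref{5.2.1}, which in our present notation reads
$z_i\cdot(a\,\delta_e) = (z_i\boxdot a)\,\delta_e + (s_i a)\,z_i = D_i(a)\,\delta_e + (s_i a)\,z_i$
for any $a\in R(T)$, and more generally $z_i(aq)=D_i(a)q + (s_ia)(z_iq)$ for $q\in Q_\W$.

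\medskip

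\noindent\emph{Base case.} For $n=1$, equation \eqref{5.1.1} gives exactly
\[
z_{i_1}\cdot(e^\lambda\delta_e) = D_{i_1}(e^\lambda)\,\delta_e + s_{i_1}(e^\lambda)\,z_{i_1},
\]
which matches the right-hand side: the empty subset $(p=0)$ contributes $D_{i_1}(e^\lambda)\,\delta_e$, and the subset $\{1\}$ (where the lone operator $D_{i_1}$ is replaced by the Weyl action of $s_{i_1}$) contributes $s_{i_1}(e^\lambda)\,z_{i_1}$.

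\medskip

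\noindent\emph{Inductive step.} Assume the formula holds for the product of $n-1$ factors $z_{i_2}\cdots z_{i_n}$. Apply $z_{i_1}$ on the left to the inductive expression. Each summand on the right has the form $a\cdot z_{i_{k_1}}\cdots z_{i_{k_p}}$ with $a\in R(T)$ and $2\le k_1<\cdots <k_p\le n$, where
\[
a = \bigl(D_{i_2}\cdots \hat{\hat{D}}_{i_{k_1}}\cdots \hat{\hat{D}}_{i_{k_p}}\cdots D_{i_n}\bigr)(e^\lambda).
\]
Applying $z_{i_1}$ and using the identity above, this term splits into two pieces: the $D_{i_1}$-piece
$D_{i_1}(a)\,z_{i_{k_1}}\cdots z_{i_{k_p}}$, which corresponds to the subset $\{k_1,\dots,k_p\}\subset\{1,\dots,n\}$ (not containing $1$) with $D_{i_1}$ in position $1$; and the $s_{i_1}$-piece
$s_{i_1}(a)\,z_{i_1}z_{i_{k_1}}\cdots z_{i_{k_p}}$, which corresponds to the subset $\{1,k_1,\dots,k_p\}$ (containing $1$) with the operator $D_{i_1}$ in position $1$ replaced by $s_{i_1}$, i.e.\ by $\hat{\hat{D}}_{i_1}$.

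\medskip

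\noindent\emph{Assembling the sum.} As the pair $(\{k_1,\dots,k_p\},\text{first-entry choice})$ ranges over all inductive summands, the $D_{i_1}$-pieces and $s_{i_1}$-pieces together enumerate each subset $\{j_1<\cdots <j_p\}\subset\{1,\dots,n\}$ exactly once, with the prescribed operator (either $D_{i_1}$ or $\hat{\hat{D}}_{i_1}$) at position $1$ depending on whether $1$ belongs to the subset, completing the induction. The proof is essentially a bookkeeping exercise; the only point requiring care is ensuring that the two cases (subset containing/not containing the new index $1$) line up with the two terms produced by $z_{i_1}(aq) = D_{i_1}(a)q + (s_{i_1}a)(z_{i_1}q)$, which is the main—and only—real obstacle.
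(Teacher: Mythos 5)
Your proposal is correct and follows essentially the same route as the paper: induction on the number of factors, with the base case given by identity \eqref{5.1.1} and the inductive step carried out by applying the leftmost $z_i$ to each summand via the twisted Leibniz rule \eqref{5.2.1}, the two resulting terms corresponding exactly to subsets omitting or containing the new index. The only cosmetic difference is that the paper labels the added factor $z_{i_0}$ and prepends it to an $n$-fold product, while you peel off $z_{i_1}$ from an $(n-1)$-fold product; the bookkeeping is identical.
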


\begin{proof}
For $n=1$, the lemma follows from the identity \eqref{5.1.1}. We prove the lemma by induction assuming it to be true for $n$ and prove  it for $n+1$. So, take $i_0\in \left\{0,1,\ldots, l\right\}$. Then, 
\begin{eqnarray*}
\left(z_{i_0}~z_{i_1}\cdots z_{i_n}\right) \left(e^\lambda\delta_e\right)&=&z_{i_0}\cdot \left(z_{i_1}\cdots z_{i_n}\cdot \left(e^\lambda\delta_e\right)\right)\\
&=& z_{i_0} \cdot \sum_{1\leq j_1 <\cdots< j_p\leq n}  \left( D_{i_1} \cdots \hat{\hat{D}}_{i_{j_1}}\cdots \hat{\hat{D}}_{i_{j_p}} \cdots{{D}}_{i_{n}}\right) \left(e^\lambda\right)\cdot z_{i_{j_1}}\cdots z_{i_{j_p}}\\
&=&\sum_{1\leq j_1<\cdots < j_p \leq n} [ \left( D_{i_0}~D_{i_1} \cdots \hat{\hat{D}}_{i_{j_1}}\cdots \hat{\hat{D}}_{i_{j_p}} \cdots{{D}}_{i_{n}}\right)\left(e^\lambda\right)\cdot 
 \left(z_{i_{j_1}}\cdots z_{i_{j_p}}\right) \\
 &+&\left(\hat{\hat{D}}_{i_0}~D_{i_1} \cdots \hat{\hat{D}}_{i_{j_1}}\cdots \hat{\hat{D}}_{i_{j_p}} \cdots{{D}}_{i_{n}}\right)\left(e^\lambda\right) \cdot
 \left(z_{i_0}\cdot z_{i_{j_1}}\cdots z_{i_{j_p}}\right) ],
\end{eqnarray*}
by the identity \eqref{5.2.1}. This completes the induction and hence proves the lemma. 
\end{proof}

\begin{definition}\label{5.6} \rm{
For any $x\in W$, similar to the sheaf $\xi^w\left(w\in \W'\right)$, define the sheaf 
$$
\zeta^x =\mathcal{O} _{\mathring{X}^x}\left(-\partial \mathring{X} ^x\right),
$$
where $\mathring{X}^x := \overline{B^-xB/B} \subset X:= G/B$ ,
$
\partial \mathring{X}^x = \mathring{X}^x \backslash \left( B^-xB/B\right)$ and $B^-\supset T$ is the opposite Borel subgroup of $G$.
Consider its class $\left[ \zeta^x\right]\in K_0^T (X) = K^0_T(X)$.

Recall (see, e.g., [KK, Theorem 4.4]) the $K_T^0(*)$-algebra isomorphism
$$
\varphi: ~ R(T) \underset{R(G)}{\otimes} R(T) \underset{\sim}{\rightarrow} K^0_T (X),~e^\lambda\otimes e^\mu \mapsto e^\lambda \cdot \L_X\left(-\mu\right), 
$$
for $e^\lambda, e^\mu $ characters of $T$, where $\L_X(-\mu)$ is the line bundle over $X$ associated to the principal B-bundle $G\rightarrow X$ via the character $e^{-\mu}$ of B and the domain of $\varphi$ acquires the $K_T^0(*)=R(T)$-module structure via its multiplication on the first factor.

The isomorphism $\varphi$ allows us to view $\zeta^x$ as an element $\bar{\zeta}^x\in R(T) \underset{R(G)}{\otimes} R(T) $. 

For any element $\alpha=\sum_j a_j \otimes b_j \in R(T) \underset{R(G)}{\otimes} R(T)$, 
we define 
$$
\vert \alpha \vert = \sum_j a_j b_j \in R(T).
$$
Of course, it is well-defined. }
\end{definition}
\begin{lemma}\label{lem5.4}
For any $x\in W$, 
$$
\vert \bar{\zeta}^x\vert = \delta_{x,e}.
$$
\end{lemma}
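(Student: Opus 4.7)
The plan is to realize the linear form $|\cdot|$ as pullback to the base point $eB\in X$ via $\varphi$, and then evaluate this pullback on $[\zeta^x]$ by a direct support argument.

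First I would identify $|\cdot|$ with the restriction $\iota^*\colon K_T^0(X)\to K_T^0(eB)=R(T)$ along the inclusion $\iota\colon \{eB\}\hookrightarrow X$. By the normalization of $\mathcal{L}_X(-\mu)$ recorded in Definition \ref{5.6}, the fiber of this line bundle at $eB$ is the one-dimensional $T$-module of weight $\mu$, so
\[
\iota^*\varphi(e^\lambda\otimes e^\mu)=e^\lambda\cdot\mathcal{L}_X(-\mu)|_{eB}=e^\lambda\cdot e^\mu=|e^\lambda\otimes e^\mu|.
\]
Extending $R(T)$-linearly gives $\iota^*\circ\varphi=|\cdot|$, so $|\bar\zeta^x|=\iota^*[\zeta^x]$ and the task reduces to computing this class.

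Next I would evaluate $\iota^*[\zeta^x]$ using the explicit support of $\zeta^x$. Since this sheaf is supported on $\mathring{X}^x=\overline{B^-xB/B}$, and that closure decomposes as the union of opposite cells $B^-yB/B$ with $y\geq x$ in the Bruhat order on $W$, the base point $eB=B^-eB/B$ lies in $\mathring{X}^x$ precisely when $x=e$. Hence for every $x\neq e$ the stalk of $\zeta^x$ at $eB$ vanishes and $\iota^*[\zeta^x]=0$.

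In the remaining case $x=e$ we have $\mathring{X}^e=X$, and $\partial\mathring{X}^e=X\setminus (B^-B/B)$ is the complement of the open big cell, which does not pass through $eB$. Consequently $\zeta^e$ coincides with $\mathcal{O}_X$ on an open neighborhood of $eB$, yielding $\iota^*[\zeta^e]=1$. Combining the two cases gives $|\bar\zeta^x|=\delta_{x,e}$. The only delicate step in this plan is the first one: once the normalization of $\mathcal{L}_X(-\mu)$ is pinned down the support argument is entirely routine, and the main potential pitfall is simply a sign or duality convention that would flip which Weyl group element gets singled out.
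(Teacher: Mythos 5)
Your proof is correct, and its second half takes a genuinely different route from the paper. The first step coincides in substance with the paper's: the paper also begins by rewriting $\vert \bar{\zeta}^x\vert$ as the pairing $\langle \zeta^x, \mathcal{O}_{\mathring{X}_e}\rangle$, i.e.\ as the (derived) restriction of $[\zeta^x]$ to the base point $eB$, which is exactly your identification $\vert\cdot\vert=\iota^*\circ\varphi$; your verification on pure tensors, using that the fiber of $\L_X(-\mu)$ at $eB$ has weight $\mu$, matches the paper's convention (localization of $\L(\lambda)$ at $x$ is $e^{-x\lambda}$). Where you diverge is in evaluating this restriction: the paper simply quotes the duality $\langle \zeta^x,\mathcal{O}_{\mathring{X}_y}\rangle=\delta_{x,y}$ from Compton--Kumar (their Proposition 3.8) and specializes to $y=e$, whereas you prove the needed special case directly by a support argument — for $x\neq e$ the supports of $\zeta^x$ and $\mathcal{O}_{eB}$ are disjoint (since $eB\in\overline{B^-xB/B}$ forces $e\geq x$), so all Tor's vanish, and for $x=e$ the sheaf $\zeta^e$ is equivariantly trivialized near $eB$ because the boundary divisor misses the opposite big cell. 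Your route is more elementary and self-contained, avoiding the external duality result; the paper's is shorter and reuses the same pairing machinery that underpins the rest of the argument. Both are sound, and your normalization check disposes of the one place where a convention error could have singled out $w_o$ instead of $e$.
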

\begin{proof}
By definition 
\begin{equation}\label{eq5.7.1}
\vert \bar{\zeta}^x\vert = \langle \zeta^x, \mathcal{O}_{\mathring{X}_e}\rangle,
\end{equation}
where $\mathring{X}_v:= \overline{BvB/B}$ and $\langle\,,\,\rangle: K_0^T(X)\otimes K_0^T (X) \rightarrow R(T)$ is defined similarly as in Definition \ref{2.3}, by setting 
$$
\left\langle \left[\mathcal{S}\right],\left[\F\right]\right\rangle = \sum_i (-1)^i \chi_T \left( X, Tor_i^{\mathcal{O}_X} \left(\mathcal{S},\F\right)\right),
$$
for T-equivariant coherent sheaves $\mathcal{S}$ and $\F$ over $X$. 
By [CK, Proposition 3.8], for $ x,y\in W$,
\begin{equation}\label{eq5.7.2}
\langle \zeta^x ,\mathcal{O} _{\mathring{X}_y}\rangle = \delta _{x,y}; \mbox{ in particular, } \langle \zeta ^x ,\mathcal{O} _{\mathring{X}_e}\rangle =\delta _{x,e}.
\end{equation}
Combining the equations \eqref{eq5.7.1} and \eqref{eq5.7.2}, we get the lemma.
\end{proof}

\begin{definition}\label{def5.8}\rm{
For any $0\leq i \leq l$, we define a certain {\it left Demazure operator: }
$$
D_i' : R(T) \otimes_{R(G)} R(T) \rightarrow R(T)\otimes_{R(G)} R(T),
\,\,\,
D_i'\left( a\otimes b\right)=\left(D_i a\right)\otimes b, \,\text{for $a,b\in R(T)$}. 
$$
Since
$D_i\left(ab\right)=\left(D_i a\right)b$, for $a\in R(T)$ and $b\in R(G)$, 
$D_i'$ is well-defined. 

A slight variant of these operators also appear in [MNS, \S 5.2]. }
\end{definition}

The following is one of our main results of the paper obtained by using Propositions \ref{4.3} and \ref{5.2} and Lemma \ref{5.5}. 
\begin{theorem} \label{5.9}
Take $u\in \W$, $v\in \W'$ and take a reduced decomposition $u=s_{i_1}\ldots s_{i_n}$\,  $\left(0\leq i_j\leq l\right)$. Then, under the modified convolution product (cf. Definition \ref{4.8})
$$
\left[ \mathcal{O}_{X^\B_u}\right] \odot \left[ \mathcal{O}_{X_v}\right] = \sum_{x\in W} \sum_{1\leq j_1< \cdots <j_p\leq n} \left\vert D_{i_1}' \cdots \hat{\hat{D}}'_{i_{j_1}}\cdots \hat{\hat{D}}'_{i_{j_p}} \cdots D_{i_n}'\left(\bar{\zeta}^x \right)\right\vert [ \mathcal{O}_{X_{\overline{s_{i_{j_1}}*\cdots *s_{i_{j_p}}*x*v}}}],
$$
where $\hat{\hat{D}}'_j$ means to replace the operator $D_j'$ by the Weyl
group action on $R(T) \otimes_{R(G)} R(T)$ acting only on the first factor, $*$ is the Demazure product in $\W$ (cf. Definition \ref{4.2}) and for $w\in \W$, $\bar{w}$ denotes the corresponding minimal representative in $wW$. 
\end{theorem}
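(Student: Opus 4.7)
We unpack the definition \eqref{4.8.3} of the modified convolution product $\odot$ and apply the three structural ingredients already developed: Proposition \ref{4.3} (computing $\odot'$ between Schubert classes), Proposition \ref{5.2} (expanding $\mu^\B_!(\mathcal{O}_{X^\B_w}\overset{\B}{\boxtimes} e^\lambda)$ in the Schubert basis with coefficients $f(w,u;\lambda)$), and Lemma \ref{5.5} (the subset-sum expansion of $z_{i_1}\cdots z_{i_n}(e^\lambda\delta_e)$ in $Q_\W$). Starting from
\[
[\mathcal{O}_{X^\B_u}] \odot [\mathcal{O}_{X_v}] = \sum_{x \in W} \bigl(\epsilon(\L^x)\cdot [\mathcal{O}_{X^\B_u}]\bigr) \odot' \bar{\mu}_!\bigl(\L_x \overset{\B}{\boxtimes} [\mathcal{O}_{X_v}]\bigr),
\]
the aim is to expand the first factor in the Schubert basis of $K_0^\B(\Y)$, evaluate the inner $\odot'$ via Proposition \ref{4.3}, and reassemble the sum over $x$ as an expression in $\bar{\zeta}^x$.

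The first factor $\epsilon(\L^x)\cdot [\mathcal{O}_{X^\B_u}]$ decomposes into characters: writing $\epsilon(\L^x)\in R(T)$ as a combination of characters $e^\lambda$ and using the identity $e^\lambda\cdot [\mathcal{O}_{X^\B_u}] = \mu^\B_!(\mathcal{O}_{X^\B_u}\overset{\B}{\boxtimes} e^\lambda)$ (valid because $\mu^\B$ is an isomorphism), Proposition \ref{5.2} expresses each piece as $\sum_{w\leq u} f(w,u;\lambda)[\mathcal{O}_{X^\B_w}]$, with coefficients arising from the action of $z_u = z_{i_1}\cdots z_{i_n}$ on $e^\lambda\delta_e$ in $Q_\W$. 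Applying Lemma \ref{5.5} to this action yields
\[
z_u(e^\lambda\delta_e) = \sum_{1\leq j_1<\cdots<j_p\leq n}\bigl(D_{i_1}\cdots \hat{\hat{D}}_{i_{j_1}}\cdots \hat{\hat{D}}_{i_{j_p}}\cdots D_{i_n}\bigr)(e^\lambda)\, z_{i_{j_1}}\cdots z_{i_{j_p}},
\]
and the identity $z_{i_{j_1}}\cdots z_{i_{j_p}} = z_{s_{i_{j_1}}*\cdots*s_{i_{j_p}}}$ (from the braid and idempotency relations of the $z_i$'s) identifies the surviving $\B$-Schubert terms as indexed by $s_J := s_{i_{j_1}}*\cdots*s_{i_{j_p}}$.

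Plugging back into the outer $\odot'$-product and invoking Proposition \ref{4.3} produces terms $(\text{coefficient})\cdot [\mathcal{O}_{X_{\overline{s_J*x*v}}}]$, where the Demazure factor $*x$ inside the final Schubert index comes from the $\L_x$ piece of $\bar{\mu}_!(\L_x\overset{\B}{\boxtimes}[\mathcal{O}_{X_v}])$. Writing $\bar{\zeta}^x = \sum_j a_j^x\otimes b_j^x$ via $\varphi$, the Demazure operators $D'_{i_k}$ and Weyl actions $\hat{\hat{D}}'_{i_k}$ act on the first tensor factor $a_j^x$, while the second factor $b_j^x$ is absorbed by the multiplication $|\cdot|: R(T)\otimes_{R(G)} R(T) \to R(T)$; the Leibniz rule \eqref{5.4.1}, $D_i(ab) = (D_i a)b + (s_i a)(D_i b)$, guarantees that the assembly produces precisely $\lvert D'_{i_1}\cdots \hat{\hat{D}}'_{i_{j_1}}\cdots \hat{\hat{D}}'_{i_{j_p}}\cdots D'_{i_n}(\bar{\zeta}^x)\rvert$ as the coefficient of $[\mathcal{O}_{X_{\overline{s_J*x*v}}}]$.

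The \emph{hard part} is the reassembly in the last paragraph: extracting the Demazure factor $*x$ inside the Schubert index and recognizing the accumulated scalar as $|D'\cdots(\bar{\zeta}^x)|$. This hinges on three inputs: (i) the Steinberg-basis duality $\{\L_x\}/\{\L^x\}$ together with the diagonal identity $\Delta = \sum_x \L_x\boxtimes\L^x$ of Lemma \ref{4.7}, (ii) the $\varphi$-identification between the Schubert sheaves $\zeta^x$ on $X$ and the tensor $\bar{\zeta}^x\in R(T)\otimes_{R(G)} R(T)$, and (iii) a careful analysis of $\bar{\mu}_!(\L_x\overset{\B}{\boxtimes}[\mathcal{O}_{X_v}])$ showing that its convolution with a $\B$-Schubert class at $s_J$ produces the class at $\overline{s_J*x*v}$ (rather than merely $\overline{s_J*v}$). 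The base case $u=e$ (so $n=0$) is immediate from Lemma \ref{lem5.4} since $|\bar{\zeta}^x|=\delta_{x,e}$, and the general case then follows by the combinatorics of Lemma \ref{5.5}.
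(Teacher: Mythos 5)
Your outline gathers the right ingredients (the definition \eqref{4.8.3}, Propositions 4.3 and 5.2, Lemma 5.5, and the duality between $\zeta^x$ and the Schubert basis), but the way you deploy Proposition 5.2 rests on a false identity, and the step that actually needs these tools is left unproved. The identity $e^\lambda\cdot\left[\mathcal{O}_{X^\B_u}\right]=\mu^\B_!\left(\mathcal{O}_{X^\B_u}\overset{\B}{\boxtimes}e^\lambda\right)$ does not hold: the right-hand side is the class of the line bundle $\L^\B_u(-\lambda)$ (the restriction to $X^\B_u$ of the $\G$-equivariant line bundle on $\Y$ attached to $e^\lambda$), whereas the left-hand side is multiplication by the scalar $e^\lambda\in R(\B)=K_0^\B(*)$, i.e.\ a twist of the equivariant structure. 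These are different classes; if they agreed, then, since $\{[\mathcal{O}_{X^\B_w}]\}_{w\in\W}$ is an $R(T)$-basis, Proposition 5.2 would force $f(w,u;\lambda)=e^\lambda\delta_{w,u}$, contradicting already Proposition 4.4 (where $\L_{X_i}(-\lambda)=e^{s_i\lambda}[\mathcal{O}_{X_i}]+D_i(e^\lambda)[\mathcal{O}_e]$). Moreover, expanding the first factor $\epsilon(\L^x)\cdot[\mathcal{O}_{X^\B_u}]$ gains nothing, because $\odot'$ is already $R(\B)$-linear in the first variable. The real difficulty sits in the second variable: one first expands, using $\langle\zeta^x,\mathcal{O}_{\mathring{X}_y}\rangle=\delta_{x,y}$ and Proposition 4.3 (noting $\mathring{X}_x=X^\B_x$),
$$
\bar{\mu}_!\left(\L_y\overset{\B}{\boxtimes}\mathcal{O}_{X_v}\right)=\sum_{x\in W}a^x_y\left[\mathcal{O}_{X_{\overline{x*v}}}\right],\qquad\text{where }\ [\zeta^x]=\sum_{y\in W}a^x_y\,[\L^y],
$$
and must then move the coefficients $a^x_y\in R(T)$ across $\odot'$, which is \emph{not} $R(T)$-linear in the second variable. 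This is precisely what Proposition 5.2 accomplishes: a character twist of the second argument becomes, on $\G\times^\B\X$, the pullback of the corresponding line bundle on $\Y$, so that $[\mathcal{O}_{X^\B_u}]\odot'\left(e^\lambda b\right)=\left[\L^\B_u(-\lambda)\right]\odot' b=\sum_{w\leq u}f(w,u;\lambda)\,[\mathcal{O}_{X^\B_w}]\odot' b$, and Lemma 5.5 identifies these $f$'s with the operators $D_{i_1}\cdots\hat{\hat{D}}_{i_{j_1}}\cdots\hat{\hat{D}}_{i_{j_p}}\cdots D_{i_n}$ applied to $e^\lambda$, indexed by the subwords $s_{i_{j_1}}*\cdots*s_{i_{j_p}}$.

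This is exactly the step you defer to the ``hard part'' and to item (iii), and item (iii) as stated is not what happens: $\bar{\mu}_!\left(\L_x\overset{\B}{\boxtimes}\mathcal{O}_{X_v}\right)$ is not a single class yielding $\overline{s_J*x*v}$; it is the combination displayed above, and a second application of Proposition 4.3 then gives the terms $[\mathcal{O}_{X_{\overline{s_{i_{j_1}}*\cdots*s_{i_{j_p}}*x*v}}}]$. You also conflate the Steinberg index in \eqref{4.8.3} with the index $x$ of $\bar{\zeta}^x$ in the theorem: the latter comes from expanding $\L_y$ in the basis $\{[\mathcal{O}_{\mathring{X}_x}]\}$ dual to $\{\zeta^x\}$, while the Steinberg index is summed out at the end via $\sum_{y}\epsilon(\L^y)\left(D_{i_1}\cdots\hat{\hat{D}}_{i_{j_1}}\cdots\hat{\hat{D}}_{i_{j_p}}\cdots D_{i_n}\right)(a^x_y)=\left\vert D'_{i_1}\cdots\hat{\hat{D}}'_{i_{j_1}}\cdots\hat{\hat{D}}'_{i_{j_p}}\cdots D'_{i_n}(\bar{\zeta}^x)\right\vert$, which is immediate from the definitions of $D'_i$, $\vert\cdot\vert$ and the expansion of $\bar{\zeta}^x$ (no Leibniz rule is involved). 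Finally, no induction on $n$ is needed at the level of the theorem — the inductive content is already inside Proposition 5.2 and Lemma 5.5 — so the closing appeal to a base case $u=e$ plus ``combinatorics of Lemma 5.5'' does not substitute for the missing computation.
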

\begin{proof}
By the identity \eqref{eq5.7.2}, we get for any line bundle $\L$ over $X$,
\begin{equation}\label{5.9.1}
\left[\L \right] =\sum_{x\in W} \left\langle \L,\zeta^x\right\rangle \left[ \mathcal{O}_{\overset{o}{X}_x}\right] \in K^T_0 \left(X\right).
\end{equation}
Write 
\begin{equation}\label{5.9.2}
\left[\zeta^x\right] =\sum_{y\in W}a^x_y\left[\L^y \right] \in K_0^T \left(X\right), \mbox{ for } a^x_y \in R(T),
\end{equation}
where $[\L^y]$ is as defined in Definition \ref{4.6}.
Thus, by the definition of the product $\odot$ (cf. \eqref{4.8.3}),
\begin{eqnarray*}
\left[ \mathcal{O}_{X_u^\B} \right] \odot \left[ \mathcal{O} _{X_v}\right] &=& \sum_{x,y\in W} \epsilon \left(\L^y\right) \left[\mathcal{O}_{X_u^\B}\right] \odot' \left\langle \L_y, \zeta^x\right\rangle \overline{\mu}_! \left( \mathcal{O}_{\mathring{X}_x} \overset{\B}{\boxtimes} \mathcal{O}_{X_v} \right), \mbox{ by } \eqref{5.9.1}\\
&=& \sum_{x,y\in W} \epsilon \left(\L^y\right) \left[\mathcal{O}_{X^B_u}\right] \odot' a_y^x \left[ \mathcal{O}_{X_{\overline{x*v}}}\right], \mbox{ by } \eqref{5.9.2} \mbox{ and}\\&&\mbox{ Proposition \ref{4.3}
 since }\mathring{X}_x =X^\B_x\\
&=& \sum_{1\leq j_1 < \cdots < j_p \leq n} \,\,\sum_{x,y\in W} \epsilon \left(\L^y\right) \left( D_{i_1} \ldots {\hat{\hat{D}}}_{i_{j_1}} \ldots {\hat{\hat{D}}}_{i_{j_p}} \ldots D_{i_n}\right) \left( a^x_y
\right)\\
&&\cdot [ \mathcal{O}_{X_{s_{i_{j_1}}*\cdots *s_{i_{j_p}}}}] \odot' \left[ \mathcal{O}_{\overline{x*v}}\right],  \mbox{ by Proposition \ref{5.2} and  Lemma \ref{5.5}}\\
&=& \sum_{1\leq j_1 < \cdots < j_p \leq n} \,\,\sum_{x\in W} \left[\sum_{y\in W} \epsilon \left(\L^y\right) \left( D_{i_1} \ldots \hat{\hat{D}}_{i_{j_1}} \ldots \hat{\hat{D}}_{i_{j_p}} \ldots D_{i_n}\right)
 \left( a^x_y\right)\right]\\
&&[ \mathcal{O}_{X_{\overline{s_{i_{j_1}}*\cdots *s_{i_{j_p}}*x*v}}}]\mbox{, by Proposition \ref{4.3}}\\
&=& \sum_{x\in W} \,\,\sum_{1\leq j_1 < \cdots < j_p \leq n}  \left\vert D_{i_1}' \ldots \hat{\hat{D}}'_{i_{j_1}} \ldots \hat{\hat{D}}'_{i_{j_p}} \ldots D_{i_n}' \left(\bar{\zeta}^x\right)\right\vert
\cdot [ \mathcal{O}_{X_{\overline{s_{i_{j_1}}*\cdots *s_{i_{j_p}}*x*v}}}], \mbox{ by \eqref{5.9.2}}.
\end{eqnarray*}
This proves the Theorem. 
\end{proof}

\begin{remark}\rm{
For $u,v\in \W'$, as in Definition \ref{4.8}, 
$$
\left(p^*\left[ \mathcal{O}_{X_u}\right]\right) \odot \left[\mathcal{O}_{X_v}\right] =\left[\mathcal{O}_{X^\B_{uw_o}}\right] \odot \left[\mathcal{O}_{X_v}\right].
$$

Thus, the above Theorem \ref{5.9} gives an expression for $\left[ \mathcal{O}_{X_u}\right] \odot \left[\mathcal{O}_{X_v}\right]$  for the (modified) convolution product $\odot$ in $K^T_0(\X)$ replacing $u$ by $uw_o$. 

Recall the isomorphism $\varphi: R(T) \otimes_{R(G)}R(T)\overset{\sim}{\rightarrow} K^0_T(X)$ from Definition \ref{5.6}.} 
\end{remark}

\begin{lemma}\label{5.11}
For $x\in W$ and $1\leq i \leq l$,

(a) \begin{eqnarray*} s_i'\left\{\left[\zeta^x\right]\right\}&=& \left\{ 
\begin{array}{l}
\displaystyle e^{\alpha_i}\left[\zeta^x\right],\mbox{ if} \quad s_ix>x\\
\displaystyle \left[\zeta^x\right] + \left(1-e^{\alpha_i}\right) \left[\zeta^{s_i x}\right], \mbox{ if}\quad s_ix<x.
\end{array}\right.\\
\end{eqnarray*}

(b) \begin{eqnarray*} D_i'\left[\zeta^x\right] &=& 
\left\{ 
\begin{array}{l}
\displaystyle\left[\zeta^x\right],\mbox{ if} \quad s_ix>x\\
\displaystyle- \left[\zeta^{s_ix}\right] , \mbox{ if}\quad s_ix<x.
\end{array}\right.
\end{eqnarray*}
\end{lemma}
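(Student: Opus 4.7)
Note first that (b) is equivalent to (a): applying the identity $D_i a = (a - s_i a)/(1-e^{\alpha_i})$ to the first factor of $R(T)\otimes_{R(G)}R(T)$ gives the operator identity $D_i' = (1 - s_i')/(1-e^{\alpha_i})$, where $e^{\alpha_i}$ multiplies the first factor. Substituting the two formulas of (a) in turn yields the two cases of (b): when $s_i x > x$, $D_i'[\zeta^x] = ([\zeta^x] - e^{\alpha_i}[\zeta^x])/(1-e^{\alpha_i}) = [\zeta^x]$; when $s_i x < x$, $D_i'[\zeta^x] = -(1-e^{\alpha_i})[\zeta^{s_ix}]/(1-e^{\alpha_i}) = -[\zeta^{s_ix}]$. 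Since $s_i = 1 - (1-e^{\alpha_i})D_i$ recovers (a) from (b), it suffices to prove either one.

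I would prove (a) by localization at the $T$-fixed points $X^T = W$. The localization map $K_T^0(X) \hookrightarrow \bigoplus_{y\in W} R(T)$ is injective, so it is enough to check both sides at each $y$. Testing on the generators $\varphi(e^\lambda\otimes e^\mu)$, whose restriction to $y$ equals $e^{\lambda - y\mu}$, one obtains the key compatibility
\[
(s_i' V)|_y \;=\; s_i\bigl(V|_{s_iy}\bigr) \qquad \text{for all } V\in K_T^0(X),\; y\in W,
\]
where the outer $s_i$ is the standard Weyl reflection on $R(T)$. The classes $[\zeta^x]$ come with well-known localization formulas, forced by their characterization as the basis dual to $\{[\mathcal{O}_{\mathring X_y}]\}$ under the pairing (extending Lemma \ref{lem5.4} to all $y$, as in [CK, Prop.~3.8]): $[\zeta^x]|_y = 0$ unless $y \geq x$ in Bruhat order, with an explicit product formula at $y = x$. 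Substituting into the displayed identity and comparing with the proposed RHS of (a) case-by-case, according to whether $s_ix > x$ or $s_ix < x$ and to the Bruhat position of $y$, should establish (a).

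The main obstacle will be the Bruhat-order bookkeeping at each fixed point: the displayed identity couples $y$ with $s_i y$, and in the case $s_i x < x$ one has to account for the overlap in the supports of $[\zeta^x]$ and $[\zeta^{s_ix}]$. A cleaner alternative is to prove (b) directly by induction on $\ell(x)$, using a recursive description of $\bar\zeta^x$ inside $R(T)\otimes_{R(G)}R(T)$; the base case $x=e$ is easy since $s_i e > e$ for every $i$, so only the first case of (a) is needed, and $\bar\zeta^e$ admits an explicit description from $\mathring X^e = X$ and $\partial \mathring X^e = \bigcup_j \mathring X^{s_j}$. Either way, the crux is the asymmetric behavior of $[\zeta^x]$ under the left Weyl action, dictated by the geometry of the opposite Schubert variety at the fixed point $y$.
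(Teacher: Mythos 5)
Your reduction between (a) and (b) via $D_i' = \frac{1}{1-e^{\alpha_i}}\left(\Id - s_i'\right)$ is correct and is exactly how the paper deduces (b) from (a), and your localization setup is sound: restriction to $X^T$ is injective (by [KK, Theorem 3.13], as used in the paper's Lemma \ref{5.13}), and the intertwining identity $(s_i'V)_{|y} = s_i\bigl(V_{|s_iy}\bigr)$ does check out on the generators $\varphi(e^\lambda\otimes e^\mu)$ (whose restriction at $y$ is $e^{\lambda+y\mu}$ in the paper's convention $\L(\lambda)_{|x}=e^{-x\lambda}$; the sign discrepancy with what you wrote is harmless). The genuine gap is the final step. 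To verify, say when $s_ix>x$, that $s_i\bigl([\zeta^x]_{|s_iy}\bigr) = e^{\alpha_i}\,[\zeta^x]_{|y}$ for \emph{every} $y\in W$, you need the actual restrictions $[\zeta^x]_{|y}$ at all fixed points, or at least a recursion relating $[\zeta^x]_{|y}$ to $[\zeta^x]_{|s_iy}$; the data you actually invoke --- vanishing unless $y\geq x$ and an explicit product formula at $y=x$ --- does not determine these, so "comparing case-by-case $\ldots$ should establish (a)" is precisely where all the content sits and is not carried out. Saying the restrictions are "forced by duality" is true in principle, but extracting them and verifying the reflection identity is essentially equivalent in difficulty to the lemma itself. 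Your fallback, induction on $\ell(x)$ "using a recursive description of $\bar\zeta^x$", is circular as stated, since the recursion $D_i'[\zeta^x]=-[\zeta^{s_ix}]$ (for $s_ix<x$) is the statement being proved.

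For comparison, the paper argues dually rather than by localization: from the $W$-equivariance of the pairing one has $\left\langle s_i'[\zeta^x],\, s_i'[\mathcal{O}_{\mathring X_y}]\right\rangle = s_i\left\langle [\zeta^x],[\mathcal{O}_{\mathring X_y}]\right\rangle = \delta_{x,y}$, so by nondegeneracy ([CK, Proposition 3.8], i.e. the analogue of Theorem \ref{2.4}) it suffices to check that each proposed right-hand side has these same pairings against the classes $s_i'[\mathcal{O}_{\mathring X_y}]$; those pairings are computed from the known left Weyl-group (Demazure--Lusztig type) action on Schubert structure sheaves, quoted from [MNS, Proposition 5.5]. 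If you want to keep your localization route, you must import an equivalent external input --- e.g. explicit fixed-point restriction formulas for the $[\zeta^x]$, or the [MNS] recursion transported to restrictions --- and then do the Bruhat bookkeeping you correctly identify as the obstacle; as written, the proposal is a plan with the decisive verification missing.
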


\begin{proof}
(a) Observe first that, for any $y \in W$, 
\begin{equation}\label{5.11.1}
\left\langle s_i'\left[\zeta^x\right],s_i'[ \mathcal{O}_{\mathring{X}_y}]\right\rangle = s_i\left\langle \left[\zeta^x\right],[ \mathcal{O}_{\mathring{X}_y}]\right\rangle=\delta_{x,y}, \mbox{ by identity \eqref{eq5.7.2}}.
\end{equation}
We first take $s_ix>x$. Then, by [MNS, Proposition 5.5], 
\begin{eqnarray*}
\left\langle e^{\alpha_i}\left[\zeta^x\right], s_i'[ \mathcal{O}_{\mathring{X}_y}]\right\rangle =
\left\{ \begin{array}{l}
0,\quad \mbox{if}\quad s_iy<y\\
\delta_{x,y},\quad\mbox{if}\quad s_iy>y.
\end{array}\right.
\end{eqnarray*}

This proves (a) in the case $s_ix>x$ using equation \eqref{5.11.1}. 

Now, take $s_ix<x$. Then, 
\begin{eqnarray*}
\left\langle\left[\zeta^x\right],  \left(1-e^{\alpha_i}\right) \left[\zeta^{s_i x}\right], s_i'[ \mathcal{O}_{\mathring{X}_y}]\right\rangle=
\left\{ \begin{array}{l}
0,\quad \mbox{if}\quad s_iy>y\\
\delta_{x,y},\quad\mbox{if}\quad s_iy<y.
\end{array}\right.
\end{eqnarray*}

This proves (a) in the case $s_i x<x$ again using equation \eqref{5.11.1}. 
\vskip1ex

(b) Since $D'_i=\frac{1}{1-e^{\alpha_i}} \left(\Id-s_i'\right)$, part (a) proves (b).  
\end{proof}

\begin{definition} \label{5.12} \rm{
Define an involution 
$$
t:~R(T)\otimes_{R(G)}R(T) \rightarrow R(T)\otimes_{R(G)}R(T) ,~a\otimes b \mapsto b\otimes a,\,\mbox{for } a,b\in R(T).
$$

Via the isomorphism $\varphi$ of Definition \ref{5.6}, we identify any element of $K_T^0(X)$ by an element of $R(T)\otimes_{R(G)}R(T)$. 
Thus, for any class $\eta\in K_T^0(X)$, we have the transposed class $\eta^t := t(\eta) \in K_T^0 (X)$ under the 
isomorphism $\varphi$. In fact, the same definition as that of $\varphi$ realizes $\eta^t\in  K_T^0\left(\X^{\B}\right)$ compatible with its restriction to $X\hookrightarrow \bar\Y$, where $\bar\Y$ is as in the proof of Lemma \ref{2.7}. Viewed $\eta^t$ as an element of $K_T^0\left(\bar\Y\right)$, we write it as $\eta^t_{\aff}$. 
We record this as an $R(T)$-algebra homomorphism:
$$
K_T^0\left(X\right) \rightarrow  K_T^0\left(\bar\Y\right),\quad \eta \mapsto \eta^t_{\aff}.
$$
In particular,  we have (for any $x\in W$), 
\begin{equation}\label{5.12.1}
\left[\zeta^x\right]^t_{\aff} \in K_T^0\left(\bar\Y\right).
\end{equation}
Let $B\subset P_i~\left(1\leq i \leq l\right)$ be the minimal parabolic subgroup of $G$ containing $s_i$. Consider the projection  $p_i:X\rightarrow G/P_i $. Recall the Demazure operator
$$
\D_i : K_T^0\left(X\right) \rightarrow K_T^0\left(X\right),\quad \eta \mapsto p_i^* \left( \left(p_i\right)_! \eta \right). 
$$
Under the identification $\varphi$, we can think of the Demazure operators acting on $R(T)\otimes_{R(G)}R(T)$. Then, 
$$
\D_i ~\left(a\otimes b\right) =a\otimes \left(\frac{b-\left( s_i b\right)e^{\alpha_i}}{1-e^{\alpha_i}}\right),\quad\text{for $a,b \in R(T)$}.
$$
Thus, 
\begin{equation}\label{5.12.2}
\D_i~\left(a\otimes b\right)= \left( 1\otimes e^\rho\right) \cdot \left( D_i''\left(a\otimes e^{-\rho} b \right)\right),
\end{equation}
where $\rho$ is the half sum of positive roots of $G$
and 
\begin{equation} \label{5.12.3}D_i''\left(a\otimes b\right)=a\otimes \left(D_i b\right)\quad \text{ (cf. Definition \ref{5.4}).}
\end{equation} 

Recall that $\left\{ e_x := e^{\delta_x}\right\}_{x\in W}$ is the Steinberg basis of $R(T)$ over $R(G)$ (cf. Definition \ref{4.6}). Thus, for any $y\in W$, we can write as elements of $R(T) \otimes _{R(G)} R(T)$: 
$$
\left[ \zeta^y\right] = \sum_{x\in W} r^y_x \otimes e_x =\sum_{x\in W} e_x \otimes q_x^y.
$$}
\end{definition} 

\begin{lemma}\label{5.13}
For any $x,y \in W$, 
$$
r_x^y = \sum_{z \in W} \zeta^y  (z) F_{z,x}\quad\mbox{ and } q^y_x = \sum_{z\in W} \left(z \cdot \zeta^y (z^{-1})\right) F_{z,x},
$$
where $F$ is the inverse of the matrix 
$E:= \left(E_{x,y} := y\cdot e_x\right)_{x,y \in W}$ and $\zeta^y(z)\in R(T)$ is the localization of $\zeta^y$ at $z$. By [St, \S2], det $E\neq 0$. 

In particular, as elements of $R(T)\otimes_{R(G)}R(T)$,
\begin{align}\label{5.13.1}
\left[ \zeta^{w_o}\right] ^t &= (-1) ^{\ell(w_o)} \left(1\otimes e^{-2\rho}\right) \cdot \left[ \zeta^{w_o}\right]\nonumber\\
&= (-1)^{\ell(w_o)} \left( e^\rho \otimes e^{-\rho} \right) \cdot \left[ \zeta^{w_o}\right]\nonumber\\
&= (-1)^{\ell(w_o)} \left( e^{2\rho} \otimes 1 \right) \cdot \left[ \zeta^{w_o}\right].
\end{align}
\end{lemma}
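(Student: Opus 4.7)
The plan is to prove both formulas for $r_x^y$ and $q_x^y$ by localizing at the $T$-fixed points of $X = G/B$, which are parametrized by $W$. The key technical input is the localization formula for $\varphi$: the class $\varphi(a \otimes b) = a \cdot \mathcal{L}_X(-b) \in K_T^0(X)$ has fiber $a \cdot (z \cdot b)$ at the fixed point $zB$, where $z$ acts on $R(T)$ via the Weyl group action. This identifies the matrix $E_{x,z} = z \cdot e_x$ with the localization matrix of the $R(T)$-basis $\{1 \otimes e_x\}_{x \in W}$, and its invertibility over $Q(T)$ is Steinberg's result $[St, \S 2]$; let $F := E^{-1}$. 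Localizing the defining identity $[\zeta^y] = \sum_x r_x^y \otimes e_x$ at each $z$ gives $\zeta^y(z) = \sum_x r_x^y \, E_{x,z}$, and inverting this matrix equation yields $r_x^y = \sum_z \zeta^y(z) \, F_{z,x}$, which is the first claim.

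For the $q_x^y$ formula, I would localize $[\zeta^y] = \sum_x e_x \otimes q_x^y$ at $z$ to obtain $\zeta^y(z) = \sum_x e_x \cdot (z \cdot q_x^y)$. Here the Weyl group action sits on $q_x^y$ rather than on the basis, so I would substitute $z \mapsto z^{-1}$ and then apply the operator $z$ to both sides, arriving at
\begin{equation*}
z \cdot \zeta^y(z^{-1}) \;=\; \sum_x (z \cdot e_x) \, q_x^y \;=\; \sum_x E_{x,z} \, q_x^y,
\end{equation*}
and inverting yields $q_x^y = \sum_z (z \cdot \zeta^y(z^{-1})) \, F_{z,x}$.

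For identity \eqref{5.13.1}, I would first observe that $\mathring{X}^{w_o} = \overline{B^- w_o B/B}$ has dimension $\ell(w_o) - \ell(w_o) = 0$, so it is the single point $\{w_o B\}$ with $\partial \mathring{X}^{w_o} = \emptyset$, and therefore $\zeta^{w_o} = [\mathcal{O}_{w_o B}]$. By the standard $K$-theoretic Euler class formula for the class of a $T$-fixed point on a smooth variety, $\zeta^{w_o}(z) = 0$ for $z \neq w_o$ while
\begin{equation*}
c \;:=\; \zeta^{w_o}(w_o) \;=\; \prod_{\alpha > 0}(1 - e^{-\alpha}),
\end{equation*}
since the tangent characters of $X$ at $w_o B$ are exactly the positive roots. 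Substituting these localizations into the formulas already proved collapses each sum to a single term: $r_x^{w_o} = c \, F_{w_o, x}$ and $q_x^{w_o} = (w_o \cdot c) \, F_{w_o, x}$. A short manipulation using $(1 - e^\beta) = -e^\beta (1 - e^{-\beta})$ together with $\sum_{\beta > 0} \beta = 2\rho$ gives
\begin{equation*}
w_o \cdot c \;=\; \prod_{\beta > 0}(1 - e^\beta) \;=\; (-1)^{\ell(w_o)} e^{2\rho} \, c,
\end{equation*}
so $r_x^{w_o} = (-1)^{\ell(w_o)} e^{-2\rho} q_x^{w_o}$ for every $x$, which unwinds to the first equality of \eqref{5.13.1}. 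The equality of the three products then follows from injectivity of fixed-point localization: each of $(1 \otimes e^{-2\rho})$, $(e^\rho \otimes e^{-\rho})$, and $(e^{2\rho} \otimes 1)$ acts on $[\zeta^{w_o}]$ as multiplication by the same scalar $e^{2\rho}$ at $w_o$ (using $w_o \cdot \rho = -\rho$) and by zero at every other fixed point.

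The main obstacle is purely bookkeeping: one must pin down the precise localization convention for $\varphi$ so that the matrix $E_{x,z} = z \cdot e_x$ is recovered with the correct placement of the Weyl action. Once this is fixed, parts (a) and (b) are matrix inversion and part (c) reduces to a one-line Weyl denominator manipulation applied to the Euler class of $X$ at $w_o B$.
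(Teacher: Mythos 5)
Your proof is correct and follows essentially the same route as the paper: localize the two expansions $[\zeta^y]=\sum_x r^y_x\otimes e_x=\sum_x e_x\otimes q^y_x$ at the $T$-fixed points, invert the Steinberg matrix $E$, and then specialize to $\zeta^{w_o}=[\mathcal{O}_{\{w_o\}}]$, whose only nonzero localization $\prod_{\alpha>0}(1-e^{-\alpha})$ at $w_o$ yields \eqref{5.13.1}, with the equality of the three products checked by injectivity of the fixed-point localization map. The only cosmetic difference is that you compute the fixed-point value of $\zeta^{w_o}$ directly as a $K$-theoretic Euler class, where the paper cites [KK, Proposition 2.22].
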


\begin{proof}
We fix $y\in W$ and drop the superscript $y$ in the above. Let $\bar{\zeta}$ be the row matrix $\bar{\zeta}_x= {\zeta}^y (x)$, for $x\in W$ and let $\bar{r}$ be the row matrix $\bar{r}_x= r_x^y$. Then, $\bar{r}\cdot E=\bar{\zeta}$. We have used here that the localization of the line bundle $\L(\lambda)$ at $x$ is $e^{-x\lambda}$. 
Let $\bar{q}$ be the row vector $\bar{q}_x=q_x^y$. 
Then, $\bar{q} \cdot E=\hat{\zeta}$, where $\hat{\zeta}$ is the row matrix
$\hat{\zeta}=x\cdot {\zeta}^y (x^{-1})$. Thus, 
$$
\bar{r}=\bar{\zeta}\cdot F \quad \mbox{and}\quad \bar{q} =\hat{\zeta}\cdot F.
$$
This proves the first part of the lemma. 

To prove \eqref{5.13.1}, observe that (cf. [KK, Proposition 2.22])
\begin{equation}\label{5.13.2}
\zeta^{w_o} (x) =
\left\{
\begin{array}{l}
\displaystyle 0, \quad \mbox{for } x<w_o\\
\displaystyle \prod_{\alpha\in R^+ }\left(1-e^{-\alpha}\right),\quad \mbox{for } x=w_o,
\end{array}
\right.
\end{equation}
where $R^+$ is the set of positive roots of $G$. (Note that $e^{w_o,w_o}$ from loc. cit. equals $\overline{\zeta^{w_o}(w_o)}$.) Observe that, by definition, 
\begin{equation}\label{5.13.3}
\zeta^{w_o} =\mathcal{O}_{\left\{w_o \right\}}.
\end{equation}

Now, \eqref{5.13.1} follows from \eqref{5.13.2} and the first part of the lemma. To prove the equality $\left(1\otimes e^{-2\rho}\right)\cdot \left[\zeta^{w_o}\right]=\left(e^\rho \otimes e^{-\rho}\right)\cdot \left[\zeta^{w_o}\right]=\left(e^{2\rho}\otimes 1\right) \left[\zeta^{w_o}\right]$ as in \eqref{5.13.1}, consider their localization since $K_T(X)\rightarrow K_T(X^T)$ is injective (cf. [KK, Theorem 3.13]), and use \eqref{5.13.2}. 
\end{proof}

\begin{proposition}\label{5.14}
For any $x\in W$, take a reduced decomposition $\left( s_{i_1}\ldots s_{i_n}\right)\cdot x=w_o$. Then, as elements of $R(T) \otimes _{R(G)} R(T)$, 
\begin{eqnarray*}
&(a)& \,\,\left[\zeta^x\right]^t = (-1)^{\ell(x)}\left( e^{2\rho}\otimes 1\right) \cdot \left( D_{i_n}''\circ \cdots \circ D_{i_1}'' \left( \left[ \zeta^{w_o}\right]\right)\right)\\
&(b)&\,\,\qquad  = (-1)^{\ell(x)}\left( e^{\rho}\otimes e^{-\rho}\right) \cdot \left( \D_{i_n}\circ \cdots \circ \D_{i_1} \left( \left[ \zeta^{w_o}\right]\right)\right)\\
&(c)&\,\, \qquad= (-1)^{\ell(x)}\left( e^{\rho}\otimes e^{-\rho}\right) \cdot \left[ \mathcal{O}_{\mathring{X}^{x^{-1}}}\right],
\end{eqnarray*}
where $\mathring{X}^{x^{-1}} := \overline{B^- x^{-1} B/B} \subset X$ as before and the operators
$\D_i$ an $D_i''$ are defined in Definition \ref{5.12}. 
\end{proposition}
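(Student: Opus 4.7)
The three parts form a chain. I would prove (a) by induction on $\ell(x)$, then derive (b) from (a) by an algebraic manipulation, and finally obtain (c) by combining (b) with a direct geometric computation of $\mathcal{D}_{i_n}\circ\cdots\circ\mathcal{D}_{i_1}$ applied to the point class.

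For (a), the base case $x=w_o$ has $n=0$ and reduces to the last equality in \eqref{5.13.1}. For the inductive step, set $x':=s_{i_n}x$; then $s_{i_1}\cdots s_{i_{n-1}}\cdot x'=w_o$ is a reduced factorization of shorter length and $\ell(x')=\ell(x)+1$. Since $s_{i_n}x'=x<x'$, Lemma~\ref{5.11}(b) gives $D'_{i_n}[\zeta^{x'}]=-[\zeta^x]$, so $[\zeta^x]=-D'_{i_n}[\zeta^{x'}]$. The key observation is that the involution $t$ intertwines the left and right Demazure operators: $t\circ D'_i=D''_i\circ t$, because $D'_i$ acts only on the first tensor factor and $D''_i$ only on the second while $t$ swaps the two. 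Taking transposes and invoking the inductive hypothesis then yields (a), using $R(T)$-linearity of $D''_{i_n}$ on the first factor (so it commutes with multiplication by $(e^{2\rho}\otimes 1)$) and the sign identity $(-1)^{\ell(x')+1}=(-1)^{\ell(x)}$.

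For (b), write $M_\mu(\eta):=(1\otimes e^\mu)\cdot\eta$. Identity \eqref{5.12.2} reads $\mathcal{D}_i=M_\rho\circ D''_i\circ M_{-\rho}$, and the composition telescopes:
$$\mathcal{D}_{i_n}\circ\cdots\circ\mathcal{D}_{i_1}=M_\rho\circ(D''_{i_n}\circ\cdots\circ D''_{i_1})\circ M_{-\rho}.$$
Applying this to $[\zeta^{w_o}]$ and premultiplying by $(e^\rho\otimes e^{-\rho})$, the outer $M_\rho$ combines with $(e^\rho\otimes e^{-\rho})$ to give $(e^\rho\otimes 1)$. From \eqref{5.13.1}, $(e^{2\rho}\otimes 1)[\zeta^{w_o}]=(e^\rho\otimes e^{-\rho})[\zeta^{w_o}]$; cancelling the unit $(e^\rho\otimes 1)$ yields $(1\otimes e^{-\rho})[\zeta^{w_o}]=(e^\rho\otimes 1)[\zeta^{w_o}]$. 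Using $R(T)$-linearity of each $D''_{i_k}$ on the first factor to pull this $(e^\rho\otimes 1)$ outside the composition produces a total prefactor $(e^{2\rho}\otimes 1)$ multiplying $D''_{i_n}\cdots D''_{i_1}[\zeta^{w_o}]$, matching (a) exactly.

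For (c), by (b) it suffices to show $\mathcal{D}_{i_n}\circ\cdots\circ\mathcal{D}_{i_1}[\zeta^{w_o}]=[\mathcal{O}_{\mathring{X}^{x^{-1}}}]$. By \eqref{5.13.3}, $[\zeta^{w_o}]=[\mathcal{O}_{\{w_o\}}]=[\mathcal{O}_{\mathring{X}^{w_o}}]$. I would iterate the standard Demazure formula for opposite Schubert classes,
$$\mathcal{D}_i\bigl[\mathcal{O}_{\mathring{X}^w}\bigr]=\begin{cases}\bigl[\mathcal{O}_{\mathring{X}^{ws_i}}\bigr]&\text{if } ws_i<w,\\[2pt]\bigl[\mathcal{O}_{\mathring{X}^{w}}\bigr]&\text{if } ws_i>w,\end{cases}$$
which follows because in the first case $p_i|_{\mathring{X}^w}$ is birational onto $p_i(\mathring{X}^w)=p_i(\mathring{X}^{ws_i})$ with $p_i^{-1}$ of the image equal to $\mathring{X}^{ws_i}$, while rational singularities ensure $(p_i)_!\mathcal{O}_{\mathring{X}^w}=\mathcal{O}_{p_i(\mathring{X}^w)}$; in the second case $\mathring{X}^w$ is $P_i$-saturated so $\mathcal{D}_i$ acts as the identity. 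Setting $y_k:=w_o s_{i_1}\cdots s_{i_k}$, reducedness of each prefix gives $\ell(y_k)=\ell(w_o)-k$, so $y_{k-1}s_{i_k}<y_{k-1}$ at every step; iterating yields $[\mathcal{O}_{\mathring{X}^{y_n}}]$, and $y_n=w_o(s_{i_1}\cdots s_{i_n})=w_o(w_o x^{-1})=x^{-1}$, as required. The main obstacle I expect is the algebraic manipulation in (b): one must justify carefully in the non-domain ring $R(T)\otimes_{R(G)}R(T)$ that \eqref{5.13.1} permits exchanging the scalar prefactors $(e^{2\rho}\otimes 1)$ and $(e^\rho\otimes e^{-\rho})$ after the telescoping, relying on the fact that $(e^\rho\otimes 1)$ is a unit and hence cancellable. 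Parts (a) and (c) are then a clean induction and a standard geometric iteration, respectively.
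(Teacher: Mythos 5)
Your proposal is correct and follows essentially the same route as the paper: part (a) rests on the same ingredients (the intertwining $t\circ D_i'=D_i''\circ t$, Lemma \ref{5.11}(b), and \eqref{5.13.1}), merely unrolled as an induction on $\ell(x)$ instead of transposing the full composite $D'_{i_n}\circ\cdots\circ D'_{i_1}$ at once; part (b) is the same telescoping of \eqref{5.12.2} combined with the unit trick from \eqref{5.13.1}; and part (c) is the paper's iteration of $\D_i$ on the point class, phrased directly with opposite Schubert varieties $\mathring{X}^{y_k}$ rather than the $w_o$-translates $w_o\mathring{X}_{w_ox^{-1}}$, which is the identical computation.
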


\begin{proof}
(a) Observe first that for any $\left[\zeta\right]\in K_T^0\left(X\right) $,
\begin{equation}\label{5.14.1}
\left(D_i' \left[\zeta \right] \right)^t = D_i'' \left(\left[\zeta \right] ^t\right).
\end{equation}
By Lemma \ref{5.11} (b), 
$$
\left( D_{i_n}'\circ \cdots \circ D_{i_1}'\right)  \left[ \zeta^{w_o}\right] =(-1)^n \left[ \zeta^{x}\right].
$$
Taking the transpose, we get 
\begin{equation}\label{5.14.2}
\left(\left( D_{i_n}'\circ \cdots \circ D_{i_1}' \right) \left[ \zeta^{w_o}\right] \right)^t = (-1)^n\left[ \zeta^{x}\right]^t.
\end{equation}
By equation \eqref{5.14.1},
$$
\left(\left( D_{i_n}'\circ \cdots \circ D_{i_1}' \right) \left[ \zeta^{w_o}\right] \right)^t = \left( D_{i_n}''\circ \cdots \circ D_{i_1}'' \right) \left(\left[ \zeta^{w_o}\right]^t \right).
$$
 Thus, by equation \eqref{5.14.2}, 
\begin{eqnarray*}
 (-1)^n\left[ \zeta^{x}\right]^t &=& \left( D_{i_n}''\circ \cdots \circ D_{i_1}'' \right) \left(\left[ \zeta^{w_o}\right]^t \right)\\
&=& (-1)^{\ell(w_o)}\left( D_{i_n}''\circ \cdots \circ D_{i_1}'' \right) \left(\left(e^{2\rho}\otimes 1 \right)\cdot \left[ \zeta^{w_o} \right]\right), \mbox{ by \eqref{5.13.1}}\\
&=&(-1)^{\ell(w_o)}\left(e^{2\rho}\otimes 1 \right)\left( D_{i_n}''\circ \cdots \circ D_{i_1}'' \right)\left(\left[ \zeta^{w_o}\right] \right).
\end{eqnarray*}
This proves (a).
\vskip1ex

(b) By \eqref{5.12.2}, 
$$
\D_i \left(\alpha\right) =\left( 1\otimes e^\rho\right) D_i'' \left( \left(1\otimes e^{-\rho}\right)\cdot \alpha\right) , \mbox{ for any } \alpha \in R(T) \otimes _{R(G)} R(T).
$$
Further, $\left( 1\otimes e^\rho\right) \cdot\left[ \zeta^{w_o} \right] =\left(e^{-\rho}\otimes 1 \right)\cdot\left[ \zeta^{w_o} \right] $
(see the proof of \eqref{5.13.1}). Thus, (b) follows from (a). 
\vskip1ex

(c) By the (b)-part, 
\begin{eqnarray*}
\left[ \zeta^{x}\right]^t &=&  (-1)^{\ell(x)}\left( e^{\rho}\otimes e^{-\rho}\right) \cdot \left( \D_{i_n}\circ \cdots \circ \D_{i_1} \left( \left[ \zeta^{w_o}\right]\right)\right)\\
&=&(-1)^{\ell(x)}\left( e^{\rho}\otimes e^{-\rho}\right) \cdot \left( \D_{i_n}\circ \cdots \circ \D_{i_1} \left( [ \mathcal{O}_{\left\{w_o\right\}}]\right)\right)\\
&=& (-1)^{\ell(x)}\left( e^{\rho}\otimes e^{-\rho}\right) \cdot [ \mathcal{O}_{w_o \mathring{X}_{w_ox^{-1}}}], \mbox{ by the definition of } \D_i\\
&=& (-1)^{\ell(x)}\left( e^{\rho}\otimes e^{-\rho}\right) \cdot [ \mathcal{O}_{\mathring{X}^{x^{-1}}}].
\end{eqnarray*}
This proves (c), completing the proof of the proposition.
\end{proof}

For any $u\in \W, v\in \W'$ and $x\in W$, consider 
$$ X_{(u,x,v)} := X_u' \times^{\B}\mathring{X}_x' \times{^\B} X_v $$
 together with the standard product map $\mu_x:X_{(u,x,v)}\rightarrow\X$ and the standard projection $\pi_x: X_{(u,x,v)}\rightarrow X_{u}^\B$, where $X'_u$ is the inverse image of $X_u^\B$ in $\G$ under $\G\rightarrow \Y:=\X^\B$, 
$\mathring{X}_x \subset X\hookrightarrow \Y$ and $\mathring{X}_x'$ is to be thought of as its inverse image in $\G$. 
Observe that $X_{(u,x,v)}$ is a projective variety and  $\pi_x$ is a fibration (in particular, a flat morphism). Thus, the pull-back $\pi_x^*$ is well-defined. Also, we have the standard pull-back map
$$
\mu_x^*:K_T^0\left(\X\right) \rightarrow K_T^0\left( X_{(u,x,v)}\right).
$$
In particular, $\mu_x^* \left(\xi^w\right)$ is well-defined for any $w\in \W'$. Since $ X_{(u,x,v)}$ is a projective variety, both $\mu_{x!}$ and $\pi_{x!}$ are well-defined.

We give another expression for the modified convolution product $\odot$ in the following:
\begin{theorem}\label{5.15}
For $u\in\W$ and $v\in\W'$, 
$$
\left[\mathcal{O}_{X_u^\B}\right]\odot\left[\mathcal{O}_{X_v}\right] =\sum_{w\in \W'}\,\,\sum_{x\in W} \left\langle \left( \left[ \zeta^x\right]^t_{\aff} \right)_{|X_u^\B},\pi_{x!}\mu_x^*\xi^w\right\rangle\left[\mathcal{O}_{X_w}\right],
$$
where $\left[ \zeta^x\right]^t_{\aff}$ is as in \eqref{5.12.1}. 
\end{theorem}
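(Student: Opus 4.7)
The plan is to extract the coefficient of each $[\mathcal{O}_{X_w}]$ on both sides via the duality pairing, unfold both sides as Euler characteristics on convolution-type spaces, and match them through a Steinberg-duality rewriting on the middle factor.

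By the non-singularity of the pairing in Theorem~\ref{2.4}, $\{[\mathcal{O}_{X_w}]\}_{w\in\W'}$ and $\{[\xi^w]\}_{w\in\W'}$ are dual bases. Hence the coefficient of $[\mathcal{O}_{X_w}]$ in any element of $K_0^T(\X)$ is its pairing with $\xi^w$, and the theorem reduces to the scalar identity
\begin{equation*}
\langle \xi^w, [\mathcal{O}_{X_u^\B}]\odot[\mathcal{O}_{X_v}]\rangle = \sum_{x\in W}\bigl\langle (\zeta^x)^t_{\aff}\big|_{X_u^\B},\, \pi_{x!}\mu_x^*\xi^w\bigr\rangle \qquad (\forall\, w\in\W').
\end{equation*}

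For the LHS, I would unfold the definition~\eqref{4.8.3} of $\odot$ together with Definition~\ref{4.1} of $\odot'$. Applying the $R(\B)$-linearity of $\odot'$ in the first variable, flat base change for the external product $\epsilon\overset{\B}{\boxtimes}(-)$, and the projection formula for $\mu_!$ yields
\begin{equation*}
\sum_{x\in W}\epsilon(\L^x)\cdot \chi_T\bigl(X'_u\times^\B\P\times^\B X_v,\ (\mu_{\mathrm{tot}})^*\xi^w\otimes (\mathcal{O}_{X'_u}\overset{\B}{\boxtimes}\L_x\overset{\B}{\boxtimes}\mathcal{O}_{X_v})\bigr),
\end{equation*}
where $\mu_{\mathrm{tot}}$ is the triple multiplication. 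Symmetrically, the projection formula for the flat fibration $\pi_x$ converts each summand on the RHS into $\chi_T(X_{(u,x,v)},\ \mu_x^*\xi^w\otimes \pi_x^*(\zeta^x)^t_{\aff}|_{X_u^\B})$, an Euler characteristic over the smaller space $X_{(u,x,v)}=X'_u\times^\B\mathring{X}'_x\times^\B X_v$. Since $\mu_{\mathrm{tot}}$ restricts to $\mu_x$ on $X_{(u,x,v)}$, the task reduces to a finite-dimensional rewriting on the middle $\P$-factor.

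The heart of the matter is therefore an identity trading the weighted Steinberg sum $\sum_x \epsilon(\L^x)\L_x$ on $\P/\B$ for the family of transposed classes $\{(\zeta^x)^t_{\aff}|_{X_u^\B}\boxtimes \mathcal{O}_{\mathring{X}'_x}\}_{x\in W}$ distributed across the first two factors. The ingredients are the diagonal decomposition $\Delta=\sum_x \L_x\boxtimes\L^x$ of Lemma~\ref{4.7}, the explicit basis changes between $\{\L_x\}$ and $\{\zeta^x\}$ from Lemma~\ref{5.13} and Proposition~\ref{5.14}, and the involution $t$ of Definition~\ref{5.12}. The involution $t$ enters precisely because the isomorphism $\varphi$ of Definition~\ref{5.6} swaps the two tensor slots of $R(T)\otimes_{R(G)} R(T)$, and the affine realization $(\cdot)^t_{\aff}$ of~\eqref{5.12.1} is compatible with $\varphi$ by construction.

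The main obstacle will be this last rewriting. I would need to track carefully the Steinberg duality on the middle $\P/\B$-factor as it interacts with the $\B$-equivariant (but not $\P$-equivariant) supports $\mathring{X}'_x$ on the right-hand side, and verify that the affine extension of $\varphi$ faithfully transports the transposition from Schubert varieties in $X\hookrightarrow \bar\Y$ to arbitrary affine Schubert varieties $X_u^\B\subset \bar\Y$ (needed when $u\in\W\setminus W$, so that $X_u^\B\not\subset X$). Once this $K_T$-theoretic identity on $X'_u\times^\B \P$ is in hand, comparison of Euler characteristics finishes the proof.
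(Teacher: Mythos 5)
Your frame agrees with the paper's: reduce to the scalar identity by pairing with $\xi^w$ (Theorem \ref{2.4}), unfold $\odot$ through \eqref{4.8.3} and $\odot'$, and finish on the triple space $X_{(u,x,v)}$ via the adjunction $\langle \F, f_!\uS\rangle=\langle f^*\F,\uS\rangle$ of Lemma \ref{5.16}. But the theorem is not proved by this outline, because the one step that carries all the content --- converting the Steinberg-weighted sum $\sum_x \epsilon(\L^x)(\cdots\L_x\cdots)$ on the middle factor into the family $\{([\zeta^x]^t_{\aff})_{|X_u^\B}\}$ on the first factor paired against $\mathcal{O}_{\mathring{X}_x}$ in the middle --- is exactly what you defer as ``the main obstacle,'' and the ingredients you propose for it (the diagonal class of Lemma \ref{4.7}, the basis changes of Lemma \ref{5.13} and Proposition \ref{5.14}) are not the ones that make it work; in particular Lemma \ref{5.13} and Proposition \ref{5.14} play no role in this argument.

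The mechanism the paper uses is concrete and short: (a) expand $\L_y=\sum_x\langle\L_y,\zeta^x\rangle[\mathcal{O}_{\mathring{X}_x}]$ using the duality of [CK, Proposition 3.8], so that Proposition \ref{4.3} replaces $\overline{\mu}_!(\mathcal{O}_{\mathring{X}_x}\overset{\B}{\boxtimes}\mathcal{O}_{X_v})$ by $\mathcal{O}_{X_{\overline{x*v}}}$; (b) write $\bar\zeta^x=\sum_i a_i\otimes b_i$, so $\langle\L_y,\zeta^x\rangle=\sum_i a_i\langle\L_y,\alpha(b_i)\rangle$ with the inner pairing valued in $R(G)$, pull that $R(G)$-scalar out of the second slot of $\odot'$ by Corollary \ref{4.5}, and collapse the Steinberg sum $\sum_y\epsilon(\L^y)\langle\L_y,\alpha(b_i)\rangle=b_i$; (c) crucially, observe that the remaining twist by $a_i=e^{\lambda_i}$ on the \emph{second} variable of $\odot'$ --- where $\odot'$ is not $R(\B)$-linear --- is the same as tensoring the \emph{first} variable by the line bundle $\L(-\lambda_i)$ pulled back via $\pi$, restricted to $X_u^\B$; summing $\sum_i b_i\,\L(-\lambda_i)_{|X_u^\B}$ is precisely the definition of $([\zeta^x]^t_{\aff})_{|X_u^\B}$. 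Step (c) is the whole reason the transpose appears, and it is what your worry about ``whether the affine extension of $\varphi$ transports the transposition'' amounts to; without carrying it out, your reduction of both sides to Euler characteristics over $X_u'\times^\B\P\times^\B X_v$ and $X_{(u,x,v)}$ leaves the two sides unmatched. So the proposal is a correct setup plus a correct closing move, with the central identity missing.
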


\begin{proof}
By the definition of $\odot$ as in \eqref{4.8.3}, 
\begin{align}\label{5.15.1}
\left[\mathcal{O}_{X_u^\B}\right]\odot\left[\mathcal{O}_{X_v}\right] &= \sum_{y\in W} \epsilon \left(\L^y\right) \left[\mathcal{O}_{X_u^\B}\right]\odot'\overline{\mu}_!\left(\L_y \overset{\B}{\boxtimes} \mathcal{O}_{X_v}\right)\nonumber\\
&=\sum_{x,y\in W} \epsilon \left(\L^y\right) \left[\mathcal{O}_{X_u^\B}\right]\odot'\left\langle \L_y,\zeta^x\right\rangle \overline{\mu}_! \left( \mathcal{O}_{\mathring{X}_x}\overset{\B}{\boxtimes}\mathcal{O}_{X_v}\right) \nonumber\\
&\qquad \qquad \mbox{ by [CK, Proposition 3.8]}\nonumber\\
&= \sum_{x\in W}\, \sum_{y\in W} \epsilon\left(\L^y\right)  \left[\mathcal{O}_{X_u^\B}\right]\odot'\left\langle \L_y,\zeta^x\right\rangle \mathcal{O}_{X_{\overline{x*v}}},
\mbox{  by Proposition \ref{4.3}.}
\end{align}
Write 
$$
\left[\zeta^x\right] = \sum_i a_i \otimes b_i \in R(T) \otimes _{R(G)} R(T).
$$
Then, we have 
$$
\left\langle \L_y,\zeta^x\right\rangle =  \sum_i a_i\left\langle \L_y,\alpha(b_i) \right\rangle,
$$
where $\alpha:R(T)\rightarrow K_G^0(X)$ is the ring isomorphism induced by $e^\lambda\mapsto\L_X(-\lambda)$. 

Thus, the expression \eqref{5.15.1} becomes 
\begin{eqnarray*}
&&\sum_{x\in W}\sum_i\sum_{y\in W} \epsilon  \left(\L^y\right)
\langle \L_y, \alpha(b_i)\rangle  \left[\mathcal{O}_{X_u^\B}\right]\odot' a_i\mathcal{O}_{X_{\overline{x*v}}} \,,\\
&&\mbox{since $\odot'$ is $R(G)$-linear in the second variable }\\
&=&\sum_{x\in W}\sum_i b_i\left[\mathcal{O}_{X_u^\B}\right] \odot' a_i \mathcal{O}_{X_{\overline{x*v}}} \\
&=&\sum_{x\in W} \left( \left(\left[ \zeta^x\right]^t_{\aff}\right)_{|_{X_u^\B}}\right) \odot' \left[\mathcal{O}_{X_{\overline{x*v}}}\right].
\end{eqnarray*}
Thus, from \eqref{5.15.1}, we get 
$$
\left[\mathcal{O}_{X_u^\B}\right]\odot \left[\mathcal{O}_{X_v}\right] = \sum_{x\in W}\left( \left(\left[ \zeta^x\right]^t_{\aff}\right)_{|_{X_u^\B}}\right) \odot' \left[\mathcal{O}_{X_{\overline{x*v}}}\right].
$$
Hence, by Theorem \ref{2.4},
\begin{eqnarray*}
\left[\mathcal{O}_{X_u^\B}\right]\odot \left[\mathcal{O}_{X_v}\right] &=& \sum_{w\in \W, \, x\in W}\langle \left((\left[ \zeta^x\right]^t_{\aff})_{|_{X_u^\B}}\right) \odot' [\mathcal{O}_{X_{\overline{x*v}}}], 
\xi^w\rangle [\mathcal{O}_{X_w}]\\
&=&\sum_{w\in \W,\, x\in W}\, \langle\mu_{x_!}\left(\pi_x^*((\left[ \zeta^x\right]^t_{\aff})_{|_{X_u^\B}})\right),\xi^w \rangle [  \mathcal{O}_{X_w}]
\\
&=& \sum_{w\in \W, \, x\in W}\left\langle \left(\left[ \zeta^x\right]^t_{\aff}\right)_{|X_u^\B}, \pi_{x_!} \mu_x^* \xi^w \right\rangle  [\mathcal{O}_{X_w}],
 \mbox{ by the next lemma.}
\end{eqnarray*}
 This proves the theorem. 
\end{proof}

\begin{lemma}\label{5.16}
For any morphism of projective varieties $f:X\rightarrow Y$ and locally free sheaf $\F$ on $Y$ and a coherent sheaf $\uS$ on $X$, 
$$
\chi\left(\F\otimes f_! \uS\right) = \left\langle \F, f_! \uS\right\rangle = \left\langle f^*\left(\F\right),\uS\right\rangle.
$$
\end{lemma}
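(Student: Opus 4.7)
The plan is to reduce the lemma to two standard facts: the projection formula for a proper morphism, and the Leray spectral sequence (equivalently, the fact that the Euler characteristic is preserved by $Rf_*$ for a proper morphism $f$).

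For the first equality, note that since $\F$ is locally free, $\Tor_i^{\mathcal{O}_Y}(\F, \mathcal{G}) = 0$ for all $i>0$ and any coherent sheaf $\mathcal{G}$ on $Y$. Applying this with $\mathcal{G}=f_!\uS$ (which we interpret as an element of the $K$-group represented by the alternating sum of $R^if_*\uS$), the definition of $\langle \cdot,\cdot\rangle$ from Definition \ref{2.3} reduces to $\chi(\F\otimes f_!\uS)$, which gives the first equality.

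For the second equality, I would invoke the projection formula in its derived form: since $f$ is proper (both $X$ and $Y$ being projective) and $\F$ is locally free, one has
$$
Rf_*\left(\mathcal{L}f^*\F \otimes^L \uS\right) \simeq \F \otimes^L Rf_*\uS,
$$
and since $\F$ is locally free, $\mathcal{L}f^*\F = f^*\F$ and $f^*\F \otimes^L \uS = f^*\F\otimes \uS$. Taking Euler characteristics of both sides (which makes sense since $Y$ is projective, hence both sides have bounded coherent cohomology) yields
$$
\chi(X, f^*\F \otimes \uS) = \chi(Y, \F\otimes f_!\uS),
$$
where the Euler characteristic on the right uses the identity $\chi(Y, \F\otimes Rf_*\uS) = \sum_i (-1)^i \chi(Y,\F\otimes R^if_*\uS) = \chi(Y,\F\otimes f_!\uS)$. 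Translating back through the definition of the pairing and again using that $\F$ (resp.\ $f^*\F$) is locally free to kill higher $\Tor$'s gives $\langle \F, f_!\uS\rangle = \langle f^*\F, \uS\rangle$.

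The only subtlety, and the step to verify carefully, is the compatibility of the derived projection formula with the $K$-theoretic operation $f_! = \sum_i (-1)^i R^if_*$ as used in the statement; this is why one wants $f$ proper (to ensure $R^if_*$ is coherent and vanishes for $i\gg 0$) and $\F$ locally free (to ensure the plain tensor product $\F\otimes(-)$ equals the derived tensor product, so Euler characteristics agree term by term). Once these hypotheses are in place the projection formula and the Leray spectral sequence do all the work, and there is no real obstacle.
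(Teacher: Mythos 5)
Your proposal is correct and follows essentially the same route as the paper: the paper also writes $\chi$ as the pushforward to a point and applies the projection formula to move the locally free sheaf $\F$ across $f_!$, which is exactly the content of your derived projection formula plus Leray argument. Your extra remarks on Tor-vanishing for locally free $\F$ just make explicit what the paper leaves implicit, so there is no substantive difference.
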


\begin{proof}
Consider $\epsilon:Y\rightarrow *$. Then,
\begin{eqnarray*}
\left\langle f^* \left(\F \right),\uS\right\rangle = \chi\left(f^* \left(\F \right) \otimes \uS \right) &=& \epsilon_! f_! \left( f^* \left(\F \right) \otimes \uS\right)\\
&=& \epsilon_! \left( \F\otimes f_! \left(\uS\right)\right), \mbox{ by the Projection Formula}\\
&=& \chi\left(\F\otimes f_! \left(\uS\right) \right).
\end{eqnarray*}
\end{proof}

Let $Q^\vee_{\leq 0}:=\left\{ q\in Q^\vee:\alpha(q)\leq 0,\mbox{ for all the positive roots $\alpha$ of $G$}\right\}$, where $Q^\vee$ is the coroot lattice of $G$.
We write such a $q$ as $q\leq0$. Also, $\tau_q\in \W$ denotes the  translation by $\alpha$. 

\begin{lemma} \label{5.17}
(a) For $q\leq0$, $\tau_q\in \W'$ .

\vskip1ex
(b) For $q\leq0$ and $x\in W'_q$ , $\ell(x\tau_q)=\ell(\tau_q)-\ell(x)$, where $W_q\subset W$ is the stabilizer of $q$ in $W$ and $W_q'$ is the set of smallest coset representatives in $W/W_q$. 
\vskip1ex
(c) For $q\leq0$ and $x\in W'_q$ , $x \tau_q\in \W'$. 
Conversely, any element of $\W'$ can be written as $x\cdot \tau_q$  for some $q\leq0$ and $x\in W'_q$. 
\end{lemma}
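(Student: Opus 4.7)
All three parts follow from the criterion that $w\in\W'$ iff $w(\alpha)$ is a positive affine root for every finite positive root $\alpha$, together with the affine-root action $\tau_q(\alpha+n\delta)=\alpha+(n-\alpha(q))\delta$, with $W$ acting only on the finite part. For $w=x\tau_q$ and $\alpha\in R^+$, one has $w(\alpha)=x(\alpha)-\alpha(q)\delta$, and antidominance $q\le 0$ forces the $\delta$-coefficient $-\alpha(q)\ge 0$. If $\alpha(q)<0$ positivity is automatic; if $\alpha(q)=0$ (i.e.\ $\alpha\in R_q^+$, where $W_q\subset W$ is the parabolic subgroup generated by the simple reflections fixing $q$, so that $R_q^+=\{\alpha\in R^+:\alpha(q)=0\}$), one needs $x(\alpha)>0$, which is the standard characterization of $x\in W_q'$. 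This proves (a) (take $x=e$) and the first half of (c). For the converse in (c), write any $w\in\W'$ in the normal form $x\tau_q$ and apply the criterion to every $\alpha\in R^+$: this forces $\alpha(q)\le 0$ (hence $q\le 0$) and $x(\alpha)>0$ whenever $\alpha(q)=0$ (hence $x\in W_q'$).

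For (b), I will compute $\ell(x\tau_q)$ by counting inversions of $x\tau_q$ among positive real affine roots, split into (i) $\alpha\in R^+$ (with $n=0$), (ii) $\alpha+n\delta$ with $\alpha\in R^+$, $n\ge 1$, and (iii) $\alpha+n\delta$ with $\alpha\in R^-$, $n\ge 1$. Family (ii) yields no inversions since the $\delta$-coefficient of $x\tau_q(\alpha+n\delta)=x(\alpha)+(n-\alpha(q))\delta$ is $\ge n\ge 1$. Family (i) contributes only when $\alpha\in R_q^+$ and $x(\alpha)<0$, which is ruled out by $x\in W_q'$. For family (iii), setting $\gamma=-\alpha\in R^+$ (so $-\gamma(q)\ge 0$), each $\gamma\in R^+\setminus R_q^+$ contributes $(-\gamma(q)-1)+\mathbf 1_{x(\gamma)>0}$ inversions, while $\gamma\in R_q^+$ contributes nothing. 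Summing yields
\[
\ell(x\tau_q)=\sum_{\gamma\in R^+\setminus R_q^+}\bigl(-\gamma(q)-1+\mathbf 1_{x(\gamma)>0}\bigr)=\sum_{\gamma\in R^+}(-\gamma(q))-\#\{\gamma\in R^+\setminus R_q^+:x(\gamma)<0\},
\]
using $-\gamma(q)=0$ on $R_q^+$ and $1-\mathbf 1_{x(\gamma)>0}=\mathbf 1_{x(\gamma)<0}$. The first sum equals $\ell(\tau_q)=-2\rho(q)$; the second equals $\ell(x)$ because $x\in W_q'$ has all inversions in $R^+\setminus R_q^+$. Hence $\ell(x\tau_q)=\ell(\tau_q)-\ell(x)$.

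The main obstacle is the bookkeeping in family (iii) of (b): the three subcases (strict inequality, boundary, and the parabolic constraint on $x$) must be balanced against the contributions to $\ell(\tau_q)$. Parts (a) and (c) follow formally from the positive-root criterion once the affine-root action is fixed. A geometrically flavored alternative would use the alcove picture---counting affine reflection hyperplanes separating the fundamental alcove $A_0$ from $x\tau_q\cdot A_0$ along the two-step path $A_0\to A_0+q\to x(A_0+q)$---but it ultimately requires the same root-theoretic analysis.
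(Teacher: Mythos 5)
Your proof is correct, and it takes a genuinely different route from the paper's. The paper quotes the Iwahori--Matsumoto length formula $\ell(x\tau_q)=\sum_{\alpha\in R^+\cap x^{-1}R^-}|\alpha(q)+1|+\sum_{\alpha\in R^+\cap x^{-1}R^+}|\alpha(q)|$ as a black box and argues entirely with lengths: (a) by computing $\ell(\tau_q s_i)=\ell(\tau_q)+1$ for each finite simple reflection, (b) by analyzing that formula and ruling out $\alpha\in R^+\cap x^{-1}R^-$ with $\alpha(q)=0$ (the same observation you use, that $x\in W'_q$ has no inversions among roots vanishing on $q$), the forward half of (c) by a triangle-inequality argument on lengths combining (a) and (b), and the converse of (c) by writing $w=y\tau_{xq'}$, factoring $yx=x_1y_1$ with $x_1\in W'_{q'}$, $y_1\in W_{q'}$, and comparing minimal coset representatives. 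You instead work directly with the affine-root inversion picture: the criterion that $w\in\W'$ exactly when $w$ sends every finite positive root to a positive affine root settles (a) and both directions of (c) almost immediately (your converse, via the unique normal form $w=x\tau_q$, is cleaner than the paper's manipulation and does not even use (b)), and your inversion count in (b) in effect re-proves the relevant case of the Iwahori--Matsumoto formula rather than citing it. The paper's route is shorter given the citation; yours is more self-contained and uniform. Two facts you use implicitly and could state in one line each: that for antidominant $q$ the stabilizer $W_q$ coincides with the standard parabolic generated by the simple reflections fixing $q$, so that its positive roots are exactly $\{\alpha\in R^+:\alpha(q)=0\}$ and the standard characterizations of $W'_q$ apply; and that $\ell(\tau_q)=\sum_{\gamma\in R^+}(-\gamma(q))$, which also follows from your own inversion count specialized to $x=e$.
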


\begin{proof}
Recall that for any $q\in Q^\vee$ and $x\in W$, 
 \begin{equation}\label{5.17.1}
 \ell\left(x\tau_q\right) =\left( \sum_{\alpha\in R^+ \cap x^{-1} R^-}\left\vert \alpha (q)+1\right\vert \right) +\sum_{\alpha\in R^+ \cap x^{-1} R^+}\left\vert \alpha (q)\right\vert 
 \end{equation}
 (cf. [IM]), where $R^+$ is the set of positive roots of $G$ and $R^- :=-R^+$. 
\vskip1ex

(a) For $q\leq 0$ and any simple reflection $s_i$, $1\leq i\leq l$, 
\begin{eqnarray*}
\ell\left(\tau_q \cdot s_i\right) &=& \ell\left(s_i\tau_{-q}\right)\\
&=& \left\vert \alpha_i (q) -1 \right\vert - \sum_{\alpha\in R^+ \backslash \left\{ \alpha_i\right\}} \alpha(q) , \mbox{  by \eqref{5.17.1}}\\
&=&1-\sum_{\alpha\in R^+} \alpha(q) \\
&=& \ell(\tau_q)+1.
\end{eqnarray*}
This proves (a).

\vskip1ex
(b)
\begin{equation}\label{5.17.2}
\ell\left(x\tau_q\right) = -\sum_{\alpha\in R^+\cap x^{-1}R^- :\atop \alpha(q) \neq 0} \left( \alpha (q) +1\right) - \sum_{\alpha\in R^+ \cap x^{-1}R^+} \alpha(q) + \sum_{\alpha\in R^+\cap x^{-1}R^- :\atop \alpha(q) = 0}  1,  \mbox{ by \eqref{5.17.1}}.
\end{equation}
We assert that there does not exist any $\alpha\in R^+\cap x^{-1}R^-$ such that $\alpha(q) = 0$. This follows since $\alpha(q) = 0\Leftrightarrow s_\alpha q=q \Leftrightarrow s_\alpha \in W_q$. If $x\alpha \in R^-$, then $x s_\alpha<x$ by [Ku-1, Lemma 1.3.13]. This contradicts the choice of $x\in W_q'$. Thus, by \eqref{5.17.2},
$$
\ell\left(x\tau_q\right)=\ell\left(\tau_q\right)-\ell(x),\mbox{ proving (b).}
$$
\vskip1ex

(c) We first prove that for $q\leq 0$ and $x\in W'_q$, $x \tau_q \in \W'$. 

Take a simple reflection $s_i \in W$. If $x\tau_q s_i< x\tau_q$, then 
$$
\ell\left(x\tau_qs_i\right)=\ell\left(x\tau_q\right)-1=\ell\left(\tau_q\right)-\ell(x) - 1,\mbox{ by (b)} .
$$
This gives 
$$
\ell\left(\tau_qs_i\right)=\ell\left(x^{-1}\cdot x\tau_q s_i\right) \leq \ell(x^{-1})  +\ell\left(x\tau_q x_i\right) = \ell\left(\tau_q\right) -1.
$$
This contradicts (a), proving that $x\tau_q\in \W'$. 

For the converse, take any element $w\in \W'$ and write it as $w=y\cdot \tau_q =y \tau_{x\cdot q'}$, for $x,y\in W$ and $q'\leq0$. 
Thus, 
\begin{align}\label{5.17.3}
&w=yx\tau_{q'} x^{-1} = x_1y_1 \tau_{q'} x^{-1} ,\mbox{ for $x_1\in W'_{q'}$ and $y_1\in W_{q'}$}\nonumber\\
&= x_1 \tau_{q'} y_1x^{-1}.
\end{align}
Since $x_1\tau_{q'}\in \W'$ by the first part of $(c)$, and $w\in\W'$ (by assumption), we get $w=x_1\tau_{q'}$ by \eqref{5.17.3}. This completes the proof of (c). 
\end{proof}

\begin{lemma} \label{5.18}
$$
\sum_{x\in W} \bar{\zeta}^x =1\otimes 1.
$$
\end{lemma}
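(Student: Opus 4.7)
The plan is to translate the claimed identity in $R(T)\otimes_{R(G)}R(T)$ back to $K^0_T(X)$ via the algebra isomorphism $\varphi$ of Definition \ref{5.6}, and then verify it by pairing against the basis $\{[\mathcal{O}_{\mathring{X}_y}]\}_{y\in W}$ of $K_0^T(X)$.

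First, observe that $\varphi(1\otimes 1)=1\cdot \mathcal{L}_X(0)=[\mathcal{O}_X]$, so the claimed identity is equivalent to
$$
\sum_{x\in W}[\zeta^x]=[\mathcal{O}_X]\quad \text{in}\quad K^0_T(X).
$$
Since $X$ is smooth, $K^0_T(X)=K_0^T(X)$, and by the finite-dimensional analogue of Theorem \ref{2.4} (which is [CK, Proposition 3.8], already invoked in the proof of Lemma \ref{lem5.4}), the $R(T)$-bilinear pairing $\langle\,,\,\rangle$ on $K_T^0(X)\otimes_{R(T)} K_0^T(X)$ is non-singular, and $\{[\mathcal{O}_{\mathring{X}_y}]\}_{y\in W}$ is an $R(T)$-basis of $K_0^T(X)$ (it is the finite-dimensional analogue of the basis in \eqref{eq2.6.2}). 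Hence it suffices to check that $\langle\sum_{x\in W}[\zeta^x],[\mathcal{O}_{\mathring{X}_y}]\rangle=\langle[\mathcal{O}_X],[\mathcal{O}_{\mathring{X}_y}]\rangle$ for every $y\in W$.

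The left side is immediate from the dual-basis identity \eqref{eq5.7.2}: $\langle[\zeta^x],[\mathcal{O}_{\mathring{X}_y}]\rangle=\delta_{x,y}$, so the sum over $x\in W$ equals $1$. For the right side, $\mathcal{O}_X$ is locally free, so $\mathrm{Tor}_i^{\mathcal{O}_X}(\mathcal{O}_X,\mathcal{O}_{\mathring{X}_y})=0$ for $i>0$, and therefore
$$
\langle[\mathcal{O}_X],[\mathcal{O}_{\mathring{X}_y}]\rangle=\chi_T(X,\mathcal{O}_{\mathring{X}_y})=1,
$$
where the last equality is the standard fact that the Schubert variety $\mathring{X}_y\subset G/B$ is normal with $H^0(\mathring{X}_y,\mathcal{O})=\mathbb{C}$ (as the trivial $T$-module) and $H^i(\mathring{X}_y,\mathcal{O})=0$ for $i>0$ (cf.\ [Ku-1, Theorem 8.2.2]).

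Combining these two computations and invoking the non-singularity of the pairing yields $\sum_{x\in W}[\zeta^x]=[\mathcal{O}_X]$, and applying $\varphi^{-1}$ gives $\sum_{x\in W}\bar{\zeta}^x=1\otimes 1$. There is no real obstacle; the only point one has to recognize is that $\{[\zeta^x]\}$ is precisely the basis dual to $\{[\mathcal{O}_{\mathring{X}_y}]\}$ under the Compton--Kumar pairing, at which point the sum collapses to the class of the structure sheaf by non-singularity.
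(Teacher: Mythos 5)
Your proof is correct and follows the same route as the paper: pair both sides against the classes $[\mathcal{O}_{\mathring{X}_y}]$, use the duality $\langle \zeta^x,\mathcal{O}_{\mathring{X}_y}\rangle=\delta_{x,y}$ from [CK, Proposition 3.8], and note that $\varphi(1\otimes 1)=[\mathcal{O}_X]$ pairs to $1$ with each $[\mathcal{O}_{\mathring{X}_y}]$ since $\chi_T(\mathring{X}_y,\mathcal{O})=1$. The paper's proof is just a terser version of the same argument; you merely spell out the cohomological vanishing and the non-singularity step that it leaves implicit.
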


\begin{proof}
For any $y\in W$, by [CK, Proposition 3.8], 
$$
\langle \sum_{x\in W} \zeta^{x} ,\mathcal{O}_{\mathring{X}_y}\rangle= 1.
$$
Also, 
$$
\langle \varphi\left(1\otimes 1\right),\mathcal{O}_{\mathring{X}_y}\rangle=1, \mbox{ where $\varphi$ is as in Definition \ref{5.6}.} 
$$
Thus, the lemma follows. 
\end{proof}

A slightly weaker version of the following result is obtained by Kato [Ka-1, Theorem 1.7] by a different method, where he assumed that $q<0$. 

\begin{proposition}\label{5.19}
For any $u\in \W$ and $q\in Q^\vee_{\leq0}$, 
\begin{equation}\label{5.19.1}
\left[\mathcal{O}_{X_u^\B}\right]\odot \left[\mathcal{O}_{X_{\tau_q}}\right] = \left[ \mathcal{O}_{X_{\overline{u*\tau_q}}}\right].
\end{equation}
In particular, for $u\in\W'$ and $q\in Q_{\leq0}^\vee$, 
\begin{equation}\label{5.19.2}
\left(p^*\left[\mathcal{O}_{X_u}\right]\right)\odot \left[\mathcal{O}_{X_{\tau_q}}\right] = \left[ \mathcal{O}_{X_{{u\cdot\tau_q}}}\right].
\end{equation}
\end{proposition}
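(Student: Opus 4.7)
The plan is to show that $X_{\tau_q}$ is $\P$-stable for $q\in Q^\vee_{\leq 0}$, so that $[\mathcal{O}_{X_{\tau_q}}]$ lies in $K_0^\P(\X)$; identity \eqref{4.8.4} then replaces the modified convolution product $\odot$ by the unmodified one $\odot'$, after which Proposition \ref{4.3} yields \eqref{5.19.1} and a BSDH-style geometric computation yields \eqref{5.19.2}.

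To verify the $\P$-stability: since $X_{\tau_q}$ is $\B$-stable by construction and $\P$ is generated by $\B$ together with the finite simple reflections $s_1,\ldots,s_l$, it suffices to show $s_i\cdot X_{\tau_q}\subseteq X_{\tau_q}$ for each $1\leq i\leq l$. If $\alpha_i(q)=0$, then $s_iq=q$ and hence $s_i\tau_q=\tau_q s_i$; using $\P_i=\B\cup\B s_i\B$, one has $s_i\cdot \B\tau_q\underline{o}\subseteq \P_i\tau_q\P/\P=\B\tau_q\underline{o}\cup\B\tau_q s_i\underline{o}=\B\tau_q\underline{o}$, so taking closure in $\X$ yields $s_iX_{\tau_q}\subseteq X_{\tau_q}$. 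If $\alpha_i(q)<0$, formula \eqref{5.17.1} with $x=s_i$ gives $\ell(s_i\tau_q)=\ell(\tau_q)-1$, so by the strong exchange property $s_i\tau_q<\tau_q$ in Bruhat order; hence $s_i\cdot \B\tau_q\underline{o}\subseteq \B\tau_q\underline{o}\cup\B s_i\tau_q\underline{o}\subseteq X_{\tau_q}$, and again $s_iX_{\tau_q}\subseteq X_{\tau_q}$.

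With $[\mathcal{O}_{X_{\tau_q}}]\in K_0^\P(\X)$ established, identity \eqref{4.8.4} gives $a\odot [\mathcal{O}_{X_{\tau_q}}]=a\odot'[\mathcal{O}_{X_{\tau_q}}]$ for any $a\in K_0^\B(\Y)$. Taking $a=[\mathcal{O}_{X_u^\B}]$ and applying Proposition \ref{4.3} (noting $\tau_q\in\W'$ by Lemma \ref{5.17}(a)) proves \eqref{5.19.1}. For \eqref{5.19.2}, use $p^*[\mathcal{O}_{X_u}]=[\mathcal{O}_{X^\B_{uw_o}}]$ (Definition \ref{4.8}) and again the reduction $\odot=\odot'$. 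Writing $u=y\tau_{q_u}$ with $y\in W'_{q_u}$ and $q_u\leq 0$ (Lemma \ref{5.17}(c)), one has $u\tau_q=y\tau_{q_u+q}$ with $q_u+q\leq 0$ and $y\in W'_{q_u+q}$ (since $W_{q_u+q}\subseteq W_{q_u}$), whence $u\tau_q\in\W'$ by Lemma \ref{5.17}(c), and $\ell(u\tau_q)=\ell(u)+\ell(\tau_q)$ by additivity of $\ell(\tau_{\cdot})$ on $Q^\vee_{\leq 0}$. The length-additivity yields the Iwahori-Bruhat identity $\B u\B\tau_q\B=\B u\tau_q\B$, hence $\B u\B\tau_q\P=\B u\tau_q\P$.

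Finally, compute $[\mathcal{O}_{X^\B_{uw_o}}]\odot'[\mathcal{O}_{X_{\tau_q}}]=\mu_!(\mathcal{O}_{\tilde p^{-1}(X^\B_{uw_o})\times^\B X_{\tau_q}})$ geometrically: the product map $\mu$ has image in $\X$ equal to the closure of $\B u\B\tau_q\P/\P=\B u\tau_q\underline{o}$, namely the Schubert variety $X_{u\tau_q}$ (since $u\tau_q\in\W'$). Factoring $\mu$ through the $\P/\B$-bundle $\tilde p^{-1}(X^\B_{uw_o})\times^\B X_{\tau_q}\twoheadrightarrow \tilde p^{-1}(X^\B_{uw_o})\times^\P X_{\tau_q}$ (well-defined since $X_{\tau_q}$ is $\P$-stable), followed by the birational morphism to $X_{u\tau_q}$, the same BSDH-style argument from the proof of Proposition \ref{4.3} (using rational singularities of $X_{u\tau_q}$ and vanishing of higher direct images) gives $\mu_!(\mathcal{O})=[\mathcal{O}_{X_{u\tau_q}}]$, proving \eqref{5.19.2}. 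The main obstacle is this last geometric computation --- identifying $\mu_*(\mathcal{O})$ with $\mathcal{O}_{X_{u\tau_q}}$ and the vanishing of higher direct images --- but both follow from the same Kumar--Mehta--Ramanathan-type vanishing theorems used in Proposition \ref{4.3}.
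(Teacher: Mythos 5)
Your proposal is correct in substance but takes a genuinely different route from the paper, at least for \eqref{5.19.1}. The paper proves \eqref{5.19.1} by running its Theorem \ref{5.9}: since $\overline{x*\tau_q}=\tau_q$ for every $x\in W$ (its identity \eqref{5.19.3}), the sum over $x$ collapses, Lemma \ref{5.18} replaces $\sum_x\bar\zeta^x$ by $1\otimes 1$, and only the term with all operators replaced by Weyl-group actions survives because $D_i(1)=0$. You instead observe that $X_{\tau_q}=\overline{\B\tau_q\P/\P}$ is $\P$-stable when $q\leq 0$ (your case analysis on $\alpha_i(q)<0$ versus $\alpha_i(q)=0$, $1\leq i\leq l$, is correct), so $\left[\mathcal{O}_{X_{\tau_q}}\right]$ lifts to $K_0^\P(\X)$, identity \eqref{4.8.4} converts $\odot$ into $\odot'$, and Proposition \ref{4.3} finishes. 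This bypasses Theorem \ref{5.9} and Lemma \ref{5.18} entirely and isolates the geometric reason the formula is clean (the class being convolved is already $\P$-equivariant); the paper's argument, by contrast, stays inside the Demazure-operator formalism it has already built. For \eqref{5.19.2} your combinatorial input ($u\tau_q\in\W'$, $\ell(u\tau_q)=\ell(u)+\ell(\tau_q)$ via $u=y\tau_{q_u}$, $W_{q_u+q}\subseteq W_{q_u}$ and Lemma \ref{5.17}) coincides with the paper's claim \eqref{5.19.5}, and is correct.

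The one soft spot is your final geometric computation for \eqref{5.19.2}: you assert that $\overline{\B uw_o\B}\times^{\P}X_{\tau_q}\to X_{u\tau_q}$ is birational and that the twisted product has the needed rational singularities/vanishing, but neither is proved, and the ``same BSDH-style argument as Proposition \ref{4.3}'' is not literally of that form (it dominates by a Bott--Samelson variety rather than invoking birationality). This is repairable, but in fact unnecessary: once $\odot$ is reduced to $\odot'$, Proposition \ref{4.3} applied to the pair $(uw_o,\tau_q)$ already gives $\left[\mathcal{O}_{X_{\overline{uw_o*\tau_q}}}\right]$, and the only remaining point is the identification $\overline{uw_o*\tau_q}=u\tau_q$. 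That follows either from your image computation (the image of the convolution variety is $X_{\overline{uw_o*\tau_q}}$ by the argument of Proposition \ref{4.3}, and you compute this image to be $X_{u\tau_q}$ using $\B u\B\tau_q\B=\B u\tau_q\B$ and the $\P$-stability of $X_{\tau_q}$), or more quickly combinatorially: for $1\leq i\leq l$ one has $s_i*\tau_q\in\{\tau_q,\tau_q s_i\}$, so $w_o*\tau_q\in\tau_q W$, hence $\overline{uw_o*\tau_q}=\overline{u*\tau_q}=u\tau_q$ by the length additivity you established. With that substitution your argument is complete and entirely parallel-free of Theorem \ref{5.9}.
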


Observe that $u\cdot\tau_q\in\W'$ (as shown in the proof below).

\begin{proof}
Observe first that for any $x\in W$, 
\begin{equation}\label{5.19.3}
\overline{x*\tau_q}= \tau_q.
\end{equation}
To prove this, write $x=x'x''$ with $x'\in W_q'$ and $x''\in W_q$. 
Thus,
\begin{eqnarray*}
\overline{x*\tau_q} =\overline{x'*x''*\tau_q}&=& \overline{x'*\left(\bar{x}\cdot \tau_q\right)},\mbox{ for some $\bar{x}\in W_q$}\\
&=& \overline{x'*\tau_q}\\
&=& \tau_q, \mbox{ by Lemma \ref{5.17}(b).}
\end{eqnarray*}
Take a reduced decomposition $u=s_{i_1}\ldots s_{i_n} \in \W$.
By Theorem \ref{5.9} and identity \eqref{5.19.3}, we get 
\begin{eqnarray*}
\left[\mathcal{O}_{X_u^\B}\right]\odot \left[\mathcal{O}_{X_{\tau_q}}\right] &=& \sum_{1\leq j_1 <\cdots < j_{p}\leq n} \left \vert \left( D_{i_1}'\circ \cdots \circ \hat{\hat{D}}'_{i_{j_1}}\circ \cdots \circ  \hat{\hat{D}}'_{i_{j_p}} \circ \cdots \circ  D'_{i_n}\right) 
 (\sum_{x\in W} \bar\zeta^x)\right\vert \cdot \\
 && [\mathcal{O}_{X_{\overline{s_{i_{j_1}}*\cdots * s_{i_{j_p}}*\tau_q}}}]\\
&=& [\mathcal{O}_{X_{\overline{u*\tau_q}}}], \mbox{ by Lemma \ref{5.18}.}
\end{eqnarray*}
This proves \eqref{5.19.1}. 
By  \eqref{5.19.1}, we get for $u\in \W'$, 
\begin{eqnarray*}\label{5.19.4}
\left(p^*[\mathcal{O}_{X_u}]\right) \odot\left[\mathcal{O}_{X_{\tau_q}}\right]&=&[ \mathcal{O}_{X_{\overline{u*w_o*\tau_q}}}]\\
&=&[ \mathcal{O}_{X_{\overline{u*\tau_q}}}],\mbox{ by \eqref{5.19.3}.}
\end{eqnarray*}
We claim that 
\begin{equation}\label{5.19.5}
\ell(u\cdot \tau_q) = \ell(u)+\ell(\tau_q)\mbox{ and hence } u*\tau_q =u\cdot \tau_q.
\end{equation}
To prove this, by Lemma \ref{5.17} (c), write 
\begin{equation}\label{5.19.6}
u=x\cdot \tau_{q'},  \mbox{ for some } q'\leq 0 \mbox{ and } x\in W'_{q'}. 
\end{equation}
Thus, 
\begin{eqnarray*}
\ell\left( u\tau_q\right) &=& \ell\left( x\cdot \tau_{q'+q}\right)\\
&=& \ell\left( \tau_{q'+q}\right) -\ell(x),\mbox{ by Lemma \ref{5.17} (b)}\\
&=& \ell\left(\tau_q\right) +\ell\left(\tau_{q'}\right)-\ell(x),\mbox{ by \eqref{5.17.1}}\\
&=& \ell\left(\tau_q\right) +\ell\left(u\right)\mbox{, by Lemma \ref{5.17} (b). }
\end{eqnarray*}
This proves \eqref{5.19.5}. Thus,
\begin{eqnarray*}
\overline{u*\tau_q}= \overline{u\cdot \tau_q} &=&\overline{x\tau_{q'+q}}\mbox{, by \eqref{5.19.6}}\\
&=& x\tau_{q'+q}\mbox{, by Lemma \ref{5.17} (c).}\\
&=& u\cdot\tau_q.
\end{eqnarray*}
Thus, by \eqref{5.19.4},  $\left(p^*[\mathcal{O}_{X_u}]\right) \odot\left[\mathcal{O}_{X_{\tau_q}}\right]=\left[ \mathcal{O}_{X_{{u\cdot\tau_q}}}\right]$, proving \eqref{5.19.2}.
\end{proof}

For any $u,v\in \W'$, recall from the identity \eqref{4.9.1} (identifying $K_0^T(\X)$ as an $R(T)$-submodule of $K_0^T(\Y)$ under 
$p^*: K_0^T(\X) \hookrightarrow K_0^T(\Y)$)
\begin{equation}
\left[\mathcal{O}_{X_u}\right] \odot\left[\mathcal{O}_{X_{v}}\right]=\sum_{w\in \W'} p_{u,v}^w \left[\mathcal{O}_{X_w}\right].\tag{*}
\end{equation}

\begin{corollary}\label{5.20}
For any $q_1,q_2\leq0$ and $u,v,w\in \W'$, 
$$
p_{u,v}^w=p^{w\tau_{q_1+q_2}}_{u\tau_{q_1},v\tau_{q_2}}. 
$$
\end{corollary}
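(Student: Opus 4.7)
The plan is to reduce the identity to three inputs already in hand: Proposition \ref{5.19} (which absorbs a translation factor), Corollary \ref{4.11} (the associativity and commutativity of $\odot$), and the obvious group-law identity $\tau_{q_1}\cdot\tau_{q_2}=\tau_{q_1+q_2}$ in $\W$. No new geometry is needed.

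First, I would note that for any $u\in\W'$ and $q\leq 0$, Proposition \ref{5.19} (equation \eqref{5.19.2}) gives
$$
\left[\mathcal{O}_{X_u}\right]\odot\left[\mathcal{O}_{X_{\tau_q}}\right]=\left[\mathcal{O}_{X_{u\cdot\tau_q}}\right],
$$
where $u\cdot\tau_q\in\W'$. Applied three times (with $u$, $v$, and $w$ in $\W'$, and with $q$ equal to $q_1$, $q_2$, and $q_1+q_2$ respectively), this will let us pass freely between basis classes indexed by affine elements and basis classes indexed by their translation-shifts.

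Next, I would compute $\left[\mathcal{O}_{X_{u\tau_{q_1}}}\right]\odot\left[\mathcal{O}_{X_{v\tau_{q_2}}}\right]$ in two ways. On the one hand, by the definition of the structure constants (applied to $u\tau_{q_1},v\tau_{q_2}\in\W'$),
$$
\left[\mathcal{O}_{X_{u\tau_{q_1}}}\right]\odot\left[\mathcal{O}_{X_{v\tau_{q_2}}}\right]=\sum_{w'\in\W'}p^{w'}_{u\tau_{q_1},v\tau_{q_2}}\left[\mathcal{O}_{X_{w'}}\right].
$$
On the other hand, using Proposition \ref{5.19} to write $\left[\mathcal{O}_{X_{u\tau_{q_1}}}\right]=\left[\mathcal{O}_{X_u}\right]\odot\left[\mathcal{O}_{X_{\tau_{q_1}}}\right]$ (and similarly for the second factor), and then rearranging using Corollary \ref{4.11},
$$
\left[\mathcal{O}_{X_{u\tau_{q_1}}}\right]\odot\left[\mathcal{O}_{X_{v\tau_{q_2}}}\right]=\left(\left[\mathcal{O}_{X_u}\right]\odot\left[\mathcal{O}_{X_v}\right]\right)\odot\left(\left[\mathcal{O}_{X_{\tau_{q_1}}}\right]\odot\left[\mathcal{O}_{X_{\tau_{q_2}}}\right]\right).
$$
Proposition \ref{5.19} again (applied to $\tau_{q_1}\in\W'$ and $q_2\leq 0$) collapses the second factor to $\left[\mathcal{O}_{X_{\tau_{q_1+q_2}}}\right]$, and expanding the first factor by \eqref{4.9.1} followed by one more application of Proposition \ref{5.19} gives
$$
\left[\mathcal{O}_{X_{u\tau_{q_1}}}\right]\odot\left[\mathcal{O}_{X_{v\tau_{q_2}}}\right]=\sum_{w\in\W'}p^{w}_{u,v}\left[\mathcal{O}_{X_{w\tau_{q_1+q_2}}}\right].
$$

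Finally, I would compare the two expressions. Since Proposition \ref{5.19} guarantees that each $w\tau_{q_1+q_2}$ lies in $\W'$ and the map $w\mapsto w\tau_{q_1+q_2}$ is injective on $\W$, the classes $\{[\mathcal{O}_{X_{w\tau_{q_1+q_2}}}]\}_{w\in\W'}$ form a subset of the $R(T)$-basis $\{[\mathcal{O}_{X_{w'}}]\}_{w'\in\W'}$ of $K_0^T(\X)$. Matching coefficients of $[\mathcal{O}_{X_{w\tau_{q_1+q_2}}}]$ in the two expansions yields $p^{w\tau_{q_1+q_2}}_{u\tau_{q_1},v\tau_{q_2}}=p^{w}_{u,v}$, which is the desired identity. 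There is no genuinely hard step: the only thing to be careful about is invoking the correct form of Proposition \ref{5.19}---namely \eqref{5.19.2}, which already absorbs the verification that $u\cdot\tau_q\in\W'$ and that $\ell(u\tau_q)=\ell(u)+\ell(\tau_q)$---so that all the rearrangements take place among honest Schubert basis classes.
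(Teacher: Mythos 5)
Your proposal is correct and is essentially the paper's own argument, only written out in more detail: the paper's proof is precisely "multiply the expansion $[\mathcal{O}_{X_u}]\odot[\mathcal{O}_{X_v}]=\sum_w p_{u,v}^w[\mathcal{O}_{X_w}]$ by the translation classes, use associativity and commutativity of $\odot$ (Corollary \ref{4.11}) together with Proposition \ref{5.19}, and compare coefficients." Your extra care in citing \eqref{5.19.2} (which already ensures $u\cdot\tau_q\in\W'$) is exactly the right form of the input, so nothing further is needed.
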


\begin{proof}
Multiply (*) by $\left[\mathcal{O}_{\tau_q}\right]$ and use the associativity and the commutativity of $\odot$ (cf. Corollary \ref{4.11}) and Theorem \ref{5.19}. 
\end{proof}

\section{Example: Convolution product in the case $G=\SL_2$.}

We assume in this section that $G=\SL_2(\mathbbm{C})$ and freely follow the notation from Sections 4 and 5.

\begin{lemma}\label{6.1}
(a) $\bar\zeta^e=e^{-\rho}\otimes e^\rho$, $\bar\zeta^{s_1}=e^0\otimes e^0-e^{-\rho}\otimes e^\rho$ ,  where $\rho=\frac{\alpha_1}{2}$ .
\vskip1ex

(b) $s_0'\left(\bar\zeta^e\right)=e^\rho\otimes e^\rho$, $s_0'\left( \bar\zeta^{s_1}\right)=\bar\zeta^{s_1}+\left(1-e^{2\rho}\right)\bar\zeta^e$ .
\vskip1ex

(c) $D'_0\left(\bar\zeta^e\right)=-e^{2\rho}\bar\zeta^e$, $D'_0\left(\bar\zeta^{s_1}\right)=e^{2\rho}\bar\zeta^e$ .
\vskip1ex

(d) $D'_1\left(\bar\zeta^e\right)=\bar\zeta^e$, $D'_1\left(\bar\zeta^{s_1}\right)=-\bar\zeta^e$ .
\end{lemma}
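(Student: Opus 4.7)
The plan is to reduce everything to one explicit $T$-equivariant $K$-theory computation on $\mathbb{P}^1$ in part (a); parts (b)--(d) will then follow by invoking Lemma~\ref{5.11} and exploiting the $\SL_2$-specific identities $s_0 = s_\theta = s_1$ and $\alpha_0 = -\theta = -\alpha_1 = -2\rho$.

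For part (a) I would start from the geometry: in $\SL_2$ we have $W=\{e,s_1\}$, $X=G/B\cong \mathbb{P}^1$, and only two $B^-$-orbits, so $\mathring{X}^e=\mathbb{P}^1$ with $\partial \mathring{X}^e=\{p_{s_1}\}$, and $\mathring{X}^{s_1}=\{p_{s_1}\}$ with empty boundary. Thus $\zeta^e=\mathcal{I}_{p_{s_1}}=\mathcal{O}_{\mathbb{P}^1}(-p_{s_1})$ and $\zeta^{s_1}=\mathcal{O}_{p_{s_1}}$. The key $T$-equivariant short exact sequence I would write down is
$$
0 \longrightarrow \mathcal{L}(-\rho)\otimes\mathbb{C}_{-\rho} \xrightarrow{\,\cdot x\,} \mathcal{O}_{\mathbb{P}^1} \longrightarrow \mathcal{O}_{p_{s_1}} \longrightarrow 0,
$$
where $x$ is the homogeneous coordinate whose zero locus is $p_{s_1}$ and which carries the $T$-weight $-\rho$; this weight forces exactly the twist by $\mathbb{C}_{-\rho}$. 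I would use the identification $\mathcal{O}(-1)=\mathcal{L}(-\rho)$, checked by matching $T$-weights of fibers at $p_e$: the tautological $\mathcal{O}(-1)$ has fiber weight $\rho$ at $[1:0]$, and by the general formula for $G\times^B \mathbb{C}_\lambda$ so does $\mathcal{L}(-\rho)$. Taking classes and applying $\varphi^{-1}$ then produces $\bar\zeta^{s_1}=1\otimes 1 - e^{-\rho}\otimes e^\rho$ and $\bar\zeta^e = 1\otimes 1 - \bar\zeta^{s_1} = e^{-\rho}\otimes e^\rho$, which is (a).

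Part (d) is immediate from Lemma~\ref{5.11}(b) with $i=1$: since $s_1\cdot e>e$ one gets $D_1'\bar\zeta^e=\bar\zeta^e$, and since $s_1\cdot s_1<s_1$ one gets $D_1'\bar\zeta^{s_1}=-\bar\zeta^e$. Part (b) follows from Lemma~\ref{5.11}(a) with $i=1$ together with the observation that since $\theta=\alpha_1$ the element $s_0$ acts on $R(T)$ as $s_\theta=s_1$, so $s_0'$ and $s_1'$ coincide as operators on $R(T)\otimes_{R(G)}R(T)$; combining the formulas of Lemma~\ref{5.11}(a) (with $\alpha_1=2\rho$) with part (a) and expanding the coefficient $(1-e^{2\rho})$ yields the two stated expressions.

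For part (c) the only new ingredient is that $\alpha_0=-\alpha_1$ appears in the denominator of $D_0$, while the reflection used is still $s_\theta=s_1$. Using the identity $1-e^{\alpha_1}=-e^{\alpha_1}(1-e^{-\alpha_1})$, I would rewrite $D_0=-e^{2\rho}D_1$ as operators on $R(T)$, hence $D_0'=-e^{2\rho}\cdot D_1'$ with $e^{2\rho}$ acting on the first tensor factor of $R(T)\otimes_{R(G)}R(T)$; applying this to the two formulas of (d) gives $D_0'\bar\zeta^e=-e^{2\rho}\bar\zeta^e$ and $D_0'\bar\zeta^{s_1}=e^{2\rho}\bar\zeta^e$, which is (c). The only non-mechanical step is the careful $T$-equivariant bookkeeping in part (a)—identifying $\mathcal{O}(-1)$ with $\mathcal{L}(-\rho)$ and pinning down the equivariant twist $\mathbb{C}_{-\rho}$ from the weight of $x$; once that is done, (b)--(d) fall out by direct substitution into Lemma~\ref{5.11}.
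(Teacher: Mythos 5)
Your proposal is correct, and for part (a) it takes a genuinely different route from the paper. The paper proves (a) by duality: it pairs the candidate expressions against the basis $\{[\mathcal{O}_{\mathring{X}_e}],[\mathcal{O}_{\mathring{X}_{s_1}}]\}$ and invokes $\langle \zeta^x,\mathcal{O}_{\mathring{X}_y}\rangle=\delta_{x,y}$ (from [CK, Proposition 3.8]), so that the nonsingularity of the pairing pins down $\bar\zeta^e$ and $\bar\zeta^{s_1}$ after a short Euler-characteristic computation on $\mathbb{P}^1$; you instead identify $\zeta^e=\mathcal{I}_{p_{s_1}}$ and $\zeta^{s_1}=\mathcal{O}_{p_{s_1}}$ geometrically and compute their classes from the explicit $T$-equivariant sequence $0\to\mathcal{L}(-\rho)\otimes\mathbb{C}_{-\rho}\to\mathcal{O}_{\mathbb{P}^1}\to\mathcal{O}_{p_{s_1}}\to 0$. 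Your identification is right under the paper's convention that the fiber of $\mathcal{L}(\lambda)$ at the fixed point $x$ is $e^{-x\lambda}$ (used in the proof of Lemma \ref{5.13}): the fiber weights of $\mathcal{L}(-\rho)\otimes\mathbb{C}_{-\rho}$ are $0$ at $p_e$ and $-\alpha_1$ at $p_{s_1}$, matching those of the ideal sheaf, so the only cost of your route is this careful bookkeeping of equivariant twists, whereas the paper's route needs only Borel--Weil for $\mathrm{SL}_2$ plus the already-established duality; a cheap cross-check for your answer is Lemma \ref{5.18}, $\bar\zeta^e+\bar\zeta^{s_1}=1\otimes 1$. For (b)--(d) the two arguments essentially coincide: the paper's terse hint ($s_0=s_{\alpha_1}=s_1$, $\alpha_0=-\theta$) amounts to direct substitution into (a), while you route (b) and (d) through Lemma \ref{5.11}(a),(b) with $i=1$ (legitimate, since $s_0'=s_1'$ as operators on the first factor) and (c) through the operator identity $D_0=-e^{2\rho}D_1$ coming from $1-e^{-\alpha_1}=-e^{-\alpha_1}(1-e^{\alpha_1})$; both give the stated formulas, with $e^{2\rho}$ acting through the $R(T)$-module structure on the first tensor factor, as required.
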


\begin{proof}
To prove (a), pair the expressions with $\mathcal{O}_{\mathring{X}_e}$ and $\mathcal{O}_{\mathring{X}_{s_1}}$ and use the result $\langle \zeta^x,\mathcal{O}_{\mathring{X}_y}\rangle = \delta_{x, y}$ (cf. [CE, Proposition 3.8]). To prove (b) and (c), recall that $s_0=s_{\alpha_1}=s_1$ and $\alpha_0=-\theta$.
\end{proof}

\begin{remark} \label{6.2} For any simple Lie algebra $\mathfrak{g}$, 
$$\bar{\zeta}^e= e^{-\rho}\otimes e^\rho\,\,\,\text{and}\,\, D'_0(\bar{\zeta}^e)= -(e^\theta+ e^{2 \theta}+ \dots +e^{(h^\vee -1)\theta})\bar{\zeta}^e,$$
where $h^\vee$ is the dual Coxeter number of $\mathfrak{g}$ and $\theta$ is the highest root of $\mathfrak{g}$. 
\end{remark}

For any $n\geq 0$, let 
$
\tau_n:= \ldots s_0s_1s_0$ \,(n-factors).  Then, $\tau_n\in \W'$ and $\tau_{2n}= \tau_{-n\alpha_1^\vee}$ .
Let $X_n :=X_{\tau_n}$. 
 We use Theorem \ref{5.9} and Lemma \ref{6.1} to prove the following. It is obtained in [LLMS, Identity 17] and also in [Ka-1, $\S$2.4] by different methods. (We thank Syu Kato for pointing out these references.)

\begin{proposition} \label{6.3}
For any $n,m\geq0$ , under the (modified) convolution product $\odot$ in $K_0^T(\X)$, 
\vskip1ex
(a) $\left[ \mathcal{O}_n\right] \odot \left[ \mathcal{O}_{2m}\right]=\left[ \mathcal{O}_{n+2m}\right]  $
\vskip1ex
(b) $\left[ \mathcal{O}_{2n+1}\right] \odot \left[ \mathcal{O}_{2m+1}\right]=e^{\alpha_1}\left[ \mathcal{O}_{2n+2m+2}\right]+\left(1-e^{\alpha_1}\right) \left[ \mathcal{O}_{2n+2m+3}\right]$,

where $\mathcal{O}_n$ denotes $\mathcal{O}_{X_n}$. 
\end{proposition}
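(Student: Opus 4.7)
The plan is to prove part (a) by a direct appeal to Proposition 5.19, and then bootstrap part (b) using associativity/commutativity of $\odot$ (Corollary \ref{4.11}) to reduce to the single base case $[\mathcal{O}_1]\odot[\mathcal{O}_1]$, which I will compute from Theorem \ref{5.9} together with the explicit data of Lemma \ref{6.1}.

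For part (a), observe that $\tau_{2m} = \tau_{-m\alpha_1^\vee}$ with $-m\alpha_1^\vee \in Q_{\le 0}^\vee$, so Proposition \ref{5.19}(b) gives $[\mathcal{O}_n]\odot[\mathcal{O}_{2m}] = [\mathcal{O}_{X_{\tau_n \cdot \tau_{-m\alpha_1^\vee}}}]$. Using the standard identification $s_0 = s_1 \tau_{-\alpha_1^\vee}$, one checks that $\tau_{2k} = \tau_{-k\alpha_1^\vee}$ and $\tau_{2k+1} = s_1 \tau_{-(k+1)\alpha_1^\vee}$, from which the product $\tau_n \cdot \tau_{-m\alpha_1^\vee}$ is seen to equal $\tau_{n+2m}$ in both parities of $n$.

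For part (b), apply (a) with $n=1$ to write $[\mathcal{O}_{2k+1}] = [\mathcal{O}_1]\odot[\mathcal{O}_{2k}]$, for $k=n,m$. Then by Corollary \ref{4.11},
\[
[\mathcal{O}_{2n+1}]\odot[\mathcal{O}_{2m+1}] \;=\; \bigl([\mathcal{O}_1]\odot[\mathcal{O}_1]\bigr) \odot [\mathcal{O}_{2n}]\odot[\mathcal{O}_{2m}] \;=\; \bigl([\mathcal{O}_1]\odot[\mathcal{O}_1]\bigr) \odot [\mathcal{O}_{2n+2m}],
\]
where the second equality uses (a) applied to $[\mathcal{O}_{2n}]\odot[\mathcal{O}_{2m}]$. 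Thus once $[\mathcal{O}_1]\odot[\mathcal{O}_1] = e^{\alpha_1}[\mathcal{O}_2] + (1-e^{\alpha_1})[\mathcal{O}_3]$ is established, a final application of (a) to $[\mathcal{O}_2]\odot[\mathcal{O}_{2n+2m}]$ and $[\mathcal{O}_3]\odot[\mathcal{O}_{2n+2m}]$ delivers the stated formula.

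It remains to compute the base case $[\mathcal{O}_1]\odot[\mathcal{O}_1]$ via Theorem \ref{5.9} with $u = \tau_1 w_o = s_0 s_1$ (reduced, so $i_1 = 0, i_2 = 1$, $n=2$) and $v = \tau_1 = s_0$. The sum runs over the two elements $x \in W = \{e, s_1\}$ and the four subsets $J \subseteq \{1,2\}$, giving eight terms; for each one computes (i) the corresponding Demazure product $s_{i_{j_1}} * \cdots * s_{i_{j_p}} * x * s_0$, which in every case is one of $\tau_1, \tau_2, \tau_3$ (so $[\mathcal{O}_1], [\mathcal{O}_2], [\mathcal{O}_3]$), and (ii) the scalar $|D'_{i_1} \cdots \hat{\hat D}'_{i_{j_1}} \cdots \hat{\hat D}'_{i_{j_p}} \cdots D'_{i_n}(\bar\zeta^x)|$, which using Lemma \ref{6.1} and the identity $D_0(e^\rho) = e^\rho$ always evaluates to one of $0, \pm 1, \pm e^{\alpha_1}$. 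The main obstacle is exactly this bookkeeping: the $[\mathcal{O}_1]$ coefficients must cancel ($-e^{\alpha_1}$ from $(x=e, J=\emptyset)$ against $+e^{\alpha_1}$ from $(x=e, J=\{1\})$), the $[\mathcal{O}_2]$ coefficients must sum to $e^{\alpha_1}$ (contributions $+e^{\alpha_1}, +e^{\alpha_1}, -e^{\alpha_1}$ from three of the terms), and the $[\mathcal{O}_3]$ coefficients must sum to $1 - e^{\alpha_1}$. Once this cancellation pattern is verified, the proposition follows.
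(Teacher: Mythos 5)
Your argument is correct, and it reaches the result by a somewhat different organization than the paper. For (a), you invoke Proposition \ref{5.19}(2) (which is legitimate and non-circular, since it precedes Section 6), together with the group-theoretic identity $\tau_n\cdot\tau_{-m\alpha_1^\vee}=\tau_{n+2m}$ — your parity check via $s_0=s_1\tau_{-\alpha_1^\vee}$ is fine (one can also just note that in the infinite dihedral group $\tau_n(s_1s_0)^m$ is an alternating reduced word of length $n+2m$ ending in $s_0$). The paper instead redoes this computation directly from Theorem \ref{5.9}, using the $\SL_2$ instance $\bar\zeta^e+\bar\zeta^{s_1}=e^0\otimes e^0$ of Lemma \ref{5.18}; since Proposition \ref{5.19} is itself proved from exactly these ingredients, the two routes rest on the same machinery, but yours avoids repeating it. For (b), the paper computes $[\mathcal{O}_1]\odot[\mathcal{O}_{2m+1}]$ for all $m$ from Theorem \ref{5.9} and Lemma \ref{6.1}, then uses Corollary \ref{4.11} and (a); you instead push the reduction further, to the single base case $[\mathcal{O}_1]\odot[\mathcal{O}_1]$, and recover the general case by $R(T)$-bilinearity, associativity/commutativity, and (a) applied to $[\mathcal{O}_2]\odot[\mathcal{O}_{2n+2m}]$ and $[\mathcal{O}_3]\odot[\mathcal{O}_{2n+2m}]$ — a mild economy, since only one instance of the eight-term Theorem \ref{5.9} expansion must be evaluated. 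Your base-case bookkeeping is only outlined rather than displayed, but the data you assert is correct: I checked all eight terms; the Demazure products are $\tau_1,\tau_1,\tau_2,\tau_3$ (for $x=e$) and $\tau_2,\tau_3,\tau_2,\tau_3$ (for $x=s_1$), the scalars are $-e^{\alpha_1},\,e^{\alpha_1},\,e^{\alpha_1},\,1$ and $e^{\alpha_1},\,-e^{\alpha_1},\,-e^{\alpha_1},\,0$ respectively (using Lemma \ref{6.1} and $D_0(e^\rho)=e^\rho$), and the coefficients of $[\mathcal{O}_1]$, $[\mathcal{O}_2]$, $[\mathcal{O}_3]$ sum to $0$, $e^{\alpha_1}$, $1-e^{\alpha_1}$ exactly as you claim, in agreement with the paper's identity \eqref{6.3.2} at $m=0$ (recall $2\rho=\alpha_1$). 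To make the write-up complete you should display that table, but no idea is missing.
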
 

\begin{proof}
We first calculate for $v=\tau_{2m}$ (denoting $\mathcal{O}_v=\mathcal{O}_{X_v}$)
\begin{eqnarray*}
\left[ \mathcal{O}_{1}\right] \odot \left[ \mathcal{O}_{2m}\right]&=& \left[ \mathcal{O}_{X_{s_0s_1}^\B}\right] \odot \left[ \mathcal{O}_{\tau_{2m}}\right]\\
&=& \left\vert s_0's_1'\left( \bar{\zeta}^e+\bar{\zeta}^{s_1}\right) \right\vert \left[\mathcal{O}_{s_0s_1*\tau_{2m}}\right]
+ \left\vert D_0's_1' \left(\bar{\zeta}^e+\bar{\zeta}^{s_1}\right) \right\vert \left[\mathcal{O}_{s_1*\tau_{2m}}\right]\\
&&+ \left\vert s_0'D_1' \left(\bar{\zeta}^e+\bar{\zeta}^{s_1}\right) \right\vert \left[\mathcal{O}_{s_0*\tau_{2m}}\right]\\
&&+ \left\vert D_0'D_1' \left(\bar{\zeta}^e+\bar{\zeta}^{s_1}\right) \right\vert \left[\mathcal{O}_{\tau_{2m}}\right],\mbox{ by Theorem \ref{5.9}}\\
&=&  \left[\mathcal{O}_{s_0*\tau_{2m}}\right],\mbox{ by Lemma \ref{6.1}(a), since } \bar{\zeta}^e+\bar{\zeta}^{s_1}= e^0\otimes e^0. 
\end{eqnarray*}
Thus, 
\begin{equation}\label{6.3.1}
\left[ \mathcal{O}_{1}\right] \odot \left[ \mathcal{O}_{2m}\right] = \left[ \mathcal{O}_{2m+1}\right].
\end{equation}
We next calculate  
\begin{eqnarray*}
\left[ \mathcal{O}_{1}\right] \odot \left[ \mathcal{O}_{2m+1}\right] &=& \left[ \mathcal{O}_{X_{s_0s_1}^\B}\right] \odot \left[ \mathcal{O}_{2m+1}\right] \\
&=&  \left\vert D_0'D_1' \left(\bar{\zeta}^e\right) \right\vert \left[\mathcal{O}_{2m+1}\right]+  \left\vert D_0'D_1' \left(\bar{\zeta}^{s_1}\right) \right\vert \left[\mathcal{O}_{s_1*\tau_{2m+1}}\right]\\
&&+ \left\vert D_0's_1' \left(\bar{\zeta}^e+\bar{\zeta}^{s_1}\right) \right\vert \left[\mathcal{O}_{s_1*\tau_{2m+1}}\right]+\left\vert s_0'D_1'(\bar{\zeta}^e)\right\vert \left[\mathcal{O}_{s_0*\tau_{2m+1}}\right]\\
&&+ \left\vert s_0'D_1' (\bar{\zeta}^{s_1}) \right\vert \left[\mathcal{O}_{s_0*s_1*\tau_{2m+1}}\right]\\
&&+\left\vert s_0's_1'\left(\bar{\zeta}^e+\bar{\zeta}^{s_1}\right) \right\vert \left[\mathcal{O}_{s_0*s_1*\tau_{2m+1}}\right],\mbox{ by Theorem \ref{5.9}}\\
&=& -e^{2\rho} \left[\mathcal{O}_{2m+1}\right] +e^{2\rho} \left[\mathcal{O}_{2m+2}\right] + e^{2\rho} \left[\mathcal{O}_{2m+1}\right] \\
&&- e^{2\rho} \left[\mathcal{O}_{2m+3}\right] +\left[\mathcal{O}_{2m+3}\right] \mbox{, by Lemma \ref{6.1} }.
\end{eqnarray*}
Thus, 
\begin{equation}\label{6.3.2}
\left[ \mathcal{O}_{1}\right] \odot \left[ \mathcal{O}_{2m+1}\right] = e^{2\rho} \left[\mathcal{O}_{2m+2}\right] +\left(1-e^{2\rho} \right)\left[\mathcal{O}_{2m+3}\right].
\end{equation}
Similar to the calculation of equation \eqref{6.3.1}, for any $n\geq 0$, writing $\tau_n\cdot s_1=s_{i_1}\ldots s_{i_{n+1}}$ with $s_{i_j}\in \left\{ 0,1\right\}$ and $s_{i_{n+1}}=s_1$ , we get
\begin{eqnarray*}
\left[ \mathcal{O}_{n}\right] \odot \left[ \mathcal{O}_{2m}\right]  &=& \left[ \mathcal{O}_{X_{\tau_n,s_1}^\B}\right] \odot \left[\mathcal{O}_{\tau_{2m}}\right]\\
&=& \sum_{1\leq j_1 <\cdots < j_p \leq n+1}\left\vert D_{i_1}' \circ \cdots \circ \hat{\hat{D}}'_{i_{j_1}}\circ \cdots \circ \hat{\hat{D}}'_{i_{j_p}}\circ \cdots \circ D_{i_{n+1}}'\left(\bar{\zeta}^e+\bar{\zeta}^{s_1}\right) \right\vert \\
&&[ \mathcal{O}_{X_{\overline{s_{i_{j_1}}*\cdots * s_{i_{j_p}}*\tau_{2m}}}}],\mbox{ by Theorem \ref{5.9}} .
\end{eqnarray*}
Thus,
\begin{equation}\label{6.3.3}
\left[ \mathcal{O}_{n}\right] \odot \left[ \mathcal{O}_{2m}\right] =  \left[ \mathcal{O}_{n+2m}\right] ,\mbox{ since } \bar{\zeta}^e+\bar{\zeta}^{s_1} = e^0 \otimes e^0.
\end{equation}
From equation \eqref{6.3.3}, we get (a). 

To prove (b), from (a) we get,
\begin{eqnarray*}
\left[ \mathcal{O}_{2n+1}\right] \odot \left[ \mathcal{O}_{2m+1}\right]  &=& \left(\left[ \mathcal{O}_{1}\right] \odot \left[ \mathcal{O}_{2n}\right] \right)\odot  \left[ \mathcal{O}_{2m+1}\right] \\
&=& \left[ \mathcal{O}_{1}\right] \odot \left[ \mathcal{O}_{2n+2m+1}\right] \mbox{, from the associativity}\\
&&\mbox{ and commutativity of $\odot$ as in  Corollary \ref{4.11}}\\
&=&e^{2\rho}\left[ \mathcal{O}_{2n+2m+2}\right] +\left(1-e^{2\rho}\right)\left[ \mathcal{O}_{2n+2m+3}\right], \mbox{  by \eqref{6.3.2}.}
\end{eqnarray*}
This proves (b).
\end{proof}

\section{Quantum product in equivariant K-theory of flag varieties versus Portryagin product in the loop group}

\begin{definition} \label{7.1}\rm{
Let $Q_+^\vee:= \underset{i=1}{\overset{l}{\oplus}} \mathbbm{Z}_{\geq0}\alpha_i^\vee$, where $\left\{\alpha_1^\vee,\ldots,\alpha_l^\vee\right\}$ are the simple coroots of $G$. Consider the formal power series ring $\mathbbm{Z} [[Q_+^\vee]]$ in the variables $q_i=q^{\alpha_i^\vee}$. For any $\beta=\sum_{i=1}^ln_i\alpha_i^\vee$, $n_i\geq 0$ , we denote $q^\beta=
\prod q_i^{n_i}$.

Additively, \textit{T-equivariant quantum K-theory} of $X=G/B$ is defined as 
$$
Q K_T \left(X\right) = K_T^0 \left(X\right) [[ q_1, \ldots , q_l]].
$$

Thus, $QK_T(X)$ has a $K_T^0(*) [[q_1,\ldots,q_l]] $-basis given by the structure sheaves
$\{ \left[ \mathcal{O}^x\right] = [ \mathcal{O}_{\mathring{X}_{xw_o}}]\}_{x\in W}$, where (as earlier)
$\mathring{X}_{xw_o} \subset X$ is the Schubert variety $\overline{Bxw_oB/B}\subset X$.  
It acquires a ring structure given by Givental [Gi] and Lee [Le]. We denote the product structure by $*$ called the \textit{quantum product}. In this product,  $\left[\mathcal{O}^e\right]\cdot q^0
$ is the identity. Moreover, $\left\{ q^\beta= [\mathcal{O}^e]q^\beta \right\}_{\beta\in Q_+^\vee}
$ forms a multiplicative system. Thus, we can localize $QK_T(X)$ with respect to this multiplicative system to be denoted $QK_T(X)_{\loc}$.

Similarly, by Theorem \ref{5.19} and Lemma \ref{5.17} (b), 
$\left\{ \left[ \mathcal{O}_{X_{\tau_q}}\right]\right\}_{q\in Q_{<0}^\vee}$ forms a multiplicative system in $\left( K_0^T \left(\X\right),\odot \right)$, where 
$$Q_{<0}^\vee := \left\{ q\in Q^\vee:~ \alpha_i (q) <0, \mbox{ for all the simple roots $\alpha_i$ of $G$}\right\} .$$
 Let $K_0^T \left(\X\right)_{\loc}$ denote the localization of $K_0^T \left(\X\right)$ under the modified convolution product $\odot$ with respect to the above multiplicative system. }
\end{definition}

 We recall the following result due to Kato [Ka-1, Corollary 4.21], which was conjectured by [LLMS]. 

\begin{theorem} \label{7.2}There exists an $R(T)$-algebra embedding 
$$
 \psi:  K_0^T \left(\mathcal{X}\right)_{\loc} \hookrightarrow Q  K_T \left(X\right)_{\loc},
$$
such that, for any $\beta,\gamma \in Q_{<0}^\vee$ and $x\in W$ ,
$$
\psi \left( [\mathcal{O}_{X_{x\tau_\beta}}] \odot [ \mathcal{O}_{X_{\tau_\gamma}}]^{-1} \right) = q^{\beta-\gamma}[\mathcal{O}^x]. 
$$
Observe that by Lemma \ref{5.17} (c), $x\tau_{\beta}\in \W'$.
\end{theorem}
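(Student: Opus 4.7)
The plan is to define $\psi$ explicitly on a spanning set of $K_0^T(\X)_{\loc}$, then verify well-definedness, multiplicativity, and injectivity in turn; the deep step is multiplicativity, where Kato's geometric machinery enters. By Proposition \ref{5.19}, the sheaves $\{[\mathcal{O}_{X_{\tau_\gamma}}]\}_{\gamma\in Q_{<0}^\vee}$ form a commutative multiplicative system since $[\mathcal{O}_{X_{\tau_\gamma}}]\odot[\mathcal{O}_{X_{\tau_{\gamma'}}}] = [\mathcal{O}_{X_{\tau_{\gamma+\gamma'}}}]$, so the localization $K_0^T(\X)_{\loc}$ is meaningful. Lemma \ref{5.17}(c), combined with the observation that $W_\beta=\{e\}$ whenever $\beta$ is strictly negative, shows that every class $[\mathcal{O}_{X_w}]$ with $w\in\W'$ can, after multiplying by a suitable inverse denominator, be written as $[\mathcal{O}_{X_{x\tau_\beta}}]\odot[\mathcal{O}_{X_{\tau_\gamma}}]^{-1}$ for some $x\in W$ and $\beta,\gamma\in Q_{<0}^\vee$. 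I set $\psi$ of such a fraction equal to $q^{\beta-\gamma}[\mathcal{O}^x]$ and extend $R(T)$-linearly.

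For well-definedness, I cross-multiply any two equal fractions and apply Proposition \ref{5.19} to reduce to an equality $[\mathcal{O}_{X_{x\tau_{\beta+\gamma'}}}] = [\mathcal{O}_{X_{x'\tau_{\beta'+\gamma}}}]$; linear independence of the Schubert classes via identity \eqref{eq2.6.2} then forces $x=x'$ and $\beta-\gamma=\beta'-\gamma'$, so the images agree. Consistency of the fractional presentation across different choices of denominator $\tau_\gamma$ is exactly encoded by Corollary \ref{5.20}: the stability relation $p_{u,v}^w=p_{u\tau_{q_1},v\tau_{q_2}}^{w\tau_{q_1+q_2}}$ is what makes the description internally unambiguous. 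Injectivity then follows immediately, once multiplicativity is in place, from the fact that $\{q^\beta[\mathcal{O}^x]\}_{x\in W,\,\beta\in Q^\vee}$ is a free $R(T)$-basis of $QK_T(X)_{\loc}$, matched bijectively against the spanning set just described.

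For multiplicativity I expand a product in $K_0^T(\X)_{\loc}$ using the coefficients of the modified convolution product,
$$[\mathcal{O}_{X_{x\tau_{\beta_1}}}]\odot[\mathcal{O}_{X_{y\tau_{\beta_2}}}] = \sum_{\beta\le 0,\,z\in W_\beta'} p^{z\tau_\beta}_{x\tau_{\beta_1},\,y\tau_{\beta_2}}\,[\mathcal{O}_{X_{z\tau_\beta}}],$$
clear the common denominator $[\mathcal{O}_{X_{\tau_{\beta_1+\beta_2}}}]$, and apply $\psi$ term by term; the result is $\sum_{z,\beta} p^{z\tau_\beta}_{x\tau_{\beta_1},y\tau_{\beta_2}}\,q^{\beta-\beta_1-\beta_2}[\mathcal{O}^z]$. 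Multiplicativity of $\psi$ is then equivalent to recognizing this sum as the Givental--Lee quantum product $[\mathcal{O}^x]\ast[\mathcal{O}^y]$ in $QK_T(X)$.

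The hard part will be precisely this last identification of structure constants, and it is what truly belongs to Kato's proof rather than to the formal setup. He inserts the semi-infinite flag variety $\mathbf{Q}_G^{\mathrm{rat}}$ (equivalently the space of based quasi-maps $\mathbb{P}^1\to G/B$) as a geometric bridge whose $T$-equivariant $K$-theory maps compatibly to both the affine Grassmannian side (via specialization of semi-infinite Schubert varieties to affine Schubert varieties in $\X$) and the quantum side (via Kato's earlier identification of $K(\mathbf{Q}_G^{\mathrm{rat}})$ with $QK_T(G/B)$, refining work of Braverman--Finkelberg and Givental--Lee). Accepting that geometric input as a black box, the remainder of the proof is the formal repackaging outlined above, and the resulting $\psi$ is manifestly $R(T)$-linear and sends generators by the stated formula.
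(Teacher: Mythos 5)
The first thing to note is that the paper does not prove Theorem \ref{7.2} at all: it is quoted as Kato's result [Ka-1, Corollary 4.21] (conjectured in [LLMS]), so there is no internal argument to compare yours against. Your formal scaffolding is essentially sound and matches how the paper manipulates the theorem afterwards: Proposition \ref{5.19} and Lemma \ref{5.17}(c) do show that every $[\mathcal{O}_{X_w}]$, $w\in\W'$, becomes a fraction $[\mathcal{O}_{X_{x\tau_\beta}}]\odot[\mathcal{O}_{X_{\tau_\gamma}}]^{-1}$ with $\beta,\gamma\in Q^\vee_{<0}$ (and $W_\beta=\{e\}$ for such $\beta$, as you say); Corollary \ref{5.20} is indeed what makes the assignment independent of the chosen denominator; and since multiplication by $[\mathcal{O}_{X_{\tau_q}}]$ permutes the Schubert basis injectively, the elements of the multiplicative system are non-zero-divisors, so your cross-multiplication argument is legitimate. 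One small overstatement: $\{q^\beta[\mathcal{O}^x]\}$ is not a free $R(T)$-basis of $QK_T(X)_{\loc}$ (elements are power series, not finite combinations); what you need, and what is true, is only that finitely many distinct monomials $q^\beta[\mathcal{O}^x]$ are $R(T)$-linearly independent. Also, when you apply $\psi$ term by term you must first insert an auxiliary factor $[\mathcal{O}_{X_{\tau_\delta}}]\odot[\mathcal{O}_{X_{\tau_\delta}}]^{-1}$ with $\delta<0$, since the $\beta\leq 0$ appearing in the expansion need not be strictly antidominant; this is exactly the manipulation the paper performs in the proof of Corollary \ref{7.3}.

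The genuine issue is that your ``hard part'' is not a removable technical step but the entire content of the theorem. Your multiplicativity computation is precisely the paper's derivation of Corollary \ref{7.3} from Theorem \ref{7.2}, run in reverse: asserting that $\sum_{\beta\leq 0,\,z\in W'_\beta} p^{z\tau_\beta}_{x\tau_{\beta_1},y\tau_{\beta_2}}\,q^{\beta-\beta_1-\beta_2}[\mathcal{O}^z]$ equals the Givental--Lee product $[\mathcal{O}^x]*[\mathcal{O}^y]$ is logically equivalent (given your formal setup) to the theorem itself. Declaring it a black box, justified by a one-sentence appeal to semi-infinite flag varieties and quasi-map spaces, means no part of the actual mathematical content --- the comparison of the convolution product on $K_0^T(\X)$ with the quantum $K$-theoretic structure constants through Kato's geometric intermediary --- is reproduced or even precisely stated. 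So as a standalone proof the proposal has a real gap; as a description of where the difficulty sits and how Kato's argument is organized it is a fair sketch, and in effect it places you on the same footing as the paper, which simply cites Kato.
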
 

As a corollary of Theorem \ref{7.2}, we get the following. 
\begin{corollary} \label{7.3} For $x,y \in W$ and $\beta_1, \beta_2 \in Q_{<0}^\vee $ , under the quantum product
$$
\left[ \mathcal{O}^x\right] * \left[ \mathcal{O}^y\right] = \sum_{\beta\leq 0, \, z\in W'_\beta} p_{x\tau_{\beta_1},y \tau_{\beta_2}}^{z\tau_\beta}  q^{\beta- \left( \beta_1+\beta_2\right)} \left[\mathcal{O}^z\right] \in QK_T(X),
$$
where $p_{x\tau_{\beta_1},y \tau_{\beta_2}}^{z\tau_\beta}$ are the structure constants for the modified convolution product $\odot$ in $K_0^T \left(\X\right)$ as in  \eqref{4.9.1}. 
\end{corollary}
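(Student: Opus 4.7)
The plan is to transport the computation from the $\odot$-side in $K_0^T(\X)_{\mathrm{loc}}$ to the quantum side in $QK_T(X)_{\mathrm{loc}}$ via the $R(T)$-algebra embedding $\psi$ of Theorem \ref{7.2}, exploiting that both $\odot$ on $K_0^T(\X)$ and the quantum product on $QK_T(X)$ are commutative and associative (cf.\ Corollary \ref{4.11}).

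First I would specialize the defining formula of $\psi$ by setting $\gamma=\beta$: the relation $\psi([\mathcal{O}_{X_{x\tau_{\beta_1}}}] \odot [\mathcal{O}_{X_{\tau_{\beta_1}}}]^{-1}) = [\mathcal{O}^x]$ (and similarly for $y,\beta_2$) realizes $[\mathcal{O}^x]$ and $[\mathcal{O}^y]$ as explicit images under $\psi$; note that $x\tau_{\beta_1}\in\W'$ by Lemma \ref{5.17}(c), and $[\mathcal{O}_{X_{\tau_{\beta_1}}}]^{-1}$ makes sense because $\beta_1\in Q^\vee_{<0}$. Then, since $\psi$ is a ring homomorphism,
\[
[\mathcal{O}^x]*[\mathcal{O}^y] = \psi\bigl([\mathcal{O}_{X_{x\tau_{\beta_1}}}] \odot [\mathcal{O}_{X_{y\tau_{\beta_2}}}] \odot [\mathcal{O}_{X_{\tau_{\beta_1}}}]^{-1} \odot [\mathcal{O}_{X_{\tau_{\beta_2}}}]^{-1}\bigr).
\]
I would simplify the denominator using Proposition \ref{5.19} (applied with $u=\tau_{\beta_1}$, $q=\beta_2\in Q^\vee_{\leq 0}$), which gives $[\mathcal{O}_{X_{\tau_{\beta_1}}}] \odot [\mathcal{O}_{X_{\tau_{\beta_2}}}] = [\mathcal{O}_{X_{\tau_{\beta_1+\beta_2}}}]$, so the two inverses combine into $[\mathcal{O}_{X_{\tau_{\beta_1+\beta_2}}}]^{-1}$. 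Expanding the numerator by the definition \eqref{4.9.1} of the structure constants and reindexing $w\in\W'$ as $w=z\tau_\beta$ with $\beta\leq 0$ and $z\in W'_\beta$ via Lemma \ref{5.17}(c) yields
\[
[\mathcal{O}_{X_{x\tau_{\beta_1}}}] \odot [\mathcal{O}_{X_{y\tau_{\beta_2}}}] = \sum_{\beta\leq 0,\,z\in W'_\beta} p_{x\tau_{\beta_1},\,y\tau_{\beta_2}}^{z\tau_\beta}\,[\mathcal{O}_{X_{z\tau_\beta}}].
\]

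Finally, I would apply $\psi$ termwise and use Theorem \ref{7.2} again with $\gamma=\beta_1+\beta_2\in Q^\vee_{<0}$ to get $\psi([\mathcal{O}_{X_{z\tau_\beta}}]\odot [\mathcal{O}_{X_{\tau_{\beta_1+\beta_2}}}]^{-1}) = q^{\beta-(\beta_1+\beta_2)}[\mathcal{O}^z]$, producing the asserted formula. There is essentially no obstacle beyond bookkeeping; the only point deserving comment is that the exponents $\beta-(\beta_1+\beta_2)$ should lie in $Q^\vee_+$ so that the answer lies in $QK_T(X)$ rather than merely its localization, and this is automatic because the left-hand side $[\mathcal{O}^x]*[\mathcal{O}^y]$ is already a well-defined element of $QK_T(X)$, forcing $p_{x\tau_{\beta_1},\,y\tau_{\beta_2}}^{z\tau_\beta}=0$ whenever $\beta-(\beta_1+\beta_2)\notin Q^\vee_+$.
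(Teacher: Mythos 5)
Your overall route is the same as the paper's: realize $[\mathcal{O}^x]$ and $[\mathcal{O}^y]$ as $\psi$-images by taking $\gamma=\beta$ in Theorem \ref{7.2}, use that $\psi$ is an algebra map together with Proposition \ref{5.19} and Corollary \ref{4.11} to combine the two inverses into $[\mathcal{O}_{X_{\tau_{\beta_1+\beta_2}}}]^{-1}$, expand $[\mathcal{O}_{X_{x\tau_{\beta_1}}}]\odot[\mathcal{O}_{X_{y\tau_{\beta_2}}}]$ via \eqref{4.9.1} and Lemma \ref{5.17}(c), apply $\psi$ termwise, and finally descend from $QK_T(X)_{\loc}$ to $QK_T(X)$ using injectivity of the localization map (your closing remark about vanishing of coefficients with $\beta-(\beta_1+\beta_2)\notin Q_+^\vee$ is exactly the paper's point that $QK_T(X)\hookrightarrow QK_T(X)_{\loc}$).

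There is, however, one genuine gap at the last step. You invoke Theorem \ref{7.2} to write $\psi\left([\mathcal{O}_{X_{z\tau_\beta}}]\odot[\mathcal{O}_{X_{\tau_{\beta_1+\beta_2}}}]^{-1}\right)=q^{\beta-(\beta_1+\beta_2)}[\mathcal{O}^z]$, but the theorem is stated only for \emph{strictly} antidominant translations, i.e.\ for $\beta\in Q^\vee_{<0}$, whereas the summation index $\beta$ only satisfies $\beta\leq 0$ (e.g.\ $\beta=0$, or $\beta$ with $\alpha_i(\beta)=0$ for some $i$, which genuinely occur). So this application is not licensed as written. The paper repairs exactly this point by fixing an auxiliary $\delta\in Q^\vee_{<0}$ and inserting $[\mathcal{O}_{X_{\tau_\delta}}]\odot[\mathcal{O}_{X_{\tau_\delta}}]^{-1}$: by Proposition \ref{5.19} one has $[\mathcal{O}_{X_{z\tau_\beta}}]\odot[\mathcal{O}_{X_{\tau_\delta}}]=[\mathcal{O}_{X_{z\tau_{\beta+\delta}}}]$ and $[\mathcal{O}_{X_{\tau_\delta}}]\odot[\mathcal{O}_{X_{\tau_{\beta_1+\beta_2}}}]=[\mathcal{O}_{X_{\tau_{\delta+\beta_1+\beta_2}}}]$, and now both $\beta+\delta$ and $\delta+\beta_1+\beta_2$ lie in $Q^\vee_{<0}$, so Theorem \ref{7.2} applies and yields $q^{(\beta+\delta)-(\delta+\beta_1+\beta_2)}[\mathcal{O}^z]=q^{\beta-(\beta_1+\beta_2)}[\mathcal{O}^z]$, the $\delta$'s cancelling. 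With this shift inserted (or with an explicit argument extending the formula of Theorem \ref{7.2} to $\beta\in Q^\vee_{\leq 0}$, which amounts to the same trick), your proof is complete and coincides with the paper's.
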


\begin{proof}
By Theorem 7.2 (abbreviating $\mathcal{O}_{X_u}$ by $\mathcal{O}_u$)
\begin{equation}\label{7.3.1}
\psi \left(( \left[ \mathcal{O}_{x\tau _{\beta_1}}\right] \odot  \left[ \mathcal{O}_{\tau _{\beta_1}}\right]^{-1}) \odot ( \left[ \mathcal{O}_{y\tau _{\beta_2}}\right] \odot \left[\mathcal{O}_{\tau_{\beta_2}}\right]^{-1})\right) 
= \left[ \mathcal{O}^x\right] * \left[ \mathcal{O}^y\right] .
\end{equation}

On the other hand, taking any fixed $\delta<0$, 
\begin{align}\label{7.3.2}
& \psi \left(( \left[ \mathcal{O}_{x\tau _{\beta_1}}\right] \odot  \left[ \mathcal{O}_{\tau _{\beta_1}}\right]^{-1}) \odot ( \left[ \mathcal{O}_{y\tau _{\beta_2}}\right] \odot  \left[ \mathcal{O}_{\tau_{\beta_2}}\right]^{-1}) \right) \nonumber\\
&= \psi \left( \left[ \mathcal{O}_{x\tau _{\beta_1}}\right] \odot  \left[ \mathcal{O}_{y\tau _{\beta_2}}\right] \odot\left[ \mathcal{O}_{\tau _{\beta_1+\beta_2}}\right]^{-1} \right),\mbox{ by Theorem \ref{5.19} and Corollary \ref{4.11}}\nonumber\\
&=  \psi ( \sum_{\beta\leq 0 ,\, z \in W'_\beta} p_{x\tau_{\beta_1},y \tau_{\beta_2}}^{z\tau_\beta} \left[ \mathcal{O}_{z\tau_\beta}\right] \odot 
\left[ \mathcal{O}_{\tau_{\beta_1+\beta_2}}\right]^{-1}),\mbox{ by Lemma \ref{5.17}(c)}\nonumber\\
&= \psi ( \sum_{\beta\leq 0, \,z\in W'_\beta} p_{x\tau_{\beta_1},y \tau_{\beta_2}}^{z\tau_\beta} \left[ \mathcal{O}_{z\tau_\beta}\right] \odot
\left[ \mathcal{O}_{\tau_\delta}\right] \odot \left[ \mathcal{O}_{\tau_\delta}\right] ^{-1}\odot  \left[ \mathcal{O}_{\tau_{\beta+\beta_2}}\right]^{-1})\nonumber\\
&= \psi( \sum_{\beta\leq 0 ,\, z \in W'_\beta} p_{x\tau_{\beta_1},y \tau_{\beta_2}}^{z\tau_\beta}\left[ \mathcal{O}_{z\tau_{\beta+\delta}}\right] \odot
\left[ \mathcal{O}_{\tau_{\delta+\beta_1+\beta_2}}\right]^{-1}),\mbox{ by Theorem \ref{5.19}}\nonumber\\
&= \sum_{\beta\leq 0 , \, z \in W'_\beta} p_{x\tau_{\beta_1},y \tau_{\beta_2}}^{z\tau_\beta} q^{\beta+\delta -\delta - \beta_1-\beta_2}\left[ \mathcal{O}^{z}\right].
\end{align}
Comparing the equations \eqref{7.3.1}  and \eqref{7.3.2}, we get 
$$ 
\left[ \mathcal{O}^x\right] *\left[ \mathcal{O}^y\right] = \sum_{\beta\leq 0 , \, z\in W'_\beta} p_{x\tau_{\beta_1},y \tau_{\beta_2}}^{z\tau_\beta} q^{\beta-\beta_1-\beta_2} \left[ \mathcal{O}^{z}\right].
$$

This proves the Corollary since $QK_T(X) \hookrightarrow QK_T(X)_{\loc}$
(cf. [Ka-1, Proof of Theorem 4.17 and $\S$1.7]).
\end{proof}

\begin{remark}\label{7.4} \rm{
Observe that from the above corollary,  for $\beta_1,\beta_2<0 $ and $\beta\leq 0$ with $z \in W_\beta'$, the structure constants $p_{x\tau_{\beta_1},y \tau_{\beta_2}}^{z\tau_\beta}$ only depend on $x,y, z$ and $\beta-\beta_1-\beta_2$. In particular, 
$$
p_{x\tau_{\beta_1},y \tau_{\beta_2}}^{z\tau_\beta} =p_{x\tau_{\beta_1+\delta_1},y \tau_{\beta_2+\delta_2}}^{z\tau_{\beta+\delta_1+\delta_2}}, \mbox{ for } \delta_1,\delta_2\leq 0.
 $$. 

This is compatible with Corollary \ref{5.20}. }
\end{remark}

\begin{definition}\label{7.5} \rm{
For $x,y\in W$, write the quantum product in $QK_T(X)$: 
$$
 \left[ \mathcal{O}^x\right] *\left[ \mathcal{O}^y\right] = \sum_{z\in W, \, \eta \in Q_+^\vee} d_{x,y}^{z,\eta} q^\eta \left[\mathcal{O}^{z}\right].
$$

As a consequence of Corollary \ref{7.3}, Conjecture \ref{4.10} is equivalent to the following conjecture on the quantum product structure constants in $QK_T(X)$. }
\end{definition}

\begin{conjecture}\label{7.6}
For any $x,y, z \in W$ and $\eta \in Q_+^\vee$, 
$$
(-1)^{\ell(x)+\ell(y)-\ell(z)} d^{z,\eta}_{x,y} \in \mathbbm{Z}_+\left[ \left(e^{\alpha_1}-1\right),\ldots , \left(e^{\alpha_l}-1\right)\right].
$$
\end{conjecture}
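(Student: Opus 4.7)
The plan is to reduce to Conjecture 4.10 on the modified convolution product in $K_0^T(\X)$ and then attack that via the explicit formula of Theorem 5.9 together with Graham-type positivity properties of the Demazure operators. Indeed, by Corollary 7.3 the structure constant $d_{x,y}^{z,\eta}$ equals $p_{x\tau_{\beta_1},y\tau_{\beta_2}}^{z\tau_\beta}$ for any $\beta_1,\beta_2\in Q^\vee_{<0}$ and $\beta\leq 0$ with $\beta_1+\beta_2-\beta=\eta$, and by Lemma 5.17(b) the length shift $\ell(u)+\ell(v)-\ell(w)-(\ell(x)+\ell(y)-\ell(z))$ (with $u=x\tau_{\beta_1}$, $v=y\tau_{\beta_2}$, $w=z\tau_\beta$) equals $\ell(\tau_{\beta_1})+\ell(\tau_{\beta_2})-\ell(\tau_\beta) = -\langle 2\rho,\eta\rangle$, which is even. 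So the two sign conventions agree, and by Theorem 4.9 it suffices to prove Conjecture 3.3.

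The first substantial step is to unwind Theorem 5.9. Fix a reduced decomposition $u=s_{i_1}\cdots s_{i_n}$. The coefficient of $[\mathcal{O}_{X_w}]$ on the right side is a finite sum over subsequences $1\leq j_1<\cdots<j_p\leq n$ and over $x\in W$ of terms $\bigl|D'_{i_1}\cdots\hat{\hat{D}}'_{i_{j_1}}\cdots\hat{\hat{D}}'_{i_{j_p}}\cdots D'_{i_n}(\bar\zeta^x)\bigr|$, restricted to those data for which $\overline{s_{i_{j_1}}*\cdots *s_{i_{j_p}}*x*v}=w$. By Lemma 5.11(b) each application of $D'_i$ to a class $[\zeta^y]$ either preserves it or flips it to $-[\zeta^{s_iy}]$, producing a sign that tracks exactly how many simple reflections shorten the chain $x\rightsquigarrow x' = $ terminal element in $W$. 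The Weyl-group factors at the skipped positions are controlled by Lemma 5.11(a) and contribute scalars in $\mathbb{Z}[e^{\alpha_i}-1]$ (after rewriting $e^{\alpha_i}=1+(e^{\alpha_i}-1)$). The positivity conjecture thus becomes the term-by-term statement that the accumulated sign from the $D'_i$'s matches $(-1)^{\ell(u)+\ell(v)-\ell(w)}$ and that the accumulated $e^{\alpha_i}-1$ factors have nonnegative integer coefficients.

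A complementary approach goes through Theorem 5.15, where the structure constant is realized as a pairing $\langle([\zeta^x]_{\aff}^t)_{|X_u^\B},\pi_{x!}\mu_x^*\xi^w\rangle$. By Proposition 5.14(c) the transposed class $[\zeta^x]^t$ is, up to multiplication by the unit $e^\rho\otimes e^{-\rho}$, the structure sheaf of the opposite Schubert variety $\mathring X^{x^{-1}}$; so the pairing is an equivariant Euler characteristic on the affine Bott-Samelson variety $X_{(u,x,v)}$ of a tensor product of a \emph{Schubert} structure sheaf with the ideal sheaf $\xi^w$ pulled back through $\mu_x$. At this point I would try to invoke the Anderson-Griffeth-Miller equivariant $K$-theoretic positivity theorem for flag varieties (or its extension by Kumar for Kac-Moody Kashiwara flag varieties), after reducing the pairing on $\X$ to one on the affine flag $\Y$ and then to a finite-dimensional Schubert slice via the Bott-Samelson tower.

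The main obstacle is that the Anderson-Griffeth-Miller theorem is proved by an equivariant Kleiman-Bertini transversality argument that is genuinely algebraic, whereas the product $m:\X\times\X\to\X$ defining the Pontryagin structure is only continuous and \emph{not} algebraic (Remark 2.8). So the geometric content of Theorem 5.15 still has to be pushed through $m^*$, and one has to either (a) construct an algebraic moving family inside $\Y$ whose induced structure constants compute the Pontryagin constants (perhaps via Kato's comparison with the affine flag, using that $\odot$ factors through the fibration $p:\Y\to\X$), or (b) prove the combinatorial positivity of Theorem 5.9 directly, which already appears nontrivial beyond rank one (the $\SL_2$ case is handled in Section 6) because of cancellations between the Demazure-operator terms and the Weyl-action $\hat{\hat{D}}'$ terms. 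Either route will require a genuinely new input: a positive cellular model for $\Omega(K)$, or a refined combinatorial identity generalizing Lemma 5.11 to arbitrary words.
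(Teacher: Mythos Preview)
The statement you are addressing is a \emph{conjecture}, not a theorem: the paper does not prove it. What the paper does prove (Proposition~7.7) is that Conjecture~7.6 is \emph{equivalent} to Conjecture~4.10, and your first paragraph is essentially a correct sketch of one direction of that equivalence (modulo two harmless sign slips: $\eta=\beta-\beta_1-\beta_2$, not $\beta_1+\beta_2-\beta$, and the length difference is $+\langle 2\rho,\eta\rangle$, though evenness is all that is needed). The paper's own argument for the equivalence is the same in spirit: use Corollary~7.3 to match $d^{z,\eta}_{x,y}$ with $p^{z\tau_\beta}_{x\tau_{\beta_1},y\tau_{\beta_2}}$, then observe that $\ell(\tau_\beta)$ is even, so the signs agree.

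Everything after your first paragraph is an attempt to prove Conjecture~4.10 itself, which the paper leaves open. You identify the obstruction honestly in your last paragraph, but there are additional concrete gaps in the sketch before that. First, Lemma~5.11 is stated only for $1\leq i\leq l$; the operator $D'_0$ acts via $s_\theta$ and $\alpha_0=-\theta$, and its effect on the classes $\bar\zeta^x$ is not captured by Lemma~5.11 (cf.\ Remark~6.2 and Lemma~6.1 for $\mathrm{SL}_2$, where $D'_0(\bar\zeta^e)=-e^{2\rho}\bar\zeta^e$ involves a nontrivial scalar). Second, the ``scalars in $\mathbb{Z}[e^{\alpha_i}-1]$'' claim from Lemma~5.11(a) does not give the sign you want: the relevant factor is $1-e^{\alpha_i}=-(e^{\alpha_i}-1)$, so additional sign bookkeeping is required, and there is no a~priori reason the accumulated signs from the $D'$ and $\hat{\hat D}'$ contributions align with $(-1)^{\ell(u)+\ell(v)-\ell(w)}$ term by term. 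Third, the Anderson--Griffeth--Miller/Kumar positivity machinery applies to the \emph{cup product} in $K^0_T$, not to the Pontryagin product; transporting it through Theorem~5.15 would require a transversality statement for the non-algebraic map $m$, which is precisely the obstruction you name.

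In short: your reduction to Conjecture~4.10 matches the paper's Proposition~7.7, but the remainder is a proposal to settle an open problem, and the outlined strategies (Theorem~5.9 combinatorics, Theorem~5.15 geometry) each require substantial new input beyond what the paper provides.
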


\begin{proposition}\label{7.7}
Conjecture \ref{4.10} is equivalent to the above conjecture \ref{7.6}. 
\end{proposition}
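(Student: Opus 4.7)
The plan is to show that every structure constant $p_{u,v}^w$ appearing in Conjecture~\ref{4.10} can be identified, with matching sign, with a structure constant $d_{x,y}^{z,\eta}$ appearing in Conjecture~\ref{7.6}, and conversely. The bridge is Corollary~\ref{7.3}: if one picks $\beta_1,\beta_2\in Q^\vee_{<0}$ sufficiently negative so that $\beta:=\eta+\beta_1+\beta_2\in Q^\vee_{<0}$ as well, then $W_\beta=\{e\}$, hence $W'_\beta=W$, and Corollary~\ref{7.3} yields the identity $d_{x,y}^{z,\eta}=p_{x\tau_{\beta_1},\,y\tau_{\beta_2}}^{z\tau_\beta}$ for every $x,y,z\in W$ and $\eta\in Q_+^\vee$.

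The next step is the sign comparison. With $\beta_1,\beta_2,\beta\in Q^\vee_{<0}$, each of $x,y,z$ lies in the full Weyl group $W=W'_{\beta_1}=W'_{\beta_2}=W'_\beta$, so by Lemma~\ref{5.17}(b),
\[
\ell(x\tau_{\beta_1})+\ell(y\tau_{\beta_2})-\ell(z\tau_\beta)
=\bigl[\ell(\tau_{\beta_1})+\ell(\tau_{\beta_2})-\ell(\tau_\beta)\bigr]-\ell(x)-\ell(y)+\ell(z).
\]
The formula \eqref{5.17.1} with $x=e$ gives $\ell(\tau_q)=-\sum_{\alpha\in R^+}\alpha(q)=-2\rho(q)$ for $q\le 0$, so the bracketed quantity equals $-2\rho(\beta_1+\beta_2-\beta)=2\rho(\eta)$. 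Since $\rho(\alpha_i^\vee)=1$ for every simple coroot, $2\rho(\eta)$ is an even integer, and therefore
\[
(-1)^{\ell(x\tau_{\beta_1})+\ell(y\tau_{\beta_2})-\ell(z\tau_\beta)}
=(-1)^{\ell(x)+\ell(y)-\ell(z)}.
\]
Thus the two positivity assertions are literally the same assertion about the common integer $d_{x,y}^{z,\eta}=p_{x\tau_{\beta_1},y\tau_{\beta_2}}^{z\tau_\beta}$. This establishes the implication Conjecture~\ref{4.10}$\Rightarrow$Conjecture~\ref{7.6}.

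For the reverse implication one must check that an \emph{arbitrary} triple $u,v,w\in\W'$ — not merely those of the special form $(x\tau_{\beta_1},y\tau_{\beta_2},z\tau_\beta)$ with $\beta_i<0$ strictly — is accounted for. By Lemma~\ref{5.17}(c), every element of $\W'$ factors uniquely as $x\tau_{\beta_1}$ with $\beta_1\le 0$ and $x\in W'_{\beta_1}$; write $u=x\tau_{\beta_1}$, $v=y\tau_{\beta_2}$, $w=z\tau_\beta$ accordingly. Pick $q_1,q_2\in Q^\vee_{<0}$ so negative that $\beta_1+q_1,\beta_2+q_2,\beta+q_1+q_2$ all lie in $Q^\vee_{<0}$ (in particular $x,y,z\in W$ automatically satisfy the stabilizer conditions). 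Corollary~\ref{5.20} then gives $p_{u,v}^w=p_{x\tau_{\beta_1+q_1},y\tau_{\beta_2+q_2}}^{z\tau_{\beta+q_1+q_2}}$, which lies in the ``strictly negative'' regime where the identification with $d_{x,y}^{z,\eta}$ for $\eta=\beta-\beta_1-\beta_2$ applies. The lengths $\ell(u),\ell(v),\ell(w)$ are unchanged by the translation, and the previous paragraph's sign computation (which depends only on the strictly negative regime) matches $(-1)^{\ell(u)+\ell(v)-\ell(w)}$ with $(-1)^{\ell(x)+\ell(y)-\ell(z)}$, so Conjecture~\ref{7.6} transfers back to Conjecture~\ref{4.10}.

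The main (and essentially only) nontrivial issue is verifying that $\eta=\beta-\beta_1-\beta_2$ belongs to $Q_+^\vee$ whenever $p_{u,v}^w\neq 0$, so that it does correspond to an actual coefficient in $QK_T(X)=K_T^0(X)[[q_1,\dots,q_l]]$. But this is automatic: Corollary~\ref{7.3} expresses $[\mathcal{O}^x]*[\mathcal{O}^y]$ as a power series in the $q_i$, so the structure constants $p_{x\tau_{\beta_1},y\tau_{\beta_2}}^{z\tau_\beta}$ can be nonzero only when $\beta-\beta_1-\beta_2\in Q_+^\vee$. Combined with the translation-invariance of Corollary~\ref{5.20}, this pins down $\eta\in Q_+^\vee$ in full generality, and the equivalence of the two conjectures follows.
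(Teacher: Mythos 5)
Your proposal is correct and follows essentially the same route as the paper: both directions hinge on the identification $d_{x,y}^{z,\eta}=p_{x\tau_{\beta_1},y\tau_{\beta_2}}^{z\tau_\beta}$ from Corollary~\ref{7.3} with $\beta_1,\beta_2$ chosen deeply antidominant, the sign bookkeeping via the evenness of translation lengths coming from \eqref{5.17.1} and Lemma~\ref{5.17}(b), the decomposition of arbitrary $u,v,w\in\W'$ via Lemma~\ref{5.17}(c) together with the translation invariance of Corollary~\ref{5.20}, and the power-series structure of $QK_T(X)$ to force $\eta=\beta-\beta_1-\beta_2\in Q_+^\vee$ when the constant is nonzero. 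Only note that your phrase ``the lengths $\ell(u),\ell(v),\ell(w)$ are unchanged by the translation'' is literally false (the lengths do change); what you actually use, and what is true by the same even-translation-length computation, is that the parities, hence the signs $(-1)^{\ell(u)+\ell(v)-\ell(w)}=(-1)^{\ell(x)+\ell(y)-\ell(z)}$, are unchanged.
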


\begin{proof}
We first show that Conjecture \ref{4.10}  implies Conjecture \ref{7.6}:

 Fix any $x,y,z \in W$ and $\eta \in Q_+^\vee$. Now, chose any $\beta_1, \beta_2 \in Q_{<0}^\vee$ and $\beta=\eta+\beta_1+\beta_2$ such that $\beta\in Q_{<0}^\vee$. By Corollary \ref{7.3}, 
\begin{equation}\label{7.7.1}
d^{z, \eta}_{x,y} = p_{x\tau_{\beta_1},y \tau_{\beta_2}}^{z\tau_\beta}.
\end{equation}
Observe further that $\ell(\tau_\beta)$ is even for any $\beta\leq 0$ (cf. identity \eqref{5.17.1}). Thus, Conjecture \ref{7.6} follows from that of Conjecture \ref{4.10}. 

Conversely, assume Conjecture \ref{7.6}. Then, for any $x,y,z \in W $ and $\beta,\beta_1,\beta_2 \in Q_{<0}^\vee	$ such that $\beta-\left( \beta_1+ \beta_2\right) \in Q_+^\vee $, we get by \eqref{7.7.1}
\begin{equation}\label{7.7.2}
(-1)^{\ell(x\tau_{\beta_1})+\ell(y\tau_{\beta_2})-\ell(z\tau_\beta)}p_{x\tau_{\beta_1},y \tau_{\beta_2}}^{z\tau_\beta} \in \mathbbm{Z}_+ \left[ (e^{\alpha_1}-1), \ldots ,(e^{\alpha_l}-1)\right].
\end{equation}
Take any $u,v,w\in \W'$ and write (cf. Lemma \ref{5.17}(c)) $u=x\tau_{\gamma_1}$, $v=y\tau_{\gamma_2}$ and $w=z\tau_\gamma$, where $\gamma, \gamma_1, \gamma_2 \leq 0$ and $x\in W'_{\gamma_1}, y\in W'_{\gamma_2}, z\in W'_\gamma$. By Corollary \ref{5.20},
\begin{equation}\label{7.7.3}
p_{u,v}^w = p^{z\tau_{\gamma+\beta_1+\beta_2}}_{x\tau_{\gamma_1+\beta_1},y\tau_{\gamma_2+\beta_2}},\mbox{ for any}\,\, \beta_1,\beta_2 \in  Q_{<0}^\vee.
\end{equation}
By Corollary \ref{7.3}, 
$$
\left[ \mathcal{O}^x\right] *\left[ \mathcal{O}^y\right] =p^{z\tau_{\gamma+\beta_1+\beta_2}}_{x\tau_{\gamma_1+\beta_1},y\tau_{\gamma_2+\beta_2}} q^{\gamma -(\gamma_1+\gamma_2)} \left[ \mathcal{O}^{z}\right] + \mbox{ other terms.}
$$
In particular, if 
\begin{equation}\label{7.7.4}
 p^{z\tau_{\gamma+\beta_1+\beta_2}}_{x\tau_{\gamma_1+\beta_1},y\tau_{\gamma_2+\beta_2}} \neq 0,\quad \mbox{ then }\quad \gamma- (\gamma_1+\gamma_2)\in Q_+^\vee .
\end{equation}

Thus, if non-zero, by the identities \eqref{7.7.3} and \eqref{7.7.4},
\begin{equation}\label{7.7.5}
p_{u,v}^w = p^{z\tau_{\gamma+\beta_1+\beta_2}}_{x\tau_{\gamma_1+\beta_1},y\tau_{\gamma_2+\beta_2}}\quad \mbox{and} \quad \gamma- (\gamma_1+\gamma_2)\in Q_+^\vee. 
\end{equation}
Hence, by the identities \eqref{7.7.2} and \eqref{7.7.5}, 
$$
(-1)^{\ell(u)+\ell(v)-\ell(w)}p_{u,v}^w \in \mathbbm{Z}_+ \left[\left( e^{\alpha_1}-1\right),\ldots, \left(  e^{\alpha_l}-1\right)\right].
$$
 This proves that Conjecture \ref{7.6} implies Conjecture \ref{4.10}. 
\end{proof}

The following example is given in [BM-2, $\S$5.5].
\begin{example} For $G=\SL_2(\mathbb{C})$, we get for $QK_T(\mathbb{P}^1)$ (using Corollary \ref{7.3} and Theorem \ref{5.9}),
$$[\mathcal{O}^{s_1}] * [\mathcal{O}^{s_1}] = e^{\alpha_1}\cdot q^{\alpha^\vee_1} [\mathcal{O}^e]+ (1- e^{\alpha_1})\cdot q^0  [\mathcal{O}^{s_1}].$$
\end{example}

\begin{remark} \label{7.8} \rm{
As proved by Kato [Ka-3], for any standard parabolic subgroup $P$ of $G$, there exists a surjective morphism of commutative $R(T)$-algebras:
$$
QK_T(G/B)\rightarrow QK_T(G/P)
$$
which takes the Schubert basis $\left\{ \left[ \mathcal{O}^x\right] \right\}_{x\in W}$ of $QK_T(G/B)$ to the Schubert basis or zero of $QK_T(G/P)$. Thus, the quantum multiplication structure constants for $G/P$ can be read off from that of $G/B$. }
\end{remark}

\begin{remark}\label{7.9} \rm{
We list some of the known positivity results or conjectures related to $QK(X)$. 
\vskip1ex

(a) Non-equivariant analogue of Conjecture \ref{7.6} for $QK(X)$ (for any $X=G/B$) is made by Lenart-Maeno [LM, Conjecture 7.5]. There is an error in the sign of their conjecture, which they subsequently fixed (as informed to me by C. Lenart). It conforms to our more general $T$-equivariant  conjecture (Conjecture \ref{7.6}). 
\vskip1ex

(b) Buch-Mihalcea [BM-1] conjectured  the $QK_T$ positivity  for Grassmannians. 

\vskip1ex

(c) Lam-Schilling-Shimozono formulated an analogue of Conjecture \ref{2.9} albeit for the structure sheaf basis $\{\mathcal{O}_{X^w}\}_{w\in \W'}$ of $K^{\top} (\X)$ (non-equivariant case) in terms of the multiplicative  structure constants of a basis of the nil-Hecke algebra (cf. [LSS, Conjecture 6.7]). They also have formulated several conjectures on affine stable Grothendieck polynomials and $K$-theoretic $k$-Schur functions (cf. [LSS, Conjectures 7.20 and 7.21]). Parts of their Conjectures 7.20 and 7.21 were subsequently proved by Baldwin-Kumar [BK].

\vskip1ex
(d) Li-Mihalcea [LiM] have proved an  alternating sign behavior for the structure constants associated to line degrees corresponding to some fundamental weights on any $G/P$. 
\vskip1ex

(e) Buch-Chaput-Mihalcea-Perrin [BCMP-1] have proved an analogue of the Chevalley formula with alternating signs for cominuscule flag varieties. They have further proved the non-equivariant analogue of Conjecture \ref{7.6} for minuscule flag varieties as well as quadric hyper surfaces (cf. [BCMP-2]). 
\vskip1ex

(f) Lenart-Naito-Sagaki [LNS] have proved a {\it cancellation free} Chevalley formula with alternating signs for $QK_T(G/B)$. They also have some similar Chevalley formula for Grassmannians in type $A$ and $C$ and some two-step flag manifolds. Also see [BCMP-1] and [KLNS].
\vskip1ex

(g) By a result due to Xu [Xu], Conjecture \ref{7.6} is true for the two-step flag variety of type A. 
\vskip1ex

(h) A positivity result is proved for the symplectic Grassmannian quantum $K$-theory $QK(\IG(2, 2n))$ by Benedetti-Perrin-Xu [BPX]. }

\end{remark}

\vskip5ex

S. Kumar: Department of Mathematics, University of North Carolina, Chapel Hill, NC 27599-3250, USA.

email: shrawan@email.unc.edu

\end{document}